\renewcommand{\eqref}[1]{\textcolor{blue}{(\ref{#1})}}
\theoremstyle{plain}
\newtheorem{thm}{Theorem}[section]
\newaliascnt{prop}{thm}
\newtheorem{prop}[prop]{Proposition}
\newaliascnt{cor}{thm}
\newtheorem{cor}[cor]{Corollary}
\newaliascnt{lem}{thm}
\newtheorem{lem}[lem]{Lemma}
\newtheorem{thmA}{Theorem}[section]
\theoremstyle{definition}
\newaliascnt{defi}{thm}
\newtheorem{defi}[defi]{Definition}
\newaliascnt{rem}{thm}
\newtheorem{rem}[rem]{Remark}
\newaliascnt{reconstruction}{thm}
\newtheorem{reconstruction}[reconstruction]{Reconstruction}
\newtheorem*{exam*}{Example}
\newtheorem*{rrem*}{Remark}
\newtheorem*{defi*}{Definition}
\AddEnumerateCounter{\fnsymbol}{\c@fnsymbol}{9}
\DeclareMathOperator{\Hom}{Hom}
\DeclareMathOperator{\Isom}{Isom}
\DeclareMathOperator{\ISOM}{\mathbf{Isom}}
\DeclareMathOperator{\id}{\mathrm{id}}
\DeclareMathOperator{\im}{\mathrm{Im}}
\DeclareMathOperator{\Spec}{\mathrm{Spec}}
\DeclareMathOperator{\Aut}{\mathrm{Aut}}
\newcommand{\bbbullet}{\lozenge}
\newcommand{\bbullet}{\blacklozenge}
\DeclareMathOperator{\colim}{\mathrm{colim}}
\DeclareMathOperator{\plim}{\mathrm{lim}}
\newcommand{\bcolim}{\colim^\bbullet}
\newcommand{\bplim}{\plim^\bbullet}
\newcommand{\bcoprod}{\coprod^\bbullet}
\newcommand{\btimes}{\times^\bbullet}
\newcommand{\op}{\mathrm{op}}
\newcommand{\red}{\mathrm{red}}
\newcommand{\qcpt}{\mathrm{qcpt}}
\newcommand{\qsep}{\mathrm{qsep}}
\newcommand{\sep}{\mathrm{sep}}
\newcommand{\rqqs}{\left\{ \red , \qcpt , \qsep , \sep \right\}}
\newcommand{\rsa}{\rightsquigarrow}
\renewcommand{\emptyset}{\varnothing}
\newcommand{\dfn}{:\overset{\mathrm{def}}{=}}
\newcommand{\deff}{:\hspace{-3pt}\overset{\text{def}}{\iff}}
\newcommand{\univ}[1]{\mathbf{#1}}
\newcommand{\usm}{\(\univ{U}\)-small}
\newcommand{\vsm}{\(\univ{V}\)-small}
\DeclareMathOperator{\Sche}{\mathsf{Sch}}
\newcommand{\Sch}[1]{\Sche_{/#1}}
\newcommand{\Schb}[1]{\Sche_{\bbullet/#1}}
\newcommand{\Schc}[1]{\Sche_{\bbbullet/#1}}
\DeclareMathOperator{\sfSet}{\mathsf{Set}}
\newcommand{\SetU}{\sfSet}
\newcommand{\SetV}{\sfSet_{\univ{V}}}
\DeclareMathOperator{\sfTop}{\mathsf{Top}}
\newcommand{\TopU}{\sfTop}
\newcommand{\TopV}{\sfTop_{\univ{V}}}
\DeclareMathOperator{\sfSch}{\mathsf{Sch}}
\newcommand{\SchU}{\Sche}
\newcommand{\SchV}{\Sche_{\univ{V}}}
\newcommand{\Alg}[1]{{#1}\text{-}\mathsf{Alg}}
\DeclareMathOperator{\LAlg}{\mathsf{LAlg}}
\subjclass[2020]{primary: 14A15, secondary: 14A25. \\
\indent\textit{Key Words and Phrases}. Category-theoretic Reconstruction; schemes; Hom-schemes; rigidity; regular monomorphisms; ring schemes.}
\begin{document}

% ----------------------------------------------------------
% ----------------------------------------------------------
% ----------------------------------------------------------
% ----------------------------------------------------------
% ----------------------------------------------------------

\begin{title}[Categories of Reduced Schemes]{Category-Theoretic Reconstruction of Schemes
  from Categories of Reduced Schemes}\end{title}

\author{Tomoki Yuji}
\date{\today}
\email{\href{mailto:math@yujitomo.com}{\texttt{math@yujitomo.com}}}

% ----------------------------------------------------------
% ----------------------------------------------------------
% ----------------------------------------------------------
% ----------------------------------------------------------
% ----------------------------------------------------------

\begin{abstract}
  Let \(S\) be a locally Noetherian normal scheme
  and \(\bbullet/S\) a set of properties of \(S\)-schemes.
  Then we shall write \(\Schb{S}\) for
  the full subcategory of the category of \(S\)-schemes \(\Sch{S}\)
  determined by the objects \(X\in \Schb{S}\) that satisfy every property of \(\bbullet/S\).
  In the present paper, we shall mainly be concerned with the properties
  ``reduced'', ``quasi-compact over \(S\)'', ``quasi-separated over \(S\)'', and ``separated over \(S\)''.
  We give a functorial category-theoretic algorithm for reconstructing \(S\)
  from the intrinsic structure of the abstract category \(\Schb{S}\).
  This result is analogous to a result of Mochizuki \cite{Mzk04}
  and may be regarded as a partial generalization of a result of de Bruyn \cite{deBr19}
  in the case where \(S\) is a locally Noetherian normal scheme.
\end{abstract}

\maketitle

\setcounter{tocdepth}{1}
\tableofcontents

% ----------------------------------------------------------
% ----------------------------------------------------------
% ----------------------------------------------------------
% ----------------------------------------------------------
% ----------------------------------------------------------

\section*{Introduction}

Let \(\univ{U}\) and \(\univ{V}\) be Grothendieck universes such that \(\univ{U} \in \univ{V}\).
Let \(S\) be a \usm \ scheme.
In the following, we shall use the term ``scheme'' to refer to a \usm \ scheme.
Let \(\bbullet/S\) be a (\vsm) set of properties of \(S\)-schemes.
We shall write
\[\Schb{S}\]
for the full subcategory of the (\vsm) category of \(S\)-schemes \(\Sch{S}\)
determined by the objects \(X\in \Schb{S}\) that satisfy every property of \(\bbullet/S\).
In the present paper, we shall mainly be concerned with the properties
\[\text{``red'', ``qcpt'', ``qsep'', ``sep''}\]
of \(S\)-schemes, i.e.,
``reduced'', ``quasi-compact over \(S\)'',
``quasi-separated over \(S\)'', and ``separated over \(S\)''.
If \(\bbullet/S = \emptyset\), then we simply write \(\Sch{S}\) for \(\Schb{S}\).

In the present paper,
we consider the problem of reconstructing the scheme \(S\)
from the intrinsic structure of the abstract category \(\Schb{S}\).
In \cite{Mzk04}, Mochizuki gave a solution to this problem
in the case where \(S\) is locally Noetherian,
and \(\bbullet/S = \text{``of finite type over \(S\)''}\).
In \cite{deBr19}, van Dobben de Bruyn gave a solution to this problem
in the case where \(S\) is an arbitrary scheme,
and \(\bbullet/S = \emptyset\).
The techniques applied in \cite{deBr19} make essential use of
the existence of \textit{non-reduced schemes} in \(\Sch{S}\).
By contrast, in the present paper, we focus on the problem of reconstructing the scheme \(S\)
from categories of \(S\)-schemes that only contain \textit{reduced \(S\)-schemes},
hence rely on techniques that differ essentially from the techniques applied in \cite{deBr19}.

If \(X,Y\) are objects of a (\vsm) category \(\mathcal{C}\),
then we shall write \(\Isom (X,Y)\) for the set of isomorphisms from \(X\) to \(Y\).
By a slight abuse of notation, we shall also regard this set
as a discrete category.
If \(\mathcal{C,D}\) are (\vsm) categories,
then we shall write \(\ISOM (\mathcal{C,D})\) for the (\vsm) category
of equivalences \(\mathcal{C} \xrightarrow{\sim} \mathcal{D}\) and natural isomorphisms.
If \(\mathcal{C}\) is a (\vsm) category, and \(X\) is an object of \(\mathcal{C}\),
then we shall write \(\mathcal{C}_{/X}\) for the slice category of objects and morphisms
equipped with a structure morphism to \(X\).
If \(f:X\to Y\) is a morphism in a (\vsm) category \(\mathcal{C}\)
which is closed under fiber products,
then we shall write \(f^* : \mathcal{C}_{/Y}\to \mathcal{C}_{/X}\)
for the functor induced by the
operation of base-change, via \(f\), from \(X\) to \(Y\).

Our main result is the following:

\begin{thmA}\label{thmA: main thm}
  \
  \begin{enumerate}
    \item (\autoref{reconstruction: Sch}) \\
    Let \(S\) be a locally Noetherian normal (\usm) scheme,
    \[\bbullet \subset \rqqs\] a [possibly empty] subset
    such that \(\left\{ \mathrm{qsep,sep}\right\} \not\subset \bbullet\).
    Then the following may be constructed category-theoretically from \(\Schb{S}\)
    by means of algorithms that are independent of the choice of the subset
    \(\bbullet \subset \{\mathrm{red, qcpt, qsep,}\) \(\mathrm{sep}\}\):
    \begin{enumerate}
      \item
      for each object \(T\) of \(\Schb{S}\),
      a (\vsm) scheme \(T_{\univ{V}}\)
      and an isomorphism of (\vsm) schemes \(\eta_{T}:T_{\univ{V}}\xrightarrow{\sim} T\)
      (where we note that a \usm \ scheme is, in particular, \vsm), and
      \item
      for each morphism \(f:T_1\to T_2\) of \(\Schb{S}\),
      a morphism of (\vsm) schemes \(f_{\univ{V}}:T_{1,\univ{V}} \to T_{2,\univ{V}}\)
      such that \(\eta_{T_2}\circ f_{\univ{V}} = f \circ \eta_{T_1}\).
    \end{enumerate}
    \item (\autoref{cor: bbullet equal circ}) \\
    Let \(S,T\) be quasi-separated (\usm) schemes,
    \[\bbullet , \bbbullet \subset \rqqs\]
    [possibly empty] subsets
    such that \(\left\{ \mathrm{qsep,sep}\right\} \not\subset \bbullet, \ \left\{ \mathrm{qsep,sep}\right\} \not\subset\bbbullet\).
    Then if the (\vsm) categories \(\Schb{S}\), \(\Schc{T}\) are equivalent,
    then \(\bbullet = \bbbullet\).
    \item (\autoref{thm: isom cat equiv}) \\
    Let \(S,T\) be locally Noetherian normal (\usm) schemes,
    \[\bbullet  \subset \rqqs\]
    a [possibly empty] subset
    such that \(\left\{ \mathrm{qsep,sep}\right\} \not\subset \bbullet\).
    Then the natural functor
    \begin{align*}
      \Isom (S,T) &\to \ISOM (\Schb{T},\Schb{S}) \\
      f &\mapsto f^*
    \end{align*}
    is an equivalence of (\vsm) categories.
  \end{enumerate}
\end{thmA}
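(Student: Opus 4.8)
The plan is to prove that $F\colon f\mapsto f^*$ is an equivalence by exhibiting an explicit quasi-inverse $G$ built from the functorial reconstruction of \autoref{reconstruction: Sch}, and then checking $G\circ F=\id$ and $F\circ G\cong\id$. First I would record that $F$ is well-defined: for an isomorphism $f\colon S\to T$ the properties ``red'', ``qcpt'', ``qsep'', ``sep'' are stable under base-change (in particular along isomorphisms), so $f^*$ restricts to a functor $\Schb{T}\to\Schb{S}$ with quasi-inverse $(f^{-1})^*$; hence $f^*\in\ISOM(\Schb{T},\Schb{S})$, and since $\Isom(S,T)$ is discrete the object-assignment $f\mapsto f^*$ automatically defines a functor $F$. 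Throughout I use that $S,T$ are locally Noetherian normal (so that \autoref{reconstruction: Sch} applies) and that $\{\mathrm{qsep},\mathrm{sep}\}\not\subset\bbullet$.

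To construct $G$, write $R_S\colon\Schb{S}\to\SchV$, $X\mapsto X_{\univ{V}}$ for the reconstruction functor of \autoref{reconstruction: Sch}, equipped with its natural isomorphism $\eta^S\colon R_S\cong U_S$ to the functor $U_S$ sending $X$ to its underlying $\univ{V}$-scheme, and similarly $R_T,\eta^T,U_T$ for $\Schb{T}$. The crucial input is that the algorithm producing $R_\bullet$ and $\eta$ is category-theoretic and functorial: any equivalence $\Phi\colon\Schb{T}\xrightarrow{\sim}\Schb{S}$ is therefore accompanied by a canonical natural isomorphism $\theta_\Phi\colon R_S\circ\Phi\cong R_T$, which is moreover invariant under natural isomorphisms of equivalences. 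Combining $\theta_\Phi$ with the two $\eta$'s yields a natural isomorphism $\omega_X\colon U_S(\Phi X)\xrightarrow{\sim}U_T(X)$, natural in $X\in\Schb{T}$. Since $S$ and $T$ are the terminal objects of $\Schb{S}$ and $\Schb{T}$ and $\Phi$ preserves terminal objects, evaluating at the terminal object gives a distinguished isomorphism of schemes $G\Phi:=\omega_T\colon S\cong\Phi(T)\to T$. Invariance of $\theta_\Phi$ guarantees $\Phi\cong\Phi'\Rightarrow G\Phi=G\Phi'$, which is exactly what is needed for $G$ to descend to a functor $\ISOM(\Schb{T},\Schb{S})\to\Isom(S,T)$ into the discrete target.

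Next I would verify the two identities. For $F\circ G\cong\id$, set $f=G\Phi$ and promote $\omega$ to a natural isomorphism $\Phi\cong f^*$: applying naturality of $\omega$ to each structure morphism $p_X\colon X\to T$, and using that $\Phi(p_X)$ is the structure morphism of $\Phi X$ (because structure morphisms are precisely the morphisms to the terminal object and are preserved by $\Phi$), shows that the underlying-scheme isomorphism $\omega_X\colon\Phi X\to X$ carries the structure morphism of $\Phi X$ to $f^{-1}\circ p_X$, i.e.\ to the structure morphism of $f^*X=X\times_{T,f}S$. Thus $\omega$ realizes $\Phi\cong f^*=(G\Phi)^*$ as objects of $\ISOM(\Schb{T},\Schb{S})$. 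For $G\circ F=\id$, feed $\Phi=f^*$ into the construction: here $f^*T=S$ via $f$, and the compatibility of the reconstruction with the base-change operation forces the reconstructed terminal-object isomorphism $\omega_T$ to be $f$, so $G(f^*)=f$. Once $G\circ F=\id$ and $F\circ G\cong\id$ are in hand, $G$ is a quasi-inverse and $F$ is an equivalence; in particular faithfulness of $F$ is automatic (the source is discrete), while fullness is subsumed by these identities.

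The main obstacle will be \emph{the compatibility of the reconstruction with base-change}, which underlies both identities and is where the fine structure of \autoref{reconstruction: Sch} is genuinely needed. The reconstruction a priori only identifies underlying schemes object-by-object, so the real work is to promote these identifications, via the functoriality of $\eta$ and the invariance of the comparison $\theta_\Phi$ under natural isomorphism, to a statement about the base-change functors $f^*$ that is coherent across all structure morphisms simultaneously, and to confirm that on terminal objects this coherence returns exactly the geometric isomorphism $f$. Establishing this coherence carefully, rather than the formal deduction that $F$ is an equivalence, is where the argument's weight lies.
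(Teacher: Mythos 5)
Your proposal addresses only part (iii) of the theorem. Part (i) is the reconstruction \autoref{reconstruction: Sch}, which you are entitled to use as input, but part (ii) (\autoref{cor: bbullet equal circ}: an equivalence \(\Schb{S}\simeq\Schc{T}\) forces \(\bbullet=\bbbullet\)) is a separate assertion with its own proof in the paper --- one exhibits four explicit test schemes, each failing exactly one of the properties \(\mathrm{red},\mathrm{qcpt},\mathrm{qsep},\mathrm{sep}\), and invokes the category-theoretic characterizations of those properties --- and nothing in your argument touches it. For part (iii) itself, your overall strategy (use the reconstruction to get a comparison \(U_{\bbullet/S}^{\SchU}\circ\Phi\cong U_{\bbbullet/T}^{\SchU}\), evaluate at the terminal object to produce \(f\), then promote to \(\Phi\cong f^*\)) coincides with the paper's essential-surjectivity argument, and that half of your proof is sound.

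The genuine gap is in fullness, i.e.\ in the two identities you need for \(G\) to be a quasi-inverse. \autoref{cor: underlying sch equiv commute} only asserts the \emph{existence} of a natural isomorphism \(U_{\bbullet/S}^{\SchU}\circ F\cong U_{\bbbullet/T}^{\SchU}\circ F\) (equivalently \(U\circ\Phi\cong U\)); two such isomorphisms differ by an element of \(\Aut(U_{\bbullet/S}^{\SchU}\circ\Phi)\), so your \(\omega_T\), hence \(G\Phi\), is a priori not well defined, \(G(f^*)=f\) is not automatic, and \(f^*\cong g^*\Rightarrow f=g\) together with \(\Aut(f^*)=\{\id_{f^*}\}\) do not follow formally. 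You acknowledge this under the heading ``compatibility of the reconstruction with base-change'' and ``invariance of \(\theta_\Phi\),'' but you never establish it, and it is precisely the technical heart of the paper's proof: \autoref{lem: pull-back functor is rigid} shows that every automorphism of \(U_{\bbullet/S}^{\SchU}\circ f^*\) is of the form \((\id_Y\btimes_T\psi)_{Y}\) for a unique \(\psi\in\Aut_T(f)\) --- proved by rigidifying on \(\mathbb{P}^1\) via its three sections \(0,1,\infty\), descending to \(\mathbb{A}^1\), propagating to \(\mathbb{A}^n\) and to arbitrary affines via symmetric algebras, and then to all objects --- and \autoref{cor: isom of pb is triv} deduces \(\Aut(f^*)=\{\id_{f^*}\}\). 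Since \(\Aut_T(f)\) is trivial when \(f\) is an isomorphism, this is exactly what pins down \(\omega_T\) and makes your \(G\) well defined; the alternative route you sketch (2-functoriality of the comparison of \autoref{lem: Tch equiv commute} in the equivalence \(\Phi\)) would require an equally careful verification through the construction of \(\rho^{\mathsf{Rsp}}\), which you do not attempt. As written, the fullness half of the equivalence is therefore unproven.
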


Our proof of \autoref{thmA: main thm} proceeds by giving
category-theoretic characterizations of
various properties of schemes and morphisms of schemes as follows:
\begin{itemize}
  \item
  In \autoref{section: underlying set},
  we give a category-theoretic characterization of
  the objects of \(\Schb{S}\) whose underlying scheme is isomorphic to the spectrum of a field
  and use this characterization to
  give a functorial category-theoretic algorithm for reconstructing
  the underlying set of the objects of \(\Schb{S}\)
  from the intrinsic structure of the abstract category \(\Schb{S}\).
  \item
  In \autoref{section: regular mono},
  we consider various scheme-theoretic properties of regular monomorphisms in \(\Schb{S}\).
  These properties will play an important role in
  \autoref{section: local domains} and \autoref{section: undetlying top}.
  \item
  In \autoref{section: local domains},
  we give a category-theoretic characterization of the objects of
  \(\Schb{S}\) whose underlying scheme is isomorphic to the spectrum of a local domain.
  This characterization is closely related to the category-theoretic
  characterization of specialization and generization relations
  between points of a scheme and
  also yields a category-theoretic characterization of
  the objects of \(\Schb{S}\) whose underlying scheme is irreducible,
  as well as the objects of \(\Schb{S}\)
  whose underlying scheme is isomorphic to the spectrum of a local ring.
  \item
  In \autoref{section: undetlying top},
  by using the method of \cite[Lemma 3.7]{deBr19},
  we give a category-theoretic characterization of the morphisms of
  \(\Schb{S}\) whose underlying morphism of schemes is a closed immersion.
  We then use this characterization
  to give
  a functorial category-theoretic algorithm for
  reconstructing the underlying topological space of the objects of \(\Schb{S}\)
  from the intrinsic structure of the abstract category \(\Schb{S}\).
  \item
  In \autoref{section: loc of fp},
  we give a category-theoretic characterization of the morphisms of
  \(\Schb{S}\) whose underlying morphism of schemes is locally of finite presentation.
  This characterization also yields a category-theoretic characterization of
  the morphisms of \(\Schb{S}\)
  whose underlying morphism of schemes is proper and of finite presentation.
  These properties form the foundation for the discussion in \autoref{section: P1}.
  \item
  In \autoref{section: P1},
  we give a category-theoretic characterization of the objects of
  \(\Schb{S}\) whose underlying \(S\)-scheme is isomorphic to the projective line \(\mathbb{P}^1_S\).
  We then use this characterization
  to give a functorial category-theoretic algorithm for
  reconstructing the underlying schemes of the objects of \(\Schb{S}\)
  from the intrinsic structure of the abstract category \(\Schb{S}\).
  In addition,
  we discuss some rigidity results related to the various reconstruction algorithms developed
  in the present paper.
\end{itemize}

Finally, we remark that, in the above discussion,
as well as throughout the present paper,
\textit{category-theoretic reconstruction algorithms} are
to be understood as algorithms that are \textit{independent} of the choice
of the subset \(\bbullet \subset \rqqs\).

% ----------------------------------------------------------
% ----------------------------------------------------------
% ----------------------------------------------------------
% ----------------------------------------------------------
% ----------------------------------------------------------
% ----------------------------------------------------------
% ----------------------------------------------------------
% ----------------------------------------------------------
% ----------------------------------------------------------
% ----------------------------------------------------------

\subsection*{Acknowledgements}

I would like to thank my advisor S. Mochizuki for constant support and helpful conversations.
Also I would like to thank Professor Y. Hoshi and Professor S. Yasuda for giving me advice on this paper and my research.

% ----------------------------------------------------------
% ----------------------------------------------------------
% ----------------------------------------------------------
% ----------------------------------------------------------
% ----------------------------------------------------------
% ----------------------------------------------------------
% ----------------------------------------------------------
% ----------------------------------------------------------
% ----------------------------------------------------------
% ----------------------------------------------------------

\subsection*{Notations and Conventions}\label{notation}

\

Let \(\univ{U},\univ{V}\) be Grothendieck universes such that \(\univ{U}\in \univ{V}\).
The notation \(\mathbb{Z}\) will be used to denote the ring of integers.
We shall use the notation
\[\mathsf{Set}_\univ{V},\ \mathsf{Top}_\univ{V},\ \mathsf{Ring}_\univ{V},\ \mathsf{Sch}_\univ{V}\]
to denote, respectively, the categories of \vsm \ sets,
\vsm \ topological spaces, \vsm \ rings,
and \vsm \ schemes.
We shall use similar notation with the subscript \(\univ{V}\) omitted to denote
the corresponding category of \usm \ objects.
In the following, we shall use the term ``scheme'' to refer to a \usm \ scheme.

Let \(\mathcal{C}\) be a (\vsm) category.
We shall write \(\mathcal{C}^{\mathrm{op}}\) for
the opposite category associated to \(\mathcal{C}\).
If \(X\) is an object of \(\mathcal{C}\),
then we shall write \(\mathcal{C}_{/X}\) for the slice category of objects and morphisms
equipped with a structure morphism to \(X\).

Let \(\bbullet/S\) be a (\vsm) set of properties of \(S\)-schemes.
Then we shall write
\[\Schb{S}\]
for the full subcategory of the (\vsm) category of \(S\)-schemes \(\Sch{S}\)
determined by the objects \(X\) of \(\Schb{S}\) that satisfy every property of \(\bbullet/S\).
In the present paper, we shall mainly be concerned with the properties
\[\text{``red'', ``qcpt'', ``qsep'', ``sep''}\]
of \(S\)-schemes, i.e., ``reduced'',
``quasi-compact over \(S\)'', ``quasi-separated over \(S\)'', and ``separated over \(S\)''.
Thus, if \(\bbullet/S = \emptyset\), then \(\Sch{S}=\Schb{S}\).
In the remainder of the present paper,
\begin{quote}
  we \textit{fix} subsets \(\bbullet, \bbbullet \subset
  \left\{ \mathrm{red,qcpt,qsep,sep} \right\}\)
  such that \(\left\{ \mathrm{qsep,sep}\right\} \not\subset
  \bbullet, \ \left\{ \mathrm{qsep,sep}\right\} \not\subset\bbbullet\).
\end{quote}

We shall write \(\bplim, \bcolim, \btimes, \bcoprod \)
for the (inverse) limit, colimit, fiber product, and push-out in \(\Schb{S}\).
We shall write \(\plim, \colim, \times, \coprod \)
for the (inverse) limit, colimit, fiber product, and push-out in \(\mathsf{Sch}\).
By a slight abuse of notation,
we shall use the notation \(\emptyset\) to denote the initial object
(i.e., the empty scheme) of \(\Schb{S}\) or \(\Sch{S}\).

Let \(S\) be a scheme,
\(Z'\to Z\) a morphism in \(\Schb{S}\), and
\(f:X\to Y\) a morphism over \(Z\) in \(\Schb{S}\).
We shall write \(X_{\bbullet,Z'}\) for
the base-change (if it exists) of \(X\) by the morphism \(Z'\to Z\) in \(\Schb{S}\).
We shall write \(f_{\bbullet,Z'}\) for
the base-change (if it exists) of \(f\) by the morphism \(Z'\to Z\) in \(\Schb{S}\).
Let \(F \in \{X,f\}\).
If \(Z' = \Spec(A)\), then we shall also write \(F_{\bbullet,A}\) for \(F_{\bbullet,Z'}\).
If \(Z' = \Spec(k(z))\), where \(k(z)\) denotes the residue field of \(z\in Z\),
then we shall also write \(F_{\bbullet,z}\) for \(F_{\bbullet,Z'}\).
If \(\bbullet = \emptyset\),
then we shall simply write \(F_{Z'}\) for \(F_{\bbullet,Z'}\).

If \(f:X\to Y\) is a morphism of schemes, and
\(Z = \Spec(k(x))\) is the spectrum of
the residue field \(k(x)\) of \(X\) at a point \(x\in X\),
we shall write \(f|_x\) for the composite of
the natural morphism \(Z \to X\) with \(f\).
If \(f:\Spec(A)\to \Spec(B)\) is a morphism of affine schemes,
then we shall write \(f^{\#} : B\to A\) for the ring homomorphism induced by \(f\).
By a slight abuse of notation, if \(f^{\#}:B\to A\) is a ring homomorphism,
then we shall use the notation \(f\) to denote
the corresponding morphism of schemes \(\Spec(A)\to \Spec(B)\).

Let \(X\) be a scheme.
Then we shall write \(\mathcal{O}_X\) for the structure sheaf of \(X\).
We shall write \(|X|\) for the underlying topological space of \(X\).
If the underlying set of \(|X|\) is of cardinality \(1\),
then we shall write \(*_X\) for the unique element of the underlying set of \(X\).
If \(F\subset |X|\) is a closed subset,
then we shall write \(F_{\red}\) for
the reduced induced closed subscheme determined by \(F\) and \(X_{\red}\dfn |X|_{\red}\).
If \(f:X\to Y\) is a morphism of schemes,
then we shall write \(f_{\red}:X_{\red}\to Y_{\red}\) for the morphism induced by \(f\).
For points \(x,y\in X\),
if \(x\) is a specialization of \(y\)
(where we recall that this includes the case where \(x=y\)),
then we shall write \(y\rsa x\).

Let \(S\) be a scheme;
\(\bbullet \subset \left\{ \mathrm{red,qcpt,qsep,sep} \right\}\) a subset such that
\(\left\{ \mathrm{qsep,sep}\right\} \not\subset \bbullet\);
\(I,J\) (\usm) sets;
\(\{X_i\}_{i\in I}\) a (\usm) family of objects of \(\Schb{S}\) parametrized by \(I\);
\(\{f_j\}_{j\in J}\) a (\usm) family of morphisms in \(\Schb{S}\) parametrized by \(J\);
\(\mathcal{P}\) a property of schemes;
\(\mathcal{Q}\) a property of (\usm) families of schemes parametrized by \(I\) and
(\usm) families of morphisms of schemes parametrized by \(J\).
Assume that \(S\) satisfies property \(\mathcal{P}\).
Then we shall say that
\begin{quote}
  the property that
  \[(\{X_i\}_{i\in I}, \{f_j\}_{j\in J})\]
  satisfies \(\mathcal{Q}\)
  may be \textit{characterized category-theoretically}
  from the data
  \[(\Schb{S}, \{X_i\}_{i\in I}, \{f_j\}_{j\in J})\]
\end{quote}
if for any (\usm) family of objects \(\{Y_i\}_{i\in I}\) of \(\Schb{S}\) parametrized by \(I\),
any (\usm) family of morphisms \(\{g_j\}_{j\in J}\) of \(\Schb{S}\) parametrized by \(J\),
any scheme \(T\) which satisfies property \(\mathcal{P}\),
any subset \(\bbbullet \subset \left\{ \mathrm{red,qcpt,qsep,sep} \right\}\)
such that \(\left\{ \mathrm{qsep,sep}\right\} \not\subset \bbbullet\), and
any equivalence \(F:\Schb{S}\xrightarrow{\sim}\Schc{T}\),
\begin{center}
  \((\{Y_i\}_{i\in I}, \{f_j\}_{j\in J})\) satisfies \(\mathcal{Q}\)
  \(\iff\) \((\{F(Y_i)\}_{i\in I}, \{F(g_j)\}_{j\in J})\) satisfies \(\mathcal{Q}\).
\end{center}

\section{The Underlying Set}
\label{section: underlying set}

In this section,
we give a functorial category-theoretic algorithm for reconstructing
the underlying set of the objects of \(\Schb{S}\)
from the intrinsic structure of the abstract category \(\Schb{S}\).

First, we note the following properties of fiber products in \(\Schb{S}\):

\begin{lem}\label{lem: fiber product bbullet}
  Let \(S\) be a scheme.
  Let \(Y\to X\) and \(Z\to X\) be morphisms in \(\Schb{S}\).
  \begin{enumerate}
    \item \label{enumi: fiber product bbullet not qcpt and red}
    If \(\qcpt,\red \not\in \bbullet\),
    then \(Y\times_XZ\) belongs to \(\Schb{S}\).
    In particular,
    the fiber product \(Y\btimes_XZ\) exists in \(\Schb{S}\) and
    is naturally isomorphic to \(Y\times_XZ\).
    \item \label{enumi: fiber product bbullet not qcpt and in red}
    If \(\qcpt\not\in \bbullet\) and \(\red\in \bbullet\),
    then \((Y\times_XZ)_{\red}\) belongs to \(\Schb{S}\).
    In particular,
    the fiber product \(Y\btimes_XZ\) exists in \(\Schb{S}\) and
    is naturally isomorphic to \((Y\times_XZ)_{\red}\).
  \end{enumerate}
  Assume that either \(Y\to X\) or \(Z\to X\) is quasi-compact.
  Then the following hold:
  \begin{enumerate}[start=3]
    \item \label{enumi: fiber product bbullet not red}
    If \(\red \not\in \bbullet\),
    then \(Y\times_XZ\) belongs to \(\Schb{S}\).
    In particular,
    the fiber product \(Y\btimes_XZ\) exists in \(\Schb{S}\) and
    is naturally isomorphic to \(Y\times_XZ\).
    \item \label{enumi: fiber product bbullet red}
    If \(\red \in \bbullet\),
    then \((Y\times_XZ)_{\red}\) belongs to \(\Schb{S}\).
    In particular,
    the fiber product \(Y\btimes_XZ\) exists in \(\Schb{S}\) and
    is naturally isomorphic to \((Y\times_XZ)_{\red}\).
  \end{enumerate}
\end{lem}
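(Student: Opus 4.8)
The plan is to reduce every assertion to the single task of checking that the scheme $W \dfn Y\times_X Z$, formed in $\Sche$ (where fiber products always exist), or in the reduced situation its reduction $W_{\red}$, lies in $\Schb{S}$; the universal property in $\Schb{S}$ then follows formally. Indeed, since $\Schb{S}\subset\Sch{S}$ is full, once $W\in\Schb{S}$ the natural identification $\Hom_{\Sche}(T,W)\cong\Hom(T,Y)\times_{\Hom(T,X)}\Hom(T,Z)$ exhibits $W$ as $Y\btimes_X Z$ for every test object $T$ of $\Schb{S}$. In the reduced cases I use in addition that a morphism from a reduced scheme $T$ factors uniquely through the closed immersion $W_{\red}\to W$, so that $\Hom_{\Sche}(T,W_{\red})\cong\Hom_{\Sche}(T,W)$ for reduced $T$; as every object of $\Schb{S}$ is then reduced, $W_{\red}$ represents the same functor. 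So the real content is the stability of $\qcpt$, $\qsep$, $\sep$.

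First I would dispose of $\qsep$ and $\sep$ together by means of the diagonal. The canonical Cartesian square $W = (Y\times_S Z)\times_{X\times_S X}X$, whose right-hand vertical arrow is $\Delta_{X/S}\colon X\to X\times_S X$, presents the projection $W\to Y\times_S Z$ as a base change of $\Delta_{X/S}$, hence as an immersion, hence as a separated and quasi-separated morphism — and this requires \emph{no} hypothesis on $X\to S$. Composing with $Y\times_S Z\to S$, which is separated (resp. quasi-separated) as soon as $Y\to S$ and $Z\to S$ are, by stability under base change and composition, I conclude that $W\to S$ is separated (resp. quasi-separated). This settles the $\qsep$ and $\sep$ components of $\bbullet$ in all four cases and uses no quasi-compactness, which is exactly why parts (i), (ii) may omit it.

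Next I would treat $\qcpt$, the one property that does \emph{not} propagate through the immersion $W\to Y\times_S Z$, and which therefore forces the standing hypothesis of parts (iii), (iv) when $\qcpt\in\bbullet$. Assuming, say, that $Y\to X$ is quasi-compact, the projection $W\to Z$ is its base change and so is quasi-compact; composing with $Z\to S$, which is quasi-compact since $\qcpt\in\bbullet$, shows $W\to S$ to be quasi-compact, and the case in which $Z\to X$ is quasi-compact is symmetric. Finally, for the reduced cases (ii), (iv) I pass to $W_{\red}$: the closed immersion $W_{\red}\to W$ is affine, in particular quasi-compact, separated and quasi-separated, so by composition $W_{\red}\to S$ inherits whichever of $\qcpt$, $\qsep$, $\sep$ belong to $\bbullet$, while $W_{\red}$ is reduced by construction.

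The step I expect to be the main obstacle is the reduced case, precisely because reducedness is \emph{not} stable under fiber product, so one cannot simply use $Y\times_X Z$. The delicate point is to confirm that replacing $W$ by $W_{\red}$ leaves the universal property intact inside $\Schb{S}$: this rests on the factorization of maps out of reduced test objects through $W_{\red}\to W$, together with the verification above that $W_{\red}\to S$ still carries the non-reduced properties in $\bbullet$. A secondary point demanding care is bookkeeping of \emph{which} factorization supplies each property — the immersion $W\to Y\times_S Z$ for $\qsep$ and $\sep$, but the base-change map $W\to Z$ for $\qcpt$ — so as to make transparent that quasi-compactness, and it alone, requires the extra hypothesis on the legs $Y\to X$, $Z\to X$.
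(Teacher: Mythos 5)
Your proposal is correct and follows essentially the same route as the paper: verify membership of \(Y\times_XZ\) (or its reduction) in \(\Schb{S}\) by propagating \(\sep\)/\(\qsep\) through the immersion \(Y\times_XZ\to Y\times_SZ\), handling \(\qcpt\) via the base-changed projection, and then invoking fullness of \(\Schb{S}\subset\Sch{S}\) together with the factorization of maps from reduced schemes through \((Y\times_XZ)_{\red}\) for the universal property. You are somewhat more explicit than the paper (e.g.\ in exhibiting the immersion as a base change of \(\Delta_{X/S}\) and in checking that \(W_{\red}\to S\) inherits the non-reduced properties), but there is no substantive difference.
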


\begin{proof}
  If \(\qcpt\in \bbullet\)
  and either \(Y\to X\) or \(Z\to X\) is quasi-compact,
  then it is immediate that \(Y\times_XZ\) is quasi-compact over \(S\).
  Recall that the natural morphism \(Y\times_XZ \to Y\times_SZ\) is an immersion,
  hence separated.
  Thus, if \(\sep \in \bbullet\) (respectively, \(\qsep\in \bbullet\)),
  then since \(Y\times_SZ\) is separated (respectively, quasi-separated) over \(S\),
  \(Y\times_XZ\) is separated (respectively, quasi-separated) over \(S\).

  Thus, if \(\red \not\in \bbullet\),
  then the fiber product \(Y\times_X Z\) belongs to \(\Schb{S}\),
  which implies that \(Y\btimes_XZ\) exists in \(\Schb{S}\) and
  is naturally isomorphic to \(Y\times_XZ\).
  This completes the proof of
  \ref{enumi: fiber product bbullet not qcpt and red} and
  \ref{enumi: fiber product bbullet not red}.

  Suppose that \(\red\in \bbullet\).
  Then any morphism \(f:W\to Y\times_XZ\) from a reduced scheme \(W\)
  factors uniquely through the closed subscheme
  \((Y\times_XZ)_{\red} \subset Y\times_XZ\).
  Thus, since \((Y\times_X Z)_{\red}\) belongs to \(\Schb{S}\),
  we conclude that
  \(Y\btimes_XZ\) exists in \(\Schb{S}\)
  and is naturally isomorphic to \((Y\times_X Z)_{\red}\).
  This completes the proof of
  \ref{enumi: fiber product bbullet not qcpt and in red} and
  \ref{enumi: fiber product bbullet red}.
\end{proof}

Next,
we consider a category-theoretic characterization of the objects of
\(\Schb{S}\) whose underlying set is of cardinality \(1\).

\begin{lem}\label{lem: 1pt}
  Let \(S\) be a scheme.
  Let \(X\) be a non-initial object of \(\Schb{S}\).
  Then \(|X|\) is of cardinality \(1\)
  if and only if the following condition does \textbf{not} hold:
  \begin{enumerate}[label=(\fnsymbol*),start=2]
    \item \label{enumi: 1pt fiber empty}
    there exist objects \(Y,Z \neq \emptyset\) in \(\Schb{X}\) such that \(Y\btimes_X Z = \emptyset\).
  \end{enumerate}
  In particular,
  the property that
  \(|X|\) is of cardinality \(1\)
  may be characterized category-theoretically from the data \((\Schb{S},X)\).
\end{lem}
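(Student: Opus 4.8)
The plan is to read condition \ref{enumi: 1pt fiber empty} geometrically. For morphisms $Y\to X$ and $Z\to X$ of schemes, the fiber product $Y\times_X Z$ is empty exactly when the images of $|Y|$ and $|Z|$ in $|X|$ are disjoint: a point of $Y\times_X Z$ lies over a common image point $x$, and conversely, if $y\in Y$ and $z\in Z$ lie over the same $x\in X$, then the canonical morphisms $\Spec(k(y))\to X$ and $\Spec(k(z))\to X$ both factor through the monomorphism $\Spec(k(x))\to X$ (because $k(y),k(z)$ are fields), so that $\Spec(k(y))\times_X\Spec(k(z))\cong\Spec(k(y)\otimes_{k(x)}k(z))$ is nonempty, a tensor product of field extensions of $k(x)$ being nonzero. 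I may assume $X\ne\emptyset$: the empty scheme is the initial object of $\Schb{S}$, hence recognized category-theoretically, and has $|X|$ of cardinality $0$, so the content of the lemma is to separate cardinality $1$ from cardinality $\ge 2$ among nonempty objects.

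The device that keeps every auxiliary object inside $\Schb{S}$, uniformly in $\bbullet$, is the following observation, which I would record first: for any object $V$ of $\Schb{S}$ and any $v\in V$, the one-point scheme $\Spec(k(v))$, viewed over $S$ through $\Spec(k(v))\to V\to S$, is again an object of $\Schb{S}$, and $\Spec(k(v))\to V$ is a morphism of $\Schb{S}$. Indeed $\Spec(k(v))$ is reduced; since its underlying space is a single point, every morphism out of it is quasi-compact and quasi-separated, covering $\qcpt$ and $\qsep$; and $\Spec(k(v))\to V$ is a monomorphism, hence separated, so when $\sep\in\bbullet$ the composite with the separated morphism $V\to S$ is separated as well.

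I then prove the two implications. If $|X|\ge 2$, pick distinct $x_1,x_2\in|X|$ and set $Y=\Spec(k(x_1))$, $Z=\Spec(k(x_2))$ with their canonical morphisms to $X$; by the observation these are nonempty objects of $\Schb{S}$, and since $Y\to X$ has one-point source it is quasi-compact, so \autoref{lem: fiber product bbullet} guarantees that $Y\btimes_X Z$ exists with underlying space that of $Y\times_X Z$. As the images $\{x_1\}$ and $\{x_2\}$ are disjoint, $Y\times_X Z=\emptyset$, whence $Y\btimes_X Z=\emptyset$ and condition \ref{enumi: 1pt fiber empty} holds. Conversely, suppose $|X|=\{*\}$ and let $Y,Z\ne\emptyset$ be objects over $X$ for which $Y\btimes_X Z$ exists. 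Choosing $y\in Y$, $z\in Z$ (both over $*$) and a point $w$ of the nonempty scheme $\Spec(k(y)\otimes_{k(*)}k(z))$, the observation yields a nonempty object $\Spec(k(w))\in\Schb{S}$ carrying morphisms to $Y$ and to $Z$ over $X$. The universal property of $Y\btimes_X Z$ in $\Schb{S}$ then produces a morphism $\Spec(k(w))\to Y\btimes_X Z$; since a nonempty scheme admits no morphism to the initial object, $Y\btimes_X Z\ne\emptyset$. Hence condition \ref{enumi: 1pt fiber empty} fails, completing the biconditional.

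The main obstacle is that the fiber product in the subcategory $\Schb{S}$ need neither exist nor coincide with the one in $\mathsf{Sch}$. I handle the two directions asymmetrically to avoid this: in the direction producing condition \ref{enumi: 1pt fiber empty} I invoke \autoref{lem: fiber product bbullet} through a quasi-compact leg to compute $Y\btimes_X Z$ explicitly, whereas in the reverse direction I never identify $Y\btimes_X Z$, using only its universal property to receive a morphism from a nonempty one-point object. The final ``in particular'' is then immediate, since the entire criterion is expressed purely through objects, morphisms, fiber products, and the initial object of $\Schb{S}$.
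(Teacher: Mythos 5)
Your proof is correct. The forward direction coincides with the paper's: take the residue fields at two distinct points and observe that their fiber product over \(X\) is empty (the paper simply asserts \(\Spec k(x_1)\btimes_X\Spec k(x_2)=\emptyset\); your justification via the quasi-compact leg of \autoref{lem: fiber product bbullet} is a reasonable way to see that the fiber product exists in \(\Schb{S}\) and is empty). The backward direction is where you genuinely diverge. The paper assumes condition \ref{enumi: 1pt fiber empty} holds, reduces without loss of generality to one-point \(Y,Z\), uses \autoref{lem: fiber product bbullet} to identify \((Y\btimes_XZ)_{\red}\) with \((Y\times_XZ)_{\red}\), and derives a contradiction by base-changing to the fiber over a putative common image point \(x\), where \((Y_x\times_{\Spec k(x)}Z_x)_{\red}\neq\emptyset\). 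You instead prove the contrapositive directly: when \(|X|=\{*\}\), any nonempty \(Y,Z\) over \(X\) receive a common nonempty one-point object \(\Spec(k(w))\), \(w\in\Spec(k(y)\otimes_{k(*)}k(z))\), and the universal property of \(Y\btimes_XZ\) in \(\Schb{S}\) then forces \(Y\btimes_XZ\neq\emptyset\). What this buys you is that in this direction you never need to know whether \(Y\btimes_XZ\) is \(Y\times_XZ\) or its reduction; both arguments ultimately rest on the same fact that a tensor product of field extensions is nonzero. One small point to tighten: your ``observation'' is stated for \(\Spec(k(v))\) with \(v\) a point of an object \emph{of} \(\Schb{S}\), whereas \(w\) is a point of \(\Spec(k(y)\otimes_{k(*)}k(z))\), which is not such an object; the same reducedness/one-point/affine-hence-separated reasoning applies verbatim to \(\Spec(k(w))\to Y\to S\), but you should say so rather than cite the observation as stated. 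Your explicit remark setting aside \(X=\emptyset\) (recognizable as the initial object) is also worth keeping, since the biconditional as literally stated degenerates there.
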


\begin{proof}
  Assume that there exist two points \(x_1,x_2\in X\) such that \(x_1\neq x_2\).
  Then \(\Spec k(x_1) \btimes_X \Spec k(x_2) = \emptyset\) (where we note that both \(\Spec k(x_1)\) and \(\Spec k(x_2)\) belong to \(\Schb{S}\)).
  Hence \(X\) satisfies the condition \ref{enumi: 1pt fiber empty}.

  Assume that \(X\) satisfies the condition \ref{enumi: 1pt fiber empty},
  i.e., that there exist \(f:Y\to X, g:Z\to X\) such that
  \(Y\btimes_X Z = \emptyset\) and \(Y,Z \neq \emptyset\).
  Since \(Y,Z \neq \emptyset\),
  we may assume without loss of generality
  that \(|Y|,|Z|\) are of cardinality \(1\).
  To prove that \(X\) has two distinct points,
  it suffices to prove that \(f(Y) \cap g(Z) = \emptyset\).
  Observe that it follows from \autoref{lem: fiber product bbullet}
  that \((Y\times_XZ)_{\red} = (Y\btimes_XZ)_{\red} = \emptyset\).
  Hence for any point \(x\in X\),
  the equality
  \[(Y_x\times_{\Spec k(x)} Z_x)_{\red} = ((Y\times_X Z)_{\red})_{x,\red} = \emptyset\]
  holds.
  On the other hand, if \(f(Y) \cap g(Z) \neq \emptyset\),
  then for any point \(x\in f(Y)\cap g(Z)\),
  it holds that \((Y_x \times_{\Spec k(x)} Z_x)_{\red} \neq \emptyset\),
  in contradiction to the displayed equality.
\end{proof}

Next,
we consider a category-theoretic reconstruction of
the underlying sets of the objects of \(\Schb{S}\).

\begin{reconstruction}\label{reconstruction: Set}
  Let \(S\) be a scheme.
  Let \(X\) be an object of \(\Schb{S}\).
  We define a (\vsm) set
  \[\mathsf{Pt}_{\bbullet/S}(X) \dfn
  \Set{p_Z:Z\to X | \text{\(|Z|\) is of cardinality \(1\)}}/ \sim ,\]
  where
  \[(p_Z:Z\to X)\sim (p_{Z'}:Z'\to X) \ \deff \
  Z\btimes_{p_Z,X,p_{Z'}}Z' \neq \emptyset\]
  (this fiber product exists in \(\Schb{S}\), cf. \autoref{lem: fiber product bbullet} \ref{enumi: fiber product bbullet not red} \ref{enumi: fiber product bbullet red}).
  Then the set \(\mathsf{Pt}_{\bbullet/S}(X)\) is defined
  by means of properties that may be
  characterized category-theoretically
  (cf. \autoref{lem: 1pt}) from the data \((\Schb{S},X)\).
  Since \(|Z|\) is of cardinality \(1\),
  \(p_Z\) defines a point \(p_Z(*_Z)\in X\).
  Furthermore, it follows immediately from \autoref{lem: fiber product bbullet}
  that
  \begin{equation}
    \label{equiv: reconstruction Set equiv}
    p_Z \sim p_{Z'} \iff p_Z(*_Z)=p_{Z'} (*_{Z'}).
    \tag{\(\dagger\)}
  \end{equation}
  In particular, the map
  \begin{align*}
    \eta_X:\mathsf{Pt}_{\bbullet/S}(X) &\to |X| \\
    [p_Z] &\mapsto p_Z(*_Z)
  \end{align*}
  is well-defined and injective.
  On the other hand, for a point \(x\in X\),
  the equality \(\eta_X([\Spec k(x)\to X]) = x \) holds.
  Hence \(\eta_X\) is a bijection.

  Let \((f:X\to Y)\) be a morphism in \(\Schb{S}\).
  We define
  \begin{align*}
    \mathsf{Pt}_{\bbullet/S}(f) :
    \mathsf{Pt}_{\bbullet/S}(X) &\to \mathsf{Pt}_{\bbullet/S}(Y) \\
    [p_Z] &\mapsto [f\circ p_Z].
  \end{align*}
  It follows immediately from \eqref{equiv: reconstruction Set equiv} that
  the map \(\mathsf{Pt}_{\bbullet/S}(f)\) is well-defined.
  Hence we obtain a functor
  \(\mathsf{Pt}_{\bbullet/S}: \Schb{S} \to \SetV\).

  Let \(\bigstar \dfn \SetU\) or \(\SetV\).
  We define
  \begin{equation}
    \label{equation: underlying set functor}
    \begin{aligned}
      U_{\bbullet/S}^{\bigstar}: \Schb{S} &\to \bigstar \\
      X &\mapsto |X|.
    \end{aligned}
    \tag{\(\ddagger\)}
  \end{equation}
  Write \(i^{\sfSet}_{\univ{U}\in\univ{V}}: \SetU \to \SetV\)
  for the natural inclusion.
  Then the equality
  \(U_{\bbullet/S}^{\SetV} = i^{\sfSet}_{\univ{U}\in\univ{V}}\circ U_{\bbullet/S}^{\SetU}\) holds.
  It follows from the equivalence \eqref{equiv: reconstruction Set equiv}, together with
  the definitions of \(\eta_X\) and \(\mathsf{Pt}_{\bbullet/S}(f)\), that
  the following diagram commutes:
  \begin{equation}
    \label{equiv: reconstruction Set commute}
    \begin{CD}
      \mathsf{Pt}_{\bbullet/S}(X) @>\mathsf{Pt}_{\bbullet/S}(f)>>
      \mathsf{Pt}_{\bbullet/S}(Y) \\
      @V\eta_X VV @VV \eta_Y V \\
      |X| @>>> |Y|.
    \end{CD}
    \tag{\(\dagger\dagger\)}
  \end{equation}
  Thus we obtain an isomorphism of functors
  \(\eta:\mathsf{Pt}_{\bbullet/S} \xrightarrow{\sim} U_{\bbullet/S}^{\SetV} =
  i^{\sfSet}_{\univ{U}\in\univ{V}}\circ U_{\bbullet/S}^{\SetU}\).
\end{reconstruction}

Since the functor \(\mathsf{Pt}_{\bbullet/S}\) is defined
category-theoretically from the data \(\Schb{S}\),
the following lemma holds:

\begin{lem}\label{lem: Pt equiv commute}
  Let \(S,T\) be schemes and \(F:\Schb{S} \xrightarrow{\sim} \Schc{T}\) an equivalence.
  Then \(S,T,F\) determine an isomorphism \(\rho^{\mathsf{Pt}}: \mathsf{Pt}_{\bbullet/S} \xrightarrow{\sim} \mathsf{Pt}_{\bbbullet/S}\circ F\)
  between the two composite functors of the following diagram:
  \[
  \begin{CD}
    \Schb{S} @>F>> \Schc{T} \\
    @V\mathsf{Pt}_{\bbullet/S}VV @VV\mathsf{Pt}_{\bbbullet/T}V \\
    \SetV @= \SetV.
  \end{CD}
  \]
\end{lem}

\begin{proof}
  Let \(X\) be an object of \(\Schb{S}\).
  We define a map
  \begin{align*}
    \rho_X : \mathsf{Pt}_{\bbullet/S}(X) &\to \mathsf{Pt}_{\bbbullet/T}(F(X)) \\
    [p_Z:Z\to X] &\mapsto [F(p_Z): F(Z)\to F(X)].
  \end{align*}
  By \autoref{lem: 1pt},
  \(|F(Z)|\) is of cardinality \(1\).
  Since \(F\) is an equivalence,
  \[Z\btimes_XZ \neq \emptyset \iff F(Z) \times^{\bbbullet}_{F(X)}F(Z') \neq \emptyset.\]
  Hence \(\rho_X\) is well-defined and injective.
  If \(Z'\) is an object of \(\Schc{T}\) whose underlying set is of cardinality \(1\),
  \(Z'\to F(X)\) is a morphism of \(\Schc{T}\), and
  \(F^{-1}\) is a quasi-inverse of \(F\),
  then it follows immediately that \(\rho_X([F^{-1}(Z')\to X]) = [Z'\to F(X)]\).
  Hence \(\rho_X\) is a bijection.

  To complete the proof of \autoref{lem: Pt equiv commute},
  it suffices to prove that
  for any morphism \((f:X\to Y)\in \Schb{S}\),
  the following diagram commutes:
  \[
  \begin{CD}
    \mathsf{Pt}_{\bbullet/S}(X) @> \rho_X >> \mathsf{Pt}_{\bbbullet/T}(F(X)) \\
    @V\mathsf{Pt}_{\bbullet/S}(f)VV @VV\mathsf{Pt}_{\bbbullet/T}(F(f))V \\
    \mathsf{Pt}_{\bbullet/S}(Y) @>> \rho_Y > \mathsf{Pt}_{\bbbullet/T}(F(Y))
  \end{CD}
  \]
  Let \(p_Z:Z\to X\) be a morphism in \(\Schb{S}\) such that
  \(Z\) is of cardinality \(1\).
  Then
  \begin{align*}
    \rho_Y(\mathsf{Pt}_{\bbullet/S}(f)([p_Z])) &= \rho_Y([f\circ p_Z]))
    = [F(f\circ p_Z)] = [F(f) \circ F(p_Z)] \\
    &= \mathsf{Pt}_{\bbbullet/T}(F(f))([F(p_Z)])
    = \mathsf{Pt}_{\bbbullet/T}(F(f))(\rho_X([p_Z])).
  \end{align*}
  This completes the proof of \autoref{lem: Pt equiv commute}.
\end{proof}

\begin{cor}\label{cor: underlying set equiv commute}
  Let \(S,T\) be schemes and \(F:\Schb{S} \xrightarrow{\sim} \Schc{T}\) an equivalence.
  Then the following diagram commutes up to isomorphism:
  \[
  \begin{CD}
    \Schb{S} @>F>> \Schc{T} \\
    @VU_{\bbullet/S}^{\SetU}VV @VVU_{\bbbullet/T}^{\SetU}V \\
    \SetU @= \SetU.
  \end{CD}
  \]
\end{cor}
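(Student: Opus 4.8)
The plan is to deduce this $\SetU$-level statement from the $\SetV$-level isomorphisms already constructed, by invoking the full faithfulness of the inclusion $i^{\sfSet}_{\univ{U}\in\univ{V}}: \SetU \to \SetV$. First I would assemble the $\SetV$-version of the asserted commutativity. Applying \autoref{reconstruction: Set} to $S$ (with $\bbullet$) and to $T$ (with $\bbbullet$) furnishes natural isomorphisms
\[
\eta^S: \mathsf{Pt}_{\bbullet/S} \xrightarrow{\sim} U_{\bbullet/S}^{\SetV}, \qquad
\eta^T: \mathsf{Pt}_{\bbbullet/T} \xrightarrow{\sim} U_{\bbbullet/T}^{\SetV},
\]
where $U_{\bbullet/S}^{\SetV} = i^{\sfSet}_{\univ{U}\in\univ{V}}\circ U_{\bbullet/S}^{\SetU}$ and $U_{\bbbullet/T}^{\SetV} = i^{\sfSet}_{\univ{U}\in\univ{V}}\circ U_{\bbbullet/T}^{\SetU}$. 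Whiskering $\eta^T$ by $F$ gives $\eta^T\ast F: \mathsf{Pt}_{\bbbullet/T}\circ F \xrightarrow{\sim} U_{\bbbullet/T}^{\SetV}\circ F$, while \autoref{lem: Pt equiv commute} supplies the natural isomorphism $\rho^{\mathsf{Pt}}: \mathsf{Pt}_{\bbbullet/T}\circ F \xrightarrow{\sim} \mathsf{Pt}_{\bbullet/S}$ between the two composites of its square. Pasting these $2$-cells,
\[
U_{\bbbullet/T}^{\SetV}\circ F \xleftarrow{\;\eta^T\ast F\;} \mathsf{Pt}_{\bbbullet/T}\circ F \xrightarrow{\;\rho^{\mathsf{Pt}}\;} \mathsf{Pt}_{\bbullet/S} \xrightarrow{\;\eta^S\;} U_{\bbullet/S}^{\SetV},
\]
I obtain a natural isomorphism $U_{\bbbullet/T}^{\SetV}\circ F \cong U_{\bbullet/S}^{\SetV}$, that is,
\[
i^{\sfSet}_{\univ{U}\in\univ{V}}\circ U_{\bbbullet/T}^{\SetU}\circ F \;\cong\; i^{\sfSet}_{\univ{U}\in\univ{V}}\circ U_{\bbullet/S}^{\SetU}.
\]

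Second, I would cancel the inclusion $i^{\sfSet}_{\univ{U}\in\univ{V}}$. Since $\SetU$ is a full subcategory of $\SetV$ (as $\univ{U}\in\univ{V}$), this inclusion is fully faithful; hence every natural transformation between $i^{\sfSet}_{\univ{U}\in\univ{V}}\circ G$ and $i^{\sfSet}_{\univ{U}\in\univ{V}}\circ H$, for functors $G,H$ valued in $\SetU$, arises by whiskering from a unique natural transformation between $G$ and $H$, and the former is invertible precisely when the latter is. Applying this with $G = U_{\bbbullet/T}^{\SetU}\circ F$ and $H = U_{\bbullet/S}^{\SetU}$ descends the isomorphism of the previous step to a natural isomorphism $U_{\bbbullet/T}^{\SetU}\circ F \cong U_{\bbullet/S}^{\SetU}$, which is exactly the asserted commutativity up to isomorphism of the square.

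The only genuinely load-bearing point — the step I would state carefully rather than leave to the routine pasting of $2$-cells — is this descent along $i^{\sfSet}_{\univ{U}\in\univ{V}}$: one must verify that a componentwise bijection of $\univ{U}$-small sets which is natural after applying $i^{\sfSet}_{\univ{U}\in\univ{V}}$ is already natural in $\SetU$. This is formal, following from the faithfulness of $i^{\sfSet}_{\univ{U}\in\univ{V}}$ (for naturality) together with the unique lifting of a morphism of $\SetV$ between objects in the image of $i^{\sfSet}_{\univ{U}\in\univ{V}}$ to $\SetU$ (for the existence and invertibility of the components). Everything else is bookkeeping of the whiskering and composition of the previously constructed isomorphisms $\eta^S$, $\eta^T$, and $\rho^{\mathsf{Pt}}$.
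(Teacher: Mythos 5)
Your proposal is correct and follows essentially the same route as the paper: compose the isomorphisms $\eta$ from \autoref{reconstruction: Set} with $\rho^{\mathsf{Pt}}$ from \autoref{lem: Pt equiv commute} to obtain a natural isomorphism at the $\SetV$ level, then descend along the fully faithful inclusion $i^{\sfSet}_{\univ{U}\in\univ{V}}$. The only difference is that you spell out the whiskering and the descent argument in more detail than the paper does.
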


\begin{proof}
  It follows from \autoref{lem: Pt equiv commute} and \autoref{reconstruction: Set}
  that we have natural isomorphisms
  \begin{align*}
    i^{\sfSet}_{\univ{U}\in\univ{V}}\circ U_{\bbullet/S}^{\SetU}
    \xleftarrow{\sim} \mathsf{Pt}_{\bbullet/S}
    \xrightarrow{\sim} \mathsf{Pt}_{\bbbullet/T} \circ F
    \xrightarrow{\sim} i^{\sfSet}_{\univ{U}\in\univ{V}}\circ U_{\bbbullet/T}^{\SetU} \circ F.
  \end{align*}
  Since \(i^{\sfSet}_{\univ{U}\in\univ{V}}\) is fully faithful,
  the composite isomorphism of the above display determines a natural isomorphism
  \(U_{\bbullet/S}^{\SetU} \xrightarrow{\sim} U_{\bbbullet/T}^{\SetU} \circ F\).
\end{proof}

In the following discussion,
we shall use the isomorphism of functors \(\eta\) constructed in \autoref{reconstruction: Set}
to \textit{identify} (until further notice) \(\mathsf{Pt}_{\bbullet/S}(-)\) with \(|-|\).

\begin{cor}\label{cor: surj is cat}
  Let \(S\) be a scheme.
  Let \(f:Y\to X\) be a morphism of \(\Schb{S}\), \(x\in |X|\).
  Then:
  \begin{enumerate}
    \item \label{enumi: cor: surj is cat surj}
    The property that
    \(f\) is surjective
    may be characterized category-theoretically from the data \((\Schb{S},f:Y\to X)\).
    \item \label{enumi: cor: surj is cat x in f}
    The property that
    \(x\in \mathrm{Im}(f)\)
    may be characterized category-theoretically from the data \((\Schb{S},f:Y\to X,x)\).
  \end{enumerate}
\end{cor}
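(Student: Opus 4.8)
The plan is to reduce each of the two properties to a statement about the reconstructed underlying-set map $\mathsf{Pt}_{\bbullet/S}(f)$ and then transport it along equivalences by means of \autoref{lem: Pt equiv commute}.

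First I would record the scheme-theoretic reductions. A morphism of schemes $f:Y\to X$ is surjective if and only if the induced map of underlying sets $|f|:|Y|\to|X|$ is surjective; under the identification of $\mathsf{Pt}_{\bbullet/S}(-)$ with $|-|$ furnished by \autoref{reconstruction: Set}, this map is exactly $\mathsf{Pt}_{\bbullet/S}(f)$. Likewise, writing a point $x\in|X|$ as a class $[p_Z]\in\mathsf{Pt}_{\bbullet/S}(X)$, the condition $x\in\mathrm{Im}(f)$ is precisely the condition $[p_Z]\in\mathrm{Im}\bigl(\mathsf{Pt}_{\bbullet/S}(f)\bigr)$. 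Thus \ref{enumi: cor: surj is cat surj} is the assertion that surjectivity of $\mathsf{Pt}_{\bbullet/S}(f)$ is category-theoretic, and \ref{enumi: cor: surj is cat x in f} is the assertion that membership of a given point in $\mathrm{Im}\bigl(\mathsf{Pt}_{\bbullet/S}(f)\bigr)$ is category-theoretic.

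Next, to verify the criterion of the Notations and Conventions, I would fix a scheme $T$, a subset $\bbbullet\subset\rqqs$ with $\{\mathrm{qsep,sep}\}\not\subset\bbbullet$, and an equivalence $F:\Schb{S}\xrightarrow{\sim}\Schc{T}$. \autoref{lem: Pt equiv commute} supplies the natural isomorphism $\rho^{\mathsf{Pt}}$, hence bijections $\rho_X:\mathsf{Pt}_{\bbullet/S}(X)\xrightarrow{\sim}\mathsf{Pt}_{\bbbullet/T}(F(X))$ and $\rho_Y$ fitting into a commutative square whose horizontal arrows are $\mathsf{Pt}_{\bbullet/S}(f)$ and $\mathsf{Pt}_{\bbbullet/T}(F(f))$. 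Because $\rho_X$ and $\rho_Y$ are bijections, this square shows at once that $\mathsf{Pt}_{\bbullet/S}(f)$ is surjective if and only if $\mathsf{Pt}_{\bbbullet/T}(F(f))$ is surjective, i.e.\ that $f$ is surjective if and only if $F(f)$ is; this proves \ref{enumi: cor: surj is cat surj}. For \ref{enumi: cor: surj is cat x in f}, the same square yields $[p_Z]\in\mathrm{Im}\bigl(\mathsf{Pt}_{\bbullet/S}(f)\bigr)$ if and only if $\rho_X([p_Z])\in\mathrm{Im}\bigl(\mathsf{Pt}_{\bbbullet/T}(F(f))\bigr)$, and $\rho_X([p_Z])=[F(p_Z)]$ is precisely the point of $F(X)$ corresponding to $x$ under $F$; this proves \ref{enumi: cor: surj is cat x in f}.

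I do not anticipate any genuine obstacle: the substantive work has already been carried out in \autoref{reconstruction: Set} and \autoref{lem: Pt equiv commute}, which respectively reconstruct the underlying-set functor and establish its compatibility with equivalences. The only point that demands a little care is the bookkeeping in \ref{enumi: cor: surj is cat x in f} — one must check that applying $F$ to the morphism $p_Z$ representing $x$ produces exactly the point $\rho_X([p_Z])$, so that image-membership is transported by $F$ rather than by some unrelated identification.
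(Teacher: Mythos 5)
Your proposal is correct and follows essentially the same route as the paper: the paper's proof simply cites \autoref{cor: underlying set equiv commute} (itself an immediate consequence of \autoref{lem: Pt equiv commute} and \autoref{reconstruction: Set}), which is exactly the commutative square with bijective vertical arrows that you unpack. The extra bookkeeping you note for \ref{enumi: cor: surj is cat x in f} — that $\rho_X([p_Z])=[F(p_Z)]$ is the point corresponding to $x$ — is a worthwhile detail that the paper leaves implicit.
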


\begin{proof}
  \autoref{cor: surj is cat} follows immediately from \autoref{cor: underlying set equiv commute}.
\end{proof}

In the remainder of this section,
we consider a category-theoretic characterization of the morphisms of
\(\Schb{S}\) whose underlying morphism of schemes
is isomorphic to the natural morphism to a scheme
from the spectrum of the residue field at a point of the scheme.

\begin{lem}\label{lem: Spec field}
  Let \(S\) be a scheme.
  Let \(X\) be an object of \(\Schb{S}\).
  Then \(X\) is isomorphic to the spectrum of a field if and only if
  the following conditions hold:
  \begin{enumerate}
    \item \label{enumi: Spec field 1pt}
    \(|X|\) is of cardinality \(1\);
    \item \label{enumi: Spec field epi}
    every morphism \(f:Y\to X\), where \(Y\neq \emptyset\), is an epimorphism.
  \end{enumerate}
  In particular, the property that
  \(X\) is isomorphic to the spectrum of a field
  may be characterized category-theoretically
  (cf. \autoref{lem: 1pt}) from the data \((\Schb{S},X)\).
\end{lem}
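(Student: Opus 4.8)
The plan is to prove the two implications separately, taking the characterization of condition \ref{enumi: Spec field 1pt} from \autoref{lem: 1pt} for granted and focusing on how condition \ref{enumi: Spec field epi} encodes reducedness.

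For the ``only if'' direction I would start from \(X = \Spec k\), where \ref{enumi: Spec field 1pt} is immediate, and verify \ref{enumi: Spec field epi} as follows. Given \(f\colon Y\to X\) with \(Y\neq\emptyset\), I would pick a point \(y\in Y\) and form the canonical morphism \(\iota_y\colon\Spec k(y)\to Y\); a short check shows \(\Spec k(y)\in\Schb{S}\) (the morphism \(\iota_y\) is quasi-compact, separated and quasi-separated, so the \(\bbullet\)-properties of \(Y\to S\) pass to the composite \(\Spec k(y)\to S\)). Setting \(h\coloneqq f\circ\iota_y\), whose underlying ring map is the field extension \(k\hookrightarrow k(y)\), I would invoke the fact that the left factor of an epimorphism is again an epimorphism, reducing \ref{enumi: Spec field epi} to showing that \(h\) is an epimorphism. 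For this, given \(g_1,g_2\colon\Spec k\to W\) in \(\Schb{S}\) with \(g_1 h = g_2 h\), both \(g_i\) send the point to a common \(w\in W\) and so correspond to local homomorphisms \(\mathcal{O}_{W,w}\to k\); the relation \(g_1 h = g_2 h\) forces these to agree after composition with the injection \(k\hookrightarrow k(y)\), hence \(g_1 = g_2\).

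For the ``if'' direction I would argue that \ref{enumi: Spec field 1pt} and \autoref{lem: 1pt} make \(X\) a one-point scheme, hence affine, say \(X = \Spec A\) with \(A\) local, unique prime \(\mathfrak{m}\) equal to its nilradical, and residue field \(k = A/\mathfrak{m}\); the goal becomes \(\mathfrak{m}=0\). If \(\red\in\bbullet\) this is automatic, since \(A\) is then reduced and \ref{enumi: Spec field epi} is not needed. If \(\red\notin\bbullet\), I would apply \ref{enumi: Spec field epi} to the closed immersion \(\iota\colon X_{\red}=\Spec k\hookrightarrow X\) (which lies in \(\Schb{S}\) and is nonempty) against the test object \(W\coloneqq\Spec(A\times_k A)\), the spectrum of the fiber product of rings formed from two copies of \(\pi\colon A\to k\). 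I would give \(W\) the \(S\)-structure \(W\xrightarrow{\Spec\Delta}X\to S\) coming from the diagonal \(\Delta\colon A\to A\times_k A\), and note that the two projections induce \(S\)-morphisms \(g_1,g_2\colon X\to W\) splitting \(\Spec\Delta\) and satisfying \(g_1\circ\iota = g_2\circ\iota\). Since \ref{enumi: Spec field epi} makes \(\iota\) an epimorphism, this yields \(g_1=g_2\), i.e. \(\mathrm{pr}_1=\mathrm{pr}_2\) on \(A\times_k A = \{(a,a')\mid a\equiv a'\ (\mathrm{mod}\ \mathfrak{m})\}\), which holds precisely when \(\mathfrak{m}=0\); thus \(A=k\) is a field.

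The step I expect to be the main obstacle is the construction in the case \(\red\notin\bbullet\): one must manufacture a test object inside \(\Schb{S}\) that detects nilpotents, and the fiber-product-of-rings scheme \(W\) succeeds exactly because the reducedness constraint is absent. The technical heart is checking \(W\in\Schb{S}\) — that the \(\bbullet\)-properties are stable under composition with the affine morphism \(\Spec\Delta\) — together with confirming that \(g_1,g_2\) are genuine \(S\)-morphisms agreeing after restriction to \(X_{\red}\); once these are secured, the equivalence \(\mathrm{pr}_1=\mathrm{pr}_2\iff\mathfrak{m}=0\) closes the argument.
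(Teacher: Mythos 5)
Your proof is correct and follows essentially the same route as the paper: the necessity direction reduces to the injectivity of a ring homomorphism out of a field (the paper restricts to an affine open of \(Y\) rather than to \(\Spec k(y)\), an immaterial difference), and the sufficiency direction uses exactly the same test object \(\Spec(A\times_k A)\) with its two projections to detect a nonzero nilradical. Your explicit verification that the test object and \(\Spec k(y)\) lie in \(\Schb{S}\), and the case split on whether \(\red\in\bbullet\), are points the paper leaves implicit, but they do not change the argument.
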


\begin{proof}
  First, we prove necessity.
  Assume that \(X\) is isomorphic to the spectrum of a field.
  It follows immediately that condition \ref{enumi: Spec field 1pt} holds.
  Next, we prove that \(X\) satisfies condition \ref{enumi: Spec field epi}.
  Let \(f:Y\to X\) be a morphism, where \(Y\neq \emptyset\), and
  \(V=\Spec B\) an affine open subscheme of \(Y\).
  To prove that \(f\) is an epimorphism,
  it suffices to prove that \(f|_V\) is an epimorphism.
  Hence we may assume that \(Y = \Spec B\) is affine.
  Since \(k=\Gamma (X,\mathcal{O}_X)\) is a field,
  the ring morphism \(f^{\#} : k\to B\) is injective.
  Let \(Z\) be an object of \(\Schb{S}\) and
  \(g,h:X\rightrightarrows Z\) two morphisms such that \(g \circ f = h \circ f\).
  Then it suffices to prove that \(g=h\).
  Since \(Y\neq \emptyset\), it follows immediately that \(f\) is surjective on
  the underlying sets, hence that
  \(g\) and \(h\) induce the same morphism on the underlying sets.
  Since \(|X|\) is of cardinality \(1\),
  \(g\) and \(h\) factor through an affine open subscheme \(\Spec C\) of \(Z\).
  Hence we may assume that \(Z = \Spec C\) is affine.
  Observe that it follows from the equality
  \(g \circ f = h \circ f\) that \(f^{\#} \circ g^{\#} = f^{\#} \circ h^{\#}\).
  Thus, since \(f^{\#}\) is injective, we conclude that \(g^{\#} = h^{\#}\), i.e., that \(g=h\) holds.

  Next, we prove sufficiency.
  Assume that \(X\) satisfies conditions \ref{enumi: Spec field 1pt} and \ref{enumi: Spec field epi}.
  It follows immediately from condition \ref{enumi: Spec field 1pt}
  that \(X\) is an affine scheme.
  Assume that \(A=\Gamma (X,\mathcal{O}_X)\) is not a field.
  Then the natural surjection \(f^{\#}:A\to k\dfn A/\sqrt{0}_A\),
  where \(\sqrt{0}_A\) denotes the nilradical of \(A\), is
  not injective.
  Write \(q^{\#},r^{\#}:A' \dfn A\times _k A\rightrightarrows A\) for the two projections.
  Since \(f^{\#}\) is not injective, \(q^{\#}\neq r^{\#}\).
  Thus, since the equality \(f^{\#} \circ q^{\#} = f^{\#} \circ r^{\#}\) holds,
  we conclude that \(f\) is not an epimorphism,
  in contradiction to our assumption that \(X\) satisfies condition \ref{enumi: Spec field epi}.
  This contradiction implies that \(A\) is a field, as desired.
\end{proof}

\begin{lem}\label{lem: quotient field}
  Let \(S\) be a scheme.
  Let \(f:Y\to X\) be a morphism of \(\Schb{S}\).
  Then \(f\) is isomorphic as an object of \(\Sch{X}\)
  to the object of \(\Sch{X}\)
  that arises from the natural morphism to \(X\)
  from the spectrum of the residue field at a point of \(X\)
  if and only if
  the following conditions hold:
  \begin{enumerate}
    \item \label{enumi: quotient field field}
    \(Y\) is isomorphic to the spectrum of a field;
    \item \label{enumi: quotient field factor}
    if \(g: Z\to X\) is a morphism of \(\Schb{S}\) such that
    \(Z\) is isomorphic to the spectrum of a field and \(Y\btimes_X Z \neq \emptyset\),
    then there exists a unique morphism \(h:Z\to Y\) such that \(g = f \circ h\).
  \end{enumerate}
  In particular, the property that
  \(f:Y\to X\) is isomorphic as an object of \(\Sch{X}\)
  to the object of \(\Sch{X}\)
  that arises from the natural morphism to \(X\)
  from the spectrum of the residue field at a point of \(X\)
  may be characterized category-theoretically
  (cf. \autoref{lem: Spec field}) from the data \((\Schb{S},f:Y\to X)\).
\end{lem}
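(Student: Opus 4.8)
The plan is to prove the two implications directly, after first isolating the local-algebraic content of the universal property in condition \ref{enumi: quotient field factor}. Concretely, suppose $x\in X$ has residue field $k(x)$, and work in an affine chart $\Spec A\subset X$ around $x$, with $x$ corresponding to a prime $\mathfrak{p}\subset A$. A morphism $\Spec L\to X$ from the spectrum of a field that hits the point $x$ corresponds to a ring homomorphism $A\to L$ with kernel $\mathfrak{p}$; such a homomorphism factors through $A\to A/\mathfrak{p}\hookrightarrow k(x)=\mathrm{Frac}(A/\mathfrak{p})$, and the induced map $k(x)\to L$ is the \emph{unique} field homomorphism through which $A\to L$ factors (uniqueness because the image of $A$ generates $k(x)$ as a field). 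I would record this observation at the outset, as it is the algebraic heart of the argument.

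For necessity, I would assume that $f$ is isomorphic over $X$ to the canonical morphism $\Spec k(x)\to X$ for some $x\in|X|$, and may therefore take $f$ to be that morphism. Condition \ref{enumi: quotient field field} is then immediate. For \ref{enumi: quotient field factor}, the crucial step is to convert the hypothesis $Y\btimes_X Z\neq\emptyset$ into the assertion that $g$ hits the point $x$: writing $y=g(*_Z)$, the set-theoretic image of $Y\times_X Z\to X$ is contained in $\{x\}\cap\{y\}$, so by \autoref{lem: fiber product bbullet} — which identifies $Y\btimes_X Z$ with either $Y\times_X Z$ or its reduction, neither of which affects (non)emptiness — the nonvanishing of $Y\btimes_X Z$ forces $y=x$. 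Once $g$ hits $x$, the existence and uniqueness of $h$ follow at once from the local-algebraic fact recorded above, applied to $g^{\#}:A\to L$.

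For sufficiency, I would use \ref{enumi: quotient field field} to write $Y=\Spec K$, set $x=f(*_Y)$, and observe that $f^{\#}:A\to K$ has kernel $\mathfrak{p}$, hence factors as $A\to k(x)\xrightarrow{\iota}K$ with $\iota$ an inclusion of fields. Applying \ref{enumi: quotient field factor} to the canonical morphism $g:Z=\Spec k(x)\to X$ — for which $Y\btimes_X Z\neq\emptyset$, since both $Y$ and $Z$ lie over the single point $x$ and one computes $Y\times_X Z\cong\Spec K\neq\emptyset$ — yields a morphism $h:\Spec k(x)\to Y$ over $X$, i.e.\ a field homomorphism $h^{\#}:K\to k(x)$ satisfying $h^{\#}\circ\iota=\mathrm{id}_{k(x)}$. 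Hence $h^{\#}$ is surjective, and being a homomorphism of fields it is injective, so it is an isomorphism; therefore $h$ exhibits $f$ as the canonical residue field point at $x$.

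Finally, the concluding assertion is immediate once the equivalence is established: condition \ref{enumi: quotient field field} is category-theoretic by \autoref{lem: Spec field}, while condition \ref{enumi: quotient field factor} is phrased entirely in terms of the fiber product $\btimes$, nonemptiness (being distinct from the initial object $\emptyset$), the property of being the spectrum of a field, and the existence and uniqueness of a morphism — all category-theoretic notions. I expect the main obstacle to lie not in either implication in isolation but in handling the reducedness conventions built into $\btimes$: one must confirm that replacing $Y\times_X Z$ by $(Y\times_X Z)_{\red}$ in \autoref{lem: fiber product bbullet} never alters the (non)emptiness that condition \ref{enumi: quotient field factor} detects, so that the entire argument is genuinely uniform in the choice of $\bbullet$.
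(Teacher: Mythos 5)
Your proposal is correct, and the necessity direction is essentially the paper's argument: both reduce the hypothesis \(Y\btimes_X Z\neq\emptyset\) to the statement that \(g\) hits the point \(x\) (the paper does this by picking a point \(w\in Y\btimes_XZ\) and computing \(x=f(p_Y(w))=g(p_Z(w))=g(*_Z)\); you do it via the image of \(Y\times_XZ\to X\) together with \autoref{lem: fiber product bbullet} — same content), and then both invoke the unique factorization of a homomorphism \(A\to L\) with kernel \(\mathfrak{p}\) through \(k(x)=\mathrm{Frac}(A/\mathfrak{p})\). Where you genuinely diverge is in the sufficiency direction. The paper runs the standard ``two objects with the same universal property'' argument: it applies the (already proven) necessity to the canonical morphism \(g:\Spec k(x)\to X\) to produce \(h:Y\to Z\), applies condition \ref{enumi: quotient field factor} for \(f\) to produce \(h':Z\to Y\), and then uses the \emph{uniqueness} clause twice to conclude \(h'\circ h=\id_Y\), \(h\circ h'=\id_Z\). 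You instead apply condition \ref{enumi: quotient field factor} only once, obtaining \(h^{\#}:K\to k(x)\) with \(h^{\#}\circ\iota=\id_{k(x)}\), and finish with the field-theoretic observation that a retraction of a field extension is an isomorphism (surjective by the retraction identity, injective because any nonzero homomorphism of fields is). Both are sound; the paper's version is purely formal and never opens up the rings, while yours uses less of the hypothesis (no appeal to the uniqueness clause, no second application of the universal property) at the cost of an explicit algebraic computation, including the verification that \(K\otimes_A k(x)\cong K\neq 0\) so that condition \ref{enumi: quotient field factor} is actually applicable to \(g\). Your closing remark about uniformity in \(\bbullet\) is handled exactly as you expect: \autoref{lem: fiber product bbullet} identifies \(Y\btimes_XZ\) with \(Y\times_XZ\) or \((Y\times_XZ)_{\red}\), and reduction does not change (non)emptiness.
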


\begin{proof}
  First, we prove necessity.
  Assume that there exists \(x\in X\) such that
  \(f\) is isomorphic as an object of \(\Sch{X}\) to the object of \(\Sch{X}\)
  that arises from the natural morphism \(\Spec k(x) \to X\).
  Then \(Y\cong \Spec k(x)\) is isomorphic to the spectrum of a field.
  Hence condition \ref{enumi: quotient field field} holds.
  Next, we prove that \(f\) satisfies condition \ref{enumi: quotient field factor}.
  Let \(g: Z\to X\) be a morphism of \(\Schb{S}\) such that
  \(Z\) is isomorphic to the spectrum of a field and \(Y\btimes_X Z \neq \emptyset\).
  Write \(p_Y : Y\btimes_X Z \to Y\) and \(p_Z : Y\btimes_X Z \to Z\)
  for the natural projections.
  Since \(Y\btimes_X Z\neq \emptyset\), there exists a point \(w\in Y\btimes_X Z\).
  Then \(x = f(*_Y) = f(p_Y(w)) = g(p_Z(w)) = g(*_Z)\).
  Hence \(g\) factors uniquely through \(Y\).
  Thus \(f\) satisfies condition \ref{enumi: quotient field factor}.

  Next, we prove sufficiency.
  Assume that \(f:Y\to X\) satisfies conditions
  \ref{enumi: quotient field field} and \ref{enumi: quotient field factor}.
  By condition \ref{enumi: quotient field field},
  there exists a field \(K\) such that \(Y \cong \Spec K\).
  We set \(x \dfn f(*_Y)\).
  Since the natural morphism \(g:Z = \Spec k(x) \to X\) satisfies condition \ref{enumi: quotient field factor},
  there exists \(h:Y\to Z\) such that \(g \circ h = f\).
  Since \(f:Y \to X\) satisfies condition \ref{enumi: quotient field factor},
  there exists \(h':Z\to Y\) such that \(g = f \circ h'\).
  Hence \(f = f \circ h' \circ h\) and \(g = g \circ h \circ h'\).
  Thus, it follows from the uniqueness portion of condition \ref{enumi: quotient field factor}
  that \(h' \circ h = \id _Y\) and \(h \circ h' = \id _Z\).
  Hence \(f\) is isomorphic as an object of \(\Sch{X}\) to the object of \(\Sch{X}\)
  that arises from the natural morphism \(g:\Spec k(x) \to X\).
\end{proof}

%%%%%%%%%%%%%%%%%%%%%%%%%%%%%%%%%%%%
%%%%%%%%%%%%%%%%%%%%%%%%%%%%%%%%%%%%
%%%%%%%%%%%%%%%%%%%%%%%%%%%%%%%%%%%%
%%%%%%%%%%%%%%%%%%%%%%%%%%%%%%%%%%%%
%%%%%%%%%%%%%%%%%%%%%%%%%%%%%%%%%%%%
%%%%%%%%%%%%%%%%%%%%%%%%%%%%%%%%%%%%
%%%%%%%%%%%%%%%%%%%%%%%%%%%%%%%%%%%%
%%%%%%%%%%%%%%%%%%%%%%%%%%%%%%%%%%%%
%%%%%%%%%%%%%%%%%%%%%%%%%%%%%%%%%%%%
%%%%%%%%%%%%%%%%%%%%%%%%%%%%%%%%%%%%
%%%%%%%%%%%%%%%%%%%%%%%%%%%%%%%%%%%%
%%%%%%%%%%%%%%%%%%%%%%%%%%%%%%%%%%%%
%%%%%%%%%%%%%%%%%%%%%%%%%%%%%%%%%%%%
%%%%%%%%%%%%%%%%%%%%%%%%%%%%%%%%%%%%
%%%%%%%%%%%%%%%%%%%%%%%%%%%%%%%%%%%%
%%%%%%%%%%%%%%%%%%%%%%%%%%%%%%%%%%%%
%%%%%%%%%%%%%%%%%%%%%%%%%%%%%%%%%%%%
%%%%%%%%%%%%%%%%%%%%%%%%%%%%%%%%%%%%
%%%%%%%%%%%%%%%%%%%%%%%%%%%%%%%%%%%%
%%%%%%%%%%%%%%%%%%%%%%%%%%%%%%%%%%%%
%%%%%%%%%%%%%%%%%%%%%%%%%%%%%%%%%%%%
%%%%%%%%%%%%%%%%%%%%%%%%%%%%%%%%%%%%
%%%%%%%%%%%%%%%%%%%%%%%%%%%%%%%%%%%%
%%%%%%%%%%%%%%%%%%%%%%%%%%%%%%%%%%%%
%%%%%%%%%%%%%%%%%%%%%%%%%%%%%%%%%%%%
%%%%%%%%%%%%%%%%%%%%%%%%%%%%%%%%%%%%
%%%%%%%%%%%%%%%%%%%%%%%%%%%%%%%%%%%%
%%%%%%%%%%%%%%%%%%%%%%%%%%%%%%%%%%%%
%%%%%%%%%%%%%%%%%%%%%%%%%%%%%%%%%%%%
%%%%%%%%%%%%%%%%%%%%%%%%%%%%%%%%%%%%
%%%%%%%%%%%%%%%%%%%%%%%%%%%%%%%%%%%%
%%%%%%%%%%%%%%%%%%%%%%%%%%%%%%%%%%%%
%%%%%%%%%%%%%%%%%%%%%%%%%%%%%%%%%%%%
%%%%%%%%%%%%%%%%%%%%%%%%%%%%%%%%%%%%
%%%%%%%%%%%%%%%%%%%%%%%%%%%%%%%%%%%%
%%%%%%%%%%%%%%%%%%%%%%%%%%%%%%%%%%%%

\section{Regular Monomorphisms}
\label{section: regular mono}

In this section,
we study various scheme-theoretic properties of regular monomorphisms in \(\Schb{S}\)
(cf. \autoref{lem: reg mono}).

\begin{defi}
  Let \(f:X\to Y\) be a morphism in a category \(\mathcal{C}\).
  We shall say that \(f:X\to Y\) is a \textit{regular monomorphism}
  if there exist two morphisms \(g,h : Y\rightrightarrows Z\) in \(\mathcal{C}\)
  such that \(f\) is the equalizer of \(g\) and \(h\).
\end{defi}

If, for a morphism of schemes \(f:X\to Y\),
the push-out \(Y\coprod_XY\) exists,
then it is often the case that \(f\) is the equalizer of
the two natural morphisms \(Y \rightrightarrows Y\coprod_XY\).
Hence to study various properties of regular monomorphisms in \(\Schb{S}\),
it is important to study various properties of coproducts in \(\Schb{S}\).

First, we note the following properties concerning coproducts in \(\Schb{S}\):

\begin{lem}\label{lem: coprod lemma}
  Let \(S\) be a scheme.
  Let \(X,Y\) be objects of \(\Schb{S}\).
  Then \(X\coprod Y\) belongs to \(\Schb{S}\).
  In particular, the coproduct \(X\bcoprod Y\) exists in \(\Schb{S}\)
  and is naturally isomorphic to \(X\coprod Y\).
  In the following, we shall simply write \(\coprod\) for \(\bcoprod\).
\end{lem}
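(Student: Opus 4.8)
The plan is to prove the statement in two steps. First I would show that the disjoint union $X \coprod Y$, formed in $\mathsf{Sch}$ and regarded as an $S$-scheme via the induced structure morphism, already belongs to the full subcategory $\Schb{S}$; then I would invoke the fullness of the inclusion $\Schb{S} \subset \Sch{S}$ to conclude that this same object is the coproduct $X \bcoprod Y$ inside $\Schb{S}$.

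For the first step, since $X$ and $Y$ lie in $\Schb{S}$, they satisfy every property of $\bbullet$, so it suffices to check that each of the four candidate properties is stable under finite disjoint union. Reducedness and quasi-compactness over $S$ are immediate: an affine open cover of $X \coprod Y$ is given by affine opens of $X$ and of $Y$, which settles reducedness; while quasi-compactness over $S$ is local on $S$, and over a quasi-compact open $V \subseteq S$ the preimage of $V$ in $X \coprod Y$ is the disjoint union of its preimages in $X$ and in $Y$, hence quasi-compact. For separatedness and quasi-separatedness over $S$, I would use the decomposition $(X \coprod Y)\times_S(X \coprod Y) = (X\times_S X)\coprod(X\times_S Y)\coprod(Y\times_S X)\coprod(Y\times_S Y)$, under which the relative diagonal of $X \coprod Y$ is identified with the disjoint union of the relative diagonals of $X$ and of $Y$ and factors through the open-and-closed subscheme $(X\times_S X)\coprod(Y\times_S Y)$. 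A disjoint union of closed immersions (resp. quasi-compact morphisms) is again a closed immersion (resp. quasi-compact), and this property is preserved upon enlarging the target from an open-and-closed subscheme to the ambient product; hence $X \coprod Y$ is separated (resp. quasi-separated) over $S$. This shows $X \coprod Y \in \Schb{S}$ regardless of the choice of $\bbullet$.

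For the second step, let $W$ be an arbitrary object of $\Schb{S}$. Since $\Schb{S}$ is full in $\Sch{S}$ and $X \coprod Y$ lies in $\Schb{S}$, the universal property of the coproduct in $\Sch{S}$ yields natural bijections $\Hom_{\Schb{S}}(X\coprod Y, W) = \Hom_{\Sch{S}}(X\coprod Y, W) \cong \Hom_{\Sch{S}}(X,W)\times\Hom_{\Sch{S}}(Y,W) = \Hom_{\Schb{S}}(X,W)\times\Hom_{\Schb{S}}(Y,W)$. Thus $X \coprod Y$ satisfies the universal property of the coproduct within $\Schb{S}$, so $X \bcoprod Y$ exists and is naturally isomorphic to $X \coprod Y$.

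Every verification here is routine; the only place demanding even mild attention is the separated/quasi-separated step, where one must observe that the relative diagonal factors through the ``diagonal block'' $(X\times_S X)\coprod(Y\times_S Y)$ and that being a closed immersion (resp. quasi-compact morphism) persists when the target is enlarged from that block to the full fourfold product. I expect no genuine obstacle.
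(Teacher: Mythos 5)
Your proof is correct and takes essentially the same route as the paper, which simply asserts that the lemma ``follows from the definition of the disjoint union of schemes''; your write-up just makes explicit the stability of the four properties under finite disjoint union and the fullness argument that the paper leaves implicit. No issues.
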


\begin{proof}
  \autoref{lem: coprod lemma} follows from the definition of the disjoint union of schemes.
\end{proof}

By applying the above lemma,
we obtain the following category-theoretic characterization of the objects of
\(\Schb{S}\) whose underlying scheme is connected.

\begin{cor}\label{lem: connected is cat}
  Let \(S\) be a scheme.
  Let \(X\) be an object of \(\Schb{S}\).
  Then \(X\) is connected
  if and only if, for any objects \(Y,Z\) of \(\Schb{S}\),
  \begin{center}
    \(X\cong Y\coprod Z \ \Rightarrow \ Y \cong \emptyset \ \lor \ Z \cong \emptyset.\)
  \end{center}
  In particular,
  the property that \(X\) is connected
  may be characterized category-theoretically from the data \((\Schb{S},X)\).
\end{cor}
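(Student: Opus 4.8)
The plan is to prove both implications directly, exploiting the fact established in \autoref{lem: coprod lemma} that the categorical coproduct \(\coprod\) in \(\Schb{S}\) coincides with the disjoint union of schemes, so that on underlying topological spaces one has \(|Y\coprod Z| = |Y| \sqcup |Z|\). The displayed condition will thereby be translated into a statement about clopen decompositions of \(|X|\).

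For the ``only if'' direction, I would suppose that \(X\) is connected and that \(X \cong Y\coprod Z\). Passing to underlying topological spaces and using the identification of \(\coprod\) with disjoint union, I obtain \(|X| \cong |Y|\sqcup |Z|\), so that \(|Y|\) and \(|Z|\) are complementary, hence clopen, subsets of the connected space \(|X|\). Connectedness of \(|X|\) then forces one of them, say \(|Y|\), to be empty; since the empty scheme is the unique object of \(\Schb{S}\) with empty underlying space, this yields \(Y \cong \emptyset\), as required.

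For the ``if'' direction, I would argue by contraposition. Assuming \(X\) is not connected, I would write \(|X| = U \sqcup V\) with \(U,V\) nonempty open (hence clopen) subsets, equip each with its open-subscheme structure, and observe that \(X \cong U \coprod V\) as schemes with \(U,V \neq \emptyset\). The key step — and the one requiring the most care — is to verify that \(U\) and \(V\) again lie in \(\Schb{S}\), i.e., that every property in \(\bbullet\) is inherited by a clopen subscheme. This reduces to the observation that the inclusion \(U \hookrightarrow X\) is simultaneously an open and a closed immersion, hence a quasi-compact, separated (a fortiori quasi-separated) monomorphism; composing with \(X \to S\) and using that \(\qcpt,\qsep,\sep\) are stable under composition, together with the fact that reducedness is inherited by open subschemes, shows that \(U,V \to S\) satisfy every property of \(\bbullet\). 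Granting this, \(X \cong U \coprod V\) with both factors nonempty contradicts the stated condition, completing the contrapositive.

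Finally, for the ``in particular'' assertion, I would note that the displayed condition is formulated entirely in terms of the coproduct \(\coprod\), isomorphism, and the initial object \(\emptyset\) of \(\Schb{S}\), all of which are preserved by any equivalence \(\Schb{S} \xrightarrow{\sim} \Schc{T}\) (cf. \autoref{lem: coprod lemma}); hence connectedness is a category-theoretic property in the sense fixed in the conventions. The main obstacle is the bookkeeping in the ``if'' direction verifying membership of the clopen pieces in \(\Schb{S}\), which is precisely where the specific choice of \(\bbullet \subset \{\mathrm{red},\mathrm{qcpt},\mathrm{qsep},\mathrm{sep}\}\) enters.
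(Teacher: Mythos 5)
Your proposal is correct and follows essentially the same route as the paper, which simply observes (citing \autoref{lem: coprod lemma}) that the coproduct in \(\Schb{S}\) is the scheme-theoretic disjoint union and then invokes the definition of connectedness; your write-up merely makes explicit the verification that the two clopen pieces again lie in \(\Schb{S}\) (clopen immersions being quasi-compact and separated, and reducedness passing to open subschemes), which the paper leaves implicit.
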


\begin{proof}
  \autoref{lem: connected is cat} follows from the definition of connectedness.
\end{proof}

Next,
we study various properties of push-outs along open immersions.

\begin{lem}\label{lem: coprod property}
  Let \(S\) be a scheme.
  Let \(X\) be an \(S\)-scheme, \(U \subset X\) an open subscheme of \(X\).
  Then the following hold:
  \begin{enumerate}
    \item \label{enumi: coprod exists}
    The push-out \(X\coprod_U X\) exists in \(\Sch{S}\).
    \item \label{enumi: coprod red}
    If \(X\) is reduced, then \(X\coprod_U X\) is reduced.
    \item \label{enumi: coprod qsep}
    If \(X\) is quasi-separated over \(S\),
    and the inclusion \(U\subset X\) is a quasi-compact open immersion,
    then \(X\coprod_UX\) is quasi-separated over \(S\).
    \item \label{enumi: coprod qcpt}
    If \(X\) is quasi-compact over \(S\),
    then \(X\coprod_UX\) is quasi-compact over \(S\).
  \end{enumerate}
\end{lem}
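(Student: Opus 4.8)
The plan is to realize the push-out $W \dfn X\coprod_U X$ explicitly by gluing two copies of $X$ along $U$, and then to verify each of the four assertions on the resulting open cover of $W$. For (i), I would take two copies $X_1,X_2$ of $X$, regard $U$ as an open subscheme of each via the given open immersion $U\subset X$, and glue $X_1$ and $X_2$ along $U$ by the identity automorphism of $U$ using the standard gluing lemma for schemes. This produces a scheme $W$ together with open immersions $\iota_1\colon X_1\hookrightarrow W$ and $\iota_2\colon X_2\hookrightarrow W$ whose images cover $W$ and satisfy $\iota_1(X_1)\cap\iota_2(X_2)=U$, with $\iota_1|_U=\iota_2|_U$. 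The structure morphisms $X_i\to S$ agree on $U$, so they glue to a morphism $W\to S$, making $W$ an $S$-scheme. To obtain the universal property of the push-out, I would take any scheme $T$ with morphisms $g_1\colon X_1\to T$, $g_2\colon X_2\to T$ satisfying $g_1|_U=g_2|_U$; since morphisms of schemes glue along an open cover, there is a unique $g\colon W\to T$ with $g\circ\iota_i=g_i$, which is exactly the universal property defining $X\coprod_U X$.

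The assertions (ii) and (iv) are then local verifications on this cover. For (ii), reducedness is local on a scheme, and $W=\iota_1(X_1)\cup\iota_2(X_2)$ is covered by two open subschemes isomorphic to $X$; if $X$ is reduced, each is reduced, hence $W$ is reduced. For (iv), quasi-compactness of $W\to S$ can be checked on preimages of quasi-compact opens of $S$: for such an open $V\subset S$, its preimage in $W$ is the union of its preimages in $\iota_1(X_1)$ and $\iota_2(X_2)$, each quasi-compact since $X\to S$ is quasi-compact, and a finite union of quasi-compact spaces is quasi-compact.

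The main work, and the step I expect to be the crux, is (iii). I would prove that $W\to S$ is quasi-separated by showing that the diagonal $\Delta\colon W\to W\times_S W$ is quasi-compact. Covering $W\times_S W$ by the four opens $X_i\times_S X_j$ for $i,j\in\{1,2\}$ and using that quasi-compactness of a morphism is local on the target, it suffices to check that $\Delta^{-1}(X_i\times_S X_j)\to X_i\times_S X_j$ is quasi-compact for each pair. For the diagonal pairs $(1,1)$ and $(2,2)$ the preimage is $X_i$ and the map is the diagonal $\Delta_{X/S}$, which is quasi-compact because $X$ is quasi-separated over $S$. For the off-diagonal pairs $(1,2)$ and $(2,1)$ the preimage is exactly the overlap $\iota_1(X_1)\cap\iota_2(X_2)=U$, and the restricted map is the composite $U\hookrightarrow X\xrightarrow{\Delta_{X/S}}X\times_S X$. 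Here the hypothesis that $U\subset X$ is a quasi-compact open immersion is essential: the open immersion $U\hookrightarrow X$ is then quasi-compact, and composing it with the quasi-compact morphism $\Delta_{X/S}$ yields a quasi-compact morphism. Thus all four pieces are quasi-compact, so $\Delta$ is quasi-compact and $W\to S$ is quasi-separated. The only delicate points are the correct identification $\Delta^{-1}(X_1\times_S X_2)=U$ of the diagonal's preimage over the off-diagonal charts and the precise use of the quasi-compact open immersion hypothesis at that stage.
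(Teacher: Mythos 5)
Your construction of $X\coprod_U X$ by gluing two copies of $X$ along $U$, and your verifications of (i), (ii), and (iv) by working on the resulting two-chart open cover, match the paper's proof essentially verbatim. The only genuine divergence is in (iii). The paper reduces to $S$ affine and invokes the criterion (Stacks Tag 01KO) that quasi-separatedness can be tested on intersections of affine opens: for affine opens $U_1,U_2\subset X$, the sets $U_1\cap U_2$ and $U_1\cap U_2\cap U$ are quasi-compact, the first because $X$ is quasi-separated and the second because $U\hookrightarrow X$ is quasi-compact. You instead verify directly that the diagonal $\Delta\colon W\to W\times_S W$ is quasi-compact, covering the target by the four charts $X_i\times_S X_j$ and identifying $\Delta^{-1}(X_1\times_S X_2)$ with the overlap $U$, over which $\Delta$ restricts to the composite $U\hookrightarrow X\xrightarrow{\Delta_{X/S}}X\times_S X$ of two quasi-compact morphisms. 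Both arguments are correct and both isolate the same two inputs (quasi-separatedness of $X$ over $S$ for the diagonal charts, quasi-compactness of $U\hookrightarrow X$ for the off-diagonal ones); your version has the advantage of not needing to reduce to $S$ affine or to manipulate affine open covers explicitly, at the cost of a slightly more careful bookkeeping of the preimage of the off-diagonal charts, which you handle correctly.
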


\begin{proof}
  Since the morphism \(U\hookrightarrow X\) is an open immersion,
  one may construct the push-out \(X\coprod_U X\) in \(\Sch{S}\)
  by glueing two copies of the scheme \(X\) along the open subscheme \(U\subset X\).
  Assertions \ref{enumi: coprod exists} and \ref{enumi: coprod red}
  follow immediately.

  Next, we prove assertion \ref{enumi: coprod qsep}.
  Since the property ``quasi-separated over \(S\)'' is Zariski local on \(S\),
  we may assume that \(S\) is an affine scheme.
  By \cite[\href{https://stacks.math.columbia.edu/tag/01KO}{Tag 01KO}]{stacks-project},
  it suffices to prove that
  if \(U_1,U_2 \subset X\) are affine open subsets,
  then \(U_1\cap U_2\) and \(U_1\cap U_2\cap U\) are quasi-compact.
  Since \(X\) is quasi-separated, \(U_1\cap U_2\) is quasi-compact.
  Furthermore, since the inclusion \(U\subset X\) is quasi-compact,
  the intersection \(U_1\cap U_2\cap U\) is quasi-compact.
  Hence \(X\coprod_UX\) is quasi-separated.

  Finally, we prove assertion \ref{enumi: coprod qcpt}.
  Since the property ``quasi-compact over \(S\)'' is Zariski local on \(S\),
  we may assume that \(S\) is an affine scheme.
  Then \(X\) is quasi-compact.
%  Let \(\mathcal{U}\) be an open covering of \(X\coprod_UX\).
  Write \(i_1,i_2:X\rightrightarrows X\coprod _UX\)
  for the two natural open immersions.
  Then \(X\coprod _UX\) is covered by
  the two quasi-compact open subschemes \(i_1(X)\) and \(i_2(X)\).
%  Then \(i_1^{-1}(\mathcal{U})\) and \(i_2^{-1}(\mathcal{U})\) are open coverings of \(X\).
%  Since \(X\) is quasi-compact,
%  there exists a finite covering \(\mathcal{V}\) of \(X\)
%  such that \(\mathcal{V}\) is a common refinement of
%  \(i_1^{-1}(\mathcal{U})\) and \(i_2^{-1}(\mathcal{U})\).
%  Then \(i_1(\mathcal{V}) \cup i_2(\mathcal{V})\) is a finite refinement of \(\mathcal{U}\).
  Hence \(X\coprod_UX\) is quasi-compact.
\end{proof}

Next, we note the following property
concerning affine schemes of \(\Schb{S}\):

\begin{lem}\label{lem: affine over qsep is qcpt}
  Let \(S\) be a quasi-separated scheme.
  Let \(X\) be a scheme over \(S\).
  Then the following assertions hold:
  \begin{enumerate}
    \item \label{enumi: lem: aff over qsep is qcpt qc}
    If \(X\) is a quasi-compact scheme,
    then the structure morphism \(f:X\to S\) is quasi-compact.
    In particular,
    if \(X\) is an affine scheme,
    then the structure morphism \(f:X\to S\) is quasi-compact.
    \item \label{enumi: lem: aff over qsep is qcpt sep}
    If \(X\) is a separated (respectively, quasi-separated) scheme,
    then the structure morphism \(f:X\to S\)
    is separated (respectively, quasi-separated).
    In particular,
    if \(X\) is an affine scheme,
    then the structure morphism \(f:X\to S\) is separated,
    hence also quasi-separated.
  \end{enumerate}
\end{lem}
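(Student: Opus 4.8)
The statement to prove is \autoref{lem: affine over qsep is qcpt}: that for a quasi-separated base scheme $S$ and an $S$-scheme $X$, quasi-compactness (resp.\ separatedness, quasi-separatedness) of $X$ as an absolute scheme forces the corresponding property of the structure morphism $f:X\to S$. The plan is to exploit the standard \emph{cancellation} and \emph{composition} behavior of these properties relative to a morphism to a quasi-separated (or quasi-compact) target, namely the identity or structure morphism of $S$. The key observation throughout is that $S$ being quasi-separated means its structure morphism $S\to\Spec(\mathbb{Z})$ is quasi-separated, and that quasi-compactness, separatedness, and quasi-separatedness all enjoy the relevant two-out-of-three properties.

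\textbf{Assertion \ref{enumi: lem: aff over qsep is qcpt qc}.}
First I would reduce to checking quasi-compactness of $f$ locally on $S$: since the property is Zariski-local on the target, I may assume $S$ is affine, hence quasi-compact. Now $f:X\to S$ with $X$ quasi-compact and $S$ affine: the preimage $f^{-1}(S)=X$ is quasi-compact, so $f$ is quasi-compact. For the "in particular'' clause, an affine scheme is quasi-compact, so this is immediate. The only subtlety is to confirm that the reduction to affine $S$ is legitimate, which follows from the definition of quasi-compact morphism as one whose restriction over each affine open has quasi-compact preimage.

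\textbf{Assertion \ref{enumi: lem: aff over qsep is qcpt sep}.}
Here I would use the cancellation property for (quasi-)separated morphisms applied to the factorization of the structure morphism of $X$. Consider the composite $X\xrightarrow{f} S\xrightarrow{g}\Spec(\mathbb{Z})$, which is the absolute structure morphism of $X$. If $X$ is separated (resp.\ quasi-separated), then $g\circ f$ is separated (resp.\ quasi-separated). By the standard cancellation lemma---if $g\circ f$ is separated (resp.\ quasi-separated) and $g$ is quasi-separated, then $f$ is separated (resp.\ quasi-separated)---and using that $g:S\to\Spec(\mathbb{Z})$ is quasi-separated precisely because $S$ is a quasi-separated scheme, I conclude $f$ is separated (resp.\ quasi-separated). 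For the "in particular'' clause, an affine scheme is separated as an absolute scheme, so the first part applies; separatedness implies quasi-separatedness.

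\textbf{Anticipated obstacle.}
The only real point requiring care is invoking the correct cancellation statement: the cancellation property for separatedness ordinarily holds unconditionally, whereas for quasi-separatedness one needs the hypothesis that the target morphism $g$ is quasi-separated—which is exactly where the assumption that $S$ is quasi-separated enters. I would make sure to cite the cancellation property in the precise form $P(g\circ f) \wedge P'(g) \Rightarrow P(f)$ with the diagonal characterization (via the graph morphism $\Gamma_f: X\to X\times_{\mathbb{Z}} S$), rather than quoting a vaguer two-out-of-three, so that the quasi-separated hypothesis on $S$ is visibly the hypothesis that makes the argument go through. Everything else is routine and follows from standard facts in the Stacks Project on composition and base change of these morphism properties.
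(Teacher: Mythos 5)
Your treatment of assertion \ref{enumi: lem: aff over qsep is qcpt sep} is correct and is essentially the paper's argument: the paper simply cites the cancellation lemma (Stacks, Tag 01KV) applied to the factorization \(X \to S \to \Spec(\mathbb{Z})\). One small remark: cancellation for separatedness and quasi-separatedness is unconditional (no hypothesis on the second morphism is needed), so the quasi-separatedness of \(S\) is not in fact what makes part \ref{enumi: lem: aff over qsep is qcpt sep} work; it is part \ref{enumi: lem: aff over qsep is qcpt qc} where that hypothesis is indispensable.

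Your argument for assertion \ref{enumi: lem: aff over qsep is qcpt qc}, however, has a genuine gap. The reduction ``since quasi-compactness of \(f\) is Zariski-local on the target, I may assume \(S\) is affine'' replaces \(X\) by \(f^{-1}(V)\) for an affine open \(V\subset S\), and there is no reason for \(f^{-1}(V)\) to be quasi-compact just because \(X\) is: the hypothesis ``\(X\) is quasi-compact'' does not localize over \(S\). After the reduction you write that \(f^{-1}(S)=X\) is quasi-compact, but what must be shown is precisely that \(f^{-1}(V)\) is quasi-compact for \emph{every} affine open \(V\) --- which is the content of the assertion, not something the reduction entitles you to assume. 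The tell-tale sign is that your proof of \ref{enumi: lem: aff over qsep is qcpt qc} never uses the quasi-separatedness of \(S\), whereas the assertion is false without it: if \(S\) is obtained by gluing two copies of \(\Spec(k[x_1,x_2,\dots])\) along the (non-quasi-compact) complement of the closed point defined by \((x_1,x_2,\dots)\), then the inclusion of one copy is a morphism from an affine scheme to \(S\) whose preimage of the other affine copy is not quasi-compact. The repair is to run the same cancellation argument you used for \ref{enumi: lem: aff over qsep is qcpt sep}: the morphism \(X\to\Spec(\mathbb{Z})\) is quasi-compact because \(X\) is a quasi-compact scheme, the morphism \(S\to\Spec(\mathbb{Z})\) is quasi-separated because \(S\) is a quasi-separated scheme, and cancellation for quasi-compactness along a quasi-separated second factor (Stacks, Tag 03GI, which is exactly what the paper cites) yields that \(f\) is quasi-compact.
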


\begin{proof}
  Assertion \ref{enumi: lem: aff over qsep is qcpt qc} follows immediately from
  \cite[\href{https://stacks.math.columbia.edu/tag/03GI}{Tag 03GI}]{stacks-project}.
  Assertion \ref{enumi: lem: aff over qsep is qcpt sep} follows immediately from
  \cite[\href{https://stacks.math.columbia.edu/tag/01KV}{Tag 01KV}]{stacks-project}.
\end{proof}

By applying \autoref{lem: coprod property} and \autoref{lem: affine over qsep is qcpt},
we obtain the following property concerning the existence of push-outs in \(\Schb{S}\):

\begin{cor}\label{cor: coprod exists}
  Let \(S\) be a quasi-separated scheme.
  Let \(X\) be an object of \(\Schb{S}\) and
  \(U\) an affine open subscheme of \(X\).
  Suppose that
  \(\bbbullet = (\bbullet \cup \left\{ \qcpt, \sep \right\})\setminus \{\qsep\}\).
  Then the following assertions hold:
  \begin{enumerate}
    \item \label{enumi: cor: coprod exists aff}
    \(U\) belongs to \(\Schc{S}\),
    hence, in particular, to \(\Schb{S}\).
    \item \label{enumi: cor: coprod exists coprod}
    Assume that \(\sep\not\in\bbullet\).
    Then \(X\coprod_UX\)
    (cf. \autoref{lem: coprod property} \ref{enumi: coprod exists})
    belongs to \(\Schb{S}\).
    In particular, the push-out \(X\bcoprod_UX\) exists in \(\Schb{S}\) and
    is naturally isomorphic to \(X\coprod_UX\).
  \end{enumerate}
  %Moreover,
  %if \(S\) is a quasi-separated scheme and \(U\subset X\) an affine open subset,
  %then the push-out \(X\bcoprod_UX\) exists in \(\Schb{S}\) and
  %is naturally isomorphic to \(X\coprod_UX\).
\end{cor}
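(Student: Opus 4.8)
The plan is to verify directly that the schemes in question satisfy every property of the relevant subset of \(\left\{ \red,\qcpt,\qsep,\sep \right\}\), and then to promote the push-out computed in \(\Sch{S}\) to a push-out in \(\Schb{S}\) by a fullness argument. The bookkeeping observation that drives everything is this: since \(\sep \in \bbbullet = \bbullet \cup \left\{ \qcpt,\sep \right\}\), the standing hypothesis \(\left\{ \qsep,\sep \right\} \not\subset \bbbullet\) forces \(\qsep \notin \bbbullet\), hence \(\qsep \notin \bbullet\). Consequently \(\bbbullet \subset \left\{ \red,\qcpt,\sep \right\}\), and in the situation of assertion \ref{enumi: cor: coprod exists coprod}, where moreover \(\sep \notin \bbullet\), we even have \(\bbullet \subset \left\{ \red,\qcpt \right\}\). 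This pins down exactly which properties must be checked in each case.

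For assertion \ref{enumi: cor: coprod exists aff}, I would check that \(U\) satisfies each property of \(\bbbullet \subset \left\{ \red,\qcpt,\sep \right\}\). If \(\red \in \bbullet\), then \(X\) is reduced, and the open subscheme \(U \subset X\) is then reduced. Since \(U\) is affine and \(S\) is quasi-separated, \autoref{lem: affine over qsep is qcpt} \ref{enumi: lem: aff over qsep is qcpt qc} shows that \(U \to S\) is quasi-compact and \ref{enumi: lem: aff over qsep is qcpt sep} shows that \(U \to S\) is separated; these supply the properties \(\qcpt\) and \(\sep\). Hence \(U \in \Schc{S}\), and since \(\bbullet \subset \bbbullet\) gives \(\Schc{S} \subset \Schb{S}\), also \(U \in \Schb{S}\).

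For assertion \ref{enumi: cor: coprod exists coprod}, the push-out \(X\coprod_U X\) exists in \(\Sch{S}\) by \autoref{lem: coprod property} \ref{enumi: coprod exists}. To see that it lies in \(\Schb{S}\) I need only treat the properties in \(\bbullet \subset \left\{ \red,\qcpt \right\}\): if \(\red \in \bbullet\), then \(X\) is reduced and \ref{enumi: coprod red} gives that \(X\coprod_U X\) is reduced; if \(\qcpt \in \bbullet\), then \(X\) is quasi-compact over \(S\) and \ref{enumi: coprod qcpt} gives that \(X\coprod_U X\) is quasi-compact over \(S\). Thus \(X\coprod_U X \in \Schb{S}\). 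Finally, to identify this with the push-out in \(\Schb{S}\), I would note that by assertion \ref{enumi: cor: coprod exists aff} the diagram \(X \leftarrow U \rightarrow X\) lives in \(\Schb{S}\), and that the inclusion \(\Schb{S} \hookrightarrow \Sch{S}\) is full with \(X\coprod_U X \in \Schb{S}\); hence the universal property of \(X\coprod_U X\) in \(\Sch{S}\) restricts verbatim to test objects of \(\Schb{S}\), so \(X\bcoprod_U X\) exists and is naturally isomorphic to \(X\coprod_U X\).

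The property-checking is routine, being a direct application of the two preceding lemmas. The one point that genuinely uses the hypothesis \(\sep \notin \bbullet\) — and which I would flag as the conceptual obstacle — is that separatedness cannot be propagated across the gluing: \(X\coprod_U X\) is obtained by gluing two copies of \(X\) along the open \(U\), and this typically fails to be separated (as in the line with doubled origin), so \(\sep\) must be excluded from \(\bbullet\) for \(X\coprod_U X\) to remain in \(\Schb{S}\). The remaining subtlety is merely to remember to invoke fullness of \(\Schb{S} \hookrightarrow \Sch{S}\), together with the membership \(X\coprod_U X \in \Schb{S}\) already established, when passing from the push-out in \(\Sch{S}\) to the push-out in \(\Schb{S}\).
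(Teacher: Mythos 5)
There is a genuine gap, and it originates in your opening ``bookkeeping observation.'' You argue that since \(\sep \in \bbbullet\), the standing convention \(\left\{\qsep,\sep\right\}\not\subset\bbbullet\) forces \(\qsep\notin\bbbullet\) and hence \(\qsep\notin\bbullet\). But in this corollary \(\bbbullet\) is being locally repurposed as the auxiliary set \(\bbullet\cup\left\{\qcpt,\sep\right\}\); the constraint that actually governs the situation is the one on \(\bbullet\), namely \(\left\{\qsep,\sep\right\}\not\subset\bbullet\), which permits \(\qsep\in\bbullet\) so long as \(\sep\notin\bbullet\) --- and \(\sep\notin\bbullet\) is precisely the hypothesis of assertion \ref{enumi: cor: coprod exists coprod}. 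That the case \(\qsep\in\bbullet\) is genuinely in scope is confirmed by how the corollary is used later: in the proof of \autoref{lem: str local is local domain} it is invoked under the sole assumption \(\sep\notin\bbullet\), with \(\qsep\) possibly belonging to \(\bbullet\). The paper's own proof accordingly splits into the cases \(\qsep\notin\bbullet\) (where your argument agrees with it) and \(\qsep\in\bbullet\) (which your argument discards).

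Concretely, the missing step is the verification that \(X\coprod_UX\) is quasi-separated over \(S\) when \(\qsep\in\bbullet\). This is not covered by checking only \(\red\) and \(\qcpt\), and it is not automatic: one must observe that \(X\), being quasi-separated over the quasi-separated scheme \(S\), is itself a quasi-separated scheme, so that the affine open immersion \(U\hookrightarrow X\) is quasi-compact by \autoref{lem: affine over qsep is qcpt} \ref{enumi: lem: aff over qsep is qcpt qc}, and then apply \autoref{lem: coprod property} \ref{enumi: coprod qsep}. Your treatment of assertion \ref{enumi: cor: coprod exists aff}, of the properties \(\red\) and \(\qcpt\) in assertion \ref{enumi: cor: coprod exists coprod}, and the fullness argument promoting the push-out from \(\Sch{S}\) to \(\Schb{S}\) are all fine; only this case needs to be added.
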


\begin{proof}
  Assertion \ref{enumi: cor: coprod exists aff} follows immediately from
  \autoref{lem: affine over qsep is qcpt} \ref{enumi: lem: aff over qsep is qcpt qc}
  and \ref{enumi: lem: aff over qsep is qcpt sep}.
  To prove assertion \ref{enumi: cor: coprod exists coprod},
  it suffices to prove that \(X\coprod_UX\) belongs to the category \(\Schb{S}\).
  If \(\qsep\not\in \bbullet\),
  then assertion \ref{enumi: cor: coprod exists coprod} follows from
  \autoref{lem: coprod property} \ref{enumi: coprod red} and \ref{enumi: coprod qcpt}.
  Assume that \(\qsep\in \bbullet\).
  Since \(X\) is a quasi-separated scheme,
  and \(U\) is an affine scheme,
  the inclusion \(U\hookrightarrow X\) is a quasi-compact open immersion
  by \autoref{lem: affine over qsep is qcpt} \ref{enumi: lem: aff over qsep is qcpt qc}.
  Hence assertion \ref{enumi: cor: coprod exists coprod} follows from
  \autoref{lem: coprod property} \ref{enumi: coprod red},
  \ref{enumi: coprod qsep}, and \ref{enumi: coprod qcpt}.
\end{proof}

Next,
we study the operation ``\((-)_{\red}\)'' in the case of immersions.

\begin{lem}\label{lem: imm red}
  Let \(S\) be a scheme.
  Let \(f:X\to Y\) be a morphism of \(S\)-schemes.
  \begin{enumerate}
    \item \label{lem: imm red imm}
    If \(f\) is an immersion,
    then \(f_{\red}:X_{\red}\to Y_{\red}\) is also an immersion.
    \item \label{lem: imm red qcpt imm}
    If \(f\) is a quasi-compact immersion,
    then \(f_{\red}:X_{\red}\to Y_{\red}\) is also a quasi-compact immersion.
    \item \label{lem: imm red cl imm}
    If \(f\) is a closed immersion,
    then \(f_{\red}:X_{\red}\to Y_{\red}\) is also a closed immersion.
  \end{enumerate}
\end{lem}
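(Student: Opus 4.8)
The plan is to treat the three assertions in the order (iii), (i), (ii), using the closed case as the engine. Throughout I would rely on the functoriality of the reduction operation: since the canonical closed immersion \(X_{\red}\to X\) is a homeomorphism on underlying spaces and enjoys the universal property that every morphism to \(X\) from a reduced scheme factors uniquely through it (as already exploited in the proof of \autoref{lem: fiber product bbullet}), the assignment \(f\mapsto f_{\red}\) is a functor, compatible with composition, and \(f_{\red}\) is the unique factorization of \(X_{\red}\to X\xrightarrow{f}Y\) through \(Y_{\red}\to Y\).

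For assertion (iii), I would check the statement affine-locally on \(Y\), which is legitimate since ``closed immersion'' is Zariski-local on the target and reduction commutes with restriction to opens. Writing \(Y=\Spec B\) and \(X=\Spec(B/I)\), one has \(Y_{\red}=\Spec(B/\sqrt{0}_B)\) and \(X_{\red}=\Spec(B/\sqrt{I})\), and the induced homomorphism \(B/\sqrt{0}_B\to B/\sqrt{I}\) is the evident surjection, which is well-defined because \(\sqrt{0}_B\subset\sqrt{I}\). A surjection of rings corresponds to a closed immersion, so \(f_{\red}\) is a closed immersion.

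For assertion (i), I would use that an immersion factors as \(X\xrightarrow{g}V\xrightarrow{j}Y\), where \(j\) is the inclusion of an open subscheme \(V\subset Y\) and \(g\) is a closed immersion. Applying reduction and using functoriality gives \(f_{\red}=j_{\red}\circ g_{\red}\). Now \(g_{\red}\) is a closed immersion by (iii), while \(j_{\red}\colon V_{\red}\to Y_{\red}\) is again an open immersion, since the open subscheme of the reduced scheme \(Y_{\red}\) supported on \(|V|\) is already reduced, hence coincides with \(V_{\red}\). A closed immersion followed by an open immersion is an immersion, so \(f_{\red}\) is an immersion. Finally, for assertion (ii), I would observe that quasi-compactness of a morphism depends only on the underlying continuous map together with the topologies involved; since the reduction squares commute and induce homeomorphisms on underlying spaces, \(f_{\red}\) is quasi-compact precisely when \(f\) is, and combining this with (i) yields that \(f_{\red}\) is a quasi-compact immersion.

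The computations are routine; the only points requiring care are the two compatibility facts underlying the bootstrap in (i): that reduction is functorial, so that \(f_{\red}\) really is the composite of the reductions of the factors, and that reduction carries open immersions to open immersions. Both follow from the local nature of the nilradical together with the universal property of reduction, so I expect no serious obstacle, only the need to state these compatibilities cleanly before invoking them.
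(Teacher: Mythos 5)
Your proposal is correct and follows essentially the same route as the paper: reduce to the three cases (closed immersion via surjectivity of $B/\sqrt{0}_B\to B/\sqrt{I}$, open immersion via the identification of $V_{\red}$ with the open subscheme of $Y_{\red}$ on $|V|$, and quasi-compactness via the fact that $f$ and $f_{\red}$ induce the same map on underlying spaces), then assemble them through the factorization of an immersion and the functoriality of reduction. The paper leaves the factorization and functoriality implicit where you spell them out, but the argument is the same.
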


\begin{proof}
  It suffices to prove that
  if \(f\) is quasi-compact, an open immersion or a closed immersion,
  then \(f_{\red}\) is also quasi-compact,
  an open immersion or a closed immersion, respectively.

  Since \(f_{\red}\) and \(f\) induce same morphism
  of the underlying topological spaces,
  if \(f\) is quasi-compact,
  then \(f_{\red}\) is also quasi-compact.

  Assume that \(f\) is an open immersion.
  Then \(X_{\red} = (|X|,\mathcal{O}_{Y_{\red}}|_{|X|})\).
  Hence \(f_{\red}\) is also an open immersion.

  Finally, if a ring homomorphism \(A\to B\) is surjective,
  then \(A/\sqrt{0} \to B/\sqrt{0}\) is also surjective.
  In particular,
  if \(f\) is a closed immersion,
  then \(f_{\red}\) is also a closed immersion.
\end{proof}

Next,
we study various scheme-theoretic properties of
regular monomorphisms in \(\Schb{S}\).
%It is the main result of this section.

\begin{lem}\label{lem: reg mono}
  Let \(S\) be a quasi-separated scheme.
  Let \(f:X\to Y\) be a morphism of \(\Schb{S}\).
  \begin{enumerate}
    \item \label{enumi: closed imm is reg mono}
    If \(f\) is a closed immersion,
    then \(f\) is a regular monomorphism in \(\Schb{S}\).
    \item \label{enumi: reg mono is imm}
    If \(f\) is a regular monomorphism in \(\Schb{S}\),
    then \(f\) is an immersion.
    \item \label{enumi: reg mono in qsep is qc imm}
    Assume that \(\qsep\in \bbullet\).
    If \(f\) is a regular monomorphism in \(\Schb{S}\),
    then \(f\) is a quasi-compact immersion.
    \item \label{enumi: reg mono in sep is closed imm}
    Assume that \(\sep\in \bbullet\).
    If \(f\) is a regular monomorphism in \(\Schb{S}\),
    then \(f\) is a closed immersion.
  \end{enumerate}
\end{lem}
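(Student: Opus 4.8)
The plan is to analyze each assertion through the equalizer of $g,h$ computed in the ambient category $\Sche$ of all schemes, together with the diagonal of $Z$. For \ref{enumi: closed imm is reg mono}, I would exhibit $f$ as an equalizer by constructing the pushout $W \dfn Y\coprod_X Y$: glue two copies of $Y$ along the closed subscheme $X$, so that affine-locally, with $Y=\Spec B$ and $X=\Spec(B/I)$, one has $W=\Spec(B\times_{B/I}B)$, and the two canonical morphisms give $i_1,i_2:Y\rightrightarrows W$ with $i_1\circ f=i_2\circ f$. The key observation is that the fold morphism $\nabla:W\to Y$ is affine---over $\Spec B\subseteq Y$ it is $\Spec(B\times_{B/I}B)\to\Spec B$---hence separated and quasi-compact; composing with $Y\to S$ shows that $W\to S$ inherits each of ``qcpt'', ``qsep'', ``sep'' possessed by $Y\to S$, while reducedness is automatic since $B\times_{B/I}B$ is reduced when $B$ is. Thus $W\in\Schb{S}$. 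A local computation identifies the $\Sche$-equalizer of $i_1,i_2$ with $X$ (the coequalizer of the two projections $B\times_{B/I}B\rightrightarrows B$ is $B/I$); since $X\in\Schb{S}$ and $\Schb{S}$ is full in $\Sche$, $f$ is also the equalizer of $i_1,i_2$ in $\Schb{S}$, i.e.\ a regular monomorphism.

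For \ref{enumi: reg mono is imm}--\ref{enumi: reg mono in sep is closed imm}, write $f$ as the equalizer of some $g,h:Y\rightrightarrows Z$ in $\Schb{S}$ and let $E$ be the equalizer of $g,h$ in $\Sche$. The crucial structural fact is that $E\cong Y\times_{Z\times_S Z}Z$, the fiber product of $(g,h):Y\to Z\times_S Z$ and the diagonal $\Delta:Z\to Z\times_S Z$; thus $\iota:E\to Y$ is the base change of $\Delta$, and since $\Delta$ is always an immersion, so is $\iota$. I set $E'\dfn E_{\red}$ if $\red\in\bbullet$ and $E'\dfn E$ otherwise, so that $\iota':E'\hookrightarrow Y$ is again an immersion and $E'$ is reduced whenever required. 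Because $\Schb{S}$ is full in $\Sche$ and its objects are reduced when $\red\in\bbullet$, the canonical map $u:X\to E'$ induces, for every $T\in\Schb{S}$, a bijection $\Hom_{\Schb{S}}(T,X)\xrightarrow{\sim}\Hom_{\Sche}(T,E')$ given by composition with $u$.

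To finish \ref{enumi: reg mono is imm} I would prove that $u$ is an isomorphism. Every affine open $W\subseteq E'$ is affine over the quasi-separated scheme $S$, hence lies in $\Schb{S}$ by \autoref{lem: affine over qsep is qcpt} (and is reduced when $\red\in\bbullet$, being open in $E'$); applying the bijection above to $T=W$, the open immersion $W\hookrightarrow E'$ lifts uniquely to $w_W:W\to X$ with $u\circ w_W=(W\hookrightarrow E')$. Covering each pairwise intersection $W\cap W'$ by affine opens of $E'$ and invoking the uniqueness of these lifts shows that the $w_W$ agree on overlaps, so they glue to a morphism $s:E'\to X$ with $u\circ s=\id_{E'}$. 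Finally, $s\circ u$ and $\id_X$ are both morphisms in $\Schb{S}$ whose composite with $u$ equals $u$, so the injectivity of $u_\ast$ forces $s\circ u=\id_X$. Hence $u$ is an isomorphism and $f=\iota'\circ u$ is an immersion.

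Assertions \ref{enumi: reg mono in qsep is qc imm} and \ref{enumi: reg mono in sep is closed imm} then drop out of the same identification $f\cong(E'\hookrightarrow Y)$ by strengthening the diagonal. If $\qsep\in\bbullet$, then $Z\to S$ is quasi-separated, so $\Delta$ is quasi-compact, whence $\iota$ is a quasi-compact immersion; since $E_{\red}\to E$ is a closed immersion, $\iota'$ is one too, giving a quasi-compact immersion. If $\sep\in\bbullet$, then $Z\to S$ is separated, so $\Delta$, and hence $\iota'$, is a closed immersion. I expect the main obstacle to be \ref{enumi: reg mono is imm} precisely when $\qcpt\in\bbullet$ but $\qsep,\sep\notin\bbullet$: then $\Delta$ need not be quasi-compact, so $E'$ need not be quasi-compact over $S$ and need not itself belong to $\Schb{S}$, and one cannot simply identify $X$ with $E'$. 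The gluing argument of the previous paragraph---exploiting that affine opens of $E'$ nonetheless lie in $\Schb{S}$ because $S$ is quasi-separated---is exactly what resolves this case and is the technical heart of the lemma.
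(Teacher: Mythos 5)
Your proof is correct and takes essentially the same approach as the paper: assertion (i) via the pushout \(Y\coprod_XY\) along the closed immersion (the paper simply cites \cite{stacks-project} for the construction you carry out explicitly), and assertions (ii)--(iv) by realizing the regular monomorphism as the pullback of the diagonal of \(Z\) and using the fact that affine open subschemes automatically lie in \(\Schb{S}\) over a quasi-separated base in order to glue away the possible failure of quasi-compactness. The only organizational difference is that the paper first upgrades \(f\) to an equalizer in \(\Schc{S}\) with \(\bbbullet=\bbullet\setminus\{\qcpt\}\), where the relevant fiber products exist, whereas you compare \(X\) directly with the scheme-theoretic equalizer \(E'\); the affine-open gluing argument at the heart of both arguments is identical.
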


\begin{proof}
  First, we prove assertion \ref{enumi: closed imm is reg mono}.
  Assume that \(f\) is a closed immersion.
  Recall that by
  \cite[\href{https://stacks.math.columbia.edu/tag/0E25}{Tag 0E25}]{stacks-project},
  the scheme \(Y\coprod_XY\) belongs to \(\Schb{S}\),
  and \(f\) is an equalizer of \(Y\rightrightarrows Y\coprod _XY\) in \(\Schb{S}\).
  Hence \(f\) is a regular monomorphism in \(\Schb{S}\).
  This completes the proof of assertion \ref{enumi: closed imm is reg mono}.

  Next, we prove assertions \ref{enumi: reg mono is imm},
  \ref{enumi: reg mono in qsep is qc imm}, and
  \ref{enumi: reg mono in sep is closed imm}.
  Assume that there exist \(g,h :Y\rightrightarrows Z\) such that
  \(f\) is an equalizer of \((g,h)\) in the category \(\Schb{S}\).
  Suppose that
  \(\bbbullet = \bbullet \setminus \left\{\qcpt\right\}\).
  Then \(f\) is an equalizer of \((g,h)\)
  in the category \(\Schc{S}\).
  Indeed,
  suppose that \(f':X'\to Y\) is a morphism in the category \(\Schc{S}\)
  such that \(g\circ f'=h\circ f'\).
  Since \(\bbullet\subset \bbbullet\cup \left\{\qcpt,\sep\right\}\),
  by \autoref{cor: coprod exists} \ref{enumi: cor: coprod exists aff},
  any affine open subscheme of \(X'\) belongs to \(\Schb{S}\).
  Since \(f\) is an equalizer of \((g,h)\) in the category \(\Schb{S}\),
  for any open immersion \(i':U'\to X'\) such that \(U'\) is affine,
  there exists a unique morphism \(p:U'\to X\) such that
  \(f\circ p = f'\circ i'\).
  Thus, by allowing \(U'\) to vary over the affine open subschemes of \(X'\)
  and gluing together the resulting morphisms \(p:U'\to X\),
  we obtain a unique morphism \(q:X'\to X\) such that
  \(f\circ q = f'\).
  Hence \(f\) is an equalizer of \((g,h)\) in the category \(\Schc{S}\).

  By \autoref{lem: fiber product bbullet}
  \ref{enumi: fiber product bbullet not qcpt and red} and
  \ref{enumi: fiber product bbullet not qcpt and in red},
  the fiber product \(Z\times_S^\bbbullet Z\) exists
  in the category \(\Schc{S}\).
  Hence we may consider the diagonal morphism
  \(\Delta _Z :Z\to Z\times_S^\bbbullet Z\).
  By \autoref{lem: imm red} \ref{lem: imm red imm}
  (cf. also \autoref{lem: fiber product bbullet}
  \ref{enumi: fiber product bbullet not qcpt and red} and
  \ref{enumi: fiber product bbullet not qcpt and in red}),
  \(\Delta _Z\) is an immersion.
  Since \(f\) is the equalizer of \((g,h)\) in the category \(\Schc{S}\),
  \(f\) is the pull-back in the category \(\Schc{S}\)
  of \(\Delta_Z\) along \((g,h) :Y\to Z\times_S^\bbbullet Z\).
  Hence \(f\) is an immersion.
  If, moreover, \(\qsep\in \bbullet\) (respectively, \(\sep \in \bbullet\)),
  then by \autoref{lem: imm red} \ref{lem: imm red qcpt imm} and
  \ref{lem: imm red cl imm}
  (cf. also \autoref{lem: fiber product bbullet}
  \ref{enumi: fiber product bbullet not qcpt and red} and
  \ref{enumi: fiber product bbullet not qcpt and in red}),
  \(\Delta _Z\), hence also \(f\),
  is a quasi-compact (respectively, closed) immersion.
  This completes the proof of assertions \ref{enumi: reg mono is imm},
  \ref{enumi: reg mono in qsep is qc imm}, and
  \ref{enumi: reg mono in sep is closed imm}.
\end{proof}

Finally,
we apply the theory developed thus far
to conclude a category-theoretic characterization of
reduced schemes.
%various scheme-theoretic properties.

\begin{cor}\label{cor: red is cat}
  Let \(S\) be a quasi-separated scheme.
  Let \(X\) be an object of \(\Schb{S}\).
  Then \(X\) is reduced if and only if
  %for every surjective regular monomorphism \(f:Y\to X\)
  every surjective regular monomorphism \(Y\to X\)
  in \(\Schb{S}\) is an isomorphism.
  In particular, the property that
  \(X\) is reduced
  may be characterized category-theoretically
  (cf. \autoref{cor: surj is cat} \ref{enumi: cor: surj is cat surj})
  from the data \((\Schb{S},X)\).
\end{cor}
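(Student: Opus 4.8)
The plan is to prove both implications by relating the category-theoretic notion of a surjective regular monomorphism to the scheme-theoretic structure of $X$, using the characterization of regular monomorphisms established in \autoref{lem: reg mono}. For the forward direction, I would assume $X$ is reduced and take an arbitrary surjective regular monomorphism $f:Y\to X$ in $\Schb{S}$. By \autoref{lem: reg mono} \ref{enumi: reg mono is imm}, $f$ is an immersion; since $f$ is surjective (which I interpret via \autoref{cor: surj is cat} \ref{enumi: cor: surj is cat surj}, i.e., surjective on underlying sets as reconstructed in \autoref{reconstruction: Set}), the image of $f$ is all of $|X|$, so $f$ is a surjective immersion. A surjective immersion is automatically a closed immersion. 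The key point is then that a closed immersion that is surjective, with reduced target $X$, must be an isomorphism: the closed subscheme $Y$ has the same underlying space as $X$ and the closed immersion $Y\hookrightarrow X$ corresponds to a surjection $\mathcal{O}_X\twoheadrightarrow f_*\mathcal{O}_Y$ whose kernel is a sheaf of ideals supported on all of $X$ but defining a subscheme with the full underlying space; since $X$ is reduced, the smallest closed subscheme with underlying space $|X|$ is $X$ itself, so the ideal must vanish and $f$ is an isomorphism.

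\textbf{The reverse direction.}
For the converse, I would argue by contraposition: assume $X$ is \emph{not} reduced and produce a surjective regular monomorphism $Y\to X$ that is not an isomorphism. The natural candidate is the reduced induced closed subscheme $f:X_{\red}\hookrightarrow X$. This is a closed immersion, hence by \autoref{lem: reg mono} \ref{enumi: closed imm is reg mono} it is a regular monomorphism in $\Schb{S}$, provided $X_{\red}$ belongs to $\Schb{S}$. It is surjective on underlying sets since $|X_{\red}| = |X|$. Because $X$ is not reduced, $X_{\red}\to X$ is not an isomorphism, which furnishes the desired counterexample.

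\textbf{The main obstacle.}
The hard part will be verifying that $X_{\red}$ is genuinely an object of $\Schb{S}$, so that the closed immersion $X_{\red}\hookrightarrow X$ lives in the relevant category and \autoref{lem: reg mono} applies. If $\red\in\bbullet$ this is only an issue when $X$ itself fails to be reduced — but that is precisely the case under consideration, so one must check that passing to $X_{\red}$ preserves whichever of the properties \textrm{qcpt}, \textrm{qsep}, \textrm{sep} lie in $\bbullet$. This follows from \autoref{lem: imm red}: since $X_{\red}\to X$ is a closed immersion, and closed immersions are quasi-compact, separated, and quasi-separated, the composite $X_{\red}\to X\to S$ inherits the relevant finiteness and separation properties from the structure morphism $X\to S$ (using that these properties are stable under composition with a closed immersion). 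One should also confirm that a surjective regular monomorphism in $\Schb{S}$ with non-reduced source cannot arise in a way that circumvents the argument; but \autoref{lem: reg mono} \ref{enumi: reg mono is imm} already forces every regular monomorphism to be an immersion, so the structural input needed for the forward direction is fully supplied. The remaining reasoning — that a surjective immersion into a reduced scheme is an isomorphism — is a standard fact about reduced schemes that I would invoke directly.
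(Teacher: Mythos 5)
Your proposal is correct and follows essentially the same route as the paper: necessity via \autoref{lem: reg mono} \ref{enumi: reg mono is imm} (regular monomorphism $\Rightarrow$ immersion, hence surjective closed immersion, hence isomorphism onto a reduced target), and sufficiency via the surjective closed immersion $X_{\red}\to X$, which is a regular monomorphism by \autoref{lem: reg mono} \ref{enumi: closed imm is reg mono} but not an isomorphism. Your extra verification that $X_{\red}$ actually belongs to $\Schb{S}$ (using that closed immersions are quasi-compact and separated, so the relevant properties of the structure morphism are inherited) is a detail the paper leaves implicit, and it is correctly handled.
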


\begin{proof}
  First, we prove necessity.
  Assume that \(X\) is reduced, and
  \(f:Y\to X\) is a surjective regular monomorphism in \(\Schb{S}\).
  By \autoref{lem: reg mono} \ref{enumi: reg mono is imm},
  \(f\) is a surjective closed immersion.
  Thus, since \(X\) is reduced,
  \(f\) is an isomorphism.
  This completes the proof of necessity.

  Next, we prove sufficiency.
  Assume that \(X\) is not reduced.
  Then the natural morphism \(f:X_{\red} \to X\)
  is a surjective closed immersion, but not an isomorphism in \(\Schb{S}\).
  Thus, by \autoref{lem: reg mono} \ref{enumi: closed imm is reg mono},
  the closed immersion \(f\) is a surjective regular monomorphism,
  but not an isomorphism.
  This completes the proof of \autoref{cor: red is cat}.
\end{proof}

%%%%%%%5/12ここから

\begin{cor}\label{cor: reduction is cat}
  Let \(S\) be a quasi-separated scheme.
  Let \(f:Y\to X\) be a morphism in \(\Schb{S}\).
  Then \(f\) is isomorphic to the natural morphism \(X_{\red} \to X\)
  in \(\Schb{X}\)
  if and only if
  for any reduced object \(Z\) in \(\Schb{S}\) and any morphism \(g:Z\to X\) in \(\Schb{S}\),
  there exists a unique morphism \(h:Z\to Y\) in \(\Schb{S}\)
  such that \(f\circ h = g\).
  In particular, the property that
  \(f:Y\to X\) is isomorphic to the natural morphism \(X_{\red}\to X\) in \(\Schb{X}\)
  may be characterized category-theoretically
  (cf. \autoref{cor: red is cat}) from the data \((\Schb{S},f:Y\to X)\).
\end{cor}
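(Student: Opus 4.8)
The plan is to read the displayed universal mapping property as the statement that \(f\colon Y\to X\) is a \emph{terminal object} of the (full) subcategory of the slice \((\Schb{S})_{/X}\) spanned by the objects \(Z\to X\) with \(Z\) reduced, which is exactly the defining property of the reduction \(X_{\red}\to X\). Accordingly I would prove the two implications by matching \(f\) against this terminal object.

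For necessity I would first check that \(X_{\red}\to X\) really lies in \(\Schb{S}\): the reduction morphism is a closed immersion, hence quasi-compact, quasi-separated and separated, so its composite with the structure morphism \(X\to S\) again satisfies whichever of \(\qcpt,\qsep,\sep\) belong to \(\bbullet\), while \(X_{\red}\) is reduced by construction. The required factorization is then the classical universal property of reduction: any \(g\colon Z\to X\) from a reduced \(Z\) factors through the closed immersion \(X_{\red}\hookrightarrow X\), and uniquely so because \(X_{\red}\to X\) is a monomorphism. Since the factoring arrow \(Z\to X_{\red}\) is an \(S\)-morphism between objects of the full subcategory \(\Schb{S}\), it is automatically a morphism of \(\Schb{S}\), which is precisely the condition in the statement.

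For sufficiency I would run the usual uniqueness-of-terminal-objects argument. By necessity the morphism \(r\colon X_{\red}\to X\) satisfies the same universal property as \(f\). Applying the universal property of \(f\) to the reduced object \((X_{\red},r)\) yields a unique \(\alpha\colon X_{\red}\to Y\) with \(f\circ\alpha=r\); applying the universal property of \(r\) to \((Y,f)\) — a step that presupposes \(Y\) is reduced, addressed below — yields a unique \(\beta\colon Y\to X_{\red}\) with \(r\circ\beta=f\). One then gets \(\beta\circ\alpha=\id_{X_{\red}}\) from \(r\circ(\beta\circ\alpha)=f\circ\alpha=r\) together with the fact that \(r\) is a monomorphism, and \(\alpha\circ\beta=\id_{Y}\) from the uniqueness clause in the universal property of \(f\) applied to \((Y,f)\), since both \(\alpha\circ\beta\) and \(\id_{Y}\) are lifts of \(f\) along \(f\). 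Hence \(\alpha,\beta\) are mutually inverse isomorphisms over \(X\), so \(f\cong(X_{\red}\to X)\) in \(\Schb{X}\).

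The step I expect to be the main obstacle is exactly the point where \(\beta\) is produced and where \(\id_{Y}\) is used as a lift: both require \(Y\) itself to be a \emph{reduced} test object. The universal property as stated only controls morphisms \emph{out of} reduced schemes, so a priori it cannot see nilpotents on \(Y\); a non-reduced \(Y\) whose reduction maps compatibly and isomorphically onto \(X_{\red}\) would absorb reduced test objects in precisely the same fashion. Thus the genuine content of the sufficiency direction is to guarantee that \(Y\) is reduced, and this is where I would invoke the category-theoretic detection of reducedness from \autoref{cor: red is cat}: one must know, category-theoretically, that \(Y\) is reduced before the terminal-object argument can be closed. Once reducedness of \(Y\) is secured, the comparison morphisms and the two identities above follow formally and complete the proof.
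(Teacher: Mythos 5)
Your treatment of necessity is correct, and your formal terminal-object argument for sufficiency is the right skeleton; more importantly, you have put your finger on exactly the right pressure point, namely that the displayed universal property only tests morphisms \emph{out of} reduced objects and therefore cannot detect nilpotents on \(Y\). Where the proposal falls short is in the claimed resolution: invoking \autoref{cor: red is cat} tells you that reducedness of \(Y\) is a category-theoretically detectable property, but it gives you no way to \emph{derive} reducedness of \(Y\) from the stated universal property --- and in fact no such derivation exists. Take \(\bbullet=\emptyset\), \(S=X=\Spec(k)\) for a field \(k\), and let \(f:Y=\Spec(k[\epsilon]/(\epsilon^2))\to X\) be the structure morphism. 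For any reduced \(Z\) the only morphism \(g:Z\to X\) is the structure morphism, and its lifts to \(Y\) correspond to square-zero elements of \(\Gamma(Z,\mathcal{O}_Z)\), so the lift is unique (namely \(0\)); thus \(f\) satisfies the displayed condition, yet \(Y\not\cong X_{\red}=X\) over \(X\). Hence the biconditional as literally stated fails whenever \(\red\notin\bbullet\) (when \(\red\in\bbullet\) every object of \(\Schb{S}\) is reduced and the issue disappears), and your proof cannot be completed without modifying the statement.

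The correct repair is the one your own analysis already suggests: add ``\(Y\) is reduced'' as an additional condition on the right-hand side of the equivalence, which costs nothing for the category-theoreticity claim precisely because of \autoref{cor: red is cat} --- presumably the reason that corollary is cited. With that extra hypothesis your argument closes exactly as you wrote it: \(\beta\) exists because \((Y,f)\) is now a legitimate reduced test object for \(X_{\red}\to X\), and the two composites are identities by monicity of \(X_{\red}\to X\) and by the uniqueness clause applied to \((Y,f)\). For what it is worth, the paper offers no proof beyond ``follows immediately from \autoref{cor: red is cat}'', so your proposal is both more detailed than the paper's own argument and has surfaced a genuine imprecision in the statement; but as a blind proof of the statement as written it is incomplete at exactly the step you flagged, and no appeal to \autoref{cor: red is cat} can fill that hole.
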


\begin{proof}
  \autoref{cor: reduction is cat} follows immediately
  from \autoref{cor: red is cat}.
\end{proof}

\section{Local Domains}
\label{section: local domains}

In this section,
we give a category-theoretic characterization of the objects of
\(\Schb{S}\) whose underlying scheme
is isomorphic to the spectrum of a local domain
and a category-theoretic characterization of
specialization and generization relations
between points of a scheme.

\begin{defi}
  Let \(X\) be a scheme and \(x,y\) points of \(X\).
  \begin{itemize}
    \item[(1)]
    We shall say that \(x\) is a \textit{generic point} of \(X\)
    if, for any point \(x'\in X\) such that
    \(x'\rsa x\), it holds that \(x'=x\).
    \item[(2)]
    We shall say that \(x,y\) are \textit{comparable}
    if either \(x\rsa y\) or \(y\rsa x\).
    \item[(3)]
    We shall say that \(X\) is \textit{local}
    if there exists a point \(v\in X\) such that for any \(x'\in X\),
    \(x'\rsa v\).
  \end{itemize}
\end{defi}

\begin{rem}\label{rem: local is aff}
  \
  \begin{enumerate}
    \item
    It is immediate that if \(X\) is local,
    then \(X\) has a unique closed point.
    \item \label{enumi: rem: local is aff}
    If \(X\) is local,
    then every affine open neighborhood of the unique closed point of \(X\) is equal to \(X\).
    Thus \(X\) is local if and only if
    \(X\) is isomorphic to the spectrum of a local ring.
    \item
    It is immediate that a point \(x\in X\) is closed if and only if
    every point \(y\in X\) such that \(x \rsa y\) is equal to \(x\).
  \end{enumerate}
\end{rem}

\begin{defi}\label{defi: local morphism}
  Let \(f:X\to Y\) be a morphism of local schemes.
  Then we shall say that \(f\) is \textit{local} if
  \(f\) maps the unique closed point of \(X\)
  to the unique closed point of \(Y\).
\end{defi}

We will use the following property to give
a category-theoretic characterization of the property ``comparable''.

\begin{defi}\label{defi: str local defi}
  Let \(S\) be a scheme.
  Let \(X\) be an object of \(\Schb{S}\) and
  \(x_1,x_2\in X\).
  Write \(f_1:\Spec(k(x_1)) \to X, f_2:\Spec(k(x_2))\to X\) for
  the natural morphisms determined by the points \(x_1, x_2 \in X\).
  We shall say that the triple \((X,x_1,x_2)\) is
  \textit{strongly local} in \(\Schb{S}\)
  if the following conditions hold:
  \begin{enumerate}
    \item \label{enumi: str loc is connected}
    \(X\) is connected.
    \item \label{enumi: str loc reg mono isom}
    For any regular monomorphism \(f:Z\to X\),
    if \(x_1,x_2\in \mathrm{Im}(f)\), then \(f\) is an isomorphism.
    \item \label{enumi: str loc epi}
    The morphism
    \(f_1\coprod f_2: \Spec(k(x_1)) \coprod \Spec(k(x_2)) \to X\)
    determined by \(f_1\) and \(f_2\)
    is an epimorphism.
    \item \label{enumi: str loc closed pt}
    The morphism \(f_1\) is a regular monomorphism.
    \item \label{enumi: str loc not reg mono}
    For any regular monomorphism \(f:Z\to X\),
    if \(x_1\not\in \mathrm{Im}(f)\) and \(Z\neq \emptyset\),
    then the morphism \(f\coprod f_1: Z\coprod \Spec(k(x_1)) \to X\)
    determined by \(f\) and \(f_1\)
    is \textbf{not} a regular monomorphism.
  \end{enumerate}
  Thus the property that \((X,x_1,x_2)\) is strongly local
  is defined completely in terms of properties that may be
  characterized category-theoretically
  (cf. \autoref{reconstruction: Set},
  \autoref{cor: surj is cat} \ref{enumi: cor: surj is cat x in f},
  \autoref{lem: quotient field},
  \autoref{lem: connected is cat})
  from the data \((\Schb{S},(X,x_1,x_2))\)
  (where we note that by using the result of \autoref{reconstruction: Set}, the datum \((X,x_1,x_2)\) may be regarded as a category-theoretic datum).
\end{defi}

Next, we study basic properties of strongly local triples.
%We note that
%if \(S\) is quasi-separated,
%then any reduced affine schemes over \(S\) belong to \(\Schb{S}\)
%by \autoref{lem: affine over qsep is qcpt}.
%以降では、\(S\)を準分離的とする。
%このとき\autoref{lem: affine over qsep is qcpt}より、どんな被約アフィンスキームも圏\(\Schb{S}\)に属する。

\begin{lem}\label{lem: str local is local domain}
  Let \(S\) be a quasi-separated scheme.
  Let \(X\) be an object of \(\Schb{S}\)
  and \(x_1,x_2\in X\).
  Suppose that \((X,x_1,x_2)\) is strongly local in \(\Schb{S}\).
  Then the following assertions hold:
  \begin{enumerate}
    \item \label{enumi: lem: str local is local domain}
    \(X\) is isomorphic to the spectrum of a local domain.
    \item \label{enumi: lem: str local cl or gen}
    One of \(x_1,x_2\) is the closed point, and
    the other is the generic point.
    In particular,
    \(x_1,x_2\) are comparable.
  \end{enumerate}
\end{lem}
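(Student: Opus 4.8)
The plan is to make two quick topological reductions using only condition \ref{enumi: str loc reg mono isom}, and then to prove that $X$ is local and irreducible by a case analysis on whether $\sep\in\bbullet$, since by \autoref{lem: reg mono} the scheme-theoretic meaning of ``regular monomorphism'' changes across this dichotomy. First I would observe that $X$ is reduced: the canonical morphism $X_{\red}\to X$ is a surjective closed immersion, hence a regular monomorphism by \autoref{lem: reg mono} \ref{enumi: closed imm is reg mono}, whose image contains $x_1,x_2$, so by \ref{enumi: str loc reg mono isom} it is an isomorphism. Applying the same argument to the reduced induced closed subscheme supported on $\overline{\{x_1\}}\cup\overline{\{x_2\}}$ shows $|X|=\overline{\{x_1\}}\cup\overline{\{x_2\}}$. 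Thus it remains to prove that $X$ is local with closed point one of $x_1,x_2$ and generic point the other; granting locality together with this topological decomposition, one of the two closures must be all of $X$, so $X$ is irreducible, and a reduced, irreducible, local scheme is the spectrum of a local domain by \autoref{rem: local is aff} \ref{enumi: rem: local is aff}.

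Consider next the separated case $\sep\in\bbullet$. Here \autoref{lem: reg mono} \ref{enumi: reg mono in sep is closed imm} shows every regular monomorphism is a closed immersion, so condition \ref{enumi: str loc closed pt} forces $\{x_1\}$ to be closed, i.e. $x_1$ is a closed point. I would then exploit \ref{enumi: str loc not reg mono} as follows: for any nonempty closed $F\subseteq X$ with $x_1\notin F$, the reduced subscheme $F_{\red}\to X$ is a regular monomorphism whose image omits $x_1$, while, since $x_1$ is closed, $F_{\red}\coprod\Spec k(x_1)\to X$ is the closed immersion onto the closed set $F\cup\{x_1\}$, hence a regular monomorphism by \ref{enumi: closed imm is reg mono} --- contradicting \ref{enumi: str loc not reg mono}. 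Therefore no nonempty closed subset avoids $x_1$, so $x_1$ lies in the closure of every point; this makes $x_1$ the unique closed point, so $X$ is local, and it places $x_1\in\overline{\{x_2\}}$, giving irreducibility and identifying $x_2$ as the generic point.

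Finally, the non-separated case $\sep\notin\bbullet$, which I expect to be the main obstacle. The key new tool is \autoref{cor: coprod exists} \ref{enumi: cor: coprod exists coprod}: for an affine open $U\subseteq X$ the pushout $X\coprod_U X$ exists in $\Schb{S}$, the two canonical morphisms $X\rightrightarrows X\coprod_U X$ agree exactly on $U$, and so $U\hookrightarrow X$ is their equalizer, i.e. a regular monomorphism. I would first combine this with the epimorphism condition \ref{enumi: str loc epi} to prove irreducibility: if $X$ were reducible, then \ref{enumi: str loc is connected} provides a point $w\in\overline{\{x_1\}}\cap\overline{\{x_2\}}$ distinct from $x_1,x_2$, and a doubling pushout $X\coprod_U X$ along an affine open $U$ containing $x_1,x_2$ but not $w$ yields two morphisms out of $X$ that agree on $x_1,x_2$ yet differ at $w$, contradicting that $f_1\coprod f_2$ is an epimorphism. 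Once $X$ is irreducible, say with generic point $x_2$, every nonempty affine open contains $x_2$; feeding affine opens containing $x_1$ into \ref{enumi: str loc reg mono isom} forces every such open to equal $X$, whence $X=\Spec A$ is affine, and a short computation with units shows that the prime of $x_1$ is the unique maximal ideal of $A$. Thus $X$ is the spectrum of a local domain with closed point $x_1$ and generic point $x_2$.

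The genuinely delicate points, all concentrated in this last case, are: producing the affine open $U$ separating $\{x_1,x_2\}$ from $w$ without any Noetherian or quasi-compactness hypothesis on $X$; verifying via \autoref{lem: coprod property} that the relevant doubling pushouts lie in $\Schb{S}$ and faithfully detect the failure of the epimorphism property; and, throughout, controlling regular monomorphisms and epimorphisms in $\Schb{S}$ simultaneously across both regimes for $\bbullet$. I expect the conditions \ref{enumi: str loc is connected}, \ref{enumi: str loc epi}, and \ref{enumi: str loc not reg mono} to be usable only in combination rather than individually, which is what makes the argument intricate.
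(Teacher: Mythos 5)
Your proposal is correct, and the overall architecture (reduce to reducedness and $|X|=\overline{\{x_1\}}\cup\overline{\{x_2\}}$ via condition \ref{enumi: str loc reg mono isom}, then split on whether $\sep\in\bbullet$) is exactly the paper's; the initial reductions and the entire separated case coincide with the paper's argument almost verbatim. The differences are concentrated in the non-separated case. For irreducibility you and the paper use the same doubling construction; the one detail you flag as delicate — an affine open $U$ containing $x_1,x_2$ but not $w$ — is resolved in the paper by taking $U=U_1\coprod U_2$ with $U_1\ni x_1$ an affine open inside $X\setminus\overline{\{x_2\}}$ and $U_2\ni x_2$ one inside $X\setminus\overline{\{x_1\}}$ (a disjoint union of affines is affine), which needs no Noetherian or quasi-compactness input, so your worry dissolves; the paper uses connectedness to see $U\neq X$ while you use it to produce $w\in\overline{\{x_1\}}\cap\overline{\{x_2\}}$, which amounts to the same thing. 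Where you genuinely diverge is the locality step: the paper runs a \emph{second} doubling $X\coprod_V X$ along an affine neighborhood $V$ of $v$ avoiding a hypothetical point $x$ with $v\notin\overline{\{x\}}$ and again invokes the epimorphism condition \ref{enumi: str loc epi}, whereas you observe that when $\sep\notin\bbullet$ every affine open immersion $V\to X$ is the equalizer of $X\rightrightarrows X\coprod_VX$ (legitimate by \autoref{cor: coprod exists}), hence a regular monomorphism, so condition \ref{enumi: str loc reg mono isom} forces every affine open containing the non-generic point $v$ to equal $X$; the units computation then makes $\Gamma(X,\mathcal{O}_X)$ local. This is shorter, confines the use of \ref{enumi: str loc epi} to the irreducibility step, and yields affineness of $X$ directly rather than via \autoref{rem: local is aff}; note that it only uses the symmetric condition \ref{enumi: str loc reg mono isom}, so your ``say $x_2$ is generic'' is harmless even though the strongly local conditions are not symmetric in $x_1,x_2$. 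The only loose end is the degenerate case $x_1=x_2$, which the paper dispatches first (conditions \ref{enumi: str loc closed pt} and \ref{enumi: str loc reg mono isom} make $f_1$ an isomorphism, so $X$ is the spectrum of a field); your argument in fact still goes through there, but you should say so explicitly since ``the non-generic one of $x_1,x_2$'' is not defined in that case.
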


\begin{proof}
  Write \(f_1:\Spec(k(x_1)) \to X, f_2:\Spec(k(x_2))\to X\) for
  the natural morphisms determined by the points \(x_1, x_2 \in X\).

  Suppose that \(x_1 = x_2\).
  Recall from \autoref{defi: str local defi} \ref{enumi: str loc closed pt} that
  the morphism \(f_1\) is a regular monomorphism.
  Since \(x_1=x_2\), it holds that \(x_2\in \im(f_1)\).
  Hence by \autoref{defi: str local defi} \ref{enumi: str loc reg mono isom},
  the morphism \(f_1\) is an isomorphism.
  Thus \(X\) is isomorphic to the spectrum of a field.
  In particular,
  assertions \ref{enumi: lem: str local is local domain}
  and \ref{enumi: lem: str local cl or gen} hold,
  so in the remainder of the proof of \autoref{lem: str local is local domain},
  we may suppose that \(x_1\neq x_2\).

  By \autoref{lem: reg mono} \ref{enumi: closed imm is reg mono},
  the natural closed immersion
  \(\overline{\left\{ x_1,x_2\right\}}_{\red} \to X\)
  is a regular monomorphism.
  Hence by \autoref{defi: str local defi} \ref{enumi: str loc reg mono isom},
  the regular monomorphism
  \(\overline{\left\{ x_1,x_2\right\}}_{\red} \to X\) is an isomorphism.
  Therefore, the following properties hold:
  \begin{enumerate}[label=(\alph*)]%\renewcommand{\labelenumi}{(\alph{enumi})}
    \item \label{enumi: pf str loc is loc dom X red}
    \(X\) is reduced.
    \item \label{enumi: pf str loc is loc dom X has 2 irred comp}
    \(|X| = \overline{\left\{ x_1\right\}} \cup \overline{\left\{ x_2\right\}}\).
    %\item \label{enumi: pf str loc is loc dom X has 2 irred comp}
    %The generic point of each irreducible component of \(X\) is either \(x_1\) or \(x_2\).
  \end{enumerate}
  In particular, by property \ref{enumi: pf str loc is loc dom X has 2 irred comp},
  the number of irreducible components of \(X\) is at most two.
  %It follows from \(X\) is connected that
  %\(\overline{\left\{ x_1\right\}} \cap \overline{\left\{ x_2\right\}} \neq \emptyset\) holds.
%%「sepが入る場合とはいない場合でわける」的な文章を書きたい

%%%%%%%5/12ここまで

  Suppose that \(\sep\in \bbullet\).
  First, we verify that \(X\) is local, and
  \(x_1\) is the unique closed point of \(X\).
  By \autoref{defi: str local defi} \ref{enumi: str loc closed pt} and
  \autoref{lem: reg mono} \ref{enumi: reg mono in sep is closed imm},
  \(x_1\) is the unique closed point of \(X\).
  Assume that there exists a point \(x\in X\)
  such that \(x_1 \not\in \overline{\left\{ x\right\}}\).
  Then the morphisms \(f: \overline{\left\{ x\right\}}_{\red}\to X\) and
  \(f\coprod f_1 : \overline{\left\{ x\right\}}_{\red} \coprod \Spec (k(x_1))
  \to X\)
  are closed immersions.
  Hence by \autoref{lem: reg mono} \ref{enumi: closed imm is reg mono},
  \(f\) and \(f\coprod f_1\) are regular monomorphisms.
  This contradicts the fact that \((X,x_1,x_2)\) satisfies
  condition \ref{enumi: str loc not reg mono} of
  \autoref{defi: str local defi}.
  Thus \(x_1 \in \overline{\left\{ x\right\}}\) holds for any \(x\in X\),
  i.e., \(X\) is local, and
  \(x_1\) is the unique closed point of \(X\).

  Next, we complete the proof of \autoref{lem: str local is local domain}
  in the case where \(\sep\in \bbullet\).
  Since \(x_1\) is the unique closed point of \(X\),
  it holds that \(x_2 \rsa x_1\).
  Hence by property \ref{enumi: pf str loc is loc dom X has 2 irred comp},
  \(X\) has a unique generic point \(x_2\).
  In particular,
  assertion \ref{enumi: lem: str local cl or gen} holds.
  By property \ref{enumi: pf str loc is loc dom X red},
  this implies that \(X\) is integral.
  In particular,
  assertion \ref{enumi: lem: str local is local domain} holds.
  This completes the proof of
  \autoref{lem: str local is local domain} in the case where \(\sep\in \bbullet\).

  Suppose that \(\sep\not\in \bbullet\).
  First, we verify that \(X\) is irreducible.
  Assume that \(X\) is not irreducible.
  By property \ref{enumi: pf str loc is loc dom X has 2 irred comp},
  \(X\) has precisely two irreducible components,
  and \(x_1,x_2\) are the generic points of
  these two irreducible components.
  Let \(U_1\) be an affine open neighborhood of \(x_1\)
  such that \(U_1 \subset X \setminus \overline{\left\{ x_2 \right\} }\),
  and \(U_2\) an affine open neighborhood of \(x_2\)
  such that \(U_2 \subset X \setminus \overline{\left\{ x_1 \right\} }\).
  By property \ref{enumi: pf str loc is loc dom X has 2 irred comp},
  the equality
  \[U_1\cap U_2 \subset
  (X \setminus \overline{\left\{ x_2 \right\} }) \cap
  (X \setminus \overline{\left\{ x_1 \right\} })
  = X\setminus (\overline{\left\{ x_1 \right\} } \cup
  \overline{\left\{ x_2 \right\}})
  = \emptyset\]
  holds.
  Since \(U_1,U_2\) are affine,
  \(U \dfn U_1 \cup U_2\) is an affine open subset of \(X\).
  Hence by \autoref{cor: coprod exists} \ref{enumi: cor: coprod exists coprod},
  the push-out \(X\coprod_UX\) belongs to \(\Schb{S}\).
  Since \(U_1,U_2\neq \emptyset\) and \(U_1\cap U_2 = \emptyset\),
  \(U\) is not connected.
  By \autoref{defi: str local defi} \ref{enumi: str loc is connected},
  it holds that \(U\neq X\).
  Hence the two natural open immersions \(i_1,i_2:X\rightrightarrows X\coprod_UX\)
  are distinct.
  On the other hand,
  since \(x_1,x_2\) are contained in \(U\),
  the equality
  \(i_1\circ (f_1\coprod f_2) = i_2\circ (f_1\coprod f_2)\) holds,
  i.e., the morphism
  \(f_1\coprod f_2:\Spec (k(x_1))\coprod \Spec (k(x_2)) \to X\)
  is not an epimorphism.
  This contradicts the fact that \((X,x_1,x_2)\) satisfies
  condition \ref{enumi: str loc epi} of \autoref{defi: str local defi}.
  Thus \(X\) is irreducible.
  Write \(\eta\) for the generic point of \(X\).
  By property \ref{enumi: pf str loc is loc dom X has 2 irred comp},
  \(\eta\in \left\{ x_1,x_2\right\}\).

  Next, we complete the proof of \autoref{lem: str local is local domain}
  in the case where \(\sep\not\in \bbullet\).
  Let \(v\) be the unique point of \(X\) such that
  \(\left\{ v,\eta \right\} = \left\{ x_1,x_2\right\}\).
  By properties \ref{enumi: pf str loc is loc dom X red}
  and \ref{enumi: pf str loc is loc dom X has 2 irred comp},
  to complete the proof of \autoref{lem: str local is local domain},
  it suffices to prove that
  the relation \(x\rsa v\) holds for any \(x\in X\).
  Assume that there exists a point \(x\in X\)
  such that \(v\not\in \overline{\left\{ x\right\}}\).
  Then there exists an affine open neighborhood \(V\) of \(v\)
  such that \(x\not\in V\).
  By \autoref{cor: coprod exists} \ref{enumi: cor: coprod exists coprod},
  the push-out \(X\coprod_VX\) belongs to \(\Schb{S}\).
  Since \(V\neq X\),
  the two natural inclusions \(i_1',i_2':X\rightrightarrows X\coprod_VX\)
  are distinct.
  On the other hand,
  since \(\left\{ x_1,x_2\right\} = \left\{ v,\eta\right\} \subset V\),
  the equality \(i_1'\circ (f_1\coprod f_2) = i_2'\circ (f_1\coprod f_2)\) holds,
  i.e., the morphism
  \(f_1\coprod f_2:\Spec (k(x_1))\coprod \Spec (k(x_2)) \to X\)
  is not an epimorphism.
  This contradicts the fact that \((X,x_1,x_2)\) satisfies
  condition \ref{enumi: str loc epi} of \autoref{defi: str local defi}.
  Thus for any \(x\in X\), the relation \(x\rsa v\) holds.
  This completes the proof of \autoref{lem: str local is local domain}
  in the case where \(\sep\not\in \bbullet\).
\end{proof}

%The next corollary is a simple application of the above lemma.

\begin{cor}\label{cor: cl pt or gen pt in str local is cat}
  Let \(S\) be a quasi-separated scheme.
  Let \(X\) be an object of \(\Schb{S}\)
  and \(x_1,x_2,x\in X\).
  Then the property that \((X,x_1,x_2)\) is strongly local in \(\Schb{S}\), and
  \(x\) is either the unique closed point or the unique generic point of \(X\)
  may be characterized category-theoretically
  from the data \((\Schb{S},X,x_1,x_2,x)\).
\end{cor}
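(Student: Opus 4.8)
The plan is to reduce the assertion to two facts that are, in effect, already available: first, that strong locality of the triple $(X,x_1,x_2)$ is itself a category-theoretic condition; and second, that, \emph{under} the hypothesis of strong locality, the unique closed point and unique generic point of $X$ are exhausted by $x_1$ and $x_2$. Once these are in hand, the condition on $x$ collapses to an equality of points in the reconstructed underlying set, and the whole statement follows formally from the functoriality established in \autoref{reconstruction: Set} and \autoref{lem: Pt equiv commute}.

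First I would recall that \autoref{defi: str local defi} already expresses strong locality entirely in terms of conditions (connectedness, membership of a point in an image, regular monomorphisms, epimorphisms, and morphisms isomorphic to the natural morphism from the spectrum of a residue field) each of which has been shown to be category-theoretic, via \autoref{reconstruction: Set}, \autoref{cor: surj is cat}, \autoref{lem: quotient field}, and \autoref{lem: connected is cat}. Hence, for any equivalence $F:\Schb{S}\xrightarrow{\sim}\Schc{T}$, with induced bijection $\rho_X:\mathsf{Pt}_{\bbullet/S}(X)\to\mathsf{Pt}_{\bbbullet/T}(F(X))$ coming from \autoref{lem: Pt equiv commute}, the triple $(X,x_1,x_2)$ is strongly local in $\Schb{S}$ if and only if $(F(X),\rho_X(x_1),\rho_X(x_2))$ is strongly local in $\Schc{T}$. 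In particular, if $(X,x_1,x_2)$ fails to be strongly local, both the property in question and its transport under $F$ are false, so nothing further is needed in that case.

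Next, assuming $(X,x_1,x_2)$ strongly local, I would invoke \autoref{lem: str local is local domain}: $X$ is then the spectrum of a local domain, so it possesses a unique closed point and a unique generic point, and by \autoref{lem: str local is local domain} \ref{enumi: lem: str local cl or gen} these agree with $x_1$ and $x_2$ (in some order, the two points coinciding precisely when $X$ is a field). Consequently the condition ``$x$ is either the unique closed point or the unique generic point of $X$'' is equivalent to the purely set-level condition ``$x=x_1$ or $x=x_2$''. Since $\rho_X$ is a bijection compatible with the identifications of \autoref{reconstruction: Set}, we have $x=x_i \iff \rho_X(x)=\rho_X(x_i)$; combining this with the fact that the closed and generic points of $F(X)$ are likewise $\{\rho_X(x_1),\rho_X(x_2)\}$, the full conjunction is preserved by $F$, which is exactly the desired category-theoretic characterization.

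The only step requiring genuine care — and the closest thing to an obstacle — is this last reduction, which relies essentially on \autoref{lem: str local is local domain} \ref{enumi: lem: str local cl or gen} to guarantee that $\{x_1,x_2\}$ already contains both distinguished points, so that membership in this (at most) two-element set is a faithful category-theoretic surrogate for the geometric condition on $x$. All the real work has been discharged in that lemma; once the identification of the distinguished points with $\{x_1,x_2\}$ is available, the present corollary is a formal consequence of the functorial reconstruction of underlying sets.
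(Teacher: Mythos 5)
Your proposal is correct and follows essentially the same route as the paper, whose proof simply cites \autoref{reconstruction: Set}, \autoref{defi: str local defi}, and \autoref{lem: str local is local domain}: you have merely made explicit the key reduction that, under strong locality, \autoref{lem: str local is local domain} \ref{enumi: lem: str local cl or gen} identifies the set of distinguished points with \(\{x_1,x_2\}\), so the condition on \(x\) becomes a set-level membership statement preserved by the reconstructed underlying-set functor.
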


\begin{proof}
  \autoref{cor: cl pt or gen pt in str local is cat} follows
  formally from
  \autoref{reconstruction: Set},
  \autoref{defi: str local defi}, and
  \autoref{lem: str local is local domain}.
  %It follows from \autoref{lem: str local is local domain} that
  %\((X,x_1,x_2)\) is isomorphic to the spectrum of a local domain,
  %one of \(x_1,x_2\) is the unique closed point of \(X\), and
  %the other is the unique generic point of \(X\).
  %Thus the property that \(x\) is
  %either the unique closed point or the unique generic point of \(X\)
  %is equivalent to the property that either \(x=x_1\) or \(x=x_2\).
\end{proof}

Next, we prove that the triple consisting of the spectrum of a valuation ring,
the unique closed point of the spectrum,
and the unique generic point of the spectrum
is strongly local.

\begin{lem}\label{lem: val is str local}
  Let \(S\) be a quasi-separated scheme.
  Let \(A\) be a valuation ring equipped with
  a structure morphism \(\Spec(A) \to S\),
  \(v\in \Spec(A)\) the unique closed point, and
  \(\eta\in \Spec(A)\) the unique generic point.
  Then \((\Spec(A),v,\eta)\) is strongly local in \(\Schb{S}\).
\end{lem}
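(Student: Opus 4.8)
The goal is to verify that $(\Spec(A), v, \eta)$ satisfies the five conditions of \autoref{defi: str local defi}. I will check them in turn, exploiting the distinguishing feature of a valuation ring: its lattice of ideals (equivalently, its subscheme structure) is totally ordered.

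\begin{proof}[Proof sketch]
We verify conditions \ref{enumi: str loc is connected}--\ref{enumi: str loc not reg mono} of \autoref{defi: str local defi}. Condition \ref{enumi: str loc is connected} is immediate: $\Spec(A)$ is local, hence connected. For condition \ref{enumi: str loc closed pt}, the morphism $f_1 : \Spec(k(v)) \to \Spec(A)$ is the closed immersion cut out by the maximal ideal, so it is a regular monomorphism by \autoref{lem: reg mono} \ref{enumi: closed imm is reg mono}. Condition \ref{enumi: str loc epi} holds because the images of $f_1$ and $f_2$ are $\{v\}$ and $\{\eta\}$, whose union is surjective onto the two-point space $|\Spec(A)|$; a morphism that is surjective on underlying sets with source the spectrum of fields is an epimorphism (this follows by the same argument as in the proof of \autoref{lem: Spec field} \ref{enumi: Spec field epi}, reducing to affine targets and using surjectivity).

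The two substantive conditions are \ref{enumi: str loc reg mono isom} and \ref{enumi: str loc not reg mono}, and here I would invoke \autoref{lem: reg mono} \ref{enumi: reg mono is imm} to replace ``regular monomorphism $f : Z \to X$'' by ``immersion''. For condition \ref{enumi: str loc reg mono isom}, suppose $f : Z \to \Spec(A)$ is an immersion with $v, \eta \in \im(f)$. Since $v$ is the unique closed point and lies in the image, the image is not contained in any proper open subset, so $f$ must be a closed immersion onto a closed subscheme containing both the generic and the closed point. The key point is that the only closed subscheme of $\Spec(A)$ whose support is all of $\Spec(A)$ and which is reduced-compatible is $\Spec(A)$ itself: any closed immersion corresponds to an ideal $I \subseteq A$ with $V(I) = \Spec(A)$, forcing $I \subseteq \sqrt{0}$; since a valuation ring is a domain, $I = 0$ and $f$ is an isomorphism. (Here one uses that $A$ is a domain, so $\Spec(A)$ is already reduced, and the condition $\red \in \bbullet$ causes no trouble.)

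The main obstacle is condition \ref{enumi: str loc not reg mono}, which requires showing that for every immersion $f : Z \to \Spec(A)$ with $v \notin \im(f)$ and $Z \neq \emptyset$, the morphism $f \coprod f_1$ is \emph{not} a regular monomorphism. Here I would argue as follows: since $v \notin \im(f)$, the image of $f$ avoids the closed point, so $\im(f)$ is contained in the open complement $\Spec(A) \setminus \{v\}$, which is $\Spec(\mathrm{Frac}(A)) = \{\eta\}$ because a valuation ring has exactly the two points $v, \eta$ with $\eta$ generic. Thus $\im(f) = \{\eta\}$ and $f$ factors through the generic point. Then $f \coprod f_1$ has image $\{\eta, v\} = |\Spec(A)|$ and is injective on points. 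If it were a regular monomorphism, then by \autoref{lem: reg mono} \ref{enumi: reg mono is imm} it would be an immersion; being bijective on the underlying space of a local scheme, it would be a surjective immersion, hence a closed immersion, and by the argument for condition \ref{enumi: str loc reg mono isom} an isomorphism. But $Z \coprod \Spec(k(v))$ is disconnected whereas $\Spec(A)$ is connected, a contradiction. This disconnectedness-versus-connectedness contradiction is the crux, and it is exactly the valuation-theoretic input that the gap between $\eta$ and $v$ cannot be ``split'' — i.e., there is no intermediate subscheme separating the generic point from the closed point, which is precisely the total-orderedness of the ideals of $A$.
\end{proof}
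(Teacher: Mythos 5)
Your verifications of conditions \ref{enumi: str loc is connected}, \ref{enumi: str loc reg mono isom}, and \ref{enumi: str loc closed pt} of \autoref{defi: str local defi} are correct and essentially agree with the paper. However, your arguments for conditions \ref{enumi: str loc epi} and \ref{enumi: str loc not reg mono} rest on the hidden assumption that \(|\Spec(A)|\) consists of exactly the two points \(v\) and \(\eta\). That is false for valuation rings of rank \(\geq 2\): the primes of a valuation ring are totally ordered, but there may be many of them between \((0)\) and the maximal ideal. The lemma genuinely needs to cover this case, since the paper later applies it to a valuation ring dominating an arbitrary local domain (proof of \autoref{prop: comparable is cat}) and to the composite valuation rings produced by \autoref{lem: 3pt val}, both of which can have arbitrary rank. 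Concretely: in condition \ref{enumi: str loc epi} the morphism \(f_1\coprod f_2\) is \emph{not} surjective when there are intermediate primes, so your surjectivity-based justification collapses; and in condition \ref{enumi: str loc not reg mono} it is not true that \(\im(f)\subseteq\{\eta\}\) (for a rank-two ring with primes \(0\subset\mathfrak{p}\subset\mathfrak{m}\), the open subscheme \(\Spec(A)\setminus\{v\}\) has two points and gives such an \(f\)), so \(f\coprod f_1\) need not be bijective on points and the ``surjective closed immersion, hence isomorphism, contradicting disconnectedness'' chain does not get started. A secondary issue: the principle you invoke for \ref{enumi: str loc epi} --- that a morphism from a disjoint union of field spectra which is surjective on points is an epimorphism --- is false as stated (\(\Spec(k)\to\Spec(k[\varepsilon]/(\varepsilon^2))\) is a counterexample); surjectivity is not the operative property even in the rank-one case.

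The correct mechanisms, which are what the paper uses, are the following. For condition \ref{enumi: str loc epi}: given \(g,h:\Spec(A)\rightrightarrows Y\) agreeing on \(f_1\) and \(f_2\), both factor through an affine open neighborhood of \(g(v)=h(v)\) (because \(\Spec(A)\) is local, every point of the image is a generization of \(g(v)\)), and then agreement at the \emph{generic} point forces \(g^{\#}=h^{\#}\) because \(A\to k(\eta)=\mathrm{Frac}(A)\) is injective --- the domain property, not surjectivity, is what does the work. For condition \ref{enumi: str loc not reg mono}: since \(\Spec(A)\) is local, the only open subset containing \(v\) is all of \(\Spec(A)\), so \(v\in\overline{f(Z)}\) for any nonempty \(Z\); on the other hand \(\{*_{\Spec(k(v))}\}\) is open in \(Z\coprod\Spec(k(v))\) and disjoint from \(Z\), so \(f\coprod f_1\) cannot be a homeomorphism onto its image, hence is not an immersion, hence (by \autoref{lem: reg mono} \ref{enumi: reg mono is imm}) not a regular monomorphism. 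Your ``no intermediate subscheme separating \(\eta\) from \(v\)'' intuition is thus not the right invariant; what is actually used is only that \(\Spec(A)\) is a local integral scheme.
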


\begin{proof}
  Since the spectrum of a local ring is connected,
  \((\Spec(A),v,\eta)\) satisfies condition
  \ref{enumi: str loc is connected} of
  \autoref{defi: str local defi}.

  Next, we verify that \((\Spec(A),v,\eta)\) satisfies condition
  \ref{enumi: str loc reg mono isom} of
  \autoref{defi: str local defi}.
  Let \(f:Z\to X\) be a regular monomorphism such that \(v,\eta \in f(Z)\).
  Then, by \autoref{lem: reg mono} \ref{enumi: reg mono is imm},
  \(f\) is an immersion, i.e.,
  \(f\) is the composite of a closed immersion with an open immersion.
  Moreover, since \(v,\eta\in f(Z)\),
  the immersion \(f\) is a homeomorphism,
  i.e., \(f\) is a surjective closed immersion.
  Furthermore,
  since \(A\) is reduced,
  the surjective closed immersion \(f\) is an isomorphism.
  This completes the proof that
  \((\Spec(A),v,\eta)\) satisfies condition
  \ref{enumi: str loc reg mono isom} of
  \autoref{defi: str local defi}.

  Next, we verify that \((\Spec(A),v,\eta )\) satisfies condition
  \ref{enumi: str loc epi} of
  \autoref{defi: str local defi}.
  Let \(f,g:X\rightrightarrows Y\) be two morphisms
  such that \(f|_v = g|_v , f|_{\eta} = g|_{\eta}\).
  It suffices to prove that \(f = g\).
  Let \(K\) be the field of fractions of \(A\) and
  \(i^{\#}:A\to K\) the natural injection.
  It follows from the definition of \(v\) that
  \(f\) and \(g\) factor through an affine open neighborhood
  \(V = \Spec(B)\) of \(f(v) = g(v)\).
  Hence, we may assume that \(Y = \Spec(B)\) is affine.
  Since \(f|_{\eta} = g|_{\eta}\),
  the equality \(i^{\#}\circ f^{\#} = i^{\#}\circ g^{\#}\) holds.
  Since \(i^{\#}\) is injective,
  we conclude that \(f^{\#} = g^{\#}\),
  hence that \(f=g\).
  This completes the proof that
  \((\Spec(A),v,\eta)\) satisfies condition
  \ref{enumi: str loc epi} of
  \autoref{defi: str local defi}.

  Next, by \autoref{lem: reg mono} \ref{enumi: closed imm is reg mono},
  the closed immersion \(\Spec(k(v)) \to \Spec(A)\) is a regular monomorphism,
  i.e., \((\Spec(A),v,\eta)\) satisfies condition
  \ref{enumi: str loc closed pt} of
  \autoref{defi: str local defi}.

  Finally, we verify that \((\Spec(A),v,\eta)\) satisfies condition
  \ref{enumi: str loc not reg mono} of
  \autoref{defi: str local defi}.
  Let \(f:Z\to X\) be a regular monomorphism such that
  \(v\not\in f(Z)\) and \(Z\neq \emptyset\).
  By \autoref{lem: reg mono} \ref{enumi: reg mono is imm},
  it suffices to prove that
  \(g:Z\coprod \Spec(k(v)) \to X\) is not an immersion.
  Since \(Z\neq \emptyset\) and \(X\) is local,
  any open neighborhood of \(v\) intersects \(f(Z)\).
  On the other hand,
  the open neighborhood \(\left\{*_{\Spec(k(v))}\right\}\)
  of the point \(*_{\Spec(k(v))}\) in \(Z\coprod \Spec(k(v))\)
  does not intersect \(Z\).
  Thus \(g\) is not an immersion.
  This completes the proof of \autoref{lem: val is str local}.
\end{proof}

By applying the theory of strongly local triples developed thus far,
we conclude a category-theoretic characterization
of comparable pairs of points of a scheme.

\begin{prop}\label{prop: comparable is cat}
  Let \(S\) be a quasi-separated scheme.
  Let \(X\) be an object of \(\Schb{S}\) and
  \(x_1,x_2\in X\).
  Then \(x_1,x_2\) are comparable if and only if
  there exist an object \(Z\),
  points \(z_1,z_2\in Z\),
  and a morphism \(f:Z\to X\)
  such that \((Z,z_1,z_2)\) is strongly local  in \(\Schb{S}\)
  and \(\left\{ f(z_1),f(z_2)\right\} = \left\{ x_1,x_2\right\}\).
  In particular,
  the property that \(x_1,x_2\in X\) are comparable
  may be characterized category-theoretically
  (cf. \autoref{reconstruction: Set},
  \autoref{defi: str local defi})
  from the data \((\Schb{S},X,x_1,x_2)\).
\end{prop}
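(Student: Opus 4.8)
The plan is to establish the two implications separately: the reverse implication is essentially formal, whereas the forward implication rests on a valuation-theoretic construction.

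For the reverse implication, suppose $(Z,z_1,z_2)$ is strongly local in $\Schb{S}$ and $f:Z\to X$ satisfies $\{f(z_1),f(z_2)\}=\{x_1,x_2\}$. By \autoref{lem: str local is local domain} \ref{enumi: lem: str local cl or gen}, the points $z_1,z_2$ are comparable. Since the underlying map of $f$ is continuous and continuous maps preserve the specialization relation $\rsa$, the images $f(z_1),f(z_2)$ are again comparable (trivially so when they coincide). As $\{f(z_1),f(z_2)\}=\{x_1,x_2\}$, this shows that $x_1,x_2$ are comparable.

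For the forward implication, suppose $x_1,x_2$ are comparable; interchanging $x_1$ and $x_2$ if necessary, I would assume $x_2\rsa x_1$, i.e. $x_1\in\overline{\{x_2\}}$. First I would form the reduced induced closed subscheme $W\dfn\overline{\{x_2\}}_{\red}$, which is an integral scheme whose generic point is $x_2$ and whose function field is $k(x_2)$, and which contains $x_1$; the local ring $\mathcal{O}_{W,x_1}$ is then a local domain with fraction field $k(x_2)$. Next I would invoke the standard existence theorem for dominating valuation rings to choose a valuation ring $A$ of $k(x_2)$ dominating $\mathcal{O}_{W,x_1}$. The inclusions $\mathcal{O}_{W,x_1}\hookrightarrow A\hookrightarrow k(x_2)$ induce a morphism of $S$-schemes $f:\Spec(A)\to\Spec(\mathcal{O}_{W,x_1})\to W\hookrightarrow X$ whose generic point $\eta$ maps to $x_2$ and whose closed point $v$ maps to $x_1$, so that $\{f(v),f(\eta)\}=\{x_1,x_2\}$. (When $x_1=x_2$ this degenerates harmlessly: $\mathcal{O}_{W,x_1}=k(x_2)$ is already a field, $A=k(x_2)$, and $\Spec(A)$ is a single point with $v=\eta$.) To conclude I would check that $\Spec(A)$ is an object of $\Schb{S}$: since $A$ is a domain, $\Spec(A)$ is reduced and affine, and since $S$ is quasi-separated, \autoref{lem: affine over qsep is qcpt} shows its structure morphism to $S$ is quasi-compact, separated, and quasi-separated. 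Finally \autoref{lem: val is str local} shows that $(\Spec(A),v,\eta)$ is strongly local in $\Schb{S}$, furnishing the required witness; the closing category-theoretic assertion then follows because strong locality is itself a category-theoretic condition and the equality of images is reconstructible via \autoref{reconstruction: Set}.

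The hard part will be the forward implication: a purely topological argument does not suffice, since the \emph{category-theoretic} notion of strong locality must be realized by an honest object of $\Schb{S}$, and the only supply of strongly local objects available to us is \autoref{lem: val is str local}. Thus the crux is the valuation-theoretic step of dominating $\mathcal{O}_{W,x_1}$ by a valuation ring whose fraction field is exactly $k(x_2)$, arranged so that its two distinguished points map to $x_1$ and $x_2$, together with the verification that the resulting spectrum lies in $\Schb{S}$.
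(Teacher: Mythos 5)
Your proposal is correct and follows essentially the same route as the paper: reduce to the spectrum of a local domain with closed point $x_1$ and generic point $x_2$ (you make explicit, via $\overline{\{x_2\}}_{\red}$ and localization, the reduction the paper states as a ``without loss of generality''), dominate it by a valuation ring, and invoke \autoref{lem: val is str local} together with \autoref{lem: affine over qsep is qcpt} for membership in $\Schb{S}$; the reverse implication via continuity and \autoref{lem: str local is local domain} matches the paper's argument as well.
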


\begin{proof}
  First, we prove necessity.
  By \autoref{lem: affine over qsep is qcpt}
  \ref{enumi: lem: aff over qsep is qcpt qc}
  \ref{enumi: lem: aff over qsep is qcpt sep},
  we may assume without loss of generality that
  \(X\) is the spectrum of a local domain \(A\)
  whose unique closed point is \(x_1\), and
  whose unique generic point is \(x_2\).
  Write \(K\) for the fraction field of \(A\).
  Let \(C\subset K\) be a valuation ring
  that dominates \(A\)
  (cf. \cite[\href{https://stacks.math.columbia.edu/tag/00IA}{Tag 00IA}]{stacks-project}),
  \(Z\dfn \Spec(C)\),
  \(z_1\in Z\) the unique closed point,
  \(z_2\in Z\) the unique generic point, and
  \(f:Z\to X\) the morphism induced by
  \(A \subset C\).
  By \autoref{lem: affine over qsep is qcpt}
  \ref{enumi: lem: aff over qsep is qcpt qc}
  \ref{enumi: lem: aff over qsep is qcpt sep},
  \(f\) belongs to \(\Schb{S}\).
  Moreover,
  by \autoref{lem: val is str local},
  \((Z,z_1,z_2)\) is strongly local.
  This completes the proof of necessity.

  Next, we prove sufficiency.
  Let \((Z,z_1,z_2)\) be strongly local in \(\Schb{S}\)
  and \(f:Z\to X\) a morphism such that
  \(\left\{ f(z_1),f(z_2)\right\} = \left\{ x_1,x_2 \right\}\).
  We may assume without loss of generality that
  \(f(z_1) = x_1, f(z_2) = x_2\).
  %It suffices to prove that if \(z_2\rsa z_1\), then \(x_2\rsa x_1\).
  Suppose that \(z_2\rsa z_1\).
  Then
  \[x_1 = f(z_1) \in f(\overline{\left\{ z_2\right\}})
  \subset \overline{\left\{ f(z_2)\right\}} = \overline{\left\{ x_2\right\}}.\]
  Thus \(x_2 \rsa x_1\) holds.
  Suppose that \(z_1\rsa z_2\).
  Then
  \[x_2 = f(z_2) \in f(\overline{\left\{ z_1\right\}})
  \subset \overline{\left\{ f(z_1)\right\}} = \overline{\left\{ x_1\right\}}.\]
  Thus \(x_1 \rsa x_2\) holds.
  This completes the proof of \autoref{prop: comparable is cat}.
\end{proof}

%%%%%%%%%%5/19ここまで

To give a category-theoretic characterization of local objects of \(\Schb{S}\),
we define the following property:

\begin{defi}\label{defi: valuative}
  Let \(S\) be a quasi-separated scheme.
  Let \(X\) be an object of \(\Schb{S}\)
  and \(x_1,x_2\in X\).
  We shall say that
  the triple \((X,x_1,x_2)\) is \textit{valuative} in \(\Schb{S}\)
  if the following conditions hold:
  \begin{enumerate}
    \item \label{enumi: defi: valuative comparable}
    Any two points of \(X\) are comparable.
    \item \label{enumi: defi: valuative str loc}
    \((X,x_1,x_2)\) is strongly local in \(\Schb{S}\).
  \end{enumerate}
  Thus the property that \((X,x_1,x_2)\) is valuative in \(\Schb{S}\)
  is defined completely in terms of properties that may be
  characterized category-theoretically
  (cf. \autoref{defi: str local defi}, \autoref{prop: comparable is cat})
  from the data \((\Schb{S},X,x_1,x_2)\).
\end{defi}

\begin{rem}\label{rem: valuative remark}
  \
  \begin{enumerate}
    \item \label{enumi: rem: val rem cat char}
    By \autoref{cor: cl pt or gen pt in str local is cat},
    if the triple \((X,x_1,x_2)\) is valuative,
    then for \(x\in X\),
    the property that \(x\) is either the closed point or the generic point
    may be characterized category-theoretically
    from the data \((\Schb{S},X,x_1,x_2,x)\).
    %for a valuative triple \((X,x_1,x_2)\) and \(x\in X\),
    %\(x\) is either the unique closed point
    %or the unique generic point if and only if
    %\(x\in \left\{ x_1,x_2\right\}\).
    %In particular,
    %the property that
    %\(x\) is either the closed point or the generic point
    %may be characterized category-theoretically
    %from the data \((\Schb{S},X,x_1,x_2,x)\).
    \item \label{enumi: rem: val rem val is val}
    It follows immediately from \autoref{lem: val is str local} that
    the triple \((\Spec(A), v, \eta)\) consisting of
    the spectrum of a valuation ring \(A\),
    the unique closed point \(v\in \Spec(A)\), and
    the unique generic point \(\eta \in \Spec(A)\)
    is a valuative triple in \(\Schb{S}\).
  \end{enumerate}
\end{rem}

The next lemma will play an important role in
our category-theoretic characterization of the objects of \(\Schb{S}\)
whose underlying scheme is irreducible local
(cf. \autoref{prop: irred local is cat}).

\begin{lem}\label{lem: 3pt val}
  Let \(X\) be a scheme and
  \(x_1,x_2,x_3\in X\).
  Suppose that \(x_3 \rsa x_2 \rsa x_1\).
  Then there exist a triple \((\Spec(V), v, \eta)\)
  and a morphism \(f:\Spec(V) \to X\) such that
  \(\Spec(V)\) is the spectrum of a valuation ring \(V\),
  \(v\in \Spec(V)\) is the unique closed point,
  \(\eta \in \Spec(V)\) is the unique generic point,
  \(x_2\in f(\Spec(V)), f(\eta) = x_3\), and
  \(f(v) = x_1\).
\end{lem}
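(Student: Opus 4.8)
The plan is to reduce to an affine situation, and then to realize the two-step specialization by a single valuation ring obtained as a \emph{composite} of two valuation rings.

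First I would reduce to the affine case. Since \(x_3\rsa x_2\rsa x_1\) gives \(x_1\in\overline{\left\{x_3\right\}}\), every open neighborhood of \(x_1\) contains \(x_3\), and likewise contains \(x_2\); hence any affine open neighborhood \(\Spec(R)\) of \(x_1\) contains all three points, and we may replace \(X\) by \(\Spec(R)\). Writing \(x_i=\mathfrak{p}_i\), the specialization relations become \(\mathfrak{p}_3\subseteq\mathfrak{p}_2\subseteq\mathfrak{p}_1\). Localizing at \(\mathfrak{p}_1\) and passing to the quotient by \(\mathfrak{p}_3\), set \(A\dfn R_{\mathfrak{p}_1}/\mathfrak{p}_3R_{\mathfrak{p}_1}\); this is a local domain with fraction field \(K=k(x_3)\), maximal ideal \(\mathfrak{m}\) corresponding to \(x_1\), and an intermediate prime \(\mathfrak{q}\) corresponding to \(x_2\), with \((0)\subseteq\mathfrak{q}\subseteq\mathfrak{m}\).

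Next I would build the valuation ring in two stages. By the existence of dominating valuation rings (cf. \cite[\href{https://stacks.math.columbia.edu/tag/00IA}{Tag 00IA}]{stacks-project}), choose a valuation ring \(W\subseteq K\) dominating the local domain \(A_{\mathfrak{q}}\); then the generic point of \(\Spec(W)\) lies over \(x_3\), while the center of \(W\) on \(A\) is \(\mathfrak{q}\), i.e.\ over \(x_2\). Domination yields an embedding \(k(x_2)\hookrightarrow\kappa(W)\) of residue fields, so the local domain \(A/\mathfrak{q}\) (with fraction field \(k(x_2)\) and maximal ideal \(\mathfrak{m}/\mathfrak{q}\) over \(x_1\)) sits inside \(\kappa(W)\); again invoking the same existence result, choose a valuation ring \(W'\subseteq\kappa(W)\) dominating \(A/\mathfrak{q}\). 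I would then form the composite \(V\dfn\pi^{-1}(W')\), where \(\pi:W\to\kappa(W)\) is the residue map; by the standard theory of composition of valuation rings (cf. \cite{stacks-project}), \(V\) is a valuation ring of \(K\) with \(V\subseteq W\), \(V_{\mathfrak{m}_W}=W\), and \(V/\mathfrak{m}_W\cong W'\). Letting \(f:\Spec(V)\to\Spec(A)\to X\) be the induced morphism, with generic point \(\eta\) and closed point \(v\), I would verify: \(f(\eta)=x_3\), since \((0)\subseteq V\) contracts to \((0)\subseteq A\); the prime \(\mathfrak{m}_W\) of \(V\) contracts to the center \(\mathfrak{q}\) of \(W\), so \(x_2=f(\mathfrak{m}_W)\in f(\Spec(V))\); and \(\mathfrak{m}_V=\pi^{-1}(\mathfrak{m}_{W'})\) contracts, via domination of \(A/\mathfrak{q}\) by \(W'\), to \(\mathfrak{m}\), so \(f(v)=x_1\).

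The main obstacle is precisely the requirement that \(x_2\) lie in the image of \(f\): a single valuation ring dominating \(A\) and realizing the specialization \(x_3\rsa x_1\) need not have \(x_2\) in the image of its spectrum, since that image may ``skip over'' the intermediate prime \(\mathfrak{q}\). The composite construction is exactly what forces a prime of \(V\) (namely \(\mathfrak{m}_W\)) to lie over \(x_2\), so the real care lies in checking that \(\Spec(V)\) maps onto the required flag \(x_3\rsa x_2\rsa x_1\), rather than in the existence of the individual valuation rings, which is routine.
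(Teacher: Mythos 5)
Your proposal is correct and follows essentially the same route as the paper: reduce to a local domain with the three primes forming a chain, dominate the localization at the middle prime by a valuation ring \(W\) of \(k(x_3)\), dominate the quotient by the middle prime inside the residue field of \(W\), and take the composite \(V=\pi^{-1}(W')\). Your verification of the image conditions is a little more explicit than the paper's, but the construction is identical.
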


\begin{proof}
  To prove the assertion,
  we may assume without loss of generality that
  %The natural morphism
  %\(i:\Spec(\mathcal{O}_{X,x_1}/\mathfrak{m}_{X,x_3})\to X\)
  %satisfies the following conditions:
  %\begin{itemize}
    %\item
    %\(i\) maps the unique generic point to \(x_3\).
    %\item
    %\(i\) maps the unique closed point to \(x_1\).
    %\item
    %\(x_2\in \mathrm{Im}(i)\).
  %\end{itemize}
  %Thus, we may suppose that
  \(X = \Spec(A)\) is the spectrum of a local domain \(A\),
  \(x_1\in X\) is the unique closed point, and
  \(x_3\in X\) is the unique generic point.

  Let \(\mathfrak{p}\) be the prime ideal of \(A\)
  which corresponds to \(x_2 \in \Spec(A)\),
  \(V_1\) a valuation ring
  which dominates \(A_\mathfrak{p}\) in \(k(x_3)\),
  \(\mathfrak{n}_1\) the maximal ideal of \(V_1\), and
  \(p:V_1\to V_1 /\mathfrak{n}_1\) the natural surjection.
  Since \(V_1\) dominates \(A_\mathfrak{p}\) in \(k(x_3)\),
  the natural morphism \(j:A/\mathfrak{p} \to V_1/\mathfrak{n}_1\) is injective.
  Let \(V_2\) be a valuation ring
  which dominates \(A/\mathfrak{p}\) in the field \(V_1/\mathfrak{n}_1\).
  Then \(V \dfn p^{-1}(V_2)\) is the desired valuation ring.
  This completes the proof of \autoref{lem: 3pt val}.
\end{proof}

Next, we give a category-theoretic characterization of the objects of \(\Schb{S}\)
whose underlying scheme is irreducible local.

\begin{prop}\label{prop: irred local is cat}
  Let \(S\) be a quasi-separated scheme.
  Let \(X\) be an object of \(\Schb{S}\).
  Assume that \(|X|\) is not of cardinality \(1\).
  Then \(X\) is irreducible local if and only if
  there exist points \(x_1\neq x_2 \in X\) such that
  the triple \((X,x_1,x_2)\) satisfies
  the following conditions:
  \begin{enumerate}
    % \item \label{enumi: irred local 1}
    % \(x_1,x_2\) are comparable.
    \item \label{enumi: irred local 2}
    For any point \(x\in X\), \(x,x_1\) and \(x,x_2\) are comparable.
    \item \label{enumi: irred local 3}
    For any valuative triple \((Y,y_1,y_2)\) and any morphism \(f:Y\to X\),
    if \(x_1\in f(Y)\), then \(x_1 \in \left\{ f(y_1),f(y_2)\right\}\).
    \item \label{enumi: irred local 4}
    For any valuative triple \((Y,y_1,y_2)\) and any morphism \(f:Y\to X\),
    if \(x_2 \in f(Y)\), then \(x_2 \in \left\{ f(y_1),f(y_2)\right\}\).
  \end{enumerate}
  In particular,
  the property that \(X\) is irreducible local
  may be characterized category-theoretically
  (cf. \autoref{lem: 1pt},
  \autoref{reconstruction: Set},
  \autoref{cor: surj is cat} \ref{enumi: cor: surj is cat x in f},
  \autoref{prop: comparable is cat})
  from the data \((\Schb{S},X)\).
  Furthermore, the property that
  \(X\) is irreducible local, and
  \(x\in X\) is either the closed point or the generic point holds
  if and only if
  there exist points \(x_1,x_2 \in X\) such that
  \(x\in \left\{ x_1,x_2\right\}\) and
  the triple \((X,x_1,x_2)\) satisfies the above conditions
  % \ref{enumi: irred local 1},
  \ref{enumi: irred local 2},
  \ref{enumi: irred local 3},
  \ref{enumi: irred local 4}.
  In particular,
  the property that
  \(X\) is irreducible local, and
  \(x\in X\) is either the closed point or the generic point
  may be characterized category-theoretically
  (cf. \autoref{lem: 1pt},
  \autoref{reconstruction: Set},
  \autoref{cor: surj is cat} \ref{enumi: cor: surj is cat x in f},
  \autoref{prop: comparable is cat})
  from the data \((\Schb{S},X,x)\).
\end{prop}

\begin{proof}
  First, we prove necessity.
  Suppose that \(X\) is irreducible local, and
  \(|X|\) is not of cardinality \(1\).
  Let \(x_1\in X\) be the unique closed point and
  \(x_2\in X\) the unique generic point.
  Then it follows immediately that
  % the triple \((X,x_1,x_2)\) satisfies conditions
  % \ref{enumi: irred local 1} and \ref{enumi: irred local 2}.
  the triple \((X,x_1,x_2)\) satisfies condition \ref{enumi: irred local 2}.

  Next, we prove that \(X\) satisfies condition \ref{enumi: irred local 3}.
  Let \((Y,y_1,y_2)\) be a valuative triple and
  \(f:Y\to X\) a morphism such that \(x_1\in f(Y)\).
  Then there exists a point \(y\in Y\) such that \(f(y) = x_1\).
  Since either \(y_1\in Y\) or \(y_2 \in Y\)
  is the unique closed point of \(Y\)
  (cf. \autoref{lem: str local is local domain}
  \ref{enumi: lem: str local cl or gen}),
  either \(y\rsa y_1\) or \(y \rsa y_2\) holds.
  Hence either \(x_1 \rsa f(y_1)\) or \(x_1 \rsa f(y_2)\) holds.
  Since \(x_1\) is the unique closed point of \(X\),
  either \(f(y_1) = x_1\) or \(f(y_2) = x_1\) holds.
  Thus \(X\) satisfies condition \ref{enumi: irred local 3}.

  Next, we prove that \(X\) satisfies condition \ref{enumi: irred local 4}.
  Let \((Y,y_1,y_2)\) be a valuative triple and
  \(f:Y\to X\) a morphism such that \(x_2\in f(Y)\).
  Then there exists a point \(y\in Y\) such that \(f(y) = x_2\).
  Since either \(y_1\in Y\) or \(y_2 \in Y\)
  is the unique generic point of \(Y\)
  (cf. \autoref{lem: str local is local domain}
  \ref{enumi: lem: str local cl or gen}),
  either \(y_1\rsa y\) or \(y_2 \rsa y\) holds.
  Hence either \(f(y_1) \rsa x_2\) or \(f(y_2) \rsa x_2\) holds.
  Since \(x_2\) is the unique generic point of \(X\),
  either \(f(y_1) = x_2\) or \(f(y_2) = x_2\) holds.
  Thus \(X\) satisfies condition \ref{enumi: irred local 4}.
  %Since either \(y_1\) or \(y_2\) is the unique generic point of \(Y\),
  %by the same way,
  %we may prove that \(X\) satisfies condition \ref{enumi: irred local 4}.
%  Assume that \(\eta \in f(Y)\).
%  It suffices to prove that \(\eta = f(y_2)\).
%  Since \(\eta \in f(Y)\), there exists a point \(y'\in Y\) such that \(f(y') = \eta\).
%  Since \(y_2\) is the generic point of \(Y\),
%  \(y_2\rsa y'\) holds.
%  Hence \(f(y_2) \rsa f(y')= \eta\) holds.
%  Since \(\eta\) is closed point, \(f(y_2)=\eta\).
  This completes the proof of necessity.

  Next, we prove sufficiency.
  Suppose that
  \(|X|\) is not of cardinality \(1\), and
  there exist points \(x_1\neq x_2 \in X\) such that
  the triple \((X,x_1,x_2)\) satisfies conditions
  % \ref{enumi: irred local 1},
  \ref{enumi: irred local 2},
  \ref{enumi: irred local 3},
  \ref{enumi: irred local 4}.
  Then since the triple \((X,x_2,x_1)\) satisfies conditions
  % \ref{enumi: irred local 1},
  \ref{enumi: irred local 2},
  \ref{enumi: irred local 3},
  \ref{enumi: irred local 4},
  we may assume without loss of generality that \(x_2 \rsa x_1\).

  Since \(X\) satisfies condition \ref{enumi: irred local 2},
  if \(x_1\) is a closed point of \(X\),
  then any point of \(X\) is a generalization of \(x_1\).
  Hence to prove that \(X\) is local,
  it suffices to prove that \(x_1\) is a closed point of \(X\).
  Suppose that there exists a point \(x \in X\)
  such that \(x_2 \rsa x_1 \rsa x\).
  By \autoref{lem: 3pt val},
  there exists a triple \((\Spec(W),w,\zeta)\) consisting of
  the spectrum of a valuation ring \(W\),
  the unique closed point \(w\in \Spec(W)\), and
  the unique generic point \(\zeta \in \Spec(W)\),
  together with a morphism \(f:\Spec(W) \to X\),
  such that \(f(\zeta) = x_2, f(w) = x\), and \(x_1 \in f(\Spec V)\).
  By \autoref{lem: affine over qsep is qcpt}
  \ref{enumi: lem: aff over qsep is qcpt qc}
  \ref{enumi: lem: aff over qsep is qcpt sep},
  and \autoref{rem: valuative remark} \ref{enumi: rem: val rem val is val},
  the morphism \(f:\Spec(W)\to X\) belongs to \(\Schb{S}\), and
  the triple \((\Spec(W),w,\zeta)\) is valuative in \(\Schb{S}\).
  Since \(X\) satisfies condition \ref{enumi: irred local 3},
  it holds that
  \(x_1 \in \left\{ f(w),f(\zeta)\right\} = \left\{ x,x_2\right\}\).
  Since \(x_1\neq x_2\), this proves that \(x_1 = x\).
  Thus \(X\) is local, and
  \(x_1\) is the unique closed point of \(X\).

  Since \(X\) satisfies condition \ref{enumi: irred local 2},
  if \(x_2\) is a generic point of \(X\),
  then any point of \(X\) is a specialization of \(x_2\).
  Hence to prove that \(X\) is irreducible,
  it suffices to prove that \(x_2\) is a generic point of \(X\).
  Suppose that there exists a point \(\xi \in X\)
  such that \(\xi \rsa x_2 \rsa x_1\).
  By \autoref{lem: 3pt val},
  there exists a triple \((\Spec(W'),w',\zeta')\) consisting of
  the spectrum of a valuation ring \(W'\),
  the closed point \(w'\in \Spec(W')\), and
  the generic point \(\zeta' \in \Spec(W')\),
  together with a morphism \(f':\Spec(W') \to X\),
  such that \(f(\zeta') = \xi, f(w') = x_1\), and \(x_2 \in f(\Spec(W'))\).
  By \autoref{lem: affine over qsep is qcpt}
  \ref{enumi: lem: aff over qsep is qcpt qc}
  \ref{enumi: lem: aff over qsep is qcpt sep},
  and \autoref{rem: valuative remark} \ref{enumi: rem: val rem val is val},
  the morphism \(f':\Spec(W')\to X\) belongs to \(\Schb{S}\), and
  the triple \((\Spec(W'),w',\zeta')\) is valuative in \(\Schb{S}\).
  Since \(X\) satisfies condition \ref{enumi: irred local 4},
  it holds that
  \(x_2 \in \left\{ f(\zeta'),f(w')\right\} = \left\{ \xi,x_1 \right\}\).
  Since \(x_1\neq x_2\), this proves that \(x_2 = \xi\).
  Thus \(X\) is irreducible, and
  \(x_2\) is the unique generic point of \(X\).
  This completes the proof of sufficiency.

  The remaining assertions of \autoref{prop: irred local is cat} are immediate.
  This completes the proof of \autoref{prop: irred local is cat}.
\end{proof}

%%%%%6/9ここまで

To give a category-theoretic characterization of
specialization and generization relations between points of a scheme,
we first give a category-theoretic characterization of
the generic point of an object
that appears as the first member of a valuative triple.

\begin{prop}\label{prop: val generic pt}
  Let \(S\) be a quasi-separated scheme.
  Let \((X,x_1,x_2)\) be a valuative triple in \(\Schb{S}\)
  and \(\eta\in X\).
  Then \(\eta\in X\) is the generic point if and only if
  the following conditions hold:
  \begin{enumerate}
    \item \label{enumi: val generic pt x or eta}
    \(\eta\in \left\{x_1,x_2\right\}\).
    \item \label{enumi: gen pt epi}
    For any irreducible local object \(Y\)
    and any morphisms \(f,g:X\rightrightarrows Y\),
    if \(f|_\eta = g|_\eta\) then \(f=g\).
  \end{enumerate}
  In particular,
  the property that \((X,x_1,x_2)\) is valuative in \(\Schb{S}\), and
  \(\eta\in X\) is the generic point,
  may be characterized category-theoretically
  (cf. \autoref{lem: quotient field}, \autoref{prop: irred local is cat})
  from the data \((\Schb{S},X,x_1,x_2,\eta)\).
\end{prop}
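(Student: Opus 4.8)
The plan is to reduce the statement to commutative algebra by means of \autoref{lem: str local is local domain}: that result lets me write $X \cong \Spec A$ for a local domain $A$, with $\{x_1,x_2\}$ equal to the pair consisting of the closed point $v$ (the maximal ideal $\mathfrak m$) and the generic point $\eta_X$ (the zero ideal, since $A$ is a domain). I set $K = \mathrm{Frac}(A)$ and $\kappa = A/\mathfrak m$. Since an irreducible local object is, by \autoref{rem: local is aff}, the spectrum of a local ring and in particular affine, any such target may be written $Y = \Spec B$; a morphism $X \to Y$ is then a ring homomorphism $B \to A$, and $f|_\eta$ is the composite $B \to A \to k(\eta)$.

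For necessity I would assume $\eta = \eta_X$. Condition \ref{enumi: val generic pt x or eta} is then immediate from \autoref{lem: str local is local domain} \ref{enumi: lem: str local cl or gen}. For condition \ref{enumi: gen pt epi}, I use that $k(\eta_X) = K$ and that the generic point corresponds to the inclusion $A \hookrightarrow K$: if $f,g : X \rightrightarrows \Spec B$ satisfy $f|_\eta = g|_\eta$, then the two composites $B \to A \hookrightarrow K$ agree, and injectivity of $A \hookrightarrow K$ forces the ring maps $B \to A$ to coincide, i.e. $f = g$. Note that irreducibility of the target plays no role here; affineness, which locality already supplies, is all that is used.

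For sufficiency I would assume \ref{enumi: val generic pt x or eta} and \ref{enumi: gen pt epi}. By \ref{enumi: val generic pt x or eta} and \autoref{lem: str local is local domain} \ref{enumi: lem: str local cl or gen}, $\eta$ is either $v$ or $\eta_X$, so it suffices to rule out $\eta = v \neq \eta_X$. Assuming this, $A$ is not a field, so I may fix $t \in \mathfrak m$ with $t \neq 0$. The decisive step is to produce an \emph{irreducible} local test object together with two morphisms from $X$ that agree at $v$ but are distinct: I take $B = A[x]_{(\mathfrak m, x)}$, the localization of $A[x]$ at the maximal ideal $(\mathfrak m, x)$, so that $B$ is a local domain and $Y = \Spec B$ is irreducible local; being affine over the quasi-separated $S$ it belongs to $\Schb{S}$ by \autoref{lem: affine over qsep is qcpt}, and it is reduced since $B$ is a domain. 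I then let $f^\#, g^\# : B \to A$ be the identity on $A$ and send $x \mapsto 0$, respectively $x \mapsto t$; each carries $(\mathfrak m, x)$ into $\mathfrak m$, hence is a well-defined local homomorphism on the localization. Both composites $B \to A \to \kappa$ are then the residue map of the local ring $B$, so $f|_v = g|_v$, whereas $f^\#(x) = 0 \neq t = g^\#(x)$ gives $f \neq g$, contradicting \ref{enumi: gen pt epi}. Hence $\eta = \eta_X$ is the generic point.

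I expect the construction of $Y$ to be the only genuine obstacle. The naive doubling $\Spec(A \times_\kappa A)$ does separate two maps at the closed point, but it is reducible, hence not a legitimate test object for condition \ref{enumi: gen pt epi}; replacing it by the local domain $A[x]_{(\mathfrak m, x)}$ with its two tautological specializations of $x$ is precisely what repairs this defect. The concluding category-theoretic assertion would then follow by combining the above characterization with the category-theoretic recognition of valuative triples and of the morphisms $f|_\eta$ provided by \autoref{lem: quotient field} and \autoref{prop: irred local is cat}.
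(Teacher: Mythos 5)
Your proposal is correct and follows essentially the same route as the paper: reduce via \autoref{lem: str local is local domain} to $X=\Spec A$ for a local domain $A$, use injectivity of $A\hookrightarrow K$ for necessity, and for sufficiency rule out the closed point with the same test object $B=A[x]_{(\mathfrak m,x)}$ and the two $A$-algebra maps $x\mapsto 0$, $x\mapsto t$. Your added checks (that $\Spec B$ is reduced and lies in $\Schb{S}$, and the remark on why the fibered product $A\times_\kappa A$ would fail as a test object) are correct refinements of the paper's argument rather than departures from it.
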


\begin{proof}
  We may assume without loss of generality that
  \(X\) is not isomorphic to the spectrum of a field.
  By \autoref{lem: str local is local domain}
  \ref{enumi: lem: str local is local domain} and
  \autoref{defi: valuative} \ref{enumi: defi: valuative str loc},
  \(X\) is affine, and
  \(A\dfn \Gamma(X,\mathcal{O}_X)\) is a local domain.
  Write \(K\) for the field of fractions of \(A\) and
  \(i^{\#}:A\to K\) for the natural inclusion.

  First, we prove necessity.
  Suppose that \(\eta\in X\) is the generic point.
  It follows from
  \autoref{lem: str local is local domain}
  \ref{enumi: lem: str local cl or gen} that
  \(\eta\in \left\{x_1,x_2\right\}\).
  Hence condition \ref{enumi: val generic pt x or eta} holds.
  %In the remainder of the proof of the necessity,
  %we verify that condition \ref{enumi: gen pt epi} holds.
  Let \(Y=\Spec(B)\) be the spectrum of a local ring
  (cf. \autoref{rem: local is aff} \ref{enumi: rem: local is aff}) and
  \(f,g:X\rightrightarrows Y\) morphisms such that \(f|_\eta = g|_\eta\).
  Since \(\eta\) is the generic point,
  the equality \(i^{\#}\circ f^{\#}=i^{\#}\circ g^{\#}\) holds.
  Since \(i^{\#}\) is injective, the equality \(f^{\#}=g^{\#}\) holds.
  Hence the equality \(f=g\) holds.
  This completes the proof of necessity.

  Next, we prove sufficiency.
  Suppose that a point \(\eta\in X\) satisfies conditions
  \ref{enumi: val generic pt x or eta} and \ref{enumi: gen pt epi}.
  By \autoref{lem: str local is local domain} \ref{enumi: lem: str local cl or gen}
  and condition \ref{enumi: val generic pt x or eta},
  it suffices to prove that
  if \(\eta\) is the closed point,
  then \(\eta\) does \textbf{not} satisfy condition \ref{enumi: gen pt epi}.
  Suppose that \(\eta\in X\) is the closed point.
  Let \(\mathfrak{m}_\eta\) be
  the maximal ideal of \(A\) corresponding to \(\eta\).
  Since \(A\) is not a field, and
  \(\eta\) is a closed point,
  there exists a non-zero element \(a\in \mathfrak{m}_\eta\).
  Let \(B \dfn A[t]_{(\mathfrak{m}_\eta,t)}\) be the local domain
  obtained by localizing the polynomial ring \(A[t]\)
  at the maximal ideal \((\mathfrak{m}_\eta,t)\);
  \(f,g:\Spec(A) \to \Spec(B)\)
  the morphisms determined by the morphisms of \(A\)-algebras
  \(f^{\#},g^{\#}:B\rightrightarrows A\)
  such that \(f^{\#}(t)=0, g^{\#}(t)=a\).
  Then \(f|_\eta = g|_\eta\).
  On the other hand, since \(0\neq a\),
  it holds that \(f\neq g\).
  Thus \(\eta\) does not satisfy condition \ref{enumi: gen pt epi}.
  This completes the proof of \autoref{prop: val generic pt}.
\end{proof}

By a similar argument to the argument
given in the proof of \autoref{prop: val generic pt},
one may give
%By the similar argument as above proof of proposition,
%we may give
a category-theoretic characterization of
the closed point of an object
that appears as the first member of a valuative triple.

\begin{cor}\label{cor: y rsa x is cat}
  Let \(S\) be a quasi-separated scheme.
  Let \(X\) be an object of \(\Schb{S}\)
  and \(x,x'\in X\).
  Then \(x'\rsa x\) if and only if
  there exist a valuative triple \((V,v_1,v_2)\),
  \(\eta\in V\), and
  a morphism \(f:V\to X\) such that
  \(\eta\) is the unique generic point
  (cf. \autoref{lem: str local is local domain}
  \ref{enumi: lem: str local is local domain},
  \autoref{defi: valuative} \ref{enumi: defi: valuative str loc})
  of \(V\),
  \(f(\eta) = x'\), and \(x\in \im(f)\).
  In particular, the property that \(x'\rsa x\)
  may be characterized category-theoretically
  (cf. \autoref{defi: valuative}, \autoref{prop: val generic pt})
  from the data \((\Schb{S},X,x,x')\).
\end{cor}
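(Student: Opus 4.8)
The plan is to prove the displayed equivalence by a direct geometric argument, and then to deduce the final ``in particular'' assertion formally from the category-theoretic characterizations already established for each ingredient appearing in the equivalence. The two directions are quite asymmetric in length: sufficiency is essentially immediate from continuity, whereas necessity requires the construction of a dominating valuation ring, exactly as in the necessity part of \autoref{prop: comparable is cat}.

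For the sufficiency direction, I would suppose given a valuative triple \((V,v_1,v_2)\), a generic point \(\eta\in V\), and a morphism \(f:V\to X\) with \(f(\eta)=x'\) and \(x\in\im(f)\). By \autoref{lem: str local is local domain} \ref{enumi: lem: str local is local domain}, \ref{enumi: lem: str local cl or gen}, the object \(V\) is the spectrum of a local domain and \(\eta\) generizes every point of \(V\); concretely, \(\eta\rsa w\) for every \(w\in V\). Choosing \(w\in V\) with \(f(w)=x\) (possible since \(x\in\im(f)\)) and using that \(f\) is continuous, so that \(f(\overline{\{\eta\}})\subset\overline{\{f(\eta)\}}\), I would conclude \(x=f(w)\in\overline{\{x'\}}\), i.e. \(x'\rsa x\). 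This direction introduces no new construction.

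For the necessity direction, I would suppose \(x'\rsa x\), so that \(x\in\overline{\{x'\}}\). I would pass to the reduced induced closed subscheme \(\overline{\{x'\}}_{\red}\), which is integral with generic point \(x'\), restrict to an affine open neighborhood \(\Spec(A)\) of \(x\) (with \(A\) a domain whose fraction field is \(k(x')\)), and localize at the prime \(\mathfrak p\) corresponding to \(x\) to obtain a local domain \(A_{\mathfrak p}\) with closed point \(x\) and generic point \(x'\). Invoking the existence of a valuation ring \(V\subset k(x')\) dominating \(A_{\mathfrak p}\) (cf. \cite[\href{https://stacks.math.columbia.edu/tag/00IA}{Tag 00IA}]{stacks-project}), I would take \(\eta\in\Spec(V)\) to be the generic point, \(v\in\Spec(V)\) the closed point, and let \(f:\Spec(V)\to X\) be the composite \(\Spec(V)\to\Spec(A_{\mathfrak p})\to\overline{\{x'\}}_{\red}\to X\). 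Then \(f(\eta)=x'\), and because \(V\) dominates \(A_{\mathfrak p}\) one has \(f(v)=x\), so \(x\in\im(f)\). By \autoref{lem: affine over qsep is qcpt}, the affine morphism \(f\) belongs to \(\Schb{S}\), and by \autoref{rem: valuative remark} \ref{enumi: rem: val rem val is val} the triple \((\Spec(V),v,\eta)\) is valuative, which is precisely the required data (with \(v_1=v\), \(v_2=\eta\)).

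The main obstacle is a matter of care rather than difficulty: one must ensure that the constructed valuation ring sends its generic point to \(x'\) and its closed point to \(x\) simultaneously, which is exactly what the domination hypothesis delivers. Finally, for the ``in particular'' clause I would note that each condition in the existence statement is category-theoretic: being a valuative triple with a specified generic point is category-theoretic by \autoref{defi: valuative} and \autoref{prop: val generic pt}; the condition \(f(\eta)=x'\) is category-theoretic via the functorial reconstruction of underlying sets in \autoref{reconstruction: Set}; and the condition \(x\in\im(f)\) is category-theoretic by \autoref{cor: surj is cat} \ref{enumi: cor: surj is cat x in f}. Hence the existence of such data --- equivalently, the relation \(x'\rsa x\) --- may be characterized category-theoretically from the data \((\Schb{S},X,x,x')\).
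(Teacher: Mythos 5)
Your proposal is correct and follows essentially the same route as the paper: sufficiency by continuity of \(f\) together with the fact that a valuative triple is the spectrum of a local domain, and necessity by reducing to the local domain \(\mathcal{O}_{\overline{\{x'\}}_{\red},x}\) and dominating it by a valuation ring. The only cosmetic difference is that the paper obtains the dominating valuation ring by invoking \autoref{lem: 3pt val} (in its degenerate two-point case) rather than citing Tag 00IA directly, which changes nothing of substance.
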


\begin{proof}
  Sufficiency follows immediately.
  Thus it remains to prove necessity.
  Suppose that \(x'\rsa x\).
  By \autoref{lem: affine over qsep is qcpt}
  \ref{enumi: lem: aff over qsep is qcpt qc}
  \ref{enumi: lem: aff over qsep is qcpt sep},
  we may assume without loss of generality that
  \(X\) is the spectrum of a local domain,
  \(x\in X\) is the unique closed point, and
  \(x'\in X\) is the unique generic point.
  By \autoref{rem: valuative remark} \ref{enumi: rem: val rem val is val},
  \autoref{lem: 3pt val},
  there exist a valuative triple \((V,v_1,v_2)\),
  a point \(\eta \in V\),
  and a morphism \(f:V \to X\) such that
  \(V\) is the spectrum of a valuation ring,
  \(v_1\) is the unique closed point of \(V\),
  \(v_2 = \eta\) is the unique generic point of \(V\),
  \(x'=f(\eta)\), and \(x = f(v_1) \in V\).
  By \autoref{lem: affine over qsep is qcpt}
  \ref{enumi: lem: aff over qsep is qcpt qc}
  \ref{enumi: lem: aff over qsep is qcpt sep},
  \(f\) belongs to \(\Schb{S}\).
  This completes the proof of \autoref{cor: y rsa x is cat}.
\end{proof}

%
%Then, we may give category-theoretic characterizations of a closed point,
%a generic point, local objects and irreducible objects:

\begin{cor}\label{cor: irred, loc, are cat}
  Let \(S\) be a quasi-separated scheme.
  Let \(X\) be an object of \(\Schb{S}\)
  and \(x\in X\).
  Then the following assertions hold:
  \begin{enumerate}
    \item \label{enumi: cor: cl pt is cat}
    \(x\) is a closed point in \(X\) if and only if
    for any point \(x'\in X\) such that \(x\rsa x'\),
    it holds that \(x=x'\).
    In particular, the property that \(x\) is a closed point in \(X\)
    may be characterized category-theoretically
    (cf. \autoref{cor: y rsa x is cat}) from the data \((\Schb{S},X,x)\).
    \item \label{enumi: cor: gen pt is cat}
    \(x\) is a generic point in \(X\) if and only if
    for any point \(x'\in X\) such that \(x' \rsa x\),
    it holds that \(x=x'\).
    In particular, the property that \(x\) is a generic point in \(X\)
    may be characterized category-theoretically
    (cf. \autoref{cor: y rsa x is cat}) from the data \((\Schb{S},X,x)\).
    \item \label{enumi: cor: loc is cat}
    \(X\) is local if and only if
    there exists a point \(x_1\in X\) such that
    \(x'\rsa x_1\) holds for all \(x'\in X\).
    In particular, the property that \(X\) is local
    may be characterized category-theoretically
    (cf. \autoref{cor: y rsa x is cat}) from the data \((\Schb{S},X)\).
    \item \label{enumi: cor: irred is cat}
    \(X\) is irreducible if and only if
    there exists a point \(x_1\in X\) such that
    \(x_1\rsa x'\) holds for all \(x'\in X\).
    In particular, the property that \(X\) is irreducible
    may be characterized category-theoretically
    (cf. \autoref{cor: y rsa x is cat}) from the data \((\Schb{S},X)\).
  \end{enumerate}
\end{cor}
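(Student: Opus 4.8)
The plan is to observe that all four biconditionals are, once phrased in terms of the specialization relation \(\rsa\), direct unwindings of the relevant definitions (with a single, mild topological input in the irreducible case), and that the ``in particular'' clauses then follow by combining these reformulations with the already-established category-theoretic reconstruction of the underlying set (\autoref{reconstruction: Set}, \autoref{cor: underlying set equiv commute}) and the category-theoretic characterization of \(\rsa\) furnished by \autoref{cor: y rsa x is cat}. I would begin by recalling that, by the convention fixed in the Notations, \(y\rsa x\) holds precisely when \(x\in\overline{\{y\}}\).

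For assertion \ref{enumi: cor: cl pt is cat} I would simply invoke the final item of \autoref{rem: local is aff}, which already records that \(x\) is a closed point exactly when every point \(x'\) with \(x\rsa x'\) satisfies \(x'=x\); there is nothing further to prove. For assertion \ref{enumi: cor: gen pt is cat}, the displayed condition is verbatim the definition of ``generic point'' given at the beginning of \autoref{section: local domains}. Similarly, for assertion \ref{enumi: cor: loc is cat}, the condition is precisely the definition of ``local'' (take the point \(x_1\) to be the point \(v\) appearing in that definition). Thus each of these three equivalences reduces to citing the pertinent definition.

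The only assertion requiring a genuine argument is \ref{enumi: cor: irred is cat}, and I expect this to be the ``main obstacle'', albeit a very mild one. Here I would use that the underlying topological space of a scheme is \emph{sober}: every nonempty irreducible closed subset has a unique generic point. Consequently \(X\) is irreducible if and only if \(|X|\) is itself the closure of a single point \(x_1\), i.e. \(\overline{\{x_1\}}=|X|\); unwinding the meaning of \(\rsa\), this is exactly the condition that \(x_1\rsa x'\) for every \(x'\in X\). This is the one place where the proof appeals to a topological fact about schemes rather than to pure formal bookkeeping.

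Finally, for the category-theoretic ``in particular'' clauses I would argue uniformly across all four parts. Each displayed condition is a first-order statement in the points of \(X\), built solely from equality and the relation \(\rsa\), with quantifiers ranging over \(|X|\) (together with an existential quantifier over a distinguished point in \ref{enumi: cor: loc is cat} and \ref{enumi: cor: irred is cat}). By \autoref{reconstruction: Set} and \autoref{cor: underlying set equiv commute} the underlying set \(|X|\) is reconstructed functorially and compatibly with any equivalence \(F:\Schb{S}\xrightarrow{\sim}\Schc{T}\), and by \autoref{cor: y rsa x is cat} the relation \(\rsa\) is preserved under such \(F\). Hence each of these conditions is invariant under equivalences, which is precisely the meaning of ``may be characterized category-theoretically'' fixed in the Notations and Conventions; this yields the asserted reconstructions and completes the plan.
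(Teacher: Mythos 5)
Your proposal is correct and matches the paper's approach: the paper's own proof is the one-line remark that these assertions follow from the definitions of the properties in question (closed point via \autoref{rem: local is aff}, generic point and local verbatim from the definitions in \autoref{section: local domains}, irreducibility via sobriety, and the category-theoretic clauses via \autoref{reconstruction: Set} and \autoref{cor: y rsa x is cat}). You have simply spelled out the same bookkeeping in more detail.
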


\begin{proof}
  These assertions follow from
  the definitions of the various properties under consideration.
  %the definitions of each properties.
\end{proof}

\begin{cor}\label{cor: int, int loc is cat}
  Let \(S\) be a quasi-separated scheme.
  Let \(X\) be an object of \(\Schb{S}\).
  Then the following assertions hold:
  \begin{enumerate}
    \item \label{enumi: cor: int is cat}
    \(X\) is integral if and only if \(X\) is irreducible and reduced.
    \item \label{enumi: cor: int loc is cat}
    \(X\) is isomorphic to the spectrum of a local domain if and only if
    \(X\) is irreducible, reduced and local.
  \end{enumerate}
  In particular, the properties of \ref{enumi: cor: int is cat}
  and \ref{enumi: cor: int loc is cat}
  may be characterized category-theoretically
  from the data \((\Schb{S},X)\)
  (cf. \autoref{cor: red is cat},
  \autoref{cor: irred, loc, are cat} \ref{enumi: cor: loc is cat}
  \ref{enumi: cor: irred is cat}).
\end{cor}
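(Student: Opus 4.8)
The plan is to reduce both equivalences to standard scheme theory and then to read off the category-theoretic characterization from the reconstructions already in hand.

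For assertion \ref{enumi: cor: int is cat}, I would invoke the elementary fact that a scheme is integral precisely when it is reduced and irreducible. The forward implication is immediate from the definition of integrality; for the converse, working on an affine open $\Spec A$, irreducibility forces a unique minimal prime, which coincides with the nilradical, while reducedness forces the nilradical to vanish, so that $(0)$ is prime and $A$ is a domain---whence $X$ is integral.

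For assertion \ref{enumi: cor: int loc is cat}, I would combine \ref{enumi: cor: int is cat} with the equivalence between locality and being the spectrum of a local ring recorded in \autoref{rem: local is aff} \ref{enumi: rem: local is aff}. If $X \cong \Spec A$ with $A$ a local domain, then $A$ being a domain makes $X$ integral (hence irreducible and reduced), while $A$ being local makes $X$ local. Conversely, if $X$ is irreducible, reduced, and local, then \autoref{rem: local is aff} \ref{enumi: rem: local is aff} gives $X \cong \Spec A$ for some local ring $A$, and \ref{enumi: cor: int is cat} shows that $X$ is integral, so $A$ is moreover a domain.

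The ``in particular'' clause is then purely formal: ``reduced'', ``irreducible'', and ``local'' have each already been characterized category-theoretically (cf. \autoref{cor: red is cat} and \autoref{cor: irred, loc, are cat} \ref{enumi: cor: irred is cat} \ref{enumi: cor: loc is cat}), and the two equivalences just established exhibit ``integral'' and ``spectrum of a local domain'' as finite conjunctions of these properties. I expect no genuine obstacle here: the substantive input---the valuative-triple machinery underlying the reconstruction of specialization, irreducibility, and locality---was carried out in the preceding results, so this corollary amounts to assembling those characterizations.
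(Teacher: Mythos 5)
Your proposal is correct and matches the paper's (very terse) proof, which simply notes that both assertions follow from the definition of an integral scheme together with \autoref{rem: local is aff} \ref{enumi: rem: local is aff}; you have merely spelled out the standard details. No issues.
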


\begin{proof}
  These assertions follow from the definition of the notion of an integral scheme.
  %the definition of integral schemes.
\end{proof}

Next, we apply the theory developed thus far to conclude
a category-theoretic characterization of the morphisms of \(\Schb{S}\)
whose underlying morphism of schemes is isomorphic to the natural morphism
to a scheme from the spectrum of the local ring at a point of the scheme.

\begin{defi}\label{defi: pair irred loc}
  Let \(X\) be a scheme; \(x,\eta\in X\).
  \begin{enumerate}
    \item
    We shall say that the pair \((X,\eta)\) is \textit{irreducible} if
    \(X\) is irreducible, and
    \(\eta\) is its unique generic point.
    \item \label{enumi: defi: pair loc}
    We shall say that the pair \((X,x)\) is \textit{local} if
    \(X\) is local, and
    \(x\) is its unique closed point.
    \item
    We shall say that the triple \((X,x,\eta)\) is \textit{irreducible local} if
    \((X,\eta)\) is irreducible, and
    \((X,x)\) is local.
  \end{enumerate}
  Let \(S\) be a quasi-separated scheme.
  Suppose that \(X\) is an object of \(\Schb{S}\).
  Then these properties may be characterized category-theoretically
  (cf. \autoref{cor: y rsa x is cat},
  \autoref{cor: irred, loc, are cat}
  \ref{enumi: cor: cl pt is cat}
  \ref{enumi: cor: gen pt is cat}
  \ref{enumi: cor: loc is cat}
  \ref{enumi: cor: irred is cat}) from the data \((\Schb{S},X,x,\eta)\).
\end{defi}

\begin{prop}\label{prop: local ring}
  Let \(S\) be a quasi-separated scheme.
  Let \(f:Y\to X\) be a morphism of \(\Schb{S}\)
  and \(x\in X\).
  Then \(f\) is isomorphic as an object of \(\Sch{X}\) to
  the object of \(\Sch{X}\)
  that arises from the natural morphism to \(X\) from
  the spectrum of the local ring at a point \(x\) of \(X\) if and only if
  the following conditions hold:
  \begin{enumerate}
    \item \label{enumi: local ring mono}
    \(f\) is a monomorphism in \(\Schb{S}\).
    \item \label{enumi: local ring closed pt}
    There exists a point \(y\in Y\)
    such that \((Y,y)\) is local, and \(f(y)=x\).
    \item \label{enumi: local ring factor}
    For any local pair \((Z,z)\) and any morphism \(g:Z\to X\)
    such that \(g(z) = x\),
    there exists a unique local morphism \(h:Z\to Y\) such that \(g = f\circ h\).
  \end{enumerate}
  In particular, the property that
  \(f\) is isomorphic as an object of \(\Sch{X}\) to the object of \(\Sch{X}\)
  that arises from the natural morphism to \(X\) from
  the spectrum of the local ring at a point \(x\) of \(X\)
  may be characterized category-theoretically
  (cf. \autoref{defi: pair irred loc} \ref{enumi: defi: pair loc})
  from the data \((\Schb{S}, f:Y\to X, x)\).
\end{prop}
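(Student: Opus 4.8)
The plan is to compare \(f\) with the canonical localization morphism \(\lambda_x\colon \Spec(\mathcal{O}_{X,x}) \to X\), which is the intended model, and to show that conditions \ref{enumi: local ring mono}--\ref{enumi: local ring factor} pin \(f\) down up to isomorphism in \(\Sch{X}\) in exactly the way the universal property of the local ring does. As a preliminary step I would record that \(\Spec(\mathcal{O}_{X,x})\) is itself an object of \(\Schb{S}\): it is affine, so by \autoref{lem: affine over qsep is qcpt} its structure morphism to the quasi-separated scheme \(S\) is quasi-compact and separated (hence quasi-separated), while if \(\red \in \bbullet\) then \(X\) is reduced and therefore so is its localization \(\mathcal{O}_{X,x}\). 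Consequently \(\lambda_x\) is a morphism of \(\Schb{S}\) and \((\Spec(\mathcal{O}_{X,x}), \mathfrak{m}_x)\) is a local pair, where \(\mathfrak{m}_x\) denotes the closed point; note \(\lambda_x(\mathfrak{m}_x) = x\).

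For necessity I would assume \(f = \lambda_x\) and verify the three conditions. Condition \ref{enumi: local ring closed pt} is immediate with \(y = \mathfrak{m}_x\). For \ref{enumi: local ring mono}, choosing an affine open \(U = \Spec(A) \subseteq X\) with \(x\) corresponding to the prime \(\mathfrak{p}\), the morphism \(\lambda_x\) factors as \(\Spec(A_{\mathfrak{p}}) \to U \hookrightarrow X\); the first map is a monomorphism since \(A \to A_{\mathfrak{p}}\) is an epimorphism of rings and the second is an open immersion, so \(\lambda_x\) is a monomorphism of schemes and a fortiori in \(\Schb{S}\). Condition \ref{enumi: local ring factor} is the universal property of the local ring: given a local pair \((Z,z)\) with \(Z = \Spec(B)\) and a morphism \(g\colon Z \to X\) with \(g(z) = x\), the key observation is that the only open subset of the local scheme \(Z\) containing \(z\) is \(Z\) itself, so the open set \(g^{-1}(U)\) must equal \(Z\) and \(g\) factors through \(U\); the resulting ring map \(A \to B\) carries \(A \setminus \mathfrak{p}\) into \(B^{\times}\) and hence factors uniquely through a local homomorphism \(\mathcal{O}_{X,x} \to B\), which is precisely the desired unique local morphism \(h\colon Z \to \Spec(\mathcal{O}_{X,x})\) over \(X\).

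For sufficiency I would assume \ref{enumi: local ring mono}--\ref{enumi: local ring factor} and construct an isomorphism \(Y \cong \Spec(\mathcal{O}_{X,x})\) over \(X\). Using the universal property established in the necessity argument, applied to the local pair \((Y,y)\) furnished by \ref{enumi: local ring closed pt} together with the morphism \(f\) (for which \(f(y) = x\)), I obtain a unique local morphism \(h'\colon Y \to \Spec(\mathcal{O}_{X,x})\) with \(\lambda_x \circ h' = f\); in the other direction, applying hypothesis \ref{enumi: local ring factor} to the local pair \((\Spec(\mathcal{O}_{X,x}), \mathfrak{m}_x)\) and the morphism \(\lambda_x\), I obtain a unique local morphism \(h\colon \Spec(\mathcal{O}_{X,x}) \to Y\) with \(f \circ h = \lambda_x\). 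Then \(f \circ (h \circ h') = \lambda_x \circ h' = f\), so \(h \circ h' = \id_Y\) by the monomorphism hypothesis \ref{enumi: local ring mono}; and \(\lambda_x \circ (h' \circ h) = f \circ h = \lambda_x\), so \(h' \circ h = \id\) since \(\lambda_x\) is a monomorphism. Hence \(h'\) is an isomorphism in \(\Sch{X}\) carrying \(f\) to \(\lambda_x\).

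I expect the main obstacle to be the careful verification of condition \ref{enumi: local ring factor} in the necessity direction, in particular the factorization of an arbitrary \(g\) through an affine neighborhood of \(x\) and the attendant uniqueness, together with the bookkeeping needed to ensure that all the auxiliary objects (the scheme \(\Spec(\mathcal{O}_{X,x})\) and the test pairs) genuinely lie in \(\Schb{S}\), so that the categorical hypotheses \ref{enumi: local ring mono} and \ref{enumi: local ring factor} may legitimately be invoked. The mutually-inverse argument is then purely formal; indeed, the terminal-object viewpoint shows that \ref{enumi: local ring closed pt} and \ref{enumi: local ring factor} alone already determine \(f\) up to a unique isomorphism over \(X\), with \ref{enumi: local ring mono} serving only to streamline the check that \(h\) and \(h'\) are inverse.
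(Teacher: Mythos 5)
Your proposal is correct and follows essentially the same route as the paper: verify necessity via the universal property of localization and the fact that $\Spec(\mathcal{O}_{X,x})\to X$ is a monomorphism, then for sufficiency play the two universal properties off against each other and use the monomorphism hypotheses to see that the resulting $h$, $h'$ are mutually inverse. The only difference is that you spell out details the paper delegates to citations (the ring-epimorphism argument for the monomorphism claim, the verification that $\Spec(\mathcal{O}_{X,x})$ lies in $\Schb{S}$), and your closing observation that conditions (ii) and (iii) alone already force the conclusion is a valid aside.
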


%%%%%%%%%%6/16(火)ここまで

\begin{proof}
  First, we prove necessity.
  Suppose that \(f:Y\to X\) is isomorphic to
  the natural morphism \(\Spec(\mathcal{O}_{X,x}) \to X\) as \(X\)-schemes.
  Write \(y\) for the unique closed point of \(Y\).
  Then the pair \((Y,y)\) is local, and \(f(y)=x\).
  By the universality of localization,
  condition \ref{enumi: local ring factor} is satisfied.
  By \cite[\href{https://stacks.math.columbia.edu/tag/01L9}{Tag 01L9}]{stacks-project},
  \(f\) is a monomorphism of \(\Sche\).
  Hence \(f\) satisfies condition \ref{enumi: local ring mono}.
  This completes the proof of necessity.

  Next, we prove sufficiency.
  Let \(g:Z=\Spec (\mathcal{O}_{X,x}) \to X\) be the natural localization morphism.
  Since \(f:Y\to X\) satisfies condition \ref{enumi: local ring factor},
  there exists a unique local morphism \(h:Z\to Y\) such that \(g=f\circ h\).
  By the necessity portion of \autoref{prop: local ring},
  \(g\) satisfies condition \ref{enumi: local ring factor}.
  Hence there exists a unique local morphism \(h':Y\to Z\) such that \(f=g\circ h'\).
  Thus the equalities \(f = f\circ h\circ h'\) and \(g = g\circ h'\circ h\) hold.
  Since \(f\) satisfies condition \ref{enumi: local ring mono},
  i.e., \(f\) is a monomorphism,
  the equality \(f = f\circ h\circ h'\) implies \(h\circ h' = \id_Y\).
  Since \(g\) satisfies condition \ref{enumi: local ring mono},
  i.e., \(g\) is a monomorphism,
  the equality \(g = g\circ h'\circ h\) implies \(h'\circ h = \id_Z\).
  Thus the equalities \(h\circ h' = \id_Y\) and \(h'\circ h = \id_Z\) imply that
  the morphisms \(h,h'\) are isomorphisms between \(Y,Z\).
  Since \(g=f\circ h\),
  \(f:Y\to X\) is isomorphic to \(g:Z=\Spec(\mathcal{O}_{X,x})\to X\)
  as an \(X\)-scheme.
  This completes the proof of \autoref{prop: local ring}.
\end{proof}

Finally, %for further discussion,
we give a category-theoretic characterization of the objects of \(\Schb{S}\)
whose underlying scheme is isomorphic to the spectrum of a valuation ring.

\begin{prop}\label{prop: Spec val is cat}
  Let \(S\) be a quasi-separated scheme.
  Let \(X\) be an object of \(\Schb{S}\).
  Then \(X\) is isomorphic to the spectrum of a valuation ring
  if and only if the following conditions hold:
  \begin{enumerate}
    \item \label{enumi: Spec val val}
    There exist points \(x_1,x_2\in X\) such that
    \((X,x_1,x_2)\) is valuative.
    \item \label{enumi: Spec val maximal}
    Write \(\eta\) for the unique generic point of \(X\).
    Then for any integral local object \(Y\) and
    any local morphism \(f:Y\to X\),
    if \(Y\btimes_X \Spec(k(\eta)) \cong \Spec(k(\eta))\),
    then \(f\) is an isomorphism.
  \end{enumerate}
  In particular, the property that
  \(X\) is isomorphic to the spectrum of a valuation ring
  may be characterized category-theoretically
  (cf. \autoref{lem: quotient field}, \autoref{prop: val generic pt},
  \autoref{cor: irred, loc, are cat} \ref{enumi: cor: cl pt is cat},
  \autoref{cor: int, int loc is cat} \ref{enumi: cor: int loc is cat})
  from the data \((\Schb{S},X)\).
\end{prop}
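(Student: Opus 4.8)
The plan is to translate the two conditions into the single assertion that $X\cong\Spec(A)$ for a local domain $A$ that is \emph{maximal with respect to domination} among local subrings of its field of fractions, and then to match this with the classical theory of valuation rings. First I would exploit condition \ref{enumi: Spec val val}: since $(X,x_1,x_2)$ is valuative, \autoref{defi: valuative} together with \autoref{lem: str local is local domain} \ref{enumi: lem: str local is local domain} shows that $X\cong\Spec(A)$ for a local domain $A$, with $\eta$ its unique generic point, so that $k(\eta)\cong K$, the field of fractions of $A$. Thus I may assume throughout that $X=\Spec(A)$ and $\Spec(k(\eta))=\Spec(K)$.

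The key step is to unwind condition \ref{enumi: Spec val maximal} ring-theoretically. An integral local object $Y$ equipped with a local morphism $f:Y\to X$ is of the form $Y=\Spec(B)$ for a local domain $B$ together with a local ring homomorphism $A\to B$. I would then compute the generic fibre: the morphism $\Spec(k(\eta))\to X$ is a morphism of affine schemes, hence quasi-compact, so by \autoref{lem: fiber product bbullet} the object $Y\btimes_X\Spec(k(\eta))$ is the reduction of $\Spec(B\otimes_A K)$; and $B\otimes_A K$ is the localization of $B$ at the image of $A\setminus\{0\}$, which is either a domain or zero, so no reduction actually occurs. Reading ``$\cong\Spec(k(\eta))$'' as an isomorphism of $\Spec(k(\eta))$-schemes, one sees that $Y\btimes_X\Spec(k(\eta))\cong\Spec(k(\eta))$ holds if and only if $A\to B$ is injective and $B\otimes_A K=K$, i.e.\ if and only if $A\subseteq B\subseteq K$ with $B$ a local subring of $K$ having field of fractions $K$ and dominating $A$. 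Hence condition \ref{enumi: Spec val maximal} says precisely that $A$ admits no proper domination $A\subsetneq B\subseteq K$, i.e.\ that $A$ is maximal for domination among local subrings of $K$.

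With this translation the two implications become short. For necessity, suppose $A$ is a valuation ring: \autoref{rem: valuative remark} \ref{enumi: rem: val rem val is val} gives condition \ref{enumi: Spec val val}, and a valuation ring of $K$ is not properly dominated by any local subring of $K$ (if $A\subseteq B\subseteq K$ with $B$ local dominating $A$ and $x\in B\setminus\{0\}$, then $x\in A$ or $x^{-1}\in A$; in the latter case $x^{-1}\notin\mathfrak{m}_A$, since otherwise $1=x\,x^{-1}\in\mathfrak{m}_B$, so again $x\in A$), whence $A=B$ and $f$ is an isomorphism, giving condition \ref{enumi: Spec val maximal}. For sufficiency, condition \ref{enumi: Spec val val} again yields $X=\Spec(A)$ with $A$ a local domain and $K$ as above; by \cite[\href{https://stacks.math.columbia.edu/tag/00IA}{Tag 00IA}]{stacks-project} there exists a valuation ring $V$ with $A\subseteq V\subseteq K$ dominating $A$. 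Since $\Spec(V)$ is an affine scheme over the quasi-separated scheme $S$, \autoref{lem: affine over qsep is qcpt} shows that $\Spec(V)$ is an integral local object of $\Schb{S}$, and the induced $f:\Spec(V)\to X$ is a local morphism whose generic fibre is $\Spec(K)$ by the computation above. Condition \ref{enumi: Spec val maximal} then forces $f$ to be an isomorphism, so $A=V$ is a valuation ring.

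The main obstacle I expect is the generic-fibre translation of the second paragraph: one must verify that the fibre product formed in $\Schb{S}$ (a priori a reduction) coincides with the scheme-theoretic fibre $\Spec(B\otimes_A K)$, that the vanishing of $B\otimes_A K$ exactly detects failure of injectivity of $A\to B$, and that interpreting ``$\cong\Spec(k(\eta))$'' as an isomorphism of $\Spec(k(\eta))$-schemes is what recovers the inclusion $B\subseteq K$ rather than a mere abstract isomorphism of residue fields. A secondary point requiring care is confirming that the valuation ring $V$ constructed for sufficiency defines an object of $\Schb{S}$ for every admissible $\bbullet$, which is exactly what \autoref{lem: affine over qsep is qcpt} supplies using the quasi-separatedness of $S$.
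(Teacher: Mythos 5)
Your proposal is correct and follows essentially the same route as the paper: condition \ref{enumi: Spec val val} reduces to the case $X=\Spec(A)$ with $A$ a local domain via \autoref{lem: str local is local domain}, condition \ref{enumi: Spec val maximal} is unwound (using \autoref{lem: fiber product bbullet} and the observation that $B\otimes_AK$ is already reduced) into the statement that $A$ is maximal for domination among local subrings of $K$, and then necessity follows from the maximality of valuation rings under domination while sufficiency follows from \cite[\href{https://stacks.math.columbia.edu/tag/00IA}{Tag 00IA}]{stacks-project} together with \autoref{lem: affine over qsep is qcpt}. The only differences are presentational: you prove the domination-maximality of valuation rings directly where the paper cites \cite[\href{https://stacks.math.columbia.edu/tag/00I8}{Section 00I8}]{stacks-project}, and you are somewhat more explicit about identifying $\mathrm{Frac}(B)$ with $K$ via the $\Spec(k(\eta))$-scheme structure.
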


\begin{proof}
  First, we prove necessity.
  Suppose that \(X = \Spec(A)\) is the spectrum of a valuation ring \(A\).
  Write \(\eta\in X\) for the unique generic point
  and \(x\in X\) for the unique closed point.
  By \autoref{rem: valuative remark} \ref{enumi: rem: val rem val is val},
  the triple \((X,x,\eta)\) is valuative in \(\Schb{S}\).
  Hence \(X\) satisfies condition \ref{enumi: Spec val val}.
  In the remainder of the proof of necessity,
  we prove that \(X\) satisfies condition \ref{enumi: Spec val maximal}.
  Let \(K\) be the field of fractions of \(A\),
  \(Y = \Spec(B)\) the spectrum of a local domain,
  \(\xi\in Y\) the unique generic point of \(Y\),
  \(y\in Y\) the unique closed point of \(Y\), and
  \(f:Y\to X\) a local morphism such that
  \(Y\btimes_X \Spec(k(\eta)) \cong \Spec(k(\eta))\).
  Since \(A,B\) are local domains, and
  \(K\) is the field of fractions of \(A\),
  the tensor product \(K\otimes_AB\) is integral.
  Hence \(Y\times_X\Spec(k(\eta))\cong Y\btimes_X\Spec(k(\eta))\cong \Spec(k(\eta))\)
  (cf. \autoref{lem: fiber product bbullet}
  \ref{enumi: fiber product bbullet not red}
  \ref{enumi: fiber product bbullet red}),
  i.e., \(B\otimes_AK \cong K\).
  Since \(K\) is a localization of \(A\),
  the field of fractions of \(B\) is isomorphic to \(K\).
  Since \(f\) is a local morphism,
  the local domain \(B\) dominates the valuation ring \(A\)
  in the field \(K\).
  Since the valuation ring \(A\) is maximal for the relation
  of domination among the set of local subrings in \(K\)
  (\cite[\href{https://stacks.math.columbia.edu/tag/00I8}{Section 00I8}]{stacks-project}),
  \(f\) is an isomorphism.
  This completes the proof of necessity.

  Next, we prove sufficiency.
  Suppose that \(X\) satisfies conditions
  \ref{enumi: Spec val val} and \ref{enumi: Spec val maximal}.
  By \autoref{defi: valuative} \ref{enumi: defi: valuative str loc}
  and \autoref{lem: str local is local domain} \ref{enumi: lem: str local is local domain},
  \(X\) is isomorphic to the spectrum of a local domain.
  Write \(A\dfn \Gamma(X,\mathcal{O}_X)\) and
  \(K\) for the field of fractions of \(A\).
  Let \(B\) be a valuation ring which dominates \(A\) in \(K\)
  (cf. \cite[\href{https://stacks.math.columbia.edu/tag/00IA}{Tag 00IA}]{stacks-project}).
  Since \(X\) satisfies condition \ref{enumi: Spec val maximal},
  the morphism \(\Spec(B) \to \Spec(A)\) is an isomorphism
  (cf. \autoref{lem: fiber product bbullet}
  \ref{enumi: fiber product bbullet not red}
  \ref{enumi: fiber product bbullet red}).
  Hence \(A = B\), i.e., \(A\) is a valuation ring.
  This completes the proof of \autoref{prop: Spec val is cat}.
\end{proof}

%%%%%%%%%%%%%%%%%%%%%%%%%%%%%%%%%%%%
%%%%%%%%%%%%%%%%%%%%%%%%%%%%%%%%%%%%
%%%%%%%%%%%%%%%%%%%%%%%%%%%%%%%%%%%%
%%%%%%%%%%%%%%%%%%%%%%%%%%%%%%%%%%%%
%%%%%%%%%%%%%%%%%%%%%%%%%%%%%%%%%%%%
%%%%%%%%%%%%%%%%%%%%%%%%%%%%%%%%%%%%
%%%%%%%%%%%%%%%%%%%%%%%%%%%%%%%%%%%%
%%%%%%%%%%%%%%%%%%%%%%%%%%%%%%%%%%%%
%%%%%%%%%%%%%%%%%%%%%%%%%%%%%%%%%%%%
%%%%%%%%%%%%%%%%%%%%%%%%%%%%%%%%%%%%
%%%%%%%%%%%%%%%%%%%%%%%%%%%%%%%%%%%%
%%%%%%%%%%%%%%%%%%%%%%%%%%%%%%%%%%%%
%%%%%%%%%%%%%%%%%%%%%%%%%%%%%%%%%%%%
%%%%%%%%%%%%%%%%%%%%%%%%%%%%%%%%%%%%
%%%%%%%%%%%%%%%%%%%%%%%%%%%%%%%%%%%%
%%%%%%%%%%%%%%%%%%%%%%%%%%%%%%%%%%%%
%%%%%%%%%%%%%%%%%%%%%%%%%%%%%%%%%%%%
%%%%%%%%%%%%%%%%%%%%%%%%%%%%%%%%%%%%
%%%%%%%%%%%%%%%%%%%%%%%%%%%%%%%%%%%%
%%%%%%%%%%%%%%%%%%%%%%%%%%%%%%%%%%%%
%%%%%%%%%%%%%%%%%%%%%%%%%%%%%%%%%%%%
%%%%%%%%%%%%%%%%%%%%%%%%%%%%%%%%%%%%
%%%%%%%%%%%%%%%%%%%%%%%%%%%%%%%%%%%%
%%%%%%%%%%%%%%%%%%%%%%%%%%%%%%%%%%%%
%%%%%%%%%%%%%%%%%%%%%%%%%%%%%%%%%%%%
%%%%%%%%%%%%%%%%%%%%%%%%%%%%%%%%%%%%
%%%%%%%%%%%%%%%%%%%%%%%%%%%%%%%%%%%%
%%%%%%%%%%%%%%%%%%%%%%%%%%%%%%%%%%%%
%%%%%%%%%%%%%%%%%%%%%%%%%%%%%%%%%%%%
%%%%%%%%%%%%%%%%%%%%%%%%%%%%%%%%%%%%
%%%%%%%%%%%%%%%%%%%%%%%%%%%%%%%%%%%%
%%%%%%%%%%%%%%%%%%%%%%%%%%%%%%%%%%%%
%%%%%%%%%%%%%%%%%%%%%%%%%%%%%%%%%%%%
%%%%%%%%%%%%%%%%%%%%%%%%%%%%%%%%%%%%
%%%%%%%%%%%%%%%%%%%%%%%%%%%%%%%%%%%%
%%%%%%%%%%%%%%%%%%%%%%%%%%%%%%%%%%%%

%%%%7/7の次はここから

\section{The Underlying Topological Space}
\label{section: undetlying top}

In this section,
we give a functorial category-theoretic algorithm for
reconstructing the underlying topological space of the objects of \(\Schb{S}\)
from the intrinsic structure of the abstract category \(\Schb{S}\).
First, %to reconstruct the underlying topological space,
we consider the underlying topological space of fiber products in \(\Schb{S}\).

\begin{lem}\label{lem: btimes to times}
  Let \(S\) be a quasi-separated scheme.
  Let \(f:X\to Z, g:Y\to Z\) be morphisms of \(\Schb{S}\).
  Suppose that the fiber product \(X\btimes_ZY\) exists in \(\Schb{S}\).
  Then the following assertions hold:
  \begin{enumerate}
    \item \label{enumi: lem: btimes isom to times}
    If \(\red\not\in \bbullet\),
    then the natural morphism of schemes
    \(v:X\btimes_ZY\to X\times_ZY\)
    is an isomorphism.
    \item \label{enumi: lem: btimes is red of times}
    If \(\red\in \bbullet\),
    then the natural morphism of schemes
    \(v:X\btimes_ZY\to X\times_ZY\)
    is isomorphic to the natural closed immersion
    \((X\times_ZY)_{\red} \to X\times_Z Y\) in \(\Sch{X\times_ZY}\).
  \end{enumerate}
\end{lem}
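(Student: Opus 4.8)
The plan is to identify the scheme-theoretic fiber product $Q\dfn X\times_Z Y$ (when $\red\not\in\bbullet$) or its reduction $Q_{\red}$ (when $\red\in\bbullet$) as \emph{itself} a fiber product of $X$ and $Y$ over $Z$ in $\Schb{S}$; by the uniqueness of fiber products, the comparison morphism $v\colon P\dfn X\btimes_Z Y\to Q$ will then automatically be the asserted isomorphism (resp.\ the reduction closed immersion). Thus the proof splits into two tasks: (a) showing that $Q$, resp.\ $Q_{\red}$, lies in $\Schb{S}$, and (b) showing that it satisfies the universal property of the fiber product in $\Schb{S}$.

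Task (b) is formal. For every scheme $W$ one has $\Hom(W,Q)=\Hom(W,X)\times_{\Hom(W,Z)}\Hom(W,Y)$ by the universal property of $Q$ in $\mathsf{Sch}$; restricting $W$ to objects of $\Schb{S}$ shows that $Q$ corepresents the fiber-product functor on $\Schb{S}$ once we know $Q\in\Schb{S}$. When $\red\in\bbullet$, every $W\in\Schb{S}$ is reduced, so $\Hom(W,Q_{\red})=\Hom(W,Q)$ by the universal property of the reduction; hence $Q_{\red}$ corepresents the same functor once $Q_{\red}\in\Schb{S}$.

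For task (a), reducedness of $Q_{\red}$ is clear, and separatedness (resp.\ quasi-separatedness) of $Q$ over $S$ follows --- exactly as in the proof of \autoref{lem: fiber product bbullet} --- from the fact that $Q\to X\times_S Y$ is an immersion while $X\times_S Y\to S$ is separated (resp.\ quasi-separated) whenever $X\to S$ and $Y\to S$ are; the same then holds for the closed subscheme $Q_{\red}$. The one genuinely delicate point is quasi-compactness of $Q$ over $S$ when $\qcpt\in\bbullet$ and \emph{neither} $f$ nor $g$ is quasi-compact, precisely the situation not covered by \autoref{lem: fiber product bbullet}. Here I would exploit the standing hypothesis that $P$ exists. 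Since $S$ is quasi-separated, every spectrum of a field is an affine, hence quasi-compact, separated, and quasi-separated, $S$-scheme by \autoref{lem: affine over qsep is qcpt}, and so lies in $\Schb{S}$; the universal property of $P$ therefore supplies a bijection $\Hom(\Spec k,P)\xrightarrow{\sim}\Hom(\Spec k,Q)$, compatible with $v$, for every field $k$. Taking $k=k(q)$ for a point $q\in Q$ shows that each point of $Q$ lifts along $v$, so $v$ is surjective. Consequently, over any affine open $\Spec A\subseteq S$, the preimage $Q_A$ of $\Spec A$ in $Q$ is the continuous image under $v$ of the preimage $P_A$ in $P$; since $\qcpt\in\bbullet$ forces the structure morphism $P\to S$ to be quasi-compact, $P_A$ is quasi-compact, whence so is its image $Q_A$. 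Therefore $Q\to S$ is quasi-compact, and the same follows for $Q_{\red}$.

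Combining (a) and (b), $Q$ (resp.\ $Q_{\red}$) is a fiber product of $X$ and $Y$ over $Z$ in $\Schb{S}$, so the canonical comparison $v$ is an isomorphism when $\red\not\in\bbullet$ and is isomorphic in $\Sch{X\times_Z Y}$ to the natural closed immersion $(X\times_Z Y)_{\red}\to X\times_Z Y$ when $\red\in\bbullet$. I expect the quasi-compactness verification to be the main obstacle: it is the only place where the absence of any quasi-compactness hypothesis on $f$ and $g$ must be absorbed, and it is resolved not by \autoref{lem: fiber product bbullet} but by turning the mere \emph{existence} of $P$ into surjectivity of $v$ and then invoking that a continuous image of a quasi-compact space is quasi-compact.
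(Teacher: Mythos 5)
Your proposal is correct, but it reaches the conclusion by a genuinely different route from the paper. The paper never asks whether \(X\times_ZY\) (or its reduction) belongs to \(\Schb{S}\): it simply tests the comparison morphism \(v\) against affine (resp.\ reduced affine) schemes \(U\), which lie in \(\Schb{S}\) by \autoref{lem: affine over qsep is qcpt}, obtains the chain of bijections \(\Hom_{\Sch{S}}(U,X\btimes_ZY)\cong\Hom_{\Schb{S}}(U,X)\times_{\Hom_{\Schb{S}}(U,Z)}\Hom_{\Schb{S}}(U,Y)\cong\Hom_{\Sch{S}}(U,X\times_ZY)\), and concludes by gluing that \(v\) is an isomorphism (resp.\ the reduction closed immersion). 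You instead prove the stronger statement that \(X\times_ZY\) (resp.\ \((X\times_ZY)_{\red}\)) itself lies in \(\Schb{S}\) and is the fiber product there, which forces you to confront quasi-compactness over \(S\) in the case \(\qcpt\in\bbullet\) with neither \(f\) nor \(g\) quasi-compact --- the one case \autoref{lem: fiber product bbullet} does not cover. Your resolution (lift field-valued points through \(v\) using the universal property of \(X\btimes_ZY\) applied to \(\Spec(k(q))\in\Schb{S}\), deduce surjectivity of \(v\), and take continuous images of the quasi-compact preimages in \(X\btimes_ZY\) of affine opens of \(S\)) is correct. The trade-off: the paper's argument is shorter because it bypasses membership of \(X\times_ZY\) in \(\Schb{S}\) entirely, while yours extracts the extra information that the scheme-theoretic fiber product (or its reduction) lies in \(\Schb{S}\) whenever \(X\btimes_ZY\) exists, with no quasi-compactness hypothesis on \(f\) or \(g\). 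Both arguments rest on the same key input, namely \autoref{lem: affine over qsep is qcpt}.
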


\begin{proof}
  First, we prove assertion \ref{enumi: lem: btimes isom to times}.
  Let \(U\) be an affine scheme and
  \(f:U\to X\times_ZY\) a morphism of schemes.
  Then by \autoref{lem: affine over qsep is qcpt} \ref{enumi: lem: aff over qsep is qcpt qc}
  \ref{enumi: lem: aff over qsep is qcpt sep},
  together with our assumption that \(\red\not\in \bbullet\),
  \(U\) belongs to \(\Schb{S}\).
  Hence we obtain the following natural bijections:
  \begin{align*}
    \Hom_{\Sch{S}}(U,X\btimes_ZY)
    &\xrightarrow{\sim} \Hom_{\Schb{S}}(U,X) \times_{\Hom_{\Schb{S}}(U,Z)}\Hom_{\Schb{S}}(U,Y) \\
    &= \Hom_{\Sch{S}}(U,X) \times_{\Hom_{\Sch{S}}(U,Z)}\Hom_{\Sch{S}}(U,Y) \\
    &\xleftarrow{\sim} \Hom_{\Sch{S}}(U,X\times_ZY).
  \end{align*}
  Therefore, for any affine scheme \(U\) and any morphism \(f:U\to X\times_ZY\),
  \(f\) factors uniquely through the natural morphism \(v:X\btimes_ZY\to X\times_ZY\).
  This implies that \(v\) is an isomorphism.
  This completes the proof of assertion \ref{enumi: lem: btimes isom to times}.

  Next, we prove assertion \ref{enumi: lem: btimes is red of times}.
  Let \(U\) be a reduced affine scheme and
  \(f: U\to X\times_ZY\) a morphism of schemes.
  Then by \autoref{lem: affine over qsep is qcpt} \ref{enumi: lem: aff over qsep is qcpt qc}
  \ref{enumi: lem: aff over qsep is qcpt sep},
  together with our assumption that \(U\) is reduced,
  \(U\) belongs to \(\Schb{S}\).
  Hence we obtain the following natural bijections:
  \begin{align*}
    \Hom_{\Sch{S}}(U,X\btimes_ZY)
    &\xrightarrow{\sim} \Hom_{\Schb{S}}(U,X) \times_{\Hom_{\Schb{S}}(U,Z)}\Hom_{\Schb{S}}(U,Y) \\
    &= \Hom_{\Sch{S}}(U,X) \times_{\Hom_{\Sch{S}}(U,Z)}\Hom_{\Sch{S}}(U,Y) \\
    &\xleftarrow{\sim} \Hom_{\Sch{S}}(U,X\times_ZY).
  \end{align*}
  Therefore, for any reduced affine scheme \(U\) and any morphism \(f:U\to X\times_ZY\),
  \(f\) factors uniquely through the natural morphism \(v:X\btimes_ZY\to X\times_ZY\).
  This implies that \(v\) is isomorphic to
  the natural closed immersion
  \((X\times_ZY)_{\red} \to X\times_ZY\) in \(\Sch{X\times_ZY}\).
  This completes the proof of \autoref{lem: btimes to times}.
\end{proof}

\begin{cor}\label{cor: btimes to times is surj cl imm}
  Let \(S\) be a quasi-separated scheme.
  Let \(f:X\to Z, g:Y\to Z\) be morphisms of \(\Schb{S}\).
  Suppose that the fiber product \(X\btimes_ZY\) exists in \(\Schb{S}\).
  Then the natural morphism \(v: X\btimes_ZY \to X\times_ZY\)
  is a surjective closed immersion.
  In particular, \(v\) is a homeomorphism.
\end{cor}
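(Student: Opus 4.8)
The plan is to deduce the statement directly from the dichotomy established in \autoref{lem: btimes to times}, splitting into the two cases \(\red\not\in\bbullet\) and \(\red\in\bbullet\).

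First I would dispose of the case \(\red\not\in\bbullet\). By \autoref{lem: btimes to times} \ref{enumi: lem: btimes isom to times}, the natural morphism \(v\) is an isomorphism of schemes; since any isomorphism is in particular a surjective closed immersion, nothing further is needed in this case.

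Next I would treat the case \(\red\in\bbullet\). By \autoref{lem: btimes to times} \ref{enumi: lem: btimes is red of times}, the morphism \(v\) is isomorphic, as an object of \(\Sch{X\times_ZY}\), to the canonical morphism \((X\times_ZY)_{\red}\to X\times_ZY\). It therefore suffices to recall the standard fact that, for any scheme \(W\), the canonical morphism \(W_{\red}\to W\) from the reduced induced closed subscheme is a surjective closed immersion: it is a closed immersion by construction of the reduced induced structure, and it induces the identity on underlying topological spaces, hence is a homeomorphism and in particular surjective. Taking \(W = X\times_ZY\) settles this case.

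Combining the two cases, \(v\) is a surjective closed immersion; and since a surjective closed immersion is a homeomorphism onto its image, which here is all of \(X\times_ZY\), the final assertion that \(v\) is a homeomorphism follows. I do not expect any genuine obstacle here: the entire content is already packaged in \autoref{lem: btimes to times}, and the only supplementary input is the elementary observation that the reduction morphism of a scheme is always a surjective closed immersion.
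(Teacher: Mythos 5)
Your argument is correct and is exactly the paper's: the paper proves this corollary by citing \autoref{lem: btimes to times} \ref{enumi: lem: btimes isom to times} \ref{enumi: lem: btimes is red of times} and leaving the two-case unpacking (isomorphism, resp.\ reduction morphism, each being a surjective closed immersion) implicit. Your write-up simply makes that immediate deduction explicit.
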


\begin{proof}
  \autoref{cor: btimes to times is surj cl imm} follows immediately from
  \autoref{lem: btimes to times} \ref{enumi: lem: btimes isom to times}
  \ref{enumi: lem: btimes is red of times}.
\end{proof}

Next, we give a category-theoretic characterization of the morphisms of
\(\Schb{S}\) whose underlying morphism of schemes is a closed immersion.

The following result was motivated by \cite[Lemma 3.7]{deBr19}.

\begin{prop}\label{prop: closed imm is cat}
  Let \(S\) be a quasi-separated scheme.
  Let \(f:X\to Y\) be a morphism of \(\Schb{S}\).
  Then \(f\) is a closed immersion if and only if
  the following conditions hold:
  \begin{enumerate}
    \item \label{enumi: cl imm cat reg mono}
    \(f\) is a regular monomorphism in \(\Schb{S}\).
    \item \label{enumi: cl imm cat bc exists}
    For any morphism \(T\to Y\),
    the fiber product \(X_{\bbullet,T}\in \Schb{S}\) exists. %%存在が非自明
    \item \label{enumi: cl imm cat bc closed pt}
    For any morphism \(T\to Y\) and any closed point \(t\in T\)
    such that \(t\not\in \mathrm{Im}(f_{\bbullet,T}:X_{\bbullet,T}\to T)\),
    the morphism \(X_{\bbullet,T} \coprod \Spec (k(t)) \to T\)
    is a regular monomorphism in \(\Schb{S}\).
  \end{enumerate}
  In particular, the property that \(f\) is a closed immersion
  may be characterized category-theoretically
  (cf. \autoref{lem: quotient field},
  \autoref{cor: irred, loc, are cat} \ref{enumi: cor: cl pt is cat})
  from the data \((\Schb{S},f:Y\to X)\).
\end{prop}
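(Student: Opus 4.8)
The plan is to prove both implications, the genuine content lying in sufficiency; necessity amounts to checking that closed immersions are stable under the base-change operation of \(\Schb{S}\) and that disjoint closed immersions glue.

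For necessity, suppose \(f\) is a closed immersion. Then \(f\) is a regular monomorphism by \autoref{lem: reg mono} \ref{enumi: closed imm is reg mono}, giving \ref{enumi: cl imm cat reg mono}. Since \(f\) is quasi-compact, \autoref{lem: fiber product bbullet} \ref{enumi: fiber product bbullet not red} \ref{enumi: fiber product bbullet red} guarantees that \(X_{\bbullet,T}\) exists in \(\Schb{S}\) for every \(T\to Y\), which is \ref{enumi: cl imm cat bc exists}. For \ref{enumi: cl imm cat bc closed pt}, note that \(X\times_YT\to T\) is a closed immersion and \(X_{\bbullet,T}\to X\times_YT\) is a surjective closed immersion by \autoref{cor: btimes to times is surj cl imm}, so \(f_{\bbullet,T}\colon X_{\bbullet,T}\to T\) is a closed immersion with closed image \(C\). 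For a closed point \(t\notin C\) the residue-field morphism \(\Spec(k(t))\to T\) is also a closed immersion, and since \(C\) and \(\{t\}\) are disjoint closed subsets, the induced morphism \(X_{\bbullet,T}\coprod\Spec(k(t))\to T\) restricts to a closed immersion over each member of the open cover \(\{T\setminus\{t\},\,T\setminus C\}\) of \(T\); as being a closed immersion is local on the target, it is a closed immersion, hence a regular monomorphism by \autoref{lem: reg mono} \ref{enumi: closed imm is reg mono}.

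For sufficiency, assume \ref{enumi: cl imm cat reg mono}, \ref{enumi: cl imm cat bc exists}, \ref{enumi: cl imm cat bc closed pt}. By \ref{enumi: cl imm cat reg mono} and \autoref{lem: reg mono} \ref{enumi: reg mono is imm}, \(f\) is an immersion. Since an immersion with closed image is a closed immersion (glue the defining ideal on the open subscheme through which the immersion factors with the unit ideal on the complement of the image), it suffices to show that \(\im(f)\) is closed in \(Y\). Suppose not. As \(\im(f)\) is locally closed, it is open and dense in its closure \(\overline{\im(f)}\); choosing \(y\in\overline{\im(f)}\setminus\im(f)\) and an irreducible component \(W_0\) of \(\overline{\im(f)}\) through \(y\), its generic point \(\xi\) lies in \(\im(f)\) (a generic point belongs to every nonempty open of \(W_0\)), and \(\xi\rsa y\) with \(\xi\neq y\).

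The crux — and the main obstacle — is to convert this non-closed specialization into a violation of \ref{enumi: cl imm cat bc closed pt}, in the spirit of \cite[Lemma 3.7]{deBr19}. Let \(V\) be a valuation ring of \(k(\xi)\) dominating the local domain \(\mathcal{O}_{W_0,y}\) (cf. \autoref{lem: 3pt val} and \cite[\href{https://stacks.math.columbia.edu/tag/00IA}{Tag 00IA}]{stacks-project}), set \(T\dfn\Spec(V)\), and let \(g\colon T\to Y\) send the generic point \(\tau\) to \(\xi\) and the closed point \(t\) to \(y\); by \autoref{lem: affine over qsep is qcpt} and \autoref{lem: coprod lemma}, \(T\) and \(g\) lie in \(\Schb{S}\). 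By \autoref{cor: btimes to times is surj cl imm}, \(f_{\bbullet,T}\) is topologically the immersion \(X\times_YT\to T\), so \(\im(f_{\bbullet,T})=g^{-1}(\im(f))\) contains \(\tau\) but not \(t\), and \(t\) is a closed point. Thus \ref{enumi: cl imm cat bc closed pt} applies, and together with \autoref{lem: reg mono} \ref{enumi: reg mono is imm} the morphism \(h\colon X_{\bbullet,T}\coprod\Spec(k(t))\to T\) must be an immersion, hence a homeomorphism onto its locally closed image. I will contradict this by a specialization argument: pick \(x_0\in X_{\bbullet,T}\) with \(f_{\bbullet,T}(x_0)=\tau\), and let \(s\) be the point of \(\Spec(k(t))\); then \(h(x_0)=\tau\rsa t=h(s)\) in \(T\), while \(x_0\) and \(s\) lie in distinct connected components of the source (so \(x_0\not\rsa s\)), and \(s\) is the unique \(h\)-preimage of \(t\) because \(t\notin\im(f_{\bbullet,T})\). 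A homeomorphism onto the image would preserve this specialization, a contradiction. Hence \(\im(f)\) is closed and \(f\) is a closed immersion. The final category-theoretic assertion then follows, since every ingredient — regular monomorphisms, existence of the base-changes \(X_{\bbullet,T}\), closed points (\autoref{cor: irred, loc, are cat} \ref{enumi: cor: cl pt is cat}), residue fields (\autoref{lem: quotient field}), and coproducts — is category-theoretic.
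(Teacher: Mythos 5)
Your necessity argument is correct and essentially identical to the paper's. The gap is in sufficiency, at the very first step of your contradiction argument: having reduced to showing that \(\im(f)\) is closed, you assume it is not and claim that some irreducible component \(W_0\) of \(\overline{\im(f)}\) through a boundary point \(y\) has its generic point \(\xi\) inside \(\im(f)\), producing a specialization \(\xi\rsa y\) from the image into its boundary. This requires \(W_0\cap\im(f)\neq\emptyset\), equivalently that \(\overline{\im(f)}\) be the closure of \(\im(f)\) under specialization — and that is false for general schemes. Concretely, let \(Y=\Spec\bigl(\prod_{n\in\mathbb{N}}k\bigr)\), so that \(|Y|\cong\beta\mathbb{N}\), and let \(E\subset |Y|\) be the set of principal primes: \(E\) is a dense open subset (hence locally closed) with \(\overline{E}=|Y|\supsetneq E\), yet every point of \(Y\) is closed (the ring is absolutely flat), so there are no nontrivial specializations at all; the irreducible components of \(\overline{E}\) through a non-principal point \(y\) are the singletons \(\{y\}\), which miss \(E\). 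For the corresponding open immersion \(f\) your argument never produces a valuation ring \(T\to Y\), hence never invokes condition \ref{enumi: cl imm cat bc closed pt}, and so cannot reach a contradiction even though \(\im(f)\) is not closed. (Condition \ref{enumi: cl imm cat bc closed pt} does fail for this \(f\) — e.g.\ for \(T=Y\) and a non-principal closed point \(t\) — but your proof never tests that instance.) Since the proposition allows \(\bbullet=\emptyset\) and arbitrary objects of \(\Schb{S}\), this non-Noetherian pathology is in scope; your step would be legitimate if \(Y\) were locally Noetherian or if \(f\) were known to be quasi-compact (so that \(\im(f)\) is locally constructible and its closure is its specialization-closure), but neither is available here.

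The paper closes exactly this gap by using condition \ref{enumi: cl imm cat bc closed pt} differently: it works over an affine open \(V\subset Y\), applies the condition with \(T=V\) and an \emph{arbitrary} closed point \(v\in V\setminus\im(f_{\bbullet,V})\) to conclude that \(v\) is isolated in \(\im(f_{\bbullet,V})\cup\{v\}\) and hence \(v\notin\overline{\im(f_{\bbullet,V})}\); it then shows separately that the boundary \(\overline{\im(f_{\bbullet,V})}\setminus\im(f_{\bbullet,V})\) equals \(\overline{\im(f_{\bbullet,V})}\setminus U\) for the open \(U\) through which the immersion \(f_{\bbullet,V}\) factors, hence is closed in \(V\), and concludes that it is empty because every nonempty closed subset of an affine scheme contains a closed point. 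Your valuation-ring device is sound as far as it goes (and close in spirit to \cite[Lemma 3.7]{deBr19}), and the remainder of your argument — that an immersion onto a locally closed subset of \(\Spec\) of a valuation ring containing both the generic and the closed point must be surjective, contradicting the clopen decomposition of the source — is correct; but to repair the proof in the stated generality you need something like the paper's ``the boundary is closed and contains no closed points'' argument in place of the specialization-finding step.
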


\begin{proof}
  First, we prove necessity.
  Assume that \(f:Y\to X\) is a closed immersion.
  Then, by \autoref{lem: reg mono} \ref{enumi: closed imm is reg mono},
  \(f\) is a regular monomorphism in \(\Schb{S}\),
  i.e., \(f\) satisfies condition \ref{enumi: cl imm cat reg mono}.
  Let \(T\to Y\) be a morphism.
  Since the closed immersion \(f\) is quasi-compact,
  it follows from \autoref{lem: fiber product bbullet}
  \ref{enumi: fiber product bbullet not red}
  \ref{enumi: fiber product bbullet red}
  that the fiber product \(X_{\bbullet,T}\) exists, i.e.,
  \(f\) satisfies condition \ref{enumi: cl imm cat bc exists}.
  Let \(t\in T\setminus \mathrm{Im}(f_{\bbullet,T})\) be a closed point.
  Since \(f_T\) is a closed immersion,
  and \(t\not\in \mathrm{Im}(f_{\bbullet,T})\) is a closed point of \(T\),
  the morphism \(X_{\bbullet,T} \coprod \Spec (k(t)) \to T\) is a closed immersion.
  Hence by \autoref{lem: reg mono} \ref{enumi: closed imm is reg mono},
  the morphism \(X_{\bbullet,T} \coprod \Spec (k(t)) \to T\)
  is a regular monomorphism in \(\Schb{S}\).
  Thus \(f\) satisfies condition \ref{enumi: cl imm cat bc closed pt}.
  This completes the proof of necessity.

  Next, we prove sufficiency.
  Assume that \(f\) satisfies conditions
  \ref{enumi: cl imm cat reg mono}, \ref{enumi: cl imm cat bc exists}, and
  \ref{enumi: cl imm cat bc closed pt}.
  Then, by \autoref{lem: reg mono} \ref{enumi: reg mono is imm},
  \(f\) is an immersion.
  Hence it suffices to prove that \(\mathrm{Im}(f) \subset Y\) is closed.
  By condition \ref{enumi: cl imm cat bc exists}
  and \autoref{cor: btimes to times is surj cl imm},
  to prove that \(\mathrm{Im}(f) \subset Y\) is closed,
  it suffices to prove that
  for any affine open subscheme \(V\subset Y\),
  \(\mathrm{Im}(f_{\bbullet,V}) \subset V\) is closed,
  i.e., \(\overline{\mathrm{Im}(f_{\bbullet,V})} = \mathrm{Im}(f_{\bbullet,V})\).
  Let \(V\) be an affine open subscheme of \(Y\)
  and \(v\in V\) a closed point such that \(v\not\in \mathrm{Im}(f_{\bbullet,V})\).
  Since \(f\) satisfies condition \ref{enumi: cl imm cat bc closed pt},
  the morphism \(X_{\bbullet,V} \coprod \Spec (k(v)) \to V\)
  is a regular monomorphism in \(\Schb{S}\).
  %(cf. \autoref{lem: affine over qsep is qcpt}
  %\ref{enumi: lem: aff over qsep is qcpt qc}
  %\ref{enumi: lem: aff over qsep is qcpt sep}).
  Hence by \autoref{lem: reg mono} \ref{enumi: reg mono is imm},
  the morphism \(X_{\bbullet,V} \coprod \Spec (k(v)) \to V\) is an immersion,
  i.e., \(v\) is an isolated point of
  the topological subspace \(\mathrm{Im}(f_{\bbullet,V}) \cup \left\{ v\right\} \subset V\).
  Thus there exists an open neighborhood \(v\in V'\subset V\)
  such that \(\mathrm{Im}(f_{\bbullet,V})\cap V' = \emptyset\),
  i.e., \(v\not\in \overline{\mathrm{Im}(f_{\bbullet,V})}\).
  This implies that the subset
  \(\overline{\mathrm{Im}(f_{\bbullet,V})}\setminus \mathrm{Im}(f_{\bbullet,V}) \subset V\)
  does not contain any closed point of \(V\).
  Since \(V\) is an affine scheme,
  to prove that \(\overline{\mathrm{Im}(f_{\bbullet,V})} = \mathrm{Im}(f_{\bbullet,V})\),
  it suffices to prove that
  \(\overline{\mathrm{Im}(f_{\bbullet,V})}\setminus \mathrm{Im}(f_{\bbullet,V})\subset V\)
  is closed.

  By \autoref{cor: btimes to times is surj cl imm},
  the natural morphism \(X_{\bbullet,V}\to X_V\) is a surjective closed immersion.
  Since \(f_{\bbullet,V}:X_{\bbullet,V}\to V\) is the composite of
  the natural surjective closed immersion \(X_{\bbullet,V} \to X_V\) and
  the immersion \(f_V: X_V\to V\),
  \(f_{\bbullet,V}\) is an immersion.
  Hence \(f_{\bbullet,V}\)
  admits a factorization \(X_{\bbullet,V}\xrightarrow{g} U \subset V\)
  such that \(g:X_{\bbullet,V}\to U\) is a closed immersion, and
  \(U\subset V\) is an open immersion.
  Let \(p\in \overline{\mathrm{Im}(f_{\bbullet,V})}\setminus \mathrm{Im}(f_{\bbullet,V})\)
  be a point.
  Suppose that \(p\in U\).
  Then, since \(p\not\in \mathrm{Im}(f_{\bbullet,V}) = \mathrm{Im}(g)\),
  the open neighborhood \(p\in U\setminus \mathrm{Im}(g)\) does not intersect
  \(\mathrm{Im}(f_{\bbullet,V})\).
  This contradicts the fact that \(p\in \overline{\mathrm{Im}(f_{\bbullet,V})}\).
  Hence \(p\not\in U\).
  This implies that
  \(\overline{\mathrm{Im}(f_{\bbullet,V})}\setminus \mathrm{Im}(f_{\bbullet,V})
  \subset \overline{\mathrm{Im}(f_{\bbullet,V})}\setminus U\).
  Since \(\mathrm{Im}(f_{\bbullet,V}) \subset U\),
  the inclusion \(\overline{\mathrm{Im}(f_{\bbullet,V})}\setminus U\subset
  \overline{\mathrm{Im}(f_{\bbullet,V})}\setminus \mathrm{Im}(f_{\bbullet,V})\) holds.
  Thus it holds that
  \(\overline{\mathrm{Im}(f_{\bbullet,V})}\setminus \mathrm{Im}(f_{\bbullet,V})
  = \overline{\mathrm{Im}(f_{\bbullet,V})}\setminus U\).
  Since \(\overline{\mathrm{Im}(f_{\bbullet,V})}\subset V\) is closed, and
  \(U\subset V\) is open,
  we thus conclude that the subset
  \(\overline{\mathrm{Im}(f_{\bbullet,V})}\setminus \mathrm{Im}(f_{\bbullet,V})\subset V\)
  is closed.
  This completes the proof of \autoref{prop: closed imm is cat}.
\end{proof}

\begin{cor}\label{cor: open imm is cat}
  Let \(S\) be a quasi-separated scheme.
  Let \(X\) be an object of \(\Schb{S}\) and
  \(f:U\to X\) a morphism of \(\Schb{S}\).
  Then \(f\) is an open immersion if and only if
  there exists a (necessarily quasi-compact)
  closed immersion \(i:F\to X\) in \(\Schb{S}\)
  such that the following conditions hold:
  \begin{enumerate}
    \item \label{enumi: open imm empty}
    \(F\btimes_XU = \emptyset\) (cf. \autoref{lem: fiber product bbullet}
    \ref{enumi: fiber product bbullet not red}
    \ref{enumi: fiber product bbullet red}).
    \item \label{enumi: open imm univ}
    If a morphism \(g:Y\to X\) in \(\Schb{S}\) satisfies \(Y\btimes_X F = \emptyset\)
    (cf. \autoref{lem: fiber product bbullet}
    \ref{enumi: fiber product bbullet not red}
    \ref{enumi: fiber product bbullet red}),
    then \(g\) factors uniquely through \(f:U\to X\).
  \end{enumerate}
  In particular, the property that \(f\) is an open immersion
  may be characterized category-theoretically
  (cf. \autoref{prop: closed imm is cat})
  from the data \((\Schb{S},f:Y\to X)\).
\end{cor}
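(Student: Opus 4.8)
The plan is to exhibit $F$ as a (reduced) closed subscheme supported on the complement of the open subscheme $U$, and then to recover $U$ from $F$ as the complementary open subscheme, exploiting the fact that open and closed immersions are ``complementary''. Throughout I will use that, whenever the relevant fiber product exists, $A\btimes_X B=\emptyset$ if and only if the images in $X$ are disjoint: by \autoref{cor: btimes to times is surj cl imm} the natural morphism $A\btimes_X B\to A\times_X B$ is a homeomorphism, and the image of $A\times_X B$ in $X$ is the intersection of the images of $A$ and $B$. For necessity, suppose $f:U\to X$ is an open immersion, so $f(U)\subset X$ is open and $F_0\dfn X\setminus f(U)$ is closed; set $F\dfn (F_0)_{\red}$ and let $i:F\to X$ be the induced closed immersion. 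Since a closed immersion is quasi-compact and separated, the composite $F\to X\to S$ inherits each of the properties $\qcpt,\qsep,\sep$ from $X$, and $F$ is reduced by construction, so $F$ is an object of $\Schb{S}$ and $i$ is a closed immersion in $\Schb{S}$. Condition \ref{enumi: open imm empty} holds because $i(F)=F_0$ and $f(U)$ are disjoint; and for \ref{enumi: open imm univ}, if $g:Y\to X$ satisfies $Y\btimes_X F=\emptyset$ then $g(Y)\cap F_0=\emptyset$, i.e. $g(Y)\subset f(U)$, whence $g$ factors uniquely through $f$ by the universal property of the open subscheme $f(U)\cong U$.

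For sufficiency, let $j:W\dfn X\setminus i(F)\hookrightarrow X$ be the open subscheme complementary to $i(F)$. By condition \ref{enumi: open imm empty} and the emptiness criterion above, $f(U)\cap i(F)=\emptyset$, so $f(U)\subset W$ and $f$ factors as $f=j\circ\alpha$ for a (unique, since $j$ is a monomorphism) morphism of schemes $\alpha:U\to W$. To build a morphism in the other direction, I would cover $W$ by affine open subschemes $V$: each such $V$ lies in $\Schb{S}$ by \autoref{lem: affine over qsep is qcpt} (the structure morphism $V\to S$ is quasi-compact and separated because $S$ is quasi-separated, and $V$ is reduced whenever $X$ is), and $V\btimes_X F=\emptyset$ since $V\cap i(F)=\emptyset$. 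Condition \ref{enumi: open imm univ} then yields, for each $V$, a unique $\beta_V:V\to U$ with $f\circ\beta_V$ equal to the inclusion $V\hookrightarrow X$; restricting to affine opens of each overlap $V\cap V'$ and invoking the uniqueness clause shows the $\beta_V$ agree on overlaps, so they glue to a morphism of schemes $\beta:W\to U$ with $f\circ\beta=j$.

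It then remains to check that $\alpha$ and $\beta$ are mutually inverse. Since $j\circ(\alpha\circ\beta)=f\circ\beta=j$ and the open immersion $j$ is a monomorphism, $\alpha\circ\beta=\id_W$. Conversely $f\circ(\beta\circ\alpha)=j\circ\alpha=f$, and $\beta\circ\alpha:U\to U$ is a morphism of $\Schb{S}$; applying condition \ref{enumi: open imm univ} to $g=f$ itself — which satisfies $U\btimes_X F=\emptyset$ by condition \ref{enumi: open imm empty} together with the symmetry of the fiber product — the factorization of $f$ through $f$ is unique in $\Schb{S}$, and since both $\id_U$ and $\beta\circ\alpha$ are such factorizations we get $\beta\circ\alpha=\id_U$. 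Hence $\alpha:U\xrightarrow{\sim}W$ and $f=j\circ\alpha$ is an open immersion. The concluding ``category-theoretic'' assertion is then formal: closed immersions are category-theoretic by \autoref{prop: closed imm is cat}, emptiness of the fiber products $\btimes$ is category-theoretic (cf. \autoref{lem: 1pt} and \autoref{lem: fiber product bbullet}), and the existence and uniqueness of the factorizations in \ref{enumi: open imm univ} are manifestly category-theoretic.

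The step I expect to be the main obstacle is the construction of $\beta$. The complement $W=X\setminus i(F)$ need not itself be an object of $\Schb{S}$ — when $\qcpt\in\bbullet$ it may fail to be quasi-compact over $S$ — so one cannot simply feed $j:W\to X$ directly into condition \ref{enumi: open imm univ}. Passing to affine open subschemes of $W$, which \emph{do} lie in $\Schb{S}$ by \autoref{lem: affine over qsep is qcpt}, and reassembling the resulting morphisms by the uniqueness in \ref{enumi: open imm univ}, is precisely what circumvents this difficulty; the secondary point requiring care is the bookkeeping, via \autoref{lem: fiber product bbullet} and \autoref{cor: btimes to times is surj cl imm}, that the emptiness hypotheses transfer correctly between the fiber products $\btimes$ in $\Schb{S}$ and the fiber products $\times$ of schemes.
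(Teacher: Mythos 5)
Your proof is correct and follows the same overall strategy as the paper's: for necessity, take \(F\) to be the reduced complement \((X\setminus f(U))_{\red}\) with its natural closed immersion into \(X\) and verify conditions (i) and (ii) directly; for sufficiency, compare \(f\) with the open immersion \(j:W= X\setminus i(F)\to X\) complementary to \(i\), and use the uniqueness clause of condition (ii) to show that the two resulting factorizations are mutually inverse.

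The one point where you genuinely diverge is the construction of the morphism \(W\to U\). The paper applies the necessity portion of the corollary to \(j:W\to X\) and then feeds \(j\) itself into condition (ii), which tacitly presupposes that \(W\) is an object of \(\Schb{S}\); as you observe, this can fail when \(\qcpt\in\bbullet\), since an open subscheme of a scheme quasi-compact over \(S\) need not be quasi-compact over \(S\). Your detour — producing \(\beta_V:V\to U\) for each affine open \(V\subset W\) (these do lie in \(\Schb{S}\) by \autoref{lem: affine over qsep is qcpt}), checking agreement on overlaps via the uniqueness clause applied to affine opens of \(V\cap V'\), and gluing to \(\beta:W\to U\) in \(\Sch{S}\) — closes this gap. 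The remaining verification, using that \(j\) is a monomorphism to get \(\alpha\circ\beta=\id_W\) and the uniqueness in (ii) (applied to \(g=f\), legitimate since \(U\btimes_XF=\emptyset\)) to get \(\beta\circ\alpha=\id_U\), is exactly right. So your argument is, if anything, more complete than the one printed in the paper.
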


\begin{proof}
  First, we prove necessity.
  Assume that \(f:U\to X\) is an open immersion.
  Write \(i: F\dfn \left( X\setminus \mathrm{Im}(f) \right)_{\red} \to X\)
  for the natural closed immersion.
  Since \(i\) is quasi-compact and separated,
  \(i\) belongs to \(\Schb{S}\).
  Since \(F\times_X U = \emptyset\),
  condition \ref{enumi: open imm empty} holds.
  Let \(g:Y\to X\) be a morphism in \(\Schb{S}\) such that \(Y\btimes_X F = \emptyset\).
  Then the underlying morphism of topological spaces
  \(|g| : |Y| \to |X|\) factors uniquely through
  the open immersion of topological spaces \(|f| : |U|\to |X|\).
  Thus \(g : Y\to X\) factors uniquely through \(f:U\to X\).
  This completes the proof of necessity.

  Next, we prove sufficiency.
  Assume that there exists a closed immersion \(i:F\to X\) in \(\Schb{S}\) such that
  \(f:U\to X\) and \(i:F\to X\) satisfy conditions
  \ref{enumi: open imm empty} and \ref{enumi: open imm univ}.
  Write \(j: V = X\setminus \mathrm{Im}(i)\to X\) for the open immersion
  determined by the open subset \(X\setminus \mathrm{Im}(i)\subset X\).
  By the necessity portion of \autoref{cor: open imm is cat},
  the open immersion \(j: V\to X\) satisfies conditions
  \ref{enumi: open imm empty} and \ref{enumi: open imm univ}.
  Hence there exist morphisms \(g:U\to V, h:V\to U\) such that
  \(j\circ g = f, f\circ h = j\).
  By the uniqueness portion of condition \ref{enumi: open imm univ},
  the equalities \(h\circ g = \id_U, g\circ h = \id_V\) hold.
  This proves that \(g\) is an isomorphism,
  i.e., \(f = j\circ g\) is an open immersion.
  This completes the proof of sufficiency.
\end{proof}

Next, we give a category-theoretic reconstruction of
the underlying topological space of an object of \(\Schb{S}\).

\begin{reconstruction}\label{reconstruction: Top}
  Let \(S\) be a quasi-separated scheme.
  Let \(X\) be an object of \(\Schb{S}\).
  We define a (\vsm) topological space
  \begin{align*}
    \mathsf{Op}_{\bbullet/S}(X) &\dfn
    \Set{U\subset \mathsf{Pt}_{\bbullet/S}(X) |
    \begin{array}{l}
      \text{there exists a collection of open} \\
      \text{immersions
      \(\left\{i_\lambda:U_\lambda\to X\right\}_{\lambda\in \Lambda}\) in \(\Schb{S}\)} \\
      \text{such that \(U = \bigcup_{\lambda\in\Lambda}\im(\mathsf{Pt}_{\bbullet/S}(i_\lambda))\)}
    \end{array}
    }, \\
    \mathsf{Sp}_{\bbullet/S}(X) &\dfn (\mathsf{Pt}_{\bbullet/S}(X),\mathsf{Op}_{\bbullet/S}(X)).
  \end{align*}
  These (\vsm) sets are defined
  %only by the property that may be characterized cat.-theoretically
  completely in terms of properties that may be
  characterized category-theoretically (cf. \autoref{cor: open imm is cat})
  from the data \((\Schb{S},X)\). %%%%この辺くらいまでok
  Since any open subscheme \(U\subset X\) admits a covering by affine open subschemes,
  it follows from \autoref{lem: affine over qsep is qcpt}
  \ref{enumi: lem: aff over qsep is qcpt qc}
  \ref{enumi: lem: aff over qsep is qcpt sep} that
  the bijection \(\eta_X:\mathsf{Sp}_{\bbullet/S}(X)\to |X|\)
  (cf. \autoref{reconstruction: Set}) induces
  a natural bijection between \(\mathsf{Op}_{\bbullet/S}(X)\) and
  the set of open subsets of \(X\).
  Hence \(\mathsf{Sp}_{\bbullet/S}(X)\) is a topological space, and
  the bijection \(\eta_X\) may be regarded
  (by a slight abuse of notation) as a homeomorphism.
  Since for any morphism \(f:X\to Y\) of \(\Schb{S}\),
  the diagram
  \begin{equation}
    \label{equiv: reconstruction Top commute}
    \begin{CD}
      \mathsf{Sp}_{\bbullet/S}(X) @> \eta_X > \sim > |X| \\
      @V \mathsf{Pt}_{\bbullet/S}(f) VV @VV |f| V \\
      \mathsf{Sp}_{\bbullet/S}(Y) @> \eta_Y > \sim > |Y|
    \end{CD}
    \tag{\(\dagger\)}
  \end{equation}
  (cf. \autoref{reconstruction: Set}) commutes,
  the map
  \(\mathsf{Pt}_{\bbullet/S}(f):\mathsf{Sp}_{\bbullet/S}(X)\to \mathsf{Sp}_{\bbullet/S}(Y)\)
  is continuous.
  We shall write \(\mathsf{Sp}_{\bbullet/S}(f)\) for \(\mathsf{Pt}_{\bbullet/S}(f)\).
  Thus we obtain a functor
  \(\mathsf{Sp}_{\bbullet/S}:\Schb{S} \to \TopV\).

  Let \(\bigstar \dfn \TopU\) or \(\TopV\).
  We define
  \begin{equation}
    \label{equation: underlying top functor}
    \begin{aligned}
      U_{\bbullet/S}^{\bigstar}: \Schb{S} &\to \bigstar \\
      X &\mapsto |X|.
    \end{aligned}
    \tag{\(\ddagger\)}
  \end{equation}
  Let \(i^{\sfTop}_{\univ{U}\in\univ{V}}: \TopU \to \TopV\)
  be the inclusion,
  then the equality
  \(U_{\bbullet/S}^{\TopV} = i^{\sfTop}_{\univ{U}\in\univ{V}}\circ U_{\bbullet/S}^{\TopU}\) holds.
  Since \(\eta_X\) is a homeomorphism,
  \(\eta\) may be regarded (by a slight abuse of notation) as a natural isomorphism
  \(\eta: \mathsf{Sp}_{\bbullet/S} \xrightarrow{\sim}
  U_{\bbullet/S}^{\TopV} =
  i^{\sfTop}_{\univ{U}\in\univ{V}}\circ U_{\bbullet/S}^{\TopU}\).
\end{reconstruction}

Since the functor \(\mathsf{Sp}_{\bbullet/S}\) is defined
category-theoretically from the data \(\Schb{S}\),
the following lemma holds:

\begin{lem}\label{lem: Sp equiv commute}
  Let \(S,T\) be quasi-separated schemes and
  \(F:\Schb{S} \xrightarrow{\sim} \Schc{T}\) an equivalence.
  Then \(S,T,F\) determine an isomorphism \(\rho^{\mathsf{Sp}}\) between
  the two composite functors of the following diagram:
  \[
  \begin{CD}
    \Schb{S} @>F>> \Schc{T} \\
    @V\mathsf{Sp}_{\bbullet/S}VV @VV\mathsf{Sp}_{\bbbullet/T}V \\
    \TopV @= \TopV.
  \end{CD}
  \]
\end{lem}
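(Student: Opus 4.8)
The plan is to upgrade the set-level isomorphism \(\rho^{\mathsf{Pt}}\) of \autoref{lem: Pt equiv commute} to an isomorphism of functors valued in \(\TopV\). Recall from \autoref{reconstruction: Top} that \(\mathsf{Sp}_{\bbullet/S}(X)\) has underlying set \(\mathsf{Pt}_{\bbullet/S}(X)\), that \(\mathsf{Sp}_{\bbbullet/T}(F(X))\) has underlying set \(\mathsf{Pt}_{\bbbullet/T}(F(X))\), and that the continuous maps \(\mathsf{Sp}_{\bbullet/S}(f)\), \(\mathsf{Sp}_{\bbbullet/T}(F(f))\) are defined to coincide, on underlying sets, with \(\mathsf{Pt}_{\bbullet/S}(f)\), \(\mathsf{Pt}_{\bbbullet/T}(F(f))\). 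Consequently, if we take for each object \(X\) the bijection \(\rho_X:\mathsf{Pt}_{\bbullet/S}(X)\xrightarrow{\sim}\mathsf{Pt}_{\bbbullet/T}(F(X))\) of \autoref{lem: Pt equiv commute}, then the naturality squares required for \(\rho^{\mathsf{Sp}}\) are literally the naturality squares for \(\rho^{\mathsf{Pt}}\), which have already been verified. Thus the entire additional content of the lemma is the assertion that each \(\rho_X\) is a \emph{homeomorphism} for the topologies \(\mathsf{Op}_{\bbullet/S}(X)\) and \(\mathsf{Op}_{\bbbullet/T}(F(X))\).

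To prove this, I would show that \(\rho_X\) carries the generating open sets of \(\mathsf{Op}_{\bbullet/S}(X)\) onto those of \(\mathsf{Op}_{\bbbullet/T}(F(X))\). The crucial input is \autoref{cor: open imm is cat}, which characterizes the property ``open immersion'' purely category-theoretically; hence the equivalence \(F\) sends any open immersion \(i_\lambda:U_\lambda\to X\) in \(\Schb{S}\) to an open immersion \(F(i_\lambda):F(U_\lambda)\to F(X)\) in \(\Schc{T}\). Applying the naturality of \(\rho^{\mathsf{Pt}}\) to the morphism \(i_\lambda\), together with the fact that \(\rho_{U_\lambda}\) is a bijection, yields the equality \(\rho_X(\im(\mathsf{Pt}_{\bbullet/S}(i_\lambda)))=\im(\mathsf{Pt}_{\bbbullet/T}(F(i_\lambda)))\). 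Therefore, for any \(U=\bigcup_{\lambda}\im(\mathsf{Pt}_{\bbullet/S}(i_\lambda))\in\mathsf{Op}_{\bbullet/S}(X)\), its image \(\rho_X(U)=\bigcup_{\lambda}\im(\mathsf{Pt}_{\bbbullet/T}(F(i_\lambda)))\) belongs to \(\mathsf{Op}_{\bbbullet/T}(F(X))\); that is, \(\rho_X\) maps open sets to open sets.

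Finally, I would run the identical argument for a quasi-inverse of \(F\): its induced bijection on underlying sets agrees, up to the canonical isomorphism \(F^{-1}(F(X))\cong X\), with \(\rho_X^{-1}\), so \(\rho_X^{-1}\) likewise maps open sets to open sets. Hence each \(\rho_X\) is a homeomorphism, and assembling these homeomorphisms produces the sought-after natural isomorphism \(\rho^{\mathsf{Sp}}\). The main obstacle is conceptual rather than computational: one must be careful that ``open immersion'' is genuinely an invariant of the abstract category (which is exactly the point of \autoref{cor: open imm is cat}) and that the set-theoretic images transported by \(\rho\) match the images defined intrinsically on the target. Granting \autoref{cor: open imm is cat} and the naturality already established in \autoref{lem: Pt equiv commute}, the remaining verification is essentially formal.
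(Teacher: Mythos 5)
Your proposal is correct and takes essentially the same route as the paper: the paper likewise reuses the bijections \(\rho_X\) of \autoref{lem: Pt equiv commute} and reduces the lemma to showing each \(\rho_X\) is a homeomorphism, verifying openness via \autoref{cor: open imm is cat} applied to \(F\) and continuity via the same corollary applied to a quasi-inverse \(F^{-1}\). Your only deviation is the order in which the two halves (openness versus continuity) are checked, which is immaterial.
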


\begin{proof}
  Let \(X\) be an object of \(\Schb{S}\).
  By \autoref{lem: Pt equiv commute},
  the map \(\rho_X : \mathsf{Pt}_{\bbullet/S}(X) \to \mathsf{Pt}_{\bbbullet/T}(F(X))\)
  constructed in the proof of \autoref{lem: Pt equiv commute} is bijective.
  Hence to prove \autoref{lem: Sp equiv commute},
  it suffices to prove that
  the map \(\rho_X : \mathsf{Sp}_{\bbullet/S}(X) \to \mathsf{Sp}_{\bbbullet/T}(F(X))\)
  is continuous and open.

  By the definitions of the topologies on
  \(\mathsf{Sp}_{\bbullet/S}(X)\) and \(\mathsf{Sp}_{\bbbullet/T}(F(X))\),
  to prove that \(\rho_X\) is continuous,
  it suffices to prove that
  for any open immersion \(j:V\to F(X)\) in \(\Schc{T}\),
  \(\rho_X^{-1}(\im(\mathsf{Sp}_{\bbbullet/T}(j)))\)
  is an open subset of \(\mathsf{Sp}_{\bbullet/S}(X)\).
  Let \(j:V\to F(X)\) be an open immersion.
  If \(F^{-1}\) is a quasi-inverse of \(F\),
  then by \autoref{cor: open imm is cat},
  the morphism \(j':F^{-1}(V) \to X\) corresponding to \(j:V\to F(X)\)
  is an open immersion.
  %\(j':F^{-1}(V) \to X\) is an open immersion.
  Moreover, by \autoref{lem: Pt equiv commute}, the equality
  \(\im(\mathsf{Sp}_{\bbullet/S}(j')) = \rho_X^{-1}(\im(\mathsf{Sp}_{\bbbullet/T}(j)))\) holds.
  Thus \(\rho_X\) is continuous.

  %To complete the proof of \autoref{lem: Sp equiv commute},
  %we prove that \(\rho_X\) is an open map.
  By the definitions of the topologies on
  \(\mathsf{Sp}_{\bbullet/S}(X)\) and \(\mathsf{Sp}_{\bbbullet/T}(F(X))\),
  to prove that \(\rho_X\) is open,
  it suffices to prove that
  for any open immersion \(i:V\to X\) in \(\Schb{S}\),
  \(\rho_X(\im(\mathsf{Sp}_{\bbullet/S}(i)))\)
  is an open subset of \(\mathsf{Sp}_{\bbbullet/T}(X)\).
  Let \(i:U\to X\) be an open immersion.
  By \autoref{cor: open imm is cat},
  \(F(i):F(U) \to F(X)\) is an open immersion.
  Moreover, by \autoref{lem: Pt equiv commute}, the equality
  \(\rho_X(\im(\mathsf{Sp}_{\bbullet/S}(i))) = \im(\mathsf{Sp}_{\bbbullet/T}(F(i)))\) holds.
  Thus \(\rho_X\) is an open map.
  This completes the proof of \autoref{lem: Sp equiv commute}.
\end{proof}

\begin{cor}\label{cor: underlying top equiv commute}
  Let \(S,T\) be schemes and
  \(F:\Schb{S} \xrightarrow{\sim} \Schc{T}\) an equivalence.
  Then the following diagram commutes up to natural isomorphism:
  \[
  \begin{CD}
    \Schb{S} @>F>> \Schc{T} \\
    @VU_{\bbullet/S}^{\TopU}VV @VVU_{\bbbullet/T}^{\TopU}V \\
    \TopU @= \TopU.
  \end{CD}
  \]
\end{cor}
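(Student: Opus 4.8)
The plan is to imitate the proof of \autoref{cor: underlying set equiv commute} essentially verbatim, with the category $\mathsf{Set}$ replaced by $\mathsf{Top}$, the point functor $\mathsf{Pt}_{\bbullet/S}$ replaced by the space functor $\mathsf{Sp}_{\bbullet/S}$, and the set-level naturality lemma replaced by \autoref{lem: Sp equiv commute}. The only inputs required are the natural isomorphism $\eta$ supplied by \autoref{reconstruction: Top} and the natural isomorphism $\rho^{\mathsf{Sp}}$ supplied by \autoref{lem: Sp equiv commute}; once these are in hand the argument is purely formal.

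Concretely, I would first recall from \autoref{reconstruction: Top} that, for both $S$ and $T$, there are natural isomorphisms of functors
\[
\mathsf{Sp}_{\bbullet/S} \xrightarrow{\sim} i^{\sfTop}_{\univ{U}\in\univ{V}}\circ U_{\bbullet/S}^{\TopU}, \qquad
\mathsf{Sp}_{\bbbullet/T} \xrightarrow{\sim} i^{\sfTop}_{\univ{U}\in\univ{V}}\circ U_{\bbbullet/T}^{\TopU}.
\]
Then \autoref{lem: Sp equiv commute} provides a natural isomorphism $\mathsf{Sp}_{\bbullet/S} \xrightarrow{\sim} \mathsf{Sp}_{\bbbullet/T}\circ F$. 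Splicing these three isomorphisms together yields a chain of natural isomorphisms
\[
i^{\sfTop}_{\univ{U}\in\univ{V}}\circ U_{\bbullet/S}^{\TopU}
\xleftarrow{\sim} \mathsf{Sp}_{\bbullet/S}
\xrightarrow{\sim} \mathsf{Sp}_{\bbbullet/T}\circ F
\xrightarrow{\sim} i^{\sfTop}_{\univ{U}\in\univ{V}}\circ U_{\bbbullet/T}^{\TopU}\circ F.
\]

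The final step is to descend the resulting composite along the universe-inclusion $i^{\sfTop}_{\univ{U}\in\univ{V}}: \TopU \to \TopV$. Since this inclusion is fully faithful — the sets of continuous maps between $\univ{U}$-small spaces are unchanged upon regarding those spaces as $\univ{V}$-small — every natural isomorphism between two functors of the form $i^{\sfTop}_{\univ{U}\in\univ{V}}\circ(-)$ arises from a unique natural isomorphism between the underlying $\TopU$-valued functors. Applying this to the composite above produces the desired natural isomorphism $U_{\bbullet/S}^{\TopU} \xrightarrow{\sim} U_{\bbbullet/T}^{\TopU}\circ F$, which is exactly the commutativity up to natural isomorphism asserted by the statement.

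I do not anticipate any genuine obstacle here: all the substantive content — namely that $F$ preserves open immersions, and hence the reconstructed topologies — has already been absorbed into \autoref{lem: Sp equiv commute}, so the present statement is a formal corollary of that lemma together with \autoref{reconstruction: Top}. The only point requiring a moment's care is the full-faithfulness of the inclusion functor, which is what guarantees that the isomorphism genuinely descends to the $\univ{U}$-small level rather than merely existing after composition with the inclusion; this is precisely the role played by the analogous remark about $i^{\sfSet}_{\univ{U}\in\univ{V}}$ in the proof of \autoref{cor: underlying set equiv commute}.
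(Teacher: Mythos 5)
Your proposal is correct and follows essentially the same route as the paper: the paper's proof likewise splices the natural isomorphisms from \autoref{reconstruction: Top} with $\rho^{\mathsf{Sp}}$ from \autoref{lem: Sp equiv commute} and then descends along the fully faithful inclusion $i^{\sfTop}_{\univ{U}\in\univ{V}}$. No gaps.
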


\begin{proof}
  It follows from \autoref{lem: Sp equiv commute} and \autoref{reconstruction: Top}
  that we obtain the following natural isomorphisms:
  \begin{align*}
    i^{\sfTop}_{\univ{U}\in\univ{V}}\circ U_{\bbullet/S}^{\TopU}
    \xleftarrow{\sim} \mathsf{Sp}_{\bbullet/S}
    \xrightarrow{\sim} \mathsf{Sp}_{\bbbullet/T} \circ F
    \xrightarrow{\sim} i^{\sfTop}_{\univ{U}\in\univ{V}}\circ U_{\bbbullet/T}^{\TopU} \circ F.
  \end{align*}
  Since \(i^{\sfTop}_{\univ{U}\in\univ{V}}\) is fully faithful,
  the composite isomorphism of the above display determines
  a natural isomorphism
  \(U_{\bbullet/S}^{\TopU} \xrightarrow{\sim} U_{\bbbullet/T}^{\TopU} \circ F\).
\end{proof}

By \autoref{reconstruction: Top},
topological properties of schemes or morphisms may be
characterized category-theoretically.
For example, the following assertion holds:

\begin{cor}\label{cor: qc is cat}
  Let \(S\) be a quasi-separated scheme.
  \begin{enumerate}
    \item \label{enumi: qc sch is cat}
    Let \(X\) be an object of \(\Schb{S}\).
    Then the property that \(X\) is quasi-compact
    may be characterized category-theoretically
    from the data \((\Schb{S},X)\).
    \item \label{enumi: qc morph is cat}
    Let \(f:Y\to Z\) be a morphism of \(\Schb{S}\).
    Then the property that \(f\) is quasi-compact
    may be characterized category-theoretically
    from the data \((\Schb{S},f)\).
  \end{enumerate}
\end{cor}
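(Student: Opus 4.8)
The plan is to deduce both assertions formally from \autoref{cor: underlying top equiv commute}, which shows that the underlying-topological-space functor is compatible with equivalences. The guiding observation is that both properties in question are \emph{topological}: a scheme is quasi-compact precisely when its underlying topological space is quasi-compact, and a morphism of schemes \(f\) is quasi-compact precisely when \(|f|^{-1}(V)\) is quasi-compact for every quasi-compact open \(V\) of the target. This latter condition refers only to the continuous map \(|f|\) of underlying topological spaces together with the topologies of its source and target; in particular, it is invariant under replacing \(|f|\) by its conjugate by homeomorphisms of source and target.

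For assertion (i), I would unwind the definition of ``characterized category-theoretically'' from the Notations and Conventions: fixing a quasi-separated scheme \(T\), a subset \(\bbbullet\), and an equivalence \(F:\Schb{S}\xrightarrow{\sim}\Schc{T}\), it suffices to verify that \(X\) is quasi-compact if and only if \(F(X)\) is. By \autoref{cor: underlying top equiv commute}, the natural isomorphism \(U_{\bbullet/S}^{\TopU}\xrightarrow{\sim}U_{\bbbullet/T}^{\TopU}\circ F\) furnishes a homeomorphism \(|X|\xrightarrow{\sim}|F(X)|\), and since quasi-compactness of a topological space is preserved by homeomorphism, the two conditions are equivalent.

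For assertion (ii), I would evaluate the same natural isomorphism on the morphism \(f:Y\to Z\). Its naturality yields homeomorphisms \(|Y|\xrightarrow{\sim}|F(Y)|\) and \(|Z|\xrightarrow{\sim}|F(Z)|\) that intertwine \(|f|\) with \(|F(f)|\), so that \(|F(f)|\) is the conjugate of \(|f|\) by these homeomorphisms. Because the condition ``the preimage of every quasi-compact open is quasi-compact'' is invariant under such conjugation, \(f\) is quasi-compact if and only if \(F(f)\) is, which is exactly what is required.

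The single point demanding care---and the closest thing to an obstacle---is the justification that quasi-compactness of a morphism of schemes is genuinely intrinsic to the underlying continuous map rather than to the scheme structure. Here one records that the quasi-compact opens of the target are precisely its open quasi-compact subspaces, a purely topological notion, so that the scheme-theoretic definition of a quasi-compact morphism coincides with the topological one. With this observation in place, both assertions follow immediately from \autoref{cor: underlying top equiv commute}.
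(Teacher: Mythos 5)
Your proposal is correct and takes essentially the same route as the paper, which simply observes that both assertions follow immediately from the reconstruction of the underlying topological space (\autoref{reconstruction: Top}, equivalently the compatibility statement of \autoref{cor: underlying top equiv commute}); your additional remark that quasi-compactness of a morphism of schemes is a purely topological condition on the underlying continuous map is exactly the point the paper leaves implicit.
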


\begin{proof}
  \autoref{cor: qc is cat}
  follow immediately from \autoref{reconstruction: Top}.
\end{proof}

\begin{cor}\label{cor: qs sep are cat}
  Let \(S\) be a quasi-separated scheme.
  Let \(X\) be an object of \(\Schb{S}\)
  and \(f:Y\to Z\) a morphism of \(\Schb{S}\).
  Then the following assertions hold:
  \begin{enumerate}
    \item \label{enumi: qs sch is cat}
    \(X\) is quasi-separated if and only if
    for any two open immersions
    \(U_1\to X, U_2\to X\) in \(\Schb{S}\) such that
    \(U_1,U_2\) are quasi-compact,
    \(U_1\btimes_X U_2\) exists and is quasi-compact.
    In particular,
    the property that \(X\) is quasi-separated
    may be characterized category-theoretically
    (cf. \autoref{cor: qc is cat})
    from the data \((\Schb{S},X)\).
    \item \label{enumi: qs morph is cat}
    \(f\) is quasi-separated if and only if
    for any three open immersions
    \(W\to Z, V_1\to Y, V_2 \to Y\) in \(\Schb{S}\) such that
    \(W,V_1,V_2\) are quasi-compact and quasi-separated, and
    \(V_1\to Y\to Z, V_2\to Y\to Z\) factor (necessarily uniquely) through \(W\to Z\),
    \(V_1\btimes_Y V_2\) exists and is quasi-compact.
    In particular,
    the property that \(f\) is quasi-separated
    may be characterized category-theoretically
    (cf. \autoref{cor: qc is cat}, \autoref{cor: qs sep are cat} \ref{enumi: qs sch is cat})
    from the data \((\Schb{S},f)\).
    \item \label{enumi: sep over S is cat}
    \(X\) is separated \textbf{over \(S\)} if and only if
    the image of the diagonal morphism \(\Delta_X^{\bbullet}:X\to X\btimes X\)
    (where we note that \(X\btimes X = X\btimes_SX\) exists in \(\Schb{S}\)
    by \autoref{lem: fiber product bbullet}
    \ref{enumi: fiber product bbullet not qcpt and red}
    \ref{enumi: fiber product bbullet not qcpt and in red}
    \ref{enumi: fiber product bbullet not red}
    \ref{enumi: fiber product bbullet red})
    is closed.
    In particular,
    the property that \(X\) is separated over \(S\)
    may be characterized category-theoretically
    from the data \((\Schb{S},X)\).
  \end{enumerate}
\end{cor}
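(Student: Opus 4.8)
The plan is to reduce each of the three biconditionals to a standard scheme-theoretic criterion, and then to observe that the stated categorical condition reproduces that criterion faithfully by means of the reconstruction of the underlying topological space (\autoref{reconstruction: Top}, \autoref{cor: underlying top equiv commute}) together with the category-theoretic characterizations of open immersions (\autoref{cor: open imm is cat}), quasi-compactness (\autoref{cor: qc is cat}), and fiber products in \(\Schb{S}\). The single recurring tool is \autoref{cor: btimes to times is surj cl imm}: whenever \(U_1\btimes_X U_2\) exists, the natural morphism \(U_1\btimes_X U_2\to U_1\times_X U_2\) is a homeomorphism, so that quasi-compactness of the categorical fiber product is equivalent to quasi-compactness of the scheme-theoretic intersection \(U_1\cap U_2 = U_1\times_X U_2\).

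For assertion \ref{enumi: qs sch is cat} I would invoke the criterion that \(X\) is quasi-separated if and only if the intersection of any two affine (equivalently, quasi-compact) open subschemes is quasi-compact. For the converse, affine open subschemes of \(X\) belong to \(\Schb{S}\) and are quasi-compact (\autoref{cor: coprod exists} \ref{enumi: cor: coprod exists aff}, \autoref{lem: affine over qsep is qcpt}); applying the categorical condition to such a pair and using \autoref{cor: btimes to times is surj cl imm} shows their intersections are quasi-compact, whence \(X\) is quasi-separated. For the forward direction, if \(X\) is quasi-separated, then each open immersion \(U_i\to X\) with \(U_i\) quasi-compact is itself a quasi-compact morphism (intersect \(U_i\) with the members of an affine open cover of \(X\) and use quasi-separatedness), so \(U_1\btimes_X U_2\) exists in \(\Schb{S}\) by \autoref{lem: fiber product bbullet}, and the homeomorphism transports quasi-compactness of \(U_1\cap U_2\) to \(U_1\btimes_X U_2\).

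Assertion \ref{enumi: qs morph is cat} follows the same pattern, now via the local-on-the-target criterion: \(f\) is quasi-separated if and only if \(f^{-1}(W)\) is a quasi-separated scheme for every affine open \(W\subset Z\), i.e. if and only if the intersection of any two quasi-compact opens of \(f^{-1}(W)\) is quasi-compact. For the converse, specialize the categorical condition to affine \(W\subset Z\) and affine \(V_1,V_2\subset f^{-1}(W)\) (which are quasi-compact, quasi-separated, lie in \(\Schb{S}\), and whose composites to \(Z\) factor through \(W\)), and conclude via \autoref{cor: btimes to times is surj cl imm} that \(V_1\cap V_2\) is quasi-compact. For the forward direction, the hypothesis that \(V_1\to Y,\,V_2\to Y\) factor through the quasi-compact quasi-separated open \(W\to Z\) forces \(V_1,V_2\subset f^{-1}(W)\); since \(f\) quasi-separated and \(W\) quasi-separated imply \(f^{-1}(W)\to W\to\Spec(\mathbb{Z})\) is quasi-separated, \(f^{-1}(W)\) is a quasi-separated scheme, so \(V_1\cap V_2\) is quasi-compact. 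One then checks that \(V_1\cap V_2\) (or its reduction, if \(\red\in\bbullet\)) lies in \(\Schb{S}\), whence, \(\Schb{S}\) being a full subcategory of \(\Sch{S}\), it realizes \(V_1\btimes_Y V_2\).

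For assertion \ref{enumi: sep over S is cat} I would use that \(X\) is separated over \(S\) if and only if the diagonal \(\Delta_{X/S}:X\to X\times_S X\), always an immersion, has closed image. By \autoref{cor: btimes to times is surj cl imm} the morphism \(v:X\btimes X\to X\times_S X\) is a surjective closed immersion, hence a homeomorphism of underlying spaces, and \(v\circ\Delta_X^{\bbullet}=\Delta_{X/S}\); thus \(\im(\Delta_X^{\bbullet})\) is closed in \(|X\btimes X|\) if and only if \(\im(\Delta_{X/S})\) is closed in \(|X\times_S X|\), giving the biconditional. Category-theoreticity then follows from \autoref{reconstruction: Top}, which reconstructs \(\mathsf{Sp}_{\bbullet/S}(X\btimes X)\) and the continuous map induced by \(\Delta_X^{\bbullet}\), closedness of an image being a topological invariant. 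I expect the main obstacle to be the bookkeeping in the forward directions of \ref{enumi: qs sch is cat} and \ref{enumi: qs morph is cat}: across all admissible \(\bbullet\subset\rqqs\) one must simultaneously confirm that the relevant fiber product exists in \(\Schb{S}\) and that the object realizing it is (after possibly passing to the reduction) quasi-compact, which requires tracking how each property in \(\bbullet\) — reducedness, quasi-compactness over \(S\), and separatedness/quasi-separatedness over \(S\) — is inherited by the open intersection from the ambient scheme and from the quasi-separatedness already established.
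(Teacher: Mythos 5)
Your proposal is correct and matches the paper's approach: the paper's own proof of \autoref{cor: qs sep are cat} simply asserts that the statements follow formally from \autoref{lem: fiber product bbullet}, \autoref{lem: affine over qsep is qcpt}, \autoref{cor: btimes to times is surj cl imm}, and \autoref{reconstruction: Top} together with the definitions of quasi-separated and separated morphisms, and your write-up supplies exactly the details that this citation leaves implicit. The key mechanism is the same in both cases, namely that \autoref{cor: btimes to times is surj cl imm} identifies the categorical fiber product with the scheme-theoretic one up to homeomorphism, so the standard affine-intersection and closed-diagonal criteria transfer verbatim.
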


\begin{proof}
  Assertions \ref{enumi: qs sch is cat} and
  \ref{enumi: qs morph is cat} follow formally from
  \autoref{lem: fiber product bbullet}
  \ref{enumi: fiber product bbullet not red}
  \ref{enumi: fiber product bbullet red},
  \autoref{lem: affine over qsep is qcpt}
  \ref{enumi: lem: aff over qsep is qcpt qc}
  \ref{enumi: lem: aff over qsep is qcpt sep},
  \autoref{cor: btimes to times is surj cl imm}, and
  \autoref{reconstruction: Top}, together with
  the definition of the notion of a quasi-separated morphism.
  Assertion \ref{enumi: sep over S is cat} follows immediately from
  \autoref{cor: btimes to times is surj cl imm} and \autoref{reconstruction: Top},
  together with
  the definition of the notion of a separated morphism.
\end{proof}

\begin{cor}\label{cor: sep, univ cl are cat}
  Let \(S\) be a quasi-separated scheme.
  Let \(f:X\to Y\) be a morphism of \(\Schb{S}\).
  Then the following assertions hold:
  \begin{enumerate}
    \item \label{enumi: sep is cat}
    \(f\) is separated if and only if
    \(f\) is quasi-separated, and
    for any commutative diagram
    \[
    \begin{tikzpicture}[auto]
      \node (A) at (0,1.5) {\(U\)};
      \node (A') at (0,0) {\(T\)};
      \node (B) at (4,1.5) {\(X\)};
      \node (B') at (4,0) {\(Y\)};
      \node (C) at (0.2,0.15) {\(\)};
      \node (C') at (3.8,1.35) {\(\)};
      \draw[->] (A) to node {\(\scriptstyle g'\)} (B);
      \draw[->] (A) to node[swap] {\(\scriptstyle f'\)} (A');
      \draw[->] (B) to node {\(\scriptstyle f\)} (B');
      \draw[->] (A') to node[swap] {\(\scriptstyle g\)} (B');
      \draw[->, transform canvas={xshift=1.5pt, yshift=-1.5pt}] (C) to node {\(\scriptstyle h_1\)} (C');
      \draw[->, transform canvas={xshift=-1.5pt, yshift=1.5pt}] (C) to node[swap] {\(\scriptstyle h_2\)} (C');
    \end{tikzpicture}
    \]
    in \(\Schb{S}\) such that
    \begin{itemize}
      \item[\(\bullet\)]
      \(T\) is isomorphic to the spectrum of a valuation ring, and
      \item[\(\bullet\)]
      \(f':U\to T\) is isomorphic as an object of \(\Sch{T}\)
      to the object of \(\Sch{T}\) that arises from
      the natural morphism to \(T\) from the spectrum of
      the residue field at the generic point of \(T\),
    \end{itemize}
    it holds that \(h_1 = h_2\).
    In particular,
    the property that \(f\) is separated
    may be characterized category-theoretically
    (cf. \autoref{lem: quotient field},
    \autoref{cor: irred, loc, are cat}
    \ref{enumi: irred local 2},
    \autoref{prop: Spec val is cat},
    \autoref{cor: qs sep are cat} \ref{enumi: qs morph is cat})
    from the data \((\Schb{S},f)\).
    \item \label{enumi: qc + univ cl is cat}
    \(f\) is quasi-compact and universally closed if and only if
    \(f\) is quasi-compact, and
    for any diagram
    \[
    \begin{CD}
      U @> g' >> X \\
      @V f' VV @VV f V \\
      T @> g >> Y
    \end{CD}
    \]
    in \(\Schb{S}\) such that
    \begin{itemize}
      \item[\(\bullet\)]
      \(T\) is isomorphic to the spectrum of a valuation ring, and
      \item[\(\bullet\)]
      \(f':U\to T\) is isomorphic as an object of \(\Sch{T}\)
      to the object of \(\Sch{T}\) that arises from
      the natural morphism to \(T\) from the spectrum of
      the residue field at the generic point of \(T\),
    \end{itemize}
    there exists a morphism \(h: T\to X\) such that
    \(g' = h\circ f', g = f\circ h\).
    In particular,
    the property that \(f\) is quasi-compact and universally closed
    may be characterized category-theoretically
    (cf. \autoref{lem: quotient field},
    \autoref{cor: irred, loc, are cat}
    \ref{enumi: irred local 2},
    \autoref{prop: Spec val is cat},
    \autoref{cor: qc is cat} \ref{enumi: qc morph is cat})
    from the data \((\Schb{S},f)\).
  \end{enumerate}
\end{cor}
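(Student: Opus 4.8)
The plan is to deduce both equivalences from the classical valuative criteria — for separatedness in \ref{enumi: sep is cat}, and for universal closedness in \ref{enumi: qc + univ cl is cat} — after checking that the test data appearing in those criteria are precisely the data described in the statement and that they live in \(\Schb{S}\). Recall that a valuation ring \(V\) with fraction field \(K\) has a unique generic point, whose residue field is \(K\); thus the pair consisting of \(T\cong\Spec(V)\) together with the natural morphism \(f':U=\Spec(K)\to T\) is exactly ``the spectrum of a valuation ring equipped with the morphism to \(T\) from the spectrum of the residue field at its generic point''. By \autoref{lem: affine over qsep is qcpt} \ref{enumi: lem: aff over qsep is qcpt qc} \ref{enumi: lem: aff over qsep is qcpt sep}, both \(\Spec(V)\) and \(\Spec(K)\) are quasi-compact and separated over \(S\), and they are reduced since \(V\) is a domain; hence they are objects of \(\Schb{S}\). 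Moreover, since \(\Schb{S}\) is a full subcategory of \(\Sch{S}\), the fillers \(h,h_1,h_2\) range over the same morphisms whether viewed categorically or scheme-theoretically.

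For \ref{enumi: sep is cat} I would argue as follows. For necessity, if \(f\) is separated then it is in particular quasi-separated, and the uniqueness of fillers in any diagram of the indicated shape is exactly the uniqueness statement in the valuative criterion of separatedness (\cite[\href{https://stacks.math.columbia.edu/tag/01KY}{Tag 01KY}]{stacks-project}); applied to \(T=\Spec(V)\), \(U=\Spec(K)\) it yields \(h_1=h_2\). For sufficiency, assume \(f\) is quasi-separated and the displayed uniqueness condition holds. By the same criterion, to conclude that \(f\) is separated it suffices to verify, for every valuation ring \(V\) with fraction field \(K\) and every commutative square \(\Spec(K)\to X\), \(\Spec(V)\to Y\) over \(f\), that there is at most one lift \(\Spec(V)\to X\); this is precisely the hypothesis, since such a square together with its two candidate lifts constitutes a diagram of the required shape. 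Here the quasi-separatedness hypothesis is what makes this substantive direction of the criterion available.

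For \ref{enumi: qc + univ cl is cat} the argument is parallel, using the valuative criterion of universal closedness for quasi-compact morphisms (\cite[\href{https://stacks.math.columbia.edu/tag/01KF}{Tag 01KF}]{stacks-project}), whose content is the \emph{existence} (rather than uniqueness) of a filler. For necessity, a quasi-compact, universally closed \(f\) is quasi-compact, and the criterion produces, for each diagram of the indicated shape, a morphism \(h:T\to X\) with \(g'=h\circ f'\) and \(g=f\circ h\). For sufficiency, if \(f\) is quasi-compact and such an \(h\) exists for every diagram with \(T\) a spectrum of a valuation ring and \(f':U\to T\) the generic-residue-field morphism, then the valuative criterion is satisfied and \(f\) is universally closed; the quasi-compactness hypothesis is essential here, exactly as it is in the criterion.

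The main obstacle is not the geometry — once the correct valuative criteria are invoked, both directions are essentially immediate — but the bookkeeping needed to match the category-theoretic framing with the scheme-theoretic criterion: one must confirm that \(\Spec(V)\), \(\Spec(K)\) and every filler indeed lie in \(\Schb{S}\) (via \autoref{lem: affine over qsep is qcpt}), and that \(f'\) really is the residue-field-at-the-generic-point morphism rather than some other morphism to \(T\). Granting this, the final ``in particular'' clauses follow formally: being quasi-separated (resp.\ quasi-compact) is category-theoretic by \autoref{cor: qs sep are cat} \ref{enumi: qs morph is cat} (resp.\ \autoref{cor: qc is cat} \ref{enumi: qc morph is cat}), being a spectrum of a valuation ring is category-theoretic by \autoref{prop: Spec val is cat}, the generic point of such an object is recognized by \autoref{prop: val generic pt}, and the morphism from the residue field at a point is recognized by \autoref{lem: quotient field}; since every condition entering each criterion is thereby category-theoretic, so is separatedness (resp.\ quasi-compactness together with universal closedness).
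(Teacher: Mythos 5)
Your proposal is correct and follows essentially the same route as the paper, which likewise deduces both parts from the classical valuative criteria together with \autoref{lem: affine over qsep is qcpt} to place the test objects \(\Spec(V)\), \(\Spec(K)\) in \(\Schb{S}\). Your additional remarks on reducedness of these test objects and on fullness of \(\Schb{S}\subset\Sch{S}\) are exactly the bookkeeping the paper leaves implicit.
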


\begin{proof}
  These assertions follow from \autoref{lem: affine over qsep is qcpt}
  \ref{enumi: lem: aff over qsep is qcpt qc}
  \ref{enumi: lem: aff over qsep is qcpt sep},
  together with well-known valuative criteria
  (cf. \cite[\href{https://stacks.math.columbia.edu/tag/01KD}{Tag 01KD}]{stacks-project}
  \cite[\href{https://stacks.math.columbia.edu/tag/01KZ}{Tag 01KZ}]{stacks-project},
  \cite[\href{https://stacks.math.columbia.edu/tag/01L0}{Tag 01L0}]{stacks-project},
  \cite[\href{https://stacks.math.columbia.edu/tag/01KF}{Tag 01KF}]{stacks-project}).
\end{proof}

Finally,
we conclude one of the main results of the present paper as follows:

\begin{cor}\label{cor: bbullet equal circ}
  Let \(S,T\) be quasi-separated schemes.
  If the (\vsm) categories \(\Schb{S}\), \(\Schc{T}\) are equivalent,
  then the [possibly empty] subsets
  \(\bbullet , \bbbullet \subset \left\{ \mathrm{red,qcpt,qsep,sep} \right\}\)
  (such that \(\left\{\mathrm{qsep,sep}\right\}\not\subset \bbullet,\bbbullet\))
  coincide, i.e., \(\bbullet = \bbbullet\).
\end{cor}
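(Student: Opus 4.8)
The plan is to recover the set \(\bbullet\) from the abstract category \(\Schb{S}\) by detecting, for each of the four properties, a \emph{category-theoretic} and \emph{\(\bbullet\)-independent} predicate on the category, and then to transport these predicates along the given equivalence \(F\colon \Schb{S}\xrightarrow{\sim}\Schc{T}\). Since each characterization established in the previous sections is, by design, independent of the choice of \(\bbullet\), any predicate \(\Phi\) built from them satisfies \(\Phi(\Schb{S})\iff\Phi(\Schc{T})\); combined with the essential surjectivity of \(F\), a predicate of the form ``every object of the category has property \(\mathcal P\)'' will then hold for \(\Schb{S}\) exactly when it holds for \(\Schc{T}\).

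First I would introduce four such predicates: \(R\) = ``every object is reduced'' (category-theoretic by \autoref{cor: red is cat}); \(C\) = ``every object is quasi-compact'' (\autoref{cor: qc is cat} \ref{enumi: qc sch is cat}); \(P\) = ``every object is separated over \(S\)'' (\autoref{cor: qs sep are cat} \ref{enumi: sep over S is cat}); and \(Q\) = ``every object is quasi-separated'', which---since \(S\) is quasi-separated---coincides with ``every object is quasi-separated over \(S\)'' (\autoref{cor: qs sep are cat} \ref{enumi: qs sch is cat}). I then claim \(R\iff\mathrm{red}\in\bbullet\), \(C\iff\mathrm{qcpt}\in\bbullet\), \(P\iff\mathrm{sep}\in\bbullet\), and \(Q\iff(\mathrm{qsep}\in\bbullet\text{ or }\mathrm{sep}\in\bbullet)\). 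The forward directions are immediate from the definition of \(\Schb{S}\), together with the implication ``separated over \(S\) \(\Rightarrow\) quasi-separated over \(S\)''.

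For the reverse directions I must, whenever a property is absent from \(\bbullet\), produce an explicit object of \(\Schb{S}\) witnessing the failure of the corresponding predicate. Fixing a point \(s\in S\), I would take: the non-reduced affine scheme \(\Spec\!\big(k(s)[\epsilon]/(\epsilon^2)\big)\) when \(\mathrm{red}\notin\bbullet\); an infinite disjoint union \(\coprod_{\mathbb N}\Spec k(s)\) when \(\mathrm{qcpt}\notin\bbullet\); the affine line over \(k(s)\) with doubled origin when \(\mathrm{sep}\notin\bbullet\); and the spectrum \(\Spec k(s)[x_1,x_2,\dots]\) with doubled origin, whose two affine charts meet along the non-quasi-compact open \(\mathbb A^\infty\setminus\{0\}\), when \(\mathrm{qsep},\mathrm{sep}\notin\bbullet\). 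In each case I would check, using \autoref{lem: affine over qsep is qcpt} \ref{enumi: lem: aff over qsep is qcpt qc} \ref{enumi: lem: aff over qsep is qcpt sep} and the standard behaviour of disjoint unions and doubled-origin glueings, that the witness satisfies all the \emph{remaining} properties of \(\bbullet\) (so that it genuinely lies in \(\Schb{S}\)) while failing the property in question; the doubled affine line, for instance, is reduced, quasi-compact over \(S\), and quasi-separated over \(S\), but not separated over \(S\).

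Finally, transporting \(R\), \(C\), \(P\), \(Q\) across \(F\) yields \(\mathrm{red}\in\bbullet\iff\mathrm{red}\in\bbbullet\), \(\mathrm{qcpt}\in\bbullet\iff\mathrm{qcpt}\in\bbbullet\), \(\mathrm{sep}\in\bbullet\iff\mathrm{sep}\in\bbbullet\), and \((\mathrm{qsep}\in\bbullet\lor\mathrm{sep}\in\bbullet)\iff(\mathrm{qsep}\in\bbbullet\lor\mathrm{sep}\in\bbbullet)\). Invoking the standing hypothesis that \(\{\mathrm{qsep},\mathrm{sep}\}\) is never contained in \(\bbullet\) or \(\bbbullet\), the identity \([\mathrm{qsep}\in\bbullet]=[\mathrm{qsep}\in\bbullet\lor\mathrm{sep}\in\bbullet]\land\lnot[\mathrm{sep}\in\bbullet]\) recovers the matching of \(\mathrm{qsep}\) as well, giving \(\bbullet=\bbbullet\). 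I expect the genuine difficulty to lie precisely in separating \(\mathrm{qsep}\) from \(\mathrm{sep}\): because separatedness implies quasi-separatedness, no single ``all objects quasi-separated'' predicate can tell \(\mathrm{qsep}\in\bbullet\) apart from \(\mathrm{sep}\in\bbullet\), so the argument is forced to combine the two predicates \(P\) and \(Q\) and to rely on the non-quasi-separated doubled-\(\mathbb A^\infty\) example to handle the case where neither property is present.
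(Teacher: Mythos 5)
Your overall strategy---transporting across the equivalence a category-theoretic, \(\bbullet\)-independent predicate for each of the four properties, together with an explicit witness scheme for each property absent from \(\bbullet\)---is essentially the paper's, and your explicit Boolean disentangling of \(\qsep\) from \(\sep\) at the end (using the standing hypothesis \(\left\{\qsep,\sep\right\}\not\subset\bbullet\)) is, if anything, a cleaner treatment of the one genuinely delicate point. There is, however, a real gap in the quasi-compactness case: your predicate \(C\) is ``every object is quasi-compact'' in the absolute sense of \autoref{cor: qc is cat} \ref{enumi: qc sch is cat}, whereas the property indexed by \(\qcpt\) is ``quasi-compact \emph{over} \(S\)''. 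These coincide only when \(S\) itself is quasi-compact, which is not assumed: \(S\) is merely quasi-separated. If \(S\) is, say, an infinite disjoint union of points and \(\qcpt\in\bbullet\), then the terminal object of \(\Schb{S}\) is quasi-compact over \(S\) (its structure morphism is an identity or a closed immersion) but is not a quasi-compact scheme, so \(C\) fails even though \(\qcpt\in\bbullet\). Thus the forward direction of your biconditional \(C\iff\qcpt\in\bbullet\) is false in general, and when neither \(S\) nor \(T\) is quasi-compact the transported predicate \(C\) is false on both sides regardless of whether \(\qcpt\) lies in \(\bbullet\) or \(\bbbullet\), so it detects nothing.

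The repair is immediate and is in effect what the paper does: replace \(C\) by ``the structure morphism of every object to the terminal object is quasi-compact'', which is category-theoretic by \autoref{cor: qc is cat} \ref{enumi: qc morph is cat}, and observe that your witness \(\coprod_{\mathbb{N}}\Spec k(s)\) indeed fails to be quasi-compact over \(S\). (The analogous issue does not arise for \(Q\), since, as you note, quasi-separatedness over a quasi-separated base agrees with absolute quasi-separatedness by \autoref{lem: affine over qsep is qcpt} \ref{enumi: lem: aff over qsep is qcpt sep}, and you already use the relative form for \(P\).) With that single substitution your witnesses are all correctly chosen---each lies in \(\Schb{S}\) whenever the corresponding property is absent from \(\bbullet\), since each satisfies the remaining three properties over \(S\)---and the argument goes through.
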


%%%%%%8/11ここまで

\begin{proof}
  Let \(F:\Schb{S}\xrightarrow{\sim} \Schc{T}\) be an equivalence.
  It suffices to prove that \(\bbbullet \subset \bbullet\).
  Assume that there exists an element \(\star\in \bbbullet\setminus \bbullet\).
  Write
  \begin{align*}
    Z &\dfn \Spec(\mathbb{Z}[\varepsilon, X_1,X_2,\cdots,X_n,\cdots]/(\varepsilon^2)), \\
    W &\dfn \Spec(\mathbb{Z}[\varepsilon, X_1,X_2,\cdots,X_n,\cdots]/(\varepsilon^2))\setminus\{(0,0,\cdots)\},
  \end{align*}
  where \(n\) ranges over the positive integers. Then
  \begin{itemize}
    \item
    \(S\times Z\) satisfies \(\mathrm{qcpt}/S, \mathrm{sep}/S\), but not \(\red\),
    \item
    \((S\times W)_{\red}\) satisfies \(\mathrm{red,sep}/S\), but not \(\qcpt/S\),
    \item
    \((S\times (Z\coprod_WZ))_{\red}\)
    satisfies \(\mathrm{red,qcpt}/S\), but not \(\qsep/S\), and
    \item
    \(( \mathbb{A}^1_S\coprod_{\mathbb{A}^1_S\setminus\{0\}} \mathbb{A}^1_S)_{\red}\)
    satisfies \(\mathrm{red,qcpt}/S, \mathrm{qsep}/S\), but not \(\sep/S\).
  \end{itemize}
  Since \(\star\not\in \bbullet\), and
  \(\left\{\mathrm{qsep,sep}\right\}\not\subset \bbullet,\bbbullet\),
  it follows immediately that
  there exists an object \(X\) of \(\Schb{S}\)
  (which may in fact be taken to be one of the four schemes in the above display)
  such that \(X\) does not satisfy \(\star\).
  Then by \autoref{cor: red is cat},
  \autoref{cor: qc is cat} \ref{enumi: qc morph is cat},
  and \autoref{cor: qs sep are cat}
  \ref{enumi: qs morph is cat} \ref{enumi: sep over S is cat},
  \(F(X)\) does not satisfy property \(\star\).
  This contradicts the fact that \(\star\in\bbbullet\).
  Thus it holds that \(\bbbullet \subset \bbullet\).
\end{proof}

%%%%%%%%%%%%%%%%%%%%%%%%%%%%%%%%%%%%
%%%%%%%%%%%%%%%%%%%%%%%%%%%%%%%%%%%%
%%%%%%%%%%%%%%%%%%%%%%%%%%%%%%%%%%%%
%%%%%%%%%%%%%%%%%%%%%%%%%%%%%%%%%%%%
%%%%%%%%%%%%%%%%%%%%%%%%%%%%%%%%%%%%
%%%%%%%%%%%%%%%%%%%%%%%%%%%%%%%%%%%%
%%%%%%%%%%%%%%%%%%%%%%%%%%%%%%%%%%%%
%%%%%%%%%%%%%%%%%%%%%%%%%%%%%%%%%%%%
%%%%%%%%%%%%%%%%%%%%%%%%%%%%%%%%%%%%
%%%%%%%%%%%%%%%%%%%%%%%%%%%%%%%%%%%%
%%%%%%%%%%%%%%%%%%%%%%%%%%%%%%%%%%%%
%%%%%%%%%%%%%%%%%%%%%%%%%%%%%%%%%%%%
%%%%%%%%%%%%%%%%%%%%%%%%%%%%%%%%%%%%
%%%%%%%%%%%%%%%%%%%%%%%%%%%%%%%%%%%%
%%%%%%%%%%%%%%%%%%%%%%%%%%%%%%%%%%%%
%%%%%%%%%%%%%%%%%%%%%%%%%%%%%%%%%%%%
%%%%%%%%%%%%%%%%%%%%%%%%%%%%%%%%%%%%
%%%%%%%%%%%%%%%%%%%%%%%%%%%%%%%%%%%%
%%%%%%%%%%%%%%%%%%%%%%%%%%%%%%%%%%%%
%%%%%%%%%%%%%%%%%%%%%%%%%%%%%%%%%%%%
%%%%%%%%%%%%%%%%%%%%%%%%%%%%%%%%%%%%
%%%%%%%%%%%%%%%%%%%%%%%%%%%%%%%%%%%%
%%%%%%%%%%%%%%%%%%%%%%%%%%%%%%%%%%%%
%%%%%%%%%%%%%%%%%%%%%%%%%%%%%%%%%%%%
%%%%%%%%%%%%%%%%%%%%%%%%%%%%%%%%%%%%
%%%%%%%%%%%%%%%%%%%%%%%%%%%%%%%%%%%%
%%%%%%%%%%%%%%%%%%%%%%%%%%%%%%%%%%%%
%%%%%%%%%%%%%%%%%%%%%%%%%%%%%%%%%%%%
%%%%%%%%%%%%%%%%%%%%%%%%%%%%%%%%%%%%
%%%%%%%%%%%%%%%%%%%%%%%%%%%%%%%%%%%%
%%%%%%%%%%%%%%%%%%%%%%%%%%%%%%%%%%%%
%%%%%%%%%%%%%%%%%%%%%%%%%%%%%%%%%%%%
%%%%%%%%%%%%%%%%%%%%%%%%%%%%%%%%%%%%
%%%%%%%%%%%%%%%%%%%%%%%%%%%%%%%%%%%%
%%%%%%%%%%%%%%%%%%%%%%%%%%%%%%%%%%%%
%%%%%%%%%%%%%%%%%%%%%%%%%%%%%%%%%%%%

\section{Locally of Finite Presentation Morphisms}
\label{section: loc of fp}

In this section,
we give a category-theoretic characterization of the morphisms of
\(\Schb{S}\) whose underlying morphism of schemes is locally of finite presentation.
To give a category-theoretic characterization of such morphisms of \(\Schb{S}\),
we define the following properties:

\begin{defi} \label{defi: ess fp stwise fp}
  \begin{enumerate}
    \item \label{enumi: defi: ess fp}
    Let \(f:A\to B\) be a local homomorphism between local rings.
    We shall say that \(f\) is \textit{essentially of finite presentation}
    (respectively, \textit{essentially of finite type}) if
    \(f\) admits a factorization \(A\to C\to B\) such that
    \(A\to C\) is a ring homomorphism of finite presentation (respectively, of finite type), and
    \(C\to B\) is the localization morphism \textbf{at a prime ideal} of \(C\).
    \item
    Let \(f:X\to Y\) be a morphism of schemes.
    We shall say that \(f\) is \textit{stalkwise of finite presentation}
    (respectively, \textit{stalkwise of finite type}) at a point \(x\in X\) if
    the morphism of local rings \(\mathcal{O}_{Y,f(x)}\to \mathcal{O}_{X,x}\)
    is essentially of finite presentation
    (respectively, essentially of finite type).
    We shall say that \(f\) is \textit{stalkwise of finite presentation}
    (respectively, \textit{stalkwise of finite type}) if
    \(f\) is stalkwise of finite presentation at every point of \(X\).
  \end{enumerate}
\end{defi}

\begin{rem}\label{rem: ess.f.p. stwise.f.p.}
  \
  \begin{enumerate}
    %\item
    %An finite presentation morphism (respectively finite type morphism)
    %is essentially of finite presentation (respectively essentially of finite type).
    \item \label{enumi: rem: ess.f.p. stwise.f.p. loc f.p. is stwise f.p.}
    If a morphism of schemes is locally of finite presentation,
    then it is stalkwise of finite presentation.
    \item
    If a morphism of rings is essentially of finite presentation,
    then it is essentially of finite type.
    \item
    For any local ring \(A\),
    the identity morphism \(\id_A\) is essentially of finite presentation.
    \item \label{enumi: rem: composite ess fp}
    By \cite[\href{https://stacks.math.columbia.edu/tag/07DS}{Tag 07DS}]{stacks-project},
    the composite of two morphisms of local rings which are essentially of finite presentation
    (respectively, essentially of finite type)
    is essentially of finite presentation (respectively, essentially of finite type).
  \end{enumerate}
\end{rem}

\begin{lem}\label{lem: fp lemma}
  Let \(p:R\to A\) and \(f:A\to B\) be ring homomorphisms.
  Then the following assertion holds:
  \begin{enumerate}
    \item \label{enumi: fp lemma ess ft}
    Assume that \(R,A,B\) are local rings, and
    \(p,f\) are local homomorphisms.
    If \(f\circ p\) is essentially of finite type,
    then \(f\) is also essentially of finite type.
  \end{enumerate}
  Assume that there exists a ring homomorphism \(s:B\to A\).
  Then the following assertions hold:
  \begin{enumerate}[start=2]
    \item \label{enumi: fp lemma ft fp}
    If \(f\) is of finite type, and \(s\circ f = \id_A\),
    then \(s\) is of finite presentation.
    \item \label{enumi: fp lemma fp}
    If \(f\circ p\) is of finite presentation, and \(s\circ f = \id_A\),
    then \(p\) is of finite presentation.
    \item \label{enumi: fp lemma ess ft fp}
    Assume that \(A,B\) are local rings, and
    \(f,s\) are local homomorphisms.
    If \(f\) is essentially of finite type, and \(s\circ f = \id_A\),
    then \(s\) is essentially of finite presentation.
    \item \label{enumi: fp lemma ess}
    Assume that \(R,A,B\) are local rings, and
    \(p,f,s\) are local homomorphisms.
    If \(f\circ p\) is essentially of finite presentation, and \(s\circ f = \id_A\),
    then \(p\) is essentially of finite presentation.
  \end{enumerate}
\end{lem}

\begin{proof}
  First, we verify assertion \ref{enumi: fp lemma ess ft}.
  Assume that \(f\circ p\) is essentially of finite type.
  Then \(f\circ p\) admits a factorization
  \(R\xrightarrow{g} R'\xrightarrow{i} B\) such that
  \(g:R\to R'\) is a ring homomorphism of finite type, and
  \(i:R'\to B\) is the localization morphism at a prime ideal of \(R'\).
  Hence the morphism \(g'\dfn \id_A\otimes g: A\to A\otimes_R R'\) is
  a ring homomorphism of finite type.
  Since \(i:R'\to B\) is a localization morphism at a prime ideal of \(R'\),
  if we write \(A''\) for the image of
  the natural morphism \(A\otimes_R R' \to B\) determined by
  the equality \(f\circ p = i\circ g\),
  then the natural inclusion
  \(j:A''\hookrightarrow B\) is the localization morphism at the prime ideal \(A''\cap \mathfrak{m}_B\),
  where \(\mathfrak{m}_B\) is the unique maximal ideal of \(B\).
  Thus, if we write \(h:A\otimes_R R' \to A\) for the surjection, then
  the composite \(h\circ g':A\to A\otimes_R R'\to A''\) is
  a ring homomorphism of finite type, and
  we obtain a factorization
  \(A\xrightarrow{h\circ g'} A''\xrightarrow{j} B\) of \(f\) such that
  \(h\circ g':A\to A''\) is a ring homomorphism of finite type, and
  \(j:A''\to B\) is the localization morphism at a prime ideal of \(A''\).
  This proves that \(f\) is essentially of finite type.
  This completes the proof of assertion \ref{enumi: fp lemma ess ft}.

  Next, we verify assertion \ref{enumi: fp lemma ft fp}.
  Assume that \(f\) is of finite type, and \(s\circ f = \id_A\).
  Then \(s\) is surjective.
  Hence it suffices to prove that \(\ker(s)\) is finitely generated.
  Since \(f\) is of finite type,
  \(f\) admits a factorization
  \(A\xrightarrow{i} A[x_1,\cdots,x_r]\xrightarrow{q} B\) such that
  \(i:A\to A[x_1,\cdots,x_r]\) is the natural inclusion, and
  \(q\) is a surjective ring homomorphism.
  Thus we obtain the equality \(\ker(s\circ q) = (x_1-i(s(q(x_1))),\cdots,x_r-i(s(q(x_r))))\).
  In particular, \(\ker(s\circ q)\) is a finitely generated ideal of \(A[x_1,\cdots,x_r]\).
  Since \(q\) is a surjective ring homomorphism,
  we obtain the equality \(\ker(s) = q(\ker (s\circ q))\).
  Thus \(\ker(s)\) is a finitely generated ideal of \(B\).
  This completes the proof of assertion \ref{enumi: fp lemma ft fp}.

  Next, we verify assertion \ref{enumi: fp lemma fp}.
  Assume that \(f\circ p : R\to B\) is of finite presentation, and \(s\circ f = \id_A\).
  Then \(f\) is of finite type.
  By \ref{enumi: fp lemma ft fp},
  \(s\) is of finite presentation.
  Hence \(p = s\circ f\circ p\) is of finite presentation.
  This completes the proof of assertion \ref{enumi: fp lemma fp}.
  %%%%%9/15ここまで

  Next, we verify assertion \ref{enumi: fp lemma ess ft fp}.
  Assume that \(f\) is essentially of finite type, and \(s\circ f = \id_A\).
  Then \(f\) admits a factorization \(A\xrightarrow{f'} B'\xrightarrow{q} B\) such that
  \(f'\) is a ring homomorphism of finite type, and
  \(q\) is the localization morphism at a prime ideal \(\mathfrak{q}'\subset B'\).
  It follows from the equality \(s\circ f=\id_A\) that
  \(s\) is a surjective local homomorphism.
  Hence to prove assertion \ref{enumi: fp lemma ess ft fp},
  it suffices to prove that \(\ker(s)\) is a finitely generated ideal.
  Since \(f'\) is a ring homomorphism of finite type, and \(s\circ q\circ f' = \id_A\),
  it follows from \ref{enumi: fp lemma ft fp} that
  \(s\circ q: B'\to A\) is of finite presentation.
  Hence \(\ker(s\circ q)\) is a finitely generated ideal.
  Since \(s\) is a local homomorphism, \(\ker(s\circ q) \subset \mathfrak{q}'\subset B'\).
  Since \(q\) is the localization morphism at \(\mathfrak{q}'\subset B'\),
  \(\ker(s)\) is the localization of \(\ker(s\circ q)\) at \(\mathfrak{q}'\subset B'\).
  Thus \(\ker(s)\) is a finitely generated ideal of \(B\).
  This completes the proof of \ref{enumi: fp lemma ess ft fp}.

  Finally, we verify assertion \ref{enumi: fp lemma ess}.
  Assume that \(f\circ p\) is essentially of finite presentation,
  and \(s\circ f = \id_A\).
  By \ref{enumi: fp lemma ess ft},
  \(f\) is essentially of finite type.
  Since \(f\) is essentially of finite type, and \(s\circ f = \id_A\),
  it follows from \ref{enumi: fp lemma ess ft fp} that
  \(s\) is essentially of finite presentation.
  Thus \(p = s\circ (f\circ p)\) is essentially of finite presentation.
  This completes the proof of \autoref{lem: fp lemma}.
\end{proof}

\begin{cor}\label{cor: fp lemma cor}
  Let \(S,X,Y\) be schemes
  and \(p:X\to S, f:Y\to X, s:X\to Y\) morphisms.
  Assume that \(f\circ s = \id _X\), and
  \(p\circ f:Y\to S\) is locally of finite presentation
  (respectively, stalkwise of finite presentation).
  Then \(p\) is locally of finite presentation
  (respectively, stalkwise of finite presentation).
\end{cor}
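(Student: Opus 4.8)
The plan is to treat the two assertions in parallel and reduce each to the purely ring-theoretic \autoref{lem: fp lemma}: the ``locally of finite presentation'' assertion to part \ref{enumi: fp lemma fp}, and the ``stalkwise of finite presentation'' assertion to part \ref{enumi: fp lemma ess}. The common mechanism is that the hypothesis \(f\circ s=\id_X\) says \(s\) is a section of \(f\), so on the relevant coordinate rings (resp.\ local rings) the contravariant passage turns this into the identity \(s^{\#}\circ f^{\#}=\id\) demanded by the lemma, while \(p\circ f\) being (stalkwise) of finite presentation supplies the hypothesis that \(f^{\#}\circ p^{\#}\) is (essentially) of finite presentation. In both cases I would fix a point \(x\in X\) and set \(y:=s(x)\), so that \(f(y)=x\) and \(p(x)=(p\circ f)(y)=:w\).

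I would do the stalkwise case first, as it is the clean one. Taking stalks at \(x\), \(y\), \(w\) produces local homomorphisms
\[
\mathcal{O}_{S,w}\xrightarrow{\,p^{\#}\,}\mathcal{O}_{X,x}\xrightarrow{\,f^{\#}\,}\mathcal{O}_{Y,y}\xrightarrow{\,s^{\#}\,}\mathcal{O}_{X,x}.
\]
Functoriality of the stalk applied to \(f\circ s=\id_X\) gives \(s^{\#}\circ f^{\#}=\id_{\mathcal{O}_{X,x}}\), and the hypothesis that \(p\circ f\) is stalkwise of finite presentation at \(y\) says precisely that \(f^{\#}\circ p^{\#}\) is essentially of finite presentation. \autoref{lem: fp lemma} \ref{enumi: fp lemma ess} then yields that \(p^{\#}\) is essentially of finite presentation; letting \(x\) range over \(X\) shows \(p\) is stalkwise of finite presentation.

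For the ``locally of finite presentation'' case the statement is local on \(S\) and on \(X\), so it suffices, for each \(x\), to find affine opens \(\Spec R\ni w\) in \(S\), \(\Spec A\ni x\) in \(X\) and \(\Spec B\ni y\) in \(Y\) satisfying the three compatibilities \(p(\Spec A)\subseteq\Spec R\), \(f(\Spec B)\subseteq\Spec A\) and \(s(\Spec A)\subseteq\Spec B\): these give ring maps \(R\to A\xrightarrow{f^{\#}}B\xrightarrow{s^{\#}}A\) with \(s^{\#}\circ f^{\#}=\id_A\) (the restriction of \(f\circ s=\id_X\)) and with \(f^{\#}\circ p^{\#}\colon R\to B\) of finite presentation (the restriction of the locally finitely presented morphism \(p\circ f\)), whence \autoref{lem: fp lemma} \ref{enumi: fp lemma fp} gives that \(R\to A\) is of finite presentation, i.e.\ \(p\) is locally of finite presentation at \(x\).

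The one delicate point --- and the step I expect to be the main obstacle --- is producing these three affine charts \emph{simultaneously}: naive shrinking to force \(f(\Spec B)\subseteq\Spec A\) destroys \(s(\Spec A)\subseteq\Spec B\), and conversely. I would resolve this using that \(s\) is an immersion, being the base change of the always-immersive diagonal \(\Delta_f\colon Y\to Y\times_XY\) along \((\id_Y,\,s\circ f)\). Consequently there is an affine \(\Spec B\ni y\) for which \(s^{-1}(\Spec B)\to\Spec B\) is a closed immersion, so that \(\Spec A:=s^{-1}(\Spec B)\) is automatically affine with \(s(\Spec A)\subseteq\Spec B\) and \(s^{\#}\colon B\twoheadrightarrow A\); choosing \(\Spec B\) inside \(f^{-1}(\Spec A_0)\) for a preliminary chart \(\Spec A_0\ni x\), the identity \(f\circ s|_{\Spec A}=\mathrm{incl}\) forces \(\Spec A\subseteq\Spec A_0\), and arranging \(\Spec B\) to be stable under the idempotent \(e:=s\circ f\) (equivalently \(f(\Spec B)\subseteq\Spec A\)) closes up the compatible triple. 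Once the charts are in place the algebra is entirely supplied by \autoref{lem: fp lemma}; alternatively one may package the argument as \(p=(p\circ f)\circ s\), noting that \(p\circ f\) locally of finite type forces \(f\) locally of finite type, hence --- by the section argument behind \autoref{lem: fp lemma} \ref{enumi: fp lemma ft fp} --- that the closed immersion \(s\) is of finite presentation, so that the composite \(p\) is locally of finite presentation.
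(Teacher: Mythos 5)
Your proposal is correct and follows the same route as the paper: the paper's entire proof of this corollary is the citation of \autoref{lem: fp lemma} \ref{enumi: fp lemma fp} and \ref{enumi: fp lemma ess}, and you are supplying the globalization that the paper treats as immediate. The stalkwise case is complete exactly as you write it. For the locally-of-finite-presentation case, the one step you flag but do not actually carry out --- producing an affine \(\Spec B\ni y\) stable under the idempotent \(e=s\circ f\) --- does go through: start from any affine \(\Spec B_1\ni y\) inside \(U\cap f^{-1}(\Spec A_0)\), where \(U\subset Y\) is an open through which the immersion \(s\) factors as a closed immersion; since \(e(y)=y\), the closed subset \(\Spec B_1\setminus e^{-1}(\Spec B_1)\) misses \(y\), so some principal open \(D(g)\ni y\) lies in \(e^{-1}(\Spec B_1)\), and then \(\Spec B\dfn D(g)\cap e^{-1}(D(g))\) is a principal open of the affine scheme \(D(g)\), contains \(y\), and is \(e\)-stable because \(e\circ e=e\); taking \(\Gamma(\Spec A)\dfn s^{-1}(\Spec B)\) (affine, being closed in \(\Spec B\)) then yields the compatible triple. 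That said, your closing alternative \(p=(p\circ f)\circ s\) is the cleaner packaging and is precisely the scheme-level shadow of the paper's own proof of \autoref{lem: fp lemma} \ref{enumi: fp lemma fp}, which factors \(p=s\circ (f\circ p)\) after showing the retraction \(s\) is of finite presentation; the only slip there is that \(s\) is in general merely an immersion rather than a closed immersion (it is closed when \(f\) is separated), which does not affect the argument.
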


\begin{proof}
  \autoref{cor: fp lemma cor} follows immediately
  from \autoref{lem: fp lemma} \ref{enumi: fp lemma fp} \ref{enumi: fp lemma ess}.
\end{proof}

The following lemma will play an important role
in the category-theoretic characterization of
stalkwise of finite presentation morphisms that
we give in \autoref{cor: stwise fp is cat} below.

\begin{lem}\label{lem: limit bbullet lemma}
  Let \(S\) be a quasi-separated scheme.
  Let \((V_\lambda, \lambda\in \Lambda)\) be a diagram in \(\Schb{S}\)
  indexed by a cofiltered category \(\Lambda\).
  Assume that there exists a \usm \
  cofinal (hence cofiltered) subcategory
  \(\Lambda'\subset \Lambda\) such that
  \(V_{\lambda}\) is affine for every \(\lambda \in \Lambda'\).
  Then the limits \(\plim_{\lambda\in\Lambda} V_\lambda\)
  and \(\bplim_{\lambda\in\Lambda} V_\lambda\) exist
  in \(\Sch{S}\) and \(\Schb{S}\), respectively,
  and are naturally isomorphic in \(\Sch{S}\) (\(\supset \Schb{S}\)).
  Moreover, \(\bplim_{\lambda\in\Lambda} V_\lambda\) may be taken to be
  \(\Spec(\colim_{\lambda\in\Lambda'^{\mathrm{op}}}
  \Gamma(V_\lambda,\mathcal{O}_{V_\lambda}))\).
\end{lem}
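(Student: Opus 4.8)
The plan is to reduce the problem to the cofinal affine subcategory $\Lambda'$, build the limit as the spectrum of a filtered colimit of rings, and then verify that this spectrum already lies in $\Schb{S}$, so that it computes the limit simultaneously in $\Sch{S}$ and in $\Schb{S}$.

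First I would invoke cofinality. Since $\Lambda' \subset \Lambda$ is cofinal (and cofiltered), the limit of the diagram over $\Lambda$ agrees with the limit of its restriction to $\Lambda'$ in any category in which these exist. Thus it suffices to treat the diagram $(V_\lambda)_{\lambda \in \Lambda'}$ in which every $V_\lambda = \Spec(A_\lambda)$ is affine, where $A_\lambda \dfn \Gamma(V_\lambda, \mathcal{O}_{V_\lambda})$. Applying $\Gamma(-,\mathcal{O})$ to the transition morphisms yields a filtered diagram $(A_\lambda)_{\lambda \in \Lambda'^{\op}}$ of rings, and I set $A \dfn \colim_{\lambda \in \Lambda'^{\op}} A_\lambda$, which exists because $\Lambda'$ is $\univ{U}$-small; in particular $A$ is a $\univ{U}$-small ring.

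Next I would identify $\Spec(A)$ with the limit in $\Sch{S}$. For any scheme $T$, the $\Spec$--$\Gamma$ adjunction together with $A = \colim_\lambda A_\lambda$ gives natural bijections
\[
\Hom_{\sfSch}(T, \Spec(A)) \cong \Hom_{\mathsf{Ring}}(A, \Gamma(T,\mathcal{O}_T)) \cong \plim_\lambda \Hom_{\mathsf{Ring}}(A_\lambda, \Gamma(T,\mathcal{O}_T)) \cong \plim_\lambda \Hom_{\sfSch}(T, V_\lambda),
\]
so $\Spec(A)$ is the limit of $(V_\lambda)$ in $\sfSch$. As $\Lambda$ is cofiltered, hence connected, the canonical projections $\Spec(A) \to V_\lambda \to S$ all coincide and endow $\Spec(A)$ with an $S$-structure for which it is the limit in the slice category $\Sch{S}$; this establishes $\plim_{\lambda \in \Lambda} V_\lambda = \Spec(A)$ in $\Sch{S}$.

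The substantive step is to verify that $\Spec(A)$ belongs to $\Schb{S}$, which is where the constraints imposed by $\bbullet$ enter. If $\red \in \bbullet$, then every $A_\lambda$ is reduced, and a filtered colimit of reduced rings is reduced, since any element of $A$ whose power vanishes is represented, at a finite stage, by an element whose power already vanishes there, hence is zero; thus $\Spec(A)$ is reduced. If any of $\qcpt, \qsep, \sep$ lies in $\bbullet$, I would use the hypothesis that $S$ is quasi-separated together with \autoref{lem: affine over qsep is qcpt} \ref{enumi: lem: aff over qsep is qcpt qc} \ref{enumi: lem: aff over qsep is qcpt sep}: since $\Spec(A)$ is affine, its structure morphism to $S$ is automatically quasi-compact, separated, and quasi-separated. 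Hence $\Spec(A)$ satisfies every property of $\bbullet$ and lies in $\Schb{S}$. Finally, because $\Schb{S} \subset \Sch{S}$ is a \emph{full} subcategory and the limit object $\Spec(A)$ computed in $\Sch{S}$ already lies in $\Schb{S}$, a formal argument shows it also computes the limit there: for $W \in \Schb{S}$, fullness gives $\Hom_{\Schb{S}}(W, \Spec A) = \Hom_{\Sch{S}}(W, \Spec A) = \plim_\lambda \Hom_{\Sch{S}}(W, V_\lambda) = \plim_\lambda \Hom_{\Schb{S}}(W, V_\lambda)$. Therefore $\bplim_{\lambda \in \Lambda} V_\lambda$ exists, equals $\Spec(A) = \Spec(\colim_{\lambda \in \Lambda'^{\op}} \Gamma(V_\lambda, \mathcal{O}_{V_\lambda}))$, and is naturally isomorphic to $\plim_{\lambda \in \Lambda} V_\lambda$ in $\Sch{S}$. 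The only genuine obstacle is the reducedness bookkeeping in the case $\red \in \bbullet$ and correctly invoking \autoref{lem: affine over qsep is qcpt} for the over-$S$ finiteness conditions; the existence and identification of the limit are otherwise formal consequences of the $\Spec$--$\Gamma$ adjunction and cofinality.
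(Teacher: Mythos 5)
Your proof is correct and follows essentially the same route as the paper: reduce to the cofinal affine subcategory, realize the limit as \(\Spec\) of the filtered colimit of the section rings, and then check membership in \(\Schb{S}\) via \autoref{lem: affine over qsep is qcpt} together with the reducedness of filtered colimits of reduced rings. The extra details you supply (the explicit \(\Spec\)--\(\Gamma\) adjunction computation and the fullness argument transferring the limit from \(\Sch{S}\) to \(\Schb{S}\)) are exactly what the paper leaves implicit.
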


\begin{proof}
  It suffices to prove that
  the limit \(\plim_{\lambda\in\Lambda} V_\lambda\) exists in \(\Sch{S}\)
  and in fact belongs to \(\Schb{S}\).
  Since \(\Lambda'\subset \Lambda\) is cofinal,
  we may assume without loss of generality that \(\Lambda'=\Lambda\).
  Then we may take \(\plim_{\lambda\in\Lambda} V_\lambda\) to be
  \(\Spec(\colim_{\lambda\in\Lambda} \Gamma(V_\lambda,\mathcal{O}_{V_\lambda}))\).
  In particular, \(\plim_{\lambda\in\Lambda} V_\lambda\) exists in \(\Sch{S}\)
  and is affine.
  By
  \autoref{lem: affine over qsep is qcpt}
  \ref{enumi: lem: aff over qsep is qcpt qc}
  \ref{enumi: lem: aff over qsep is qcpt sep},
  \(\plim_{\lambda\in\Lambda} V_\lambda\) is separated and quasi-compact over \(S\).
  If each \(V_\lambda\) is reduced,
  then \(\plim_{\lambda\in\Lambda} V_\lambda\) is also reduced.
  Thus \(\plim_{\lambda\in\Lambda} V_\lambda\) belongs to \(\Schb{S}\).
  This completes the proof of \autoref{lem: limit bbullet lemma}.
\end{proof}

\begin{lem}\label{lem: filtered colim of loc is loc}
  Let \((C_\lambda,p_{\lambda\mu})_{\lambda\in\Lambda}\) be
  a filtered diagram of local rings and local homomorphisms.
  Then \(\colim_{\lambda\in\Lambda} C_\lambda\) is local, and
  the natural morphism
  \(i_\lambda: C_\lambda \to \colim_{\lambda\in\Lambda} C_\lambda\)
  is local.
\end{lem}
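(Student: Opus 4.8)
The plan is to identify the set of non-units of $C \dfn \colim_{\lambda\in\Lambda} C_\lambda$ as the unique maximal ideal, by reducing everything to the behaviour of units under the structure maps $i_\lambda\colon C_\lambda\to C$. Since $\Lambda$ is filtered (hence nonempty), every element of $C$ is of the form $i_\lambda(c)$ for some $\lambda\in\Lambda$ and $c\in C_\lambda$, and $i_\lambda(c)=i_\mu(c')$ holds precisely when $c$ and $c'$ acquire a common image at some later stage. First I would record the standard description of units in a filtered colimit of rings: an element $i_\lambda(c)$ is a unit in $C$ if and only if there is a morphism $\lambda\to\mu$ in $\Lambda$ whose transition homomorphism $p_{\lambda\mu}$ carries $c$ to a unit of $C_\mu$. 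One direction is clear, as ring homomorphisms preserve units; for the other, an inverse of $i_\lambda(c)$ is represented at some stage, and the relation forcing a product to equal $i_{\nu}(1)$ in the colimit makes that product literally equal to $1$ after passing along a further morphism.

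The key step — and the one place where the hypothesis that the transition maps are \emph{local} enters — is to upgrade this to the equivalence
\[
i_\lambda(c)\in C^{\times}\iff c\in C_\lambda^{\times}.
\]
For a local homomorphism $p\colon A\to B$ of local rings one has $p^{-1}(\mathfrak{m}_B)=\mathfrak{m}_A$: the inclusion $\mathfrak{m}_A\subseteq p^{-1}(\mathfrak{m}_B)$ is exactly the defining condition of locality, while $p^{-1}(\mathfrak{m}_B)$ is a proper ideal of $A$ (since $p(1)=1\notin\mathfrak{m}_B$) and hence contained in the unique maximal ideal $\mathfrak{m}_A$. Applying this to each transition map: if $i_\lambda(c)$ is a unit, then $p_{\lambda\mu}(c)\notin\mathfrak{m}_\mu$ for some morphism $\lambda\to\mu$, whence $c\notin p_{\lambda\mu}^{-1}(\mathfrak{m}_\mu)=\mathfrak{m}_\lambda$, so that $c$ is already a unit in $C_\lambda$; the converse is immediate. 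I expect this to be the main obstacle, since it is precisely the point where a witness of invertibility — which a priori exists only at some later index $\mu$ — must be transported back to $\lambda$, and this transport genuinely uses locality rather than mere filteredness.

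Granting the displayed equivalence, the remaining conclusions are formal. First $C\neq 0$, for $i_\lambda(1)=i_\lambda(0)$ would force $1=0$ in some $C_\mu$, contradicting that each $C_\mu$ is a nonzero local ring. Next, $C$ is local by the elementary criterion that a nonzero ring in which, for every $x$, either $x$ or $1-x$ is a unit is local: writing an arbitrary $x\in C$ as $i_\lambda(c)$ and using that $C_\lambda$ is local, either $c\in C_\lambda^{\times}$ or $1-c\in C_\lambda^{\times}$, so by the equivalence either $x=i_\lambda(c)\in C^{\times}$ or $1-x=i_\lambda(1-c)\in C^{\times}$. Hence the non-units of $C$ constitute the maximal ideal $\mathfrak{m}$. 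Finally, negating the equivalence gives $c\in\mathfrak{m}_\lambda\iff i_\lambda(c)\in\mathfrak{m}$, that is, $i_\lambda^{-1}(\mathfrak{m})=\mathfrak{m}_\lambda$, which is exactly the assertion that each $i_\lambda$ is a local homomorphism. This would complete the proof.
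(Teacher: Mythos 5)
Your proof is correct, but it runs along a different track from the paper's. The paper argues with ideals: it pulls back an arbitrary proper ideal \(I\subsetneq C\dfn\colim_{\lambda}C_\lambda\) to \(I_\lambda\dfn i_\lambda^{-1}(I)\subsetneq C_\lambda\), notes \(I_\lambda\subset\mathfrak{m}_\lambda\) because \(C_\lambda\) is local, and concludes from \(\colim_\lambda I_\lambda\xrightarrow{\sim}I\) that \(I\subset\mathfrak{m}\dfn\colim_\lambda\mathfrak{m}_\lambda\), so that \(\mathfrak{m}\) is the unique maximal ideal and \(i_\lambda(\mathfrak{m}_\lambda)\subset\mathfrak{m}\) gives locality of \(i_\lambda\). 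You instead argue with units: you characterize \(C^{\times}\) via witnesses at finite stages, use locality of the transition maps to pull such a witness back to index \(\lambda\) (giving \(i_\lambda(c)\in C^{\times}\iff c\in C_\lambda^{\times}\)), and then invoke the ``\(x\) or \(1-x\) is a unit'' criterion for locality. Both arguments lean on the locality of the \(p_{\lambda\mu}\) at exactly one point — in the paper's version it is what makes \(\colim_\lambda\mathfrak{m}_\lambda\) a well-defined proper ideal of \(C\); in yours it is the transport of invertibility from a later index back to \(\lambda\). Your version is a bit longer but makes the role of that hypothesis, and the non-degeneracy \(C\neq 0\), fully explicit, whereas the paper's is terser and leaves those verifications implicit; either is a complete proof.
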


\begin{proof}
  Let \(I\subsetneq C\dfn \colim_{\lambda\in\Lambda} C_\lambda\) be an ideal.
  Write \(\mathfrak{m}_\lambda\) for the maximal ideal of \(C_\lambda\),
  \(\mathfrak{m}\dfn \colim_{\lambda\in\Lambda} \mathfrak{m}_\lambda\), and
  \(I_\lambda \dfn i_\lambda^{-1}(I)\subsetneq C_\lambda\)
  for the inverse image of the ideal \(I\).
  %Then we obtain the following exact sequence:
  %\[
  %\begin{CD}
    %0 @>>> \mathfrak{m}_\lambda @>>> C_\lambda @>>> k_\lambda @>>> 0.
  %\end{CD}
  %\]
  %Hence we obtain the following exact sequence:
  %\[
  %\begin{CD}
    %0 @>>> \mathfrak{m} @>>> C @>>> k @>>> 0.
  %\end{CD}
  %\]
  %Since \(k\) is a field,
  %\(\mathfrak{m}\) is a maximal ideal of \(C\).
  %Let \(I\subsetneq C\) be an ideal.
  Since \(C_\lambda\) is local,
  \(I_\lambda \subset \mathfrak{m}_\lambda\).
  Since the natural morphism
  \(\colim_{\lambda\in\Lambda} I_\lambda \to I\) is an isomorphism,
  it holds that \(I\subset \mathfrak{m}\).
  This implies that
  \(\mathfrak{m}\) is a unique maximal ideal of \(C\),
  i.e., that \(C\) is local,
  and \(i_\lambda\) is local.
  This completes the proof of \autoref{lem: filtered colim of loc is loc}.
\end{proof}

\begin{cor}\label{cor: limit bullet exists}
  Let \(S\) be a quasi-separated scheme.
  Then the following assertions hold:
  \begin{enumerate}
    \item \label{enumi: cor: limit bullet exists loc}
    Let \((V_\lambda,p_{\lambda\mu})_{\lambda\in\Lambda}\) be
    a \usm \ cofiltered diagram in \(\Schb{S}\) such that
    each \(V_\lambda\) is local, and each \(p_{\lambda\mu}\) is local.
    Then \(\bplim_{\lambda\in \Lambda}V_\lambda\) exists.
    Moreover, \(\bplim_{\lambda\in \Lambda}V_\lambda\) is local, and
    each projection \(\bplim_{\lambda\in \Lambda}V_\lambda \to V_\lambda\) is local.
    \item \label{enumi: cor: limit bullet exists open imm}
    Let \(X\in \Schb{S}\) be an object and \(x\in X\) a point.
    Write \(I_X(x)\) for the full subcategory of \(\Schb{X}\) consisting of
    all open immersions \(U\to X\) in \(\Schb{X}\) such that
    \(x\) belongs to the image of \(U\to X\).
    (Thus, by a slight abuse of notation,
    \(I_X(x)\) may also be regarded,
    by applying the natural functor \(\Schb{X}\to \Schb{S}\),
    as a diagram indexed by \(I_X(x)\).)
    Then \(\bplim_{U\in I_X(x)}U\) exists, and
    may be taken to be \(\Spec(\mathcal{O}_{X,x})\).
  \end{enumerate}
\end{cor}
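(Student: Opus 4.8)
The plan is to deduce both assertions from the two preceding lemmas, \autoref{lem: limit bbullet lemma} and \autoref{lem: filtered colim of loc is loc}, by reducing each inverse limit to a filtered colimit of local rings.

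For \ref{enumi: cor: limit bullet exists loc}, I would first invoke \autoref{rem: local is aff} \ref{enumi: rem: local is aff} to write each local object as \(V_\lambda = \Spec(C_\lambda)\) with \(C_\lambda\) a local ring; since each \(p_{\lambda\mu}\) is local (cf. \autoref{defi: local morphism}), the induced ring maps \(C_\lambda \to C_\mu\) are local homomorphisms, so \((\Gamma(V_\lambda,\mathcal{O}_{V_\lambda}))_{\lambda\in\Lambda^{\op}}\) is a filtered diagram of local rings and local homomorphisms. As every \(V_\lambda\) is affine, \autoref{lem: limit bbullet lemma} (with \(\Lambda' = \Lambda\)) gives that \(\bplim_{\lambda}V_\lambda\) exists, agrees with \(\plim_\lambda V_\lambda\), and may be taken to be \(\Spec(\colim_{\lambda\in\Lambda^{\op}}\Gamma(V_\lambda,\mathcal{O}_{V_\lambda}))\). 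Then \autoref{lem: filtered colim of loc is loc} shows this colimit ring is local and that each structure map \(C_\lambda \to \colim_\mu C_\mu\) is local; passing back through \(\Spec\) shows \(\bplim_\lambda V_\lambda\) is local and each projection \(\bplim_\lambda V_\lambda \to V_\lambda\) is local, as required. This part is a bookkeeping translation and I expect no real difficulty.

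For \ref{enumi: cor: limit bullet exists open imm}, the idea is that \(\mathcal{I}\) is cofiltered and its affine objects form a small cofinal subcategory, so that \autoref{lem: limit bbullet lemma} applies and computes the limit as \(\Spec\) of a filtered colimit of sections, which is exactly the stalk. Concretely, \(\mathcal{I}\) is equivalent to the poset of those open neighborhoods \(U\) of \(x\) with \(U\to X\) in \(\Schb{X}\), with a unique morphism \(U' \to U\) whenever \(U' \subseteq U\). Given \(U_1, U_2 \in \mathcal{I}\), an affine open \(V\) with \(x \in V \subseteq U_1 \cap U_2\) maps to both, yielding cofilteredness; the affine such \(V\) form a \(\univ{U}\)-small cofinal subcategory \(\mathcal{I}' \subset \mathcal{I}\) (as \(|X|\), hence its set of open subsets, is \(\univ{U}\)-small). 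Applying \autoref{lem: limit bbullet lemma} to \(\mathcal{I}' \subset \mathcal{I}\) gives \(\bplim_{U\in\mathcal{I}} U \cong \Spec(\colim_{U\in\mathcal{I}'^{\op}}\Gamma(U,\mathcal{O}_U))\), and the colimit of the restriction maps over the affine neighborhoods of \(x\) is by definition the stalk \(\mathcal{O}_{X,x}\); hence \(\bplim_{U\in\mathcal{I}}U\) may be taken to be \(\Spec(\mathcal{O}_{X,x})\).

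The main obstacle is the claim that the affine open neighborhoods of \(x\) genuinely lie in \(\mathcal{I}\), i.e.\ that \(V \hookrightarrow X\) belongs to \(\Schb{X}\). The conditions ``reduced'', ``separated over \(X\)'', and ``quasi-separated over \(X\)'' hold automatically for an affine open immersion (an immersion is separated, and if \(\red \in \bbullet\) then \(X\), hence \(V\), is reduced). The delicate condition is ``quasi-compact over \(X\)'' when \(\qcpt \in \bbullet\): for affine \(V\) the morphism \(V \to X\) is quasi-compact once \(X\) is quasi-separated (apply \autoref{lem: affine over qsep is qcpt} \ref{enumi: lem: aff over qsep is qcpt qc} with base \(X\)), which holds whenever \(\qsep \in \bbullet\) or \(\sep \in \bbullet\) (then \(X\to S\), and hence \(X\), is quasi-separated) and is vacuous when \(\qcpt \notin \bbullet\). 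I would isolate and treat separately the remaining case \(\qcpt \in \bbullet\), \(\{\qsep,\sep\}\cap\bbullet = \emptyset\), with \(X\) not quasi-separated, where affine neighborhoods need not be quasi-compact over \(X\); here the cofinal-affine argument must be replaced by a more careful analysis of the quasi-compact open immersions containing \(x\), and this is the step I expect to require the most work.
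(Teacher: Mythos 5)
Your proof of \ref{enumi: cor: limit bullet exists loc} is identical to the paper's: local objects are affine, so \autoref{lem: limit bbullet lemma} produces \(\bplim_\lambda V_\lambda \cong \Spec(\colim_{\lambda\in\Lambda^{\op}}\Gamma(V_\lambda,\mathcal{O}_{V_\lambda}))\), and \autoref{lem: filtered colim of loc is loc} supplies locality of the limit and of the projections. For \ref{enumi: cor: limit bullet exists open imm} the paper's entire proof is the sentence that the subcategory of \(\mathcal{I}\) consisting of the affine open neighborhoods of \(x\) is cofinal, followed by an appeal to \autoref{lem: limit bbullet lemma}; this is exactly your argument, so in substance your proposal coincides with the paper throughout.

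The point on which you go beyond the paper --- checking that an affine open neighborhood \(V\ni x\) actually defines an object of \(\mathcal{I}\) --- is precisely the point the paper passes over without comment, and your case analysis is accurate: reducedness and (quasi-)separatedness over \(X\) are automatic, and quasi-compactness of \(V\hookrightarrow X\) follows from \autoref{lem: affine over qsep is qcpt} \ref{enumi: lem: aff over qsep is qcpt qc} (applied with base \(X\)) whenever \(X\) is quasi-separated, which holds if \(\bbullet\cap\{\qsep,\sep\}\neq\emptyset\) and is irrelevant if \(\qcpt\notin\bbullet\). In the residual case (\(\qcpt\in\bbullet\), \(\qsep,\sep\notin\bbullet\), \(X\) not quasi-separated --- such \(X\) do occur in \(\Schb{S}\)) an affine open immersion into \(X\) need not be quasi-compact, so under the literal reading of \(\Schb{X}\) as \(\Sche_{\bbullet/X}\) the affine neighborhoods need not lie in \(\mathcal{I}\), and neither the cofinality claim nor the identification of the limit with the stalk is then clear; the paper supplies no ``more careful analysis'' here, and I do not believe one exists without reinterpreting the statement. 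The intended reading is evidently that the condition on \(U\) is membership in \(\Schb{S}\) (i.e.\ \(\mathcal{I}\) lives in the slice \((\Schb{S})_{/X}\), which is in any case where the diagram is transported before \(\bplim\) is formed); under that reading \autoref{cor: coprod exists} \ref{enumi: cor: coprod exists aff} places every affine open of \(X\) in \(\mathcal{I}\), cofinality is immediate, and your argument closes with no leftover case. So the step you expected to ``require the most work'' is not carried out in the paper either; it is dissolved by fixing the interpretation of \(\mathcal{I}\), not by a new argument, and apart from flagging it your proof is the paper's proof.
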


\begin{proof}
  First, we prove assertion \ref{enumi: cor: limit bullet exists loc}.
  Since a local object in \(\Schb{S}\) is affine,
  it follows from \autoref{lem: limit bbullet lemma} that
  \(\bplim_{\lambda\in \Lambda}V_\lambda\) exists, and
  \[\bplim_{\lambda\in \Lambda}V_\lambda \cong
  \Spec(\colim_{\lambda\in\Lambda^{\mathrm{op}}}\Gamma(V_\lambda,\mathcal{O}_{V_\lambda})). \]
  By \autoref{lem: filtered colim of loc is loc},
  \(\bplim_{\lambda\in\Lambda}V_\lambda\) is local, and
  each projection \(\bplim_{\lambda\in \Lambda}V_\lambda \to V_\lambda\)
  is local.
  Thus assertion \ref{enumi: cor: limit bullet exists loc} holds.

  Next, we prove assertion \ref{enumi: cor: limit bullet exists open imm}.
  Since the \usm \ subcategory
  \[
  %I^{\text{aff}}_X(x) \dfn
  \Set{(U\subset X)\in I_X(x) |
  \text{\(U\) is an affine open neighborhood of \(x\)}} \subset I_X(x)
  \]
  is cofinal,
  assertion \ref{enumi: cor: limit bullet exists open imm} follows from
  \autoref{lem: limit bbullet lemma}.
\end{proof}

\begin{lem}\label{lem: ess fp colim}
  Let \(f:R\to A\) be a local homomorphism between \usm \ local rings.
  Write \(\LAlg_R\) for the category of \usm \ local \(R\)-algebras and
  local homomorphisms of \(R\)-algebras.
  Then the following assertions hold:
  \begin{enumerate}
    \item \label{enumi: ess fp neccesity}
    If \(f\) is essentially of finite presentation,
    then for any \usm \ filtered diagram
    \((C_\lambda,p_{\lambda\mu})_{\lambda\in\Lambda}\) in \(\LAlg_R\),
    the natural morphism
    \[
    \varphi : \colim_{\lambda\in\Lambda} \Hom_{\LAlg_R}(A,C_\lambda)
    \to \Hom_{\LAlg_R}(A,\colim_{\lambda\in\Lambda} C_\lambda)
    \]
    (where we note that by \autoref{lem: filtered colim of loc is loc},
    \(\colim_{\lambda\in\Lambda} C_\lambda\) is local)
    is surjective.
    \item \label{enumi: ess fp diagram exists general}
    There exists a \usm \ filtered diagram
    \((C_\lambda,p_{\lambda\mu})_{\lambda\in\Lambda}\) in \(\LAlg_R\)
    such that each \(R\)-algebra \(C_\lambda\) is
    essentially of finite presentation, and
    \(\colim_{\lambda\in\Lambda}C_\lambda \cong A\).
    \item \label{enumi: ess fp diagram exists red}
    Assume that \(R,A\) are reduced.
    Then there exists a \usm \ filtered diagram
    \((C_\lambda,p_{\lambda\mu})_{\lambda\in\Lambda}\) in \(\LAlg_R\)
    such that each \(R\)-algebra \(C_\lambda\) is reduced and
    essentially of finite type, and
    \(\colim_{\lambda\in\Lambda}C_\lambda \cong A\).
  \end{enumerate}
\end{lem}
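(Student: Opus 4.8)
The plan is to treat the three assertions in turn, with \ref{enumi: ess fp neccesity} as the conceptual heart of the argument. For \ref{enumi: ess fp neccesity} I would first unwind surjectivity of $\varphi$: writing $C_\infty \dfn \colim_{\lambda} C_\lambda$, it means that every local $R$-algebra homomorphism $\psi : A \to C_\infty$ factors, for some $\lambda$, through the structure map $q_\lambda : C_\lambda \to C_\infty$ by a \emph{local} homomorphism $A \to C_\lambda$. Since $f$ is essentially of finite presentation, write $A \cong C_{\mathfrak p}$ with $C$ a finitely presented $R$-algebra and $\mathfrak p \subset C$ the prime corresponding to $\mathfrak m_A$, and let $\ell : C \to A$ be the localization. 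Precomposing with $\ell$ gives an $R$-algebra homomorphism $\psi' \dfn \psi \circ \ell : C \to C_\infty$, and the standard fact that a finitely presented $R$-algebra is a finitely presented object of the category of $R$-algebras under filtered colimits produces, for some $\lambda$, an $R$-algebra homomorphism $h : C \to C_\lambda$ with $q_\lambda \circ h = \psi'$.

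The decisive step is then the automatic descent of $h$ to a local homomorphism out of $A$. By \autoref{lem: filtered colim of loc is loc} each $q_\lambda$ is local, so $q_\lambda^{-1}(\mathfrak m_{C_\infty}) = \mathfrak m_{C_\lambda}$; combined with locality of $\psi$, which gives $(\psi')^{-1}(\mathfrak m_{C_\infty}) = \ell^{-1}(\mathfrak m_A) = \mathfrak p$, this yields $h^{-1}(\mathfrak m_{C_\lambda}) = \mathfrak p$. Since $C_\lambda$ is local, $h$ therefore carries $C \setminus \mathfrak p$ into $C_\lambda^\times$ and $\mathfrak p$ into $\mathfrak m_{C_\lambda}$, so $h$ factors through a local $R$-algebra homomorphism $\tilde h : A = C_{\mathfrak p} \to C_\lambda$; and because $\ell$ is an epimorphism of rings, $q_\lambda \circ \tilde h = \psi$, exhibiting $\psi$ in the image of $\varphi$.

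For \ref{enumi: ess fp diagram exists general} and \ref{enumi: ess fp diagram exists red} the plan is to realize $A$ itself as a filtered colimit in $\LAlg_R$ of algebras of the required type. For \ref{enumi: ess fp diagram exists general} I would take the (essentially $\univ{U}$-small) comma category $\mathcal J$ of finitely presented $R$-algebras $\phi : B \to A$ over $A$; this is filtered, since finitely presented $R$-algebras are closed under the finite colimits needed, and $\colim_{\mathcal J} B \cong A$. Sending $(B,\phi)$ to $B_{\mathfrak q}$ with $\mathfrak q \dfn \phi^{-1}(\mathfrak m_A)$ defines a functor $\mathcal J \to \LAlg_R$ landing among essentially-of-finite-presentation algebras, and I would check $\colim_{\mathcal J} B_{\mathfrak q} \cong A$ by noting that $\colim_{\mathcal J} B \to \colim_{\mathcal J} B_{\mathfrak q} \to A$ composes to the identity (surjectivity) while an element dying in $A$ already dies at a finite stage (injectivity). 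For \ref{enumi: ess fp diagram exists red}, using that $A$ is reduced, I would instead run the same scheme over the directed poset of finitely generated $R$-\emph{subalgebras} $B \subseteq A$: each such $B$ is reduced and of finite type, each localization $B_{\mathfrak q}$ (with $\mathfrak q \dfn B \cap \mathfrak m_A$) is reduced, local, essentially of finite type, and injects into $A$, and $\colim B_{\mathfrak q} \cong A$ since $A = \bigcup_B B$.

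The main obstacle I anticipate is the locality bookkeeping in \ref{enumi: ess fp neccesity}: an arbitrary $R$-algebra lift $h : C \to C_\lambda$ of $\psi'$ need not a priori respect $\mathfrak p$, and the whole argument turns on the observation that locality of the colimit maps $q_\lambda$ forces $h^{-1}(\mathfrak m_{C_\lambda}) = \mathfrak p$, so that the descent to $A = C_{\mathfrak p}$ is free. The remaining verifications—filteredness of the index categories, that the relevant colimits recover $A$, and that the diagrams may be chosen $\univ{U}$-small since $R$ and $A$ are—are routine.
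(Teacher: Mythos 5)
Your proposal is correct and follows essentially the same route as the paper: for (i) you lift the map through the finitely presented presentation $C\to C_{\mathfrak p}=A$ via the standard finite-presentation/filtered-colimit compatibility and then use locality of the colimit maps plus the epimorphism property of localization to descend to a local map out of $A$, exactly as in the paper's proof; for (ii) your comma-category construction is the content of the Stacks reference the paper invokes, followed by the same localization-at-the-pulled-back-maximal-ideal step. The only (cosmetic) divergence is in (iii), where you localize finitely generated $R$-subalgebras of $A$ directly rather than taking images of the diagram from (ii) as the paper does; both yield the required reduced, essentially-of-finite-type local algebras.
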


\begin{proof}
  First, we prove assertion \ref{enumi: ess fp neccesity}.
  Let \((C_\lambda,p_{\lambda\mu})_{\lambda\in\Lambda}\)
  be a \usm \ filtered diagram in \(\LAlg_R\).
  Write \(C \dfn \colim_{\lambda\in\Lambda}C_\lambda\) and
  \(p_\lambda: C_\lambda \to C\) for the natural morphism.
  By \autoref{lem: filtered colim of loc is loc},
  \(C\) is local, and each \(p_\lambda\) is local.
  Let \(g:A\to C\) be a local homomorphism of \(R\)-algebras.
  Since \(f\) is essentially of finite presentation,
  the morphism of local rings
  \(f:R\to A\) admits a factorization
  \(R \xrightarrow{h} B \xrightarrow{j} A\)
  such that \(h:R \to B\) is of finite presentation, and
  \(j:B\to A\) is the localization morphism
  at a prime ideal of \(B\).
  Since \(B\) is an \(R\)-algebra of finite presentation,
  it follows from
  \cite[\href{https://stacks.math.columbia.edu/tag/00QO}{Tag 00QO}]{stacks-project}
  that there exist an index \(\lambda\in \Lambda\) and
  a morphism \(B\xrightarrow{f_{\lambda}} C_{\lambda}\) of \(R\)-algebras
  such that the following diagram commutes:
  \[
  \begin{tikzpicture}[auto]
    \node (A) at (0,1.5) {\(B\)};
    \node (A') at (0,0) {\(C_\lambda\)};
    \node (B) at (5,1.5) {\(A\)};
    \node (B') at (5,0) {\(C\).};
    \draw[->] (A) to node[swap] {\(\scriptstyle f_{\lambda}\)} (A');
    \draw[->] (B) to node {\(\scriptstyle g\)} (B');
    \draw[->] (A) to node {\(\scriptstyle j\)} (B);
    \draw[->] (A') to node[swap] {\(\scriptstyle p_{\lambda}\)} (B');
  \end{tikzpicture}
  \]
  Since \(j\) is the localization morphism at a prime ideal of \(B\),
  and \(p_\lambda,g\) are local homomorphisms of local rings,
  there exists a local morphism \(f_{\lambda}':A \to C_\lambda\)
  such that \(f_\lambda = f_{\lambda}'\circ j\):
  \begin{equation*}
    %\label{eqref: stwise f.p. diagram}
    \begin{tikzpicture}[auto]
      \node (A) at (0,1.5) {\(B\)};
      \node (A') at (0,0) {\(C_\lambda\)};
      \node (B) at (5,1.5) {\(A\)};
      \node (B') at (5,0) {\(C\).};
      \draw[->] (A) to node[swap] {\(\scriptstyle f_{\lambda}\)} (A');
      \draw[->] (B) to node {\(\scriptstyle g\)} (B');
      \draw[->] (A) to node {\(\scriptstyle j\)} (B);
      \draw[->] (A') to node[swap] {\(\scriptstyle p_{\lambda}\)} (B');
      \draw[->] (B) to node[swap] {\(\scriptstyle f_{\lambda}'\)} (A');
    \end{tikzpicture}
    %\tag{\(\dagger\)}
  \end{equation*}
  Since \(g\circ j = p_{\lambda} \circ f_{\lambda}
  = p_{\lambda} \circ f_{\lambda}'\circ j\), and
  \(j\) is the localization morphism at a prime ideal of \(B\),
  the equality \(g = p_{\lambda} \circ f'_{\lambda}\) holds.
  This implies that
  \[\varphi([f'_{\lambda}]) = p_\lambda \circ f'_{\lambda} = g,\]
  where \([f'_{\lambda}]\)
  denotes the element \(\in\colim_{\lambda\in\Lambda} \Hom_{\LAlg_R}(A,C_\lambda)\)
  determined by \(f'_\lambda \in \Hom_{\LAlg_R}(A,C_\lambda)\).
  Thus \(\varphi\) is surjective.
  This completes the proof of assertion \ref{enumi: ess fp neccesity}.

  Next, we prove assertion \ref{enumi: ess fp diagram exists general}.
  By \cite[\href{https://stacks.math.columbia.edu/tag/0BUF}{Tag 0BUF}]{stacks-project},
  there exists a \usm \ filtered diagram
  \((B_\lambda,q_{\lambda\mu})_{\lambda\in\Lambda}\)
  of \(R\)-algebras of finite presentation and homomorphisms of \(R\)-algebras
  such that \(\colim_{\lambda\in\Lambda}B_\lambda \cong A\).
  Write \(q_\lambda:B_\lambda \to A\) for the natural morphism,
  \(\mathfrak{m}\) for the maximal ideal of \(A\),
  \(\mathfrak{q}_\lambda \dfn q_\lambda^{-1}(\mathfrak{m})\),
  \(C_\lambda \dfn B_{\lambda, \mathfrak{q}_\lambda}\) for the localization
  of \(B_\lambda\) at \(\mathfrak{q}_{\lambda}\).
  Then the morphism \(q_\lambda:B_\lambda \to A\) induces
  the local homomorphism of local \(R\)-algebras
  \(p_\lambda:C_\lambda\to A\), and
  any isomorphism \(\colim_{\lambda\in\Lambda}B_\lambda \xrightarrow{\sim} A\)
  induces an isomorphism \(\colim_{\lambda\in\Lambda}C_\lambda \xrightarrow{\sim} A\).
  Since \(C_\lambda\) is essentially of finite presentation over \(R\),
  this completes the proof of assertion \ref{enumi: ess fp diagram exists general}.

  Finally, we prove assertion \ref{enumi: ess fp diagram exists red}.
  By assertion \ref{enumi: ess fp diagram exists general},
  there exists a \usm \ filtered diagram
  \((B_\lambda,q_{\lambda\mu})_{\lambda\in\Lambda}\)
  of essentially of finite type local \(R\)-algebras
  and local homomorphisms such that \(\colim_{\lambda\in\Lambda}B_\lambda\cong A\).
  Write \(q_\lambda:B_\lambda\to A\) for the natural morphism.
  Since \(A\) is reduced,
  \(\im(q_\lambda)\subset A\) is a reduced local \(R\)-algebra.
  Then it holds that \(A = \bigcup_{\lambda\in\Lambda}\im(q_\lambda)\).
  This implies that \(\colim_{\lambda\in\Lambda}\im(q_\lambda)\cong A\).
  Write \(C_{\lambda} \dfn \im(q_\lambda)\) and
  \(p_{\lambda\mu}: C_{\lambda}\to C_\mu\) for the morphism
  induced by \(q_{\lambda\mu}\).
  Since \(q_\lambda\) is local,
  \((C_\lambda,p_{\lambda\mu})_{\lambda\in\Lambda}\) is
  a \usm \ filtered diagram in \(\LAlg_R\).
  This completes the proof of \autoref{lem: ess fp colim}.
\end{proof}

\begin{cor}\label{cor: ess fp colim}
  Let \(S\) be a quasi-separated scheme.
  Let \(X,Y\) be local objects of \(\Schb{S}\) and
  \(f:X\to Y\) a local morphism in \(\Schb{S}\).
  Write \((\Schb{Y})_{\mathrm{loc}}\) for
  the full subcategory of \(\Schb{Y}\)
  determined by the local objects and the local morphisms.
  Consider the following condition on \(f\):
  \begin{enumerate}[label=(\fnsymbol*),start=2]
    \item \label{enumi: ess fp condition}
    For any \usm \ cofiltered diagram
    \((V_\lambda,p_{\lambda\mu})_{\lambda\in\Lambda}\)
    of \((\Schb{Y})_{\mathrm{loc}}\),
    the natural morphism
    \[
    \varphi : \colim_{\lambda\in\Lambda^{\mathrm{op}}}
    \Hom_{(\Schb{Y})_{\mathrm{loc}}}(V_\lambda ,X)
    \to \Hom_{(\Schb{Y})_{\mathrm{loc}}}(\bplim_{\lambda\in\Lambda} V_\lambda ,X)
    \]
    is surjective
    (where we note that
    by \autoref{cor: limit bullet exists}
    \ref{enumi: cor: limit bullet exists loc},
    \(\bplim_{\lambda\in\Lambda} V_\lambda\) exists in \((\Schb{Y})_{\mathrm{loc}}\)).
  \end{enumerate}
  Then the following assertions hold:
  \begin{enumerate}
    \item \label{enumi: loc fp implies condition}
    If the morphism of local rings
    \(f^{\#}:\Gamma(Y,\mathcal{O}_Y)\to \Gamma(X,\mathcal{O}_X)\)
    is essentially of finite presentation,
    then condition \ref{enumi: ess fp condition} holds.
    \item \label{enumi: condition implies loc ft}
    If condition \ref{enumi: ess fp condition} holds,
    then the morphism of local rings
    \(f^{\#}:\Gamma(Y,\mathcal{O}_Y)\to \Gamma(X,\mathcal{O}_X)\)
    is essentially of finite type.
  \end{enumerate}
\end{cor}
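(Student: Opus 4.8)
The plan is to translate the categorical condition \ref{enumi: ess fp condition} into the algebraic statement of \autoref{lem: ess fp colim} and then read off both implications. First I would set up the dictionary. Since $X,Y$ are local objects of $\Schb{S}$, by \autoref{rem: local is aff} they are affine; write $A \dfn \Gamma(X,\mathcal{O}_X)$ and $R \dfn \Gamma(Y,\mathcal{O}_Y)$, so that $A,R$ are local rings and $f^{\#}:R\to A$ is a local homomorphism. An object $V$ of $(\Schb{Y})_{\mathrm{loc}}$ is then the spectrum of a local $R$-algebra $C_V \dfn \Gamma(V,\mathcal{O}_V)$ with local structure homomorphism $R\to C_V$, and a morphism $V\to X$ in $(\Schb{Y})_{\mathrm{loc}}$ is exactly a local homomorphism of $R$-algebras $A\to C_V$; hence $\Hom_{(\Schb{Y})_{\mathrm{loc}}}(V,X)=\Hom_{\LAlg_R}(A,C_V)$. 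Given a \usm \ cofiltered diagram $(V_\lambda,p_{\lambda\mu})_{\lambda\in\Lambda}$ in $(\Schb{Y})_{\mathrm{loc}}$, the rings $C_\lambda \dfn \Gamma(V_\lambda,\mathcal{O}_{V_\lambda})$ form a \usm \ filtered diagram in $\LAlg_R$ indexed by $\Lambda^{\mathrm{op}}$, and by \autoref{cor: limit bullet exists} \ref{enumi: cor: limit bullet exists loc} together with \autoref{lem: filtered colim of loc is loc} we have $\bplim_{\lambda\in\Lambda}V_\lambda \cong \Spec(\colim_{\lambda\in\Lambda^{\mathrm{op}}}C_\lambda)$ with $\colim_{\lambda\in\Lambda^{\mathrm{op}}}C_\lambda$ local. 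Under these identifications the map $\varphi$ of condition \ref{enumi: ess fp condition} is precisely the map $\varphi$ of \autoref{lem: ess fp colim} \ref{enumi: ess fp neccesity} attached to the filtered diagram $(C_\lambda)_{\lambda\in\Lambda^{\mathrm{op}}}$.

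With this dictionary, assertion \ref{enumi: loc fp implies condition} is immediate: if $f^{\#}$ is essentially of finite presentation, then \autoref{lem: ess fp colim} \ref{enumi: ess fp neccesity} asserts that $\varphi$ is surjective for every \usm \ filtered diagram in $\LAlg_R$, hence in particular for every diagram arising from $(\Schb{Y})_{\mathrm{loc}}$, so condition \ref{enumi: ess fp condition} holds. The only point to note is that a filtered colimit of reduced rings is reduced, so the colimit computed inside $(\Schb{Y})_{\mathrm{loc}}$ agrees with the one computed in $\LAlg_R$ even when $\red\in\bbullet$.

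For assertion \ref{enumi: condition implies loc ft} I would feed a carefully chosen diagram into condition \ref{enumi: ess fp condition}. Using \autoref{lem: ess fp colim} \ref{enumi: ess fp diagram exists general} when $\red\notin\bbullet$, and \autoref{lem: ess fp colim} \ref{enumi: ess fp diagram exists red} (valid since then $X,Y$, hence $A,R$, are reduced) when $\red\in\bbullet$, I obtain a \usm \ filtered diagram $(C_\lambda)_{\lambda\in\Lambda_0}$ in $\LAlg_R$ with each $C_\lambda$ essentially of finite type (essentially of finite presentation in the first case, and in addition reduced in the second) and $\colim_{\lambda\in\Lambda_0}C_\lambda \cong A$. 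Since each $\Spec(C_\lambda)$ is affine and local, it lies in $(\Schb{Y})_{\mathrm{loc}}$ by \autoref{lem: affine over qsep is qcpt}, so $(\Spec(C_\lambda))_{\lambda\in\Lambda_0^{\mathrm{op}}}$ is a cofiltered diagram in $(\Schb{Y})_{\mathrm{loc}}$ whose limit is $X$. Applying condition \ref{enumi: ess fp condition} to this diagram, surjectivity of $\varphi$ shows that the identity $\id_A\in\Hom_{\LAlg_R}(A,A)=\Hom_{\LAlg_R}(A,\colim_{\lambda}C_\lambda)$ lifts to a local $R$-algebra homomorphism $s_\lambda:A\to C_\lambda$ with $p_\lambda\circ s_\lambda=\id_A$, where $p_\lambda:C_\lambda\to A$ is the (local) colimit structure homomorphism. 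Writing $g_\lambda:R\to C_\lambda$ for the structure homomorphism, we have $g_\lambda=s_\lambda\circ f^{\#}$ and $f^{\#}=p_\lambda\circ g_\lambda$. Since $g_\lambda$ is essentially of finite type, \autoref{lem: fp lemma} \ref{enumi: fp lemma ess ft} gives that $s_\lambda$ is essentially of finite type; then \autoref{lem: fp lemma} \ref{enumi: fp lemma ess ft fp} gives that $p_\lambda$ is essentially of finite presentation; and finally $f^{\#}=p_\lambda\circ g_\lambda$ is a composite of homomorphisms essentially of finite type, hence essentially of finite type by \autoref{rem: ess.f.p. stwise.f.p.} \ref{enumi: rem: composite ess fp}.

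I expect the main obstacle to be assertion \ref{enumi: condition implies loc ft}, specifically ensuring that the approximating diagram can be chosen to consist of objects that genuinely lie in $(\Schb{Y})_{\mathrm{loc}}$ (in particular reduced when $\red\in\bbullet$, which is exactly why the split between \autoref{lem: ess fp colim} \ref{enumi: ess fp diagram exists general} and \ref{enumi: ess fp diagram exists red} is needed), and then correctly tracking the retraction $p_\lambda\circ s_\lambda=\id_A$ through the factorization lemmas. The dictionary in the first paragraph is, by contrast, purely formal once the limit in $(\Schb{Y})_{\mathrm{loc}}$ is identified via \autoref{cor: limit bullet exists} \ref{enumi: cor: limit bullet exists loc}.
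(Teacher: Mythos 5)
Your proposal is correct and follows essentially the same route as the paper: translate condition \ref{enumi: ess fp condition} into the algebraic statement via \autoref{lem: limit bbullet lemma} and \autoref{cor: limit bullet exists} \ref{enumi: cor: limit bullet exists loc}, deduce \ref{enumi: loc fp implies condition} from \autoref{lem: ess fp colim} \ref{enumi: ess fp neccesity}, and for \ref{enumi: condition implies loc ft} feed the approximating diagram of \autoref{lem: ess fp colim} \ref{enumi: ess fp diagram exists general} \ref{enumi: ess fp diagram exists red} into the condition and conclude via \autoref{lem: fp lemma}. The only cosmetic difference is that you pass through \autoref{lem: fp lemma} \ref{enumi: fp lemma ess ft fp} to make \(p_\lambda\) essentially of finite presentation, where the paper applies \autoref{lem: fp lemma} \ref{enumi: fp lemma ess ft} to the retraction directly and only needs \(p_\lambda^{\#}\) essentially of finite type; both yield the same conclusion.
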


\begin{proof}
  Assertion \ref{enumi: loc fp implies condition}
  follows immediately from
  \autoref{lem: ess fp colim} \ref{enumi: ess fp neccesity}.
  In the remainder of the proof of \autoref{cor: ess fp colim},
  we verify assertion \ref{enumi: condition implies loc ft}.
  Assume that \(f\) satisfies condition \ref{enumi: ess fp condition}.
  By \autoref{lem: affine over qsep is qcpt}
  \ref{enumi: lem: aff over qsep is qcpt qc}
  \ref{enumi: lem: aff over qsep is qcpt sep},
  \autoref{lem: limit bbullet lemma}, and
  \autoref{lem: ess fp colim} \ref{enumi: ess fp diagram exists general}
  \ref{enumi: ess fp diagram exists red},
  there exists a cofiltered diagram
  \((V_\lambda,p_{\lambda\mu})_{\lambda\in\Lambda}\)
  of \((\Schb{Y})_{\mathrm{loc}}\) such that
  \(\bplim_{\lambda\in \Lambda}V_\lambda \cong X\), and
  for any \(\lambda\in \Lambda\),
  \(\Gamma(V_\lambda,\mathcal{O}_{V_\lambda})\) is
  essentially of finite type over \(\Gamma(Y,\mathcal{O}_Y)\).
  Let \(g\in \Hom_{(\Schb{Y})_{\mathrm{loc}}}(\bplim_{\lambda\in\Lambda} V_\lambda ,X)\)
  be an isomorphism.
  Write
  \(p_\lambda:\bplim_{\lambda\in\Lambda}V_\lambda\to V_\lambda\)
  for the natural projection.
  Since \(f\) satisfies condition \ref{enumi: ess fp condition},
  there exist an index \(\lambda\in\Lambda\) and
  a morphism \(g':V_\lambda\to X\) in \((\Schb{Y})_{\mathrm{loc}}\)
  such that \(g'\circ p_\lambda = g\).
  Since \(g\) is an isomorphism,
  it follows from \autoref{lem: fp lemma} \ref{enumi: fp lemma ess ft} that
  \(p_{\lambda}^{\#}\) is essentially of finite type.
  Since
  \(g'^{\#}\circ f^{\#}:\Gamma(Y,\mathcal{O}_Y)\to \Gamma(V_\lambda,\mathcal{O}_{V_\lambda})\)
  is essentially of finite type,
  it follows from
  \autoref{rem: ess.f.p. stwise.f.p.} \ref{enumi: rem: composite ess fp} that
  \(f^{\#} = (g^{-1})^{\#}\circ p_\lambda^{\#}\circ ({g'}^{\#}\circ f^{\#})\)
  is essentially of finite type.
  This completes the proof of \autoref{cor: ess fp colim}.
\end{proof}

\begin{defi}\label{defi: cat st ft}
  Let \(S\) be a quasi-separated scheme.
  Let \(f:X\to Y\) be a morphism of \(\Schb{S}\).
  We shall say that
  \(f\) is \textit{category-theoretically stalkwise of finite presentation}
  if \(f\) satisfies following condition:
  \begin{itemize}%[label=(\fnsymbol*),start=2]
    \item[ \ ] %\label{enumi: st fp condition}
    For any \usm \ cofiltered diagram
    \((V_\lambda,p_{\lambda\mu})_{\lambda\in\Lambda}\) of \(\Schb{Y}\) such that
    each \(V_\lambda\) is local, and each \(p_{\lambda\mu}\) is local,
    the natural morphism
    \[
    \varphi : \colim_{\lambda\in\Lambda^{\mathrm{op}}} \Hom_{\Schb{Y}}(V_\lambda ,X)
    \to \Hom_{\Schb{Y}}(\bplim_{\lambda\in\Lambda} V_\lambda ,X)
    \]
    is surjective
    (where we note that
    by \autoref{cor: limit bullet exists}
    \ref{enumi: cor: limit bullet exists loc},
    \(\bplim_{\lambda\in\Lambda} V_\lambda\) exists in \(\Schb{Y}\)).
  \end{itemize}
  Thus the property that
  \(f\) is category-theoretically stalkwise of finite presentation
  is defined completely in terms of properties that may be
  characterized category-theoretically
  (cf. \autoref{cor: irred, loc, are cat}
  \ref{enumi: cor: cl pt is cat} \ref{enumi: cor: loc is cat})
  from the data \((\Schb{S},f)\).
\end{defi}

\begin{cor}\label{cor: stwise fp is cat}
  Let \(S\) be a quasi-separated scheme.
  Let \(f:X\to Y\) be a morphism of \(\Schb{S}\).
  Then the following assertions hold:
  \begin{enumerate}
    \item \label{enumi: st fp is cat st ft}
    If \(f\) is stalkwise of finite presentation,
    then \(f\) is category-theoretically stalkwise of finite presentation.
    \item \label{enumi: cat st ft is st ft}
    If \(f\) is category-theoretically stalkwise of finite presentation,
    then \(f\) is stalkwise of finite type.
  \end{enumerate}
\end{cor}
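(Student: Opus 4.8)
The plan is to deduce both assertions from the local statement \autoref{cor: ess fp colim}, transferring between the ambient category \(\Schb{Y}\) appearing in \autoref{defi: cat st ft} and the category \((\Schb{Y''})_{\mathrm{loc}}\) appearing in \autoref{cor: ess fp colim}, where \(Y''\) denotes the spectrum of the local ring of \(Y\) at a suitable point. The two facts that make this transfer work are: first, that a morphism from a local scheme (cf. \autoref{rem: local is aff} \ref{enumi: rem: local is aff}) to a scheme \(X\) factors, through the localization \(\Spec(\mathcal{O}_{X,x})\to X\) at the image \(x\) of the closed point, by a local morphism; and second, that such localization morphisms are monomorphisms (cf. the proof of \autoref{prop: local ring}), so that morphisms may be descended along them uniquely. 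Given \(x\in X\) with \(y\dfn f(x)\), I set \(X'\dfn \Spec(\mathcal{O}_{X,x})\) and \(Y''\dfn \Spec(\mathcal{O}_{Y,y})\); by \autoref{lem: affine over qsep is qcpt} (together with the fact that localizations of reduced rings are reduced) these are local objects of \(\Schb{S}\), and \(f\) induces a local morphism \(f':X'\to Y''\) whose associated ring homomorphism \((f')^{\#}\) is the stalk map \(\mathcal{O}_{Y,y}\to \mathcal{O}_{X,x}\).

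For assertion \ref{enumi: st fp is cat st ft} I would verify the surjectivity in \autoref{defi: cat st ft} one morphism at a time. Fix a \usm cofiltered diagram \((V_\lambda)\) of local objects of \(\Schb{Y}\) with local transition morphisms, put \(W\dfn \bplim_{\lambda}V_\lambda\) (local, by \autoref{cor: limit bullet exists} \ref{enumi: cor: limit bullet exists loc}), write \(p_\lambda:W\to V_\lambda\) for the projection, and fix a morphism \(g:W\to X\) over \(Y\). Writing \(w\) for the closed point of \(W\) and \(x\dfn g(w)\), I note that the common image \(y\) of the closed points in \(Y\) equals \(f(x)\), since \(g\) is over \(Y\); hence each \(V_\lambda\) and \(W\) is naturally a local object of \((\Schb{Y''})_{\mathrm{loc}}\), and the limit computed there coincides with \(W\) by \autoref{lem: limit bbullet lemma}. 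Factoring \(g\) as \(W\xrightarrow{\tilde g}X'\to X\) with \(\tilde g\) local over \(Y''\), the hypothesis that \(f\) is stalkwise of finite presentation makes \((f')^{\#}\) essentially of finite presentation, so \autoref{cor: ess fp colim} \ref{enumi: loc fp implies condition} lifts \(\tilde g\) to some \(\tilde g_\lambda:V_\lambda\to X'\) in \((\Schb{Y''})_{\mathrm{loc}}\) with \(\tilde g_\lambda\circ p_\lambda = \tilde g\). Composing with \(X'\to X\) produces \(g_\lambda:V_\lambda\to X\) over \(Y\) with \(g_\lambda\circ p_\lambda = g\), as required.

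For assertion \ref{enumi: cat st ft is st ft} I would fix \(x\in X\) and prove that \((f')^{\#}\) is essentially of finite type; by \autoref{cor: ess fp colim} \ref{enumi: condition implies loc ft} it suffices to check condition \ref{enumi: ess fp condition} for \(f'\). Given a \usm cofiltered diagram \((V_\lambda)\) in \((\Schb{Y''})_{\mathrm{loc}}\), regard it (via \(Y''\to Y\)) as a diagram of local objects of \(\Schb{Y}\) with local transition morphisms; its limit \(W\) agrees with the one computed in \((\Schb{Y''})_{\mathrm{loc}}\). For a morphism \(g:W\to X'\) in \((\Schb{Y''})_{\mathrm{loc}}\), the composite \(\bar g:W\to X'\to X\) is a morphism over \(Y\), so the hypothesis on \(f\) supplies \(\bar g_\lambda:V_\lambda\to X\) over \(Y\) with \(\bar g_\lambda\circ p_\lambda=\bar g\). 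Since \(p_\lambda\) is local and \(g\) is local over \(Y''\), the closed point of \(V_\lambda\) maps under \(\bar g_\lambda\) to \(x\); hence \(\bar g_\lambda\) factors through \(X'\to X\) by a local morphism \(g_\lambda:V_\lambda\to X'\), which lies in \((\Schb{Y''})_{\mathrm{loc}}\) because \(Y''\to Y\) is a monomorphism and satisfies \(g_\lambda\circ p_\lambda=g\) because \(X'\to X\) is a monomorphism. This exhibits \(g\) in the image of \(\varphi\), establishing condition \ref{enumi: ess fp condition} and hence, by \autoref{cor: ess fp colim} \ref{enumi: condition implies loc ft}, that \(f\) is stalkwise of finite type at \(x\).

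I expect the main obstacle to be organizational rather than substantive: the genuine content — the equivalence between essential finiteness and the colimit/surjectivity condition — is already packaged in \autoref{cor: ess fp colim} and the ring-theoretic \autoref{lem: fp lemma}, so the work here is to set up the dictionary between morphisms over \(Y\) landing in \(X\) and local morphisms over \(Y''\) landing in \(X'\), and to check that limits, localizations, and the monomorphism property interact correctly. In particular I must confirm that the base point \(y=f(x)\) is forced by the diagram, that \(X'\) and \(Y''\) genuinely belong to \(\Schb{S}\) for every admissible \(\bbullet\), and that descent and lifting of morphisms along the localization monomorphisms are well defined and mutually compatible on the relevant Hom-sets.
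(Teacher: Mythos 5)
Your proposal is correct and follows the same route as the paper: the paper's proof consists of exactly the two citations you use, deducing assertion (i) from \autoref{cor: ess fp colim} \ref{enumi: loc fp implies condition} and assertion (ii) from \autoref{cor: ess fp colim} \ref{enumi: condition implies loc ft}, leaving the translation between \(\Schb{Y}\) and \((\Schb{Y''})_{\mathrm{loc}}\) implicit. Your write-up supplies that translation (factoring through the localization monomorphisms \(X'\to X\) and \(Y''\to Y\) and checking compatibility of the limits), and the details check out.
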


\begin{proof}
  Assertion \ref{enumi: st fp is cat st ft} follows immediately from
  \autoref{cor: ess fp colim} \ref{enumi: loc fp implies condition}.
  Assertion \ref{enumi: cat st ft is st ft} follows immediately from
  \autoref{cor: ess fp colim} \ref{enumi: condition implies loc ft}.
\end{proof}

\begin{defi}
  Let \(X\) be a scheme.
  We shall say that \(X\) is \textit{stalkwise Noetherian}
  if for any point \(x\in X\),
  \(\mathcal{O}_{X,x}\) is Noetherian.
\end{defi}

\begin{cor}\label{cor: st Noeth st fp}
  Let \(S\) be a quasi-separated scheme.
  Let \(f:X\to Y\) be a morphism of \(\Schb{S}\).
  Assume that \(Y\) is stalkwise Noetherian.
  Then \(f\) is stalkwise of finite presentation if and only if
  \(f\) is category-theoretically stalkwise of finite presentation.
\end{cor}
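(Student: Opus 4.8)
The plan is to obtain both implications from \autoref{cor: stwise fp is cat}, using the hypothesis that $Y$ is stalkwise Noetherian only to close the gap between finite type and finite presentation. The forward implication requires no Noetherian hypothesis at all: if $f$ is stalkwise of finite presentation, then \autoref{cor: stwise fp is cat} \ref{enumi: st fp is cat st ft} immediately gives that $f$ is category-theoretically stalkwise of finite presentation. So the entire content lies in the reverse implication.

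For the reverse implication, suppose $f$ is category-theoretically stalkwise of finite presentation. By \autoref{cor: stwise fp is cat} \ref{enumi: cat st ft is st ft}, $f$ is stalkwise of finite type; that is, for every $x \in X$ the local homomorphism $\mathcal{O}_{Y,f(x)} \to \mathcal{O}_{X,x}$ is essentially of finite type (cf. \autoref{defi: ess fp stwise fp}). It therefore suffices to prove the following purely algebraic statement: \emph{if $A$ is a Noetherian local ring and $A \to B$ is a local homomorphism that is essentially of finite type, then $A \to B$ is essentially of finite presentation}. Granting this, since $Y$ is stalkwise Noetherian the ring $\mathcal{O}_{Y,f(x)}$ is Noetherian for every $x$, so the statement applies with $A = \mathcal{O}_{Y,f(x)}$ and $B = \mathcal{O}_{X,x}$, yielding that $f$ is stalkwise of finite presentation at every point, as desired.

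To prove the algebraic statement, I would unwind \autoref{defi: ess fp stwise fp} \ref{enumi: defi: ess fp}: by hypothesis $A \to B$ factors as $A \to C \to B$, where $A \to C$ is of finite type and $C \to B$ is the localization of $C$ at a prime ideal. Writing $C = A[x_1,\dots,x_n]/I$ and invoking the Hilbert basis theorem, the polynomial ring $A[x_1,\dots,x_n]$ is Noetherian, so the ideal $I$ is finitely generated and $A \to C$ is in fact of finite presentation. The very same factorization $A \to C \to B$ then exhibits $A \to B$ as essentially of finite presentation.

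I do not expect any serious obstacle here: the heavy lifting---the category-theoretic reconstruction of the ``stalkwise finiteness'' condition---has already been carried out in \autoref{cor: stwise fp is cat}, and the only new input is the classical fact that over a Noetherian ring finite type coincides with finite presentation. The single point meriting care is checking that the factorization witnessing essential finite type can be reused verbatim to witness essential finite presentation, which is exactly what the Hilbert basis theorem delivers.
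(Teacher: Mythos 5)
Your proposal is correct and follows essentially the same route as the paper: the paper's proof simply cites \autoref{cor: stwise fp is cat} \ref{enumi: st fp is cat st ft} for one direction and \ref{enumi: cat st ft is st ft} for the other, leaving implicit the Noetherian upgrade from ``essentially of finite type'' to ``essentially of finite presentation'' that you spell out via the Hilbert basis theorem. Your explicit verification of that step is accurate and is exactly the content the paper's ``follows immediately'' is suppressing.
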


\begin{proof}
  Necessity follows immediately from
  \autoref{cor: stwise fp is cat} \ref{enumi: st fp is cat st ft}.
  Sufficiency follows immediately from
  \autoref{cor: stwise fp is cat} \ref{enumi: cat st ft is st ft}.
\end{proof}

\begin{rem}\label{rem: A_f}
  Let \(A\) be a ring and \(f\in A\) an element.
  Then it holds that \(A_f \cong A[x]/(xf-1)\).
  Thus \(A_f\) is of finite presentation over \(A\).
\end{rem}

\begin{lem}\label{lem: ess fp is loc at prime}
  Let \(f:B\to A\) be a ring homomorphism and
  \(\mathfrak{p}\in \Spec(A)\) a prime ideal.
  Write \(\mathfrak{q} \dfn f^{-1}(\mathfrak{p})\) and
  \(f_{\mathfrak{p}}:B_{\mathfrak{q}}\to A_{\mathfrak{p}}\)
  for the morphism induced by localizing at \(\mathfrak{q}\) and \(\mathfrak{p}\),
  respectively.
  Assume that \(f_{\mathfrak{p}}\) is essentially of finite presentation.
  Then there exist a \(B\)-algebra \(C\) of finite presentation
  and a prime ideal \(\mathfrak{r}\) of \(C\)
  such that \(A_{\mathfrak{p}}\cong C_{\mathfrak{r}}\)
  as \(B\)-algebras, hence also as \(B_{\mathfrak{q}}\)-algebras.
\end{lem}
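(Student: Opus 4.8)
The plan is to unwind the definition of ``essentially of finite presentation'' for $f_{\mathfrak{p}}$ and then \emph{spread out} the resulting finite presentation from the local ring $B_{\mathfrak{q}}$ down to a principal localization $B_g$ of $B$, which is itself of finite presentation over $B$ by \autoref{rem: A_f}. The ring $C$ of the conclusion will be the spread-out algebra, and $A_{\mathfrak{p}}$ will be recovered as a localization of $C$ at a prime by transitivity of localization.

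First I would apply \autoref{defi: ess fp stwise fp} \ref{enumi: defi: ess fp} to the hypothesis: this yields a factorization $B_{\mathfrak{q}} \to D \to A_{\mathfrak{p}}$ in which $B_{\mathfrak{q}} \to D$ is of finite presentation, say $D \cong B_{\mathfrak{q}}[x_1,\dots,x_n]/(h_1,\dots,h_m)$, and $D \to A_{\mathfrak{p}}$ is the localization of $D$ at a prime ideal $\mathfrak{s}$. Since $B_{\mathfrak{q}} = \colim_{g\in B\setminus\mathfrak{q}} B_g$ is a filtered colimit of the principal localizations $B_g$, the finitely many coefficients appearing in $h_1,\dots,h_m$ already lie in some $B_g$ with $g\in B\setminus\mathfrak{q}$. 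Choosing lifts $h_1',\dots,h_m' \in B_g[x_1,\dots,x_n]$ and setting $C \dfn B_g[x_1,\dots,x_n]/(h_1',\dots,h_m')$, one checks directly that $B_{\mathfrak{q}}\otimes_{B_g}C \cong B_{\mathfrak{q}}[x_1,\dots,x_n]/(h_1,\dots,h_m) \cong D$. By construction $C$ is of finite presentation over $B_g$, and $B_g$ is of finite presentation over $B$ by \autoref{rem: A_f}; composing, $C$ is of finite presentation over $B$.

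Finally I would identify $A_{\mathfrak{p}}$ as a localization of $C$ at a prime. The key observation is that $B_{\mathfrak{q}}$ is itself the localization $(B_g)_{\mathfrak{q}_g}$ of $B_g$ at the prime $\mathfrak{q}_g \dfn \mathfrak{q}B_g$ (since $g\notin\mathfrak{q}$, every element of $B\setminus\mathfrak{q}$ becomes a unit). Hence $D \cong B_{\mathfrak{q}}\otimes_{B_g}C$ is a localization of $C$, and therefore so is the further localization $A_{\mathfrak{p}}=D_{\mathfrak{s}}$, via the composite $C\to D\to A_{\mathfrak{p}}$. Because a localization of a ring that happens to be local coincides with the localization of that ring at the contraction of its maximal ideal, I would let $\mathfrak{r}$ be the preimage in $C$ of the maximal ideal $\mathfrak{p}A_{\mathfrak{p}}$ and conclude $A_{\mathfrak{p}}\cong C_{\mathfrak{r}}$ as $B$-algebras. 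This is automatically an isomorphism of $B_{\mathfrak{q}}$-algebras: $\mathfrak{r}$ contracts to $\mathfrak{q}_g$ in $B_g$, so $C_{\mathfrak{r}}$ is naturally a $(B_g)_{\mathfrak{q}_g}=B_{\mathfrak{q}}$-algebra, and any $B$-algebra map between $B_{\mathfrak{q}}$-algebras is a $B_{\mathfrak{q}}$-algebra map since $B\to B_{\mathfrak{q}}$ is a ring epimorphism.

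All the steps are standard commutative algebra, so there is no deep obstacle; the only point requiring genuine care is the bookkeeping in the last paragraph, namely verifying that the composite of the two localizations $C\to D\to A_{\mathfrak{p}}$ exhibits the \emph{local} ring $A_{\mathfrak{p}}$ as a localization of $C$ at a single prime $\mathfrak{r}$, and that the contraction of $\mathfrak{r}$ is compatible with all of the $B$-, $B_g$-, and $B_{\mathfrak{q}}$-algebra structures so that the final isomorphism upgrades from $B$-algebras to $B_{\mathfrak{q}}$-algebras.
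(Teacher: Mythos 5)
Your proof is correct and is essentially the argument the paper intends: the paper's own ``proof'' is a one-line citation of \autoref{defi: ess fp stwise fp} \ref{enumi: defi: ess fp} and \autoref{rem: A_f}, and your spreading-out of the finitely many relation coefficients to a principal localization \(B_g\) (finitely presented over \(B\) by \autoref{rem: A_f}), followed by the observation that a localization at a multiplicative set composed with a localization at a prime is again a localization at a prime, is exactly the standard argument those citations gesture at.
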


\begin{proof}
  By \autoref{defi: ess fp stwise fp} \ref{enumi: defi: ess fp},
  the desired \(B\)-algebra \(C\) may be constructed by taking the quotient of a polynomial ring with finite variation over \(B\) by a finitely generated ideal and localizing it with one element.
  Thus \autoref{lem: ess fp is loc at prime} follows immediately from \autoref{rem: A_f}.
\end{proof}

\begin{defi}
  Let \(X\) be a scheme, and \(x\in X\) a point of \(X\).
  We shall write \(I_X(x)\) for
  the full subcategory of \(\Schb{X}\) consisting of
  all open immersions \(U\to X\) in \(\Schb{X}\) such that
  \(x\) belongs to the image of \(U\to X\).
  (Thus, by a slight abuse of notation,
  if \(X\) is an \(S\)-scheme,
  then \(I_X(x)\) may also be regarded,
  by applying the natural functor \(\Schb{X}\to \Schb{S}\),
  as a diagram indexed by \(I_X(x)\).)

  Let \(A\) be a ring and \(\mathfrak{p}\subset A\) a prime ideal of \(A\).
  We shall define \(J_A(\mathfrak{p})\) for
  the full subcategory of \(\Alg{A}\) determined by
  the set of \(A\)-algebras
  \(\Set{ A_f | f\in A\setminus \mathfrak{p}}\).
  (Thus, by a slight abuse of notation,
  if \(A\) is a \(B\)-algebra,
  then \(J_A(\mathfrak{p})\) may also be regarded,
  by applying the natural functor \(\Alg{A}\to \Alg{B}\),
  as a diagram indexed by \(J_A(\mathfrak{p})\).)
  Then it follows immediately that
  \(\colim_{A_f\in J_A(\mathfrak{p})}A_f \cong A_{\mathfrak{p}}\).
\end{defi}

\begin{lem}\label{lem: fp neighborhood}
  Let \(\xi:B\to A\) be a ring homomorphism and
  \(\mathfrak{p}\in \Spec(A)\) a prime ideal.
  Write \(\mathfrak{p}_B\dfn \xi^{-1}(\mathfrak{p})\),
  \(\xi_{\mathfrak{p}}:B_{\mathfrak{p}_B}\to A_{\mathfrak{p}}\)
  for the morphism induced by localizing at
  \(\mathfrak{p}_B\) and \(\mathfrak{p}\), respectively.
  Assume that the following conditions hold:
  \begin{enumerate}
    \item \label{enumi: fp is ess fp lem}
    \(\xi_{\mathfrak{p}}\) is essentially of finite presentation.
    \item \label{enumi: fp colim lem}
    For any \(B\)-algebra \(R\) and
    any prime ideal \(\mathfrak{r}\subset R\),
    the natural map
    \[
    \colim_{R_r\in J_R(\mathfrak{r})} \Hom_{\Alg{B}}(A,R_r)
    \to \Hom_{\Alg{B}}(A,\colim_{R_r\in J_R(\mathfrak{r})} R_r)
    \]
    is a bijection.
  \end{enumerate}
  Then there exists an element \(f\in A\setminus \mathfrak{p}\) such that
  the composite of \(\xi\) and the localization morphism \(A \to A_f\)
  is of finite presentation.
\end{lem}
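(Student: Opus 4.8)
The plan is to reduce to a spreading-out argument, using hypothesis \ref{enumi: fp is ess fp lem} to produce a finitely presented model over which the localization of \(A\) at \(\mathfrak p\) is itself a localization, and using hypothesis \ref{enumi: fp colim lem} as a surrogate for the (unknown) finite presentation of \(A\). First I would invoke \autoref{lem: ess fp is loc at prime} applied to \(\xi_{\mathfrak p}\): since \(\xi_{\mathfrak p}\) is essentially of finite presentation, there exist a \(B\)-algebra \(C\) of finite presentation and a prime \(\mathfrak r \subset C\) together with a \(B\)-algebra isomorphism \(\phi : A_{\mathfrak p} \xrightarrow{\sim} C_{\mathfrak r}\); write \(\psi \dfn \phi^{-1}\). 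The goal then becomes to find \(f \in A \setminus \mathfrak p\) and \(c \in C\setminus \mathfrak r\) with \(A_f \cong C_c\) as \(B\)-algebras: indeed \(C_c\) is of finite presentation over \(C\) by \autoref{rem: A_f}, hence of finite presentation over \(B\), so any such isomorphism exhibits \(A_f\) as finitely presented over \(B\), which is exactly the assertion.

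Next I would \emph{spread out} the two mutually inverse maps \(\phi,\psi\). For \(\psi\), note that the composite \(C \to C_{\mathfrak r} \xrightarrow{\psi} A_{\mathfrak p} = \colim_{A_f \in J_A(\mathfrak p)} A_f\) is a \(B\)-algebra map; since \(C\) is of finite presentation over \(B\), the functor \(\Hom_{\Alg B}(C,-)\) commutes with this filtered colimit (cf. \cite[\href{https://stacks.math.columbia.edu/tag/00QO}{Tag 00QO}]{stacks-project}), so there are \(f_0 \in A\setminus\mathfrak p\) and a \(B\)-algebra map \(\beta : C \to A_{f_0}\) recovering \(\psi\) after localizing the target at \(\mathfrak p\). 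For \(\phi\), the same reasoning is unavailable because \(A\) is not yet known to be finitely presented; here I would instead apply hypothesis \ref{enumi: fp colim lem} with \(R = C\) and prime \(\mathfrak r\), for which \(\colim_{C_c \in J_C(\mathfrak r)} C_c \cong C_{\mathfrak r}\). The surjectivity of the resulting bijection factors the \(B\)-algebra map \(A \to A_{\mathfrak p} \xrightarrow{\phi} C_{\mathfrak r}\) through some \(C_{c_0}\), yielding \(c_0 \in C\setminus\mathfrak r\) and a \(B\)-algebra map \(\alpha : A \to C_{c_0}\) recovering \(\phi\) after localizing the target at \(\mathfrak r\).

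It then remains to glue \(\alpha\) and \(\beta\) into mutually inverse isomorphisms after inverting finitely many further elements. Localizing \(\beta\) so that it inverts \(\beta(c_0)\) and \(\alpha\) so that it inverts \(\alpha(f_0)\), I would form the two composites \(\beta\circ\alpha\) and \(\alpha\circ\beta\); by construction each becomes the relevant identity after localizing at \(\mathfrak p\), respectively \(\mathfrak r\) (since \(\phi\) and \(\psi\) are inverse). To promote these local identities to honest identities after a single further localization I would again use injectivity: for \(\beta\circ\alpha\) and the structural localization, both lie in \(\Hom_{\Alg B}(A, A_{f_1})\) and agree in \(\Hom_{\Alg B}(A, A_{\mathfrak p})\), so hypothesis \ref{enumi: fp colim lem} (with \(R = A_{f_1}\) and the prime over \(\mathfrak p\)) forces them equal after inverting one more element of \(A\setminus\mathfrak p\); for \(\alpha\circ\beta\) the analogous injectivity is supplied by the finite presentation of \(C\) via \cite[\href{https://stacks.math.columbia.edu/tag/00QO}{Tag 00QO}]{stacks-project}. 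After a final harmonization of the localizations I obtain \(f \in A\setminus\mathfrak p\), \(c \in C\setminus\mathfrak r\) and inverse \(B\)-algebra isomorphisms \(A_f \cong C_c\), completing the proof.

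I expect the genuine content to be concentrated in the two uses of hypothesis \ref{enumi: fp colim lem}, which replaces the missing finite presentation of \(A\), and the main technical obstacle to be the purely algebraic bookkeeping in the gluing step: one must verify that all the elements successively inverted remain outside \(\mathfrak p\) (using that \(\phi,\psi\) are local at \(\mathfrak p\) and \(\mathfrak r\)) and that the denominators introduced when localizing \(\alpha\) and \(\beta\) can be made compatible, so that \(\alpha\) and \(\beta\) descend to mutually inverse maps over a common pair of localizations. The conceptual steps are routine spreading-out; the care lies entirely in tracking these localizations.
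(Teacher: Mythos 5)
Your proposal is correct, and its first two-thirds coincide with the paper's argument: both invoke \autoref{lem: ess fp is loc at prime} to produce a finitely presented model \(C\) with a prime \(\mathfrak{q}\), both use the surjectivity in hypothesis \ref{enumi: fp colim lem} (with \(R=C\)) to spread out the map \(A\to C_{\mathfrak q}\) to some \(\psi_h:A\to C_h\), both use \cite[\href{https://stacks.math.columbia.edu/tag/00QO}{Tag 00QO}]{stacks-project} to spread out the inverse isomorphism to some \(\phi_{f_1}:C_h\to A_{f_1}\), and both use the injectivity in hypothesis \ref{enumi: fp colim lem} (with \(R=A\), \(\mathfrak r=\mathfrak p\)) to arrange that the composite \(A\to C_h\to A_{f}\) is the localization map. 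The divergence is in the endgame. You push on to a genuine two-sided isomorphism \(A_f\cong C_c\), which costs you a second injectivity pass (Tag 00QO on the \(C\)-side, to make \(\alpha\circ\beta\) the identity) plus the localization bookkeeping you describe. The paper stops earlier: having arranged \(\alpha_f=\phi_f\circ\psi_h\), it base-changes \(\psi_h\) along \(A\to A_f\) to obtain \(\psi_h':A_f\to D= C_h\otimes_A A_f\), notes that the universal property of the tensor product hands over a retraction \(\phi_f':D\to A_f\) with \(\phi_f'\circ\psi_h'=\id_{A_f}\) for free, and concludes by \autoref{lem: fp lemma} \ref{enumi: fp lemma fp}, i.e.\ that a \(B\)-algebra retract of a finitely presented \(B\)-algebra is finitely presented. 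That retraction trick eliminates your entire second gluing pass. Your route is sound and yields the marginally stronger conclusion that \(A_f\) is a principal localization of \(C\); the retract lemma is simply the cheaper way to finish.
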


\begin{proof}
  By \autoref{lem: ess fp is loc at prime}
  and condition \ref{enumi: fp is ess fp lem},
  there exist a \(B\)-algebra \(C\) of finite presentation,
  a prime ideal \(\mathfrak{q}\subset C\),
  and a morphism of \(B\)-algebras \(\psi: A \to C_{\mathfrak{q}}\)
  such that \(\mathfrak{p} = \psi^{-1}(\mathfrak{q})\), and the morphism
  \(\psi_{\mathfrak{p}}: A_{\mathfrak{p}} \xrightarrow{\sim} C_{\mathfrak{q}}\)
  induced by localizing at \(\mathfrak{p}\) is an isomorphism.
  If \(h\in C\setminus\mathfrak{q}\),
  then write \(\gamma_h:C_h\to C_{\mathfrak{q}}\) for the localization morphism.
  By condition \ref{enumi: fp colim lem},
  there exist an element \(h\in C\) and
  a morphism of \(B\)-algebras \(\psi_h:A\to C_h\)
  such that \[\psi = \gamma_h\circ \psi_h.\]
  By \autoref{rem: A_f},
  \(C_h\) is of finite presentation over \(C\),
  hence over \(B\).
  Thus, by \cite[\href{https://stacks.math.columbia.edu/tag/00QO}{Tag 00QO}]{stacks-project},
  the natural map
  \begin{equation}
    \label{eq: 1 lem: fp neighborhood}
    \colim_{A_f\in J_A(\mathfrak{p})} \Hom_{\Alg{B}}(C_h,A_f)
    \to \Hom_{\Alg{B}}(C_h,\colim_{A_f\in J_A(\mathfrak{p})} A_f)
    \tag{\(\dagger\)}
  \end{equation}
  is surjective.

  For any elements \(f_1,f_2\in A\) and
  any prime ideal \(\mathfrak{p}_1\subset A\) such that \(f_1\not\in \mathfrak{p}_1\),
  we shall write
  \[
  \alpha_{f_1}:A\to A_{f_1} \ , \
  \alpha_{f_1,f_2}:A_{f_1}\to A_{f_1f_2} \ , \
  \alpha_{f_1,\mathfrak{p}_1}:A_{f_1} \to A_{\mathfrak{p}_1}
  \]
  for the respective localization morphisms.
  Since the map \eqref{eq: 1 lem: fp neighborhood} is surjective,
  there exist an element \(f_1\in A\setminus \mathfrak{p}\) and
  a morphism of \(B\)-algebras \(\phi_{f_1}:C_h\to A_{f_1}\) such that
  \(\psi_{\mathfrak{p}}^{-1}\circ \gamma_h
  = \alpha_{f_1,\mathfrak{p}}\circ \phi_{f_1}: C_h \to A_{\mathfrak{p}}\).
  Since \(\psi_{\mathfrak{p}}^{-1}\circ \gamma_h\circ \psi_h
  = \psi_{\mathfrak{p}}^{-1} \circ \psi = \alpha_{1,\mathfrak{p}}\circ \alpha_1\),
  it holds that \(\alpha_{1,\mathfrak{p}}\circ \alpha_1
  = \alpha_{f_1,\mathfrak{p}}\circ \phi_{f_1}\circ \psi_h : A\to A_{\mathfrak{p}}\):
  \[
  \begin{CD}
    A @> \alpha_1 > \sim > A_1 \\
    @V \phi_{f_1}\circ \psi_h VV @VV \alpha_{1,\mathfrak{p}} V \\
    A_{f_1} @> \alpha_{f_1,\mathfrak{p}} >> A_{\mathfrak{p}}.
  \end{CD}
  \]
  Thus, by applying the injectivity portion of condition \ref{enumi: fp colim lem}
  in the case where \(R=A\), and \(\mathfrak{r}=\mathfrak{p}\),
  we conclude that there exists an element \(f_2\in A\setminus \mathfrak{p}\) such that
  \(\alpha_{f_1f_2} = \alpha_{f_1,f_2}\circ \phi_{f_1}\circ \psi_h : A\to A_{f_1f_2}\).

  Write \(f\dfn f_1f_2\in A\setminus \mathfrak{p}\),
  \(D\dfn C_{h\psi_h(f)}\cong C_h\otimes_A A_f\),
  \(\phi_f\dfn \alpha_{f_1,f_2}\circ \phi_{f_1}\), and
  \(\psi_h':A_f\to D\) for the morphism of \(B\)-algebras obtained
  by base changing \(\psi_h:A\to C_h\) by \(\alpha_f:A\to A_f\) (cf. the diagram \eqref{eq: diagram 5.17} displayed in the following).
  Since \(\alpha_f = \alpha_{f_1f_2}
  = \alpha_{f_1,f_2}\circ \phi_{f_1}\circ \psi_h = \phi_f \circ \psi_h\),
  it follows from the universal property of the tensor product that
  there exists a unique morphism \(\phi_f':D\to A_f\)
  such that the following diagram commutes:
  \begin{equation}
    \label{eq: diagram 5.17}
    \begin{aligned}
    \begin{tikzpicture}[auto]
      \node (A) at (0,1) {\(A\)};
      \node (B) at (2,2) {\(A_f\)};
      \node (B') at (2,0) {\(C_h\)};
      \node (C) at (4,1) {\(D\)};
      \node (D) at (6,1) {\(A_f.\)};
      \draw[->] (A) -- node {\(\scriptstyle \alpha_f\)} (B);
      \draw[->] (A) -- node[swap] {\(\scriptstyle \psi_h\)} (B');
      \draw[->] (B) -- node[swap] {\(\scriptstyle \psi_h'\)} (C);
      \draw[->] (B') -- (C);
      \draw[->] (B) to[bend left=20] node  {\(\scriptstyle \id\)} (D);
      \draw[->] (B') to[bend right=12] (D);
      \node (E) at (3.9,0.38) {\(\scriptstyle \phi_f\)};
      \draw[->] (C) -- node {\(\scriptstyle \phi_f'\)} (D);
      \node (F) at (6,0) {\(A_{f_1}\)};
      \draw[->] (B') -- node[swap] {\(\scriptstyle \phi_{f_1}\)} (F);
      % \draw[->] (B') -- (F);
      % \node at (5.2,0.15) {\(\scriptstyle \phi_{f_1}\)};
      \draw[->] (F) -- node[swap] {\(\scriptstyle \alpha_{f_1,f_2}\)} (D);
    \end{tikzpicture}
  \end{aligned}
    \tag{\(\ddagger\)}
  \end{equation}
  By \autoref{rem: A_f},
  \(D\) is of finite presentation over \(C\),
  hence over \(B\).
  Thus, it follows from \autoref{lem: fp lemma} \ref{enumi: fp lemma fp},
  together with the commutativity of the following diagram, that
  the composite \(B\xrightarrow{\xi} A \to A_f\)
  is of finite presentation:
  \[
  \begin{tikzpicture}[auto]
    \node (A) at (0,1.5) {\(B\)};
    \node (A') at (0,0) {\(C\)};
    \node (D) at (3,1.5) {\(A\)};
    \node (D') at (3,0) {\(C_h\)};
    \node (B) at (6,1.5) {\(A_f\)};
    \node (B') at (6,0) {\(D\)};
    \node (F) at (9,0) {\(A_f.\)};
    %\node (E) at (3,0.75) {\(\circlearrowleft\)};
    \draw[->] (A) -- node[swap] {\scriptsize f.p.} (A');
    \draw[->] (B) -- node[swap] {\(\scriptstyle \psi'_h\)} (B');
    \draw[->] (D) -- node[swap] {\(\scriptstyle \psi_h\)} (D');
    \draw[->] (A) -- node {\(\scriptstyle \xi\)} (D);
    \draw[->] (D) -- node {\(\scriptstyle \alpha_f\)} (B);
    \draw[->] (A') -- node {\scriptsize f.p.} (D');
    \draw[->] (D') -- node {\scriptsize f.p.} (B');
    \draw[->] (B') -- node {\(\scriptstyle \phi_f'\)} (F);
    \draw[->] (B) -- node {\(\scriptstyle \id\)} (F);
  \end{tikzpicture}
  \]
  This completes the proof of \autoref{lem: fp neighborhood}.
\end{proof}

\begin{defi}\label{defi: cat ft}
  Let \(S\) be a quasi-separated scheme.
  Let \(f:X\to Y\) be a morphism of \(\Schb{S}\).
  We shall say that \(f\) is
  \textit{category-theoretically of finite presentation} if
  \(f\) satisfies following conditions:
  \begin{enumerate}
    \item \label{enumi: condi 1 defi: cat ft}
    \(f\) is category-theoretically stalkwise of finite presentation.
    \item \label{enumi: condi defi: cat ft}
    For any morphism \(Z\to Y\) in \(\Schb{S}\) and any point \(z\in Z\),
    the natural map
    \[
    \varphi_{z,X}: \colim_{W\in I_Z(z)^{\mathrm{op}}} \Hom_{\Schb{Y}}(W ,X)
    \to \Hom_{\Schb{Y}}(\bplim_{W\in I_Z(z)} W ,X)
    \]
    is a bijection
    (where we note that
    by \autoref{cor: limit bullet exists}
    \ref{enumi: cor: limit bullet exists open imm},
    \(\bplim_{W\in I_Z(z)} W\) exists in \(\Schb{S}\)).
  \end{enumerate}
  Thus the property that
  \(f\) is category-theoretically of finite presentation
  is defined completely in terms of properties that may be
  characterized category-theoretically
  (cf. \autoref{reconstruction: Set},
  \autoref{cor: surj is cat} \ref{enumi: cor: surj is cat x in f},
  \autoref{cor: open imm is cat}, \autoref{defi: cat st ft})
  from the data \((\Schb{S},f)\).
\end{defi}

\begin{cor}\label{cor: loc of fp and cat ft}
  Let \(S\) be a quasi-separated scheme.
  Let \(f:X\to Y\) be a morphism of \(\Schb{S}\).
  Then the following assertions hold:
  \begin{enumerate}
    \item \label{enumi: loc fp cat}
    If \(f\) is locally of finite presentation,
    then \(f\) is category-theoretically of finite presentation.
    \item \label{enumi: cat ft loc fp}
    If \(f\) is category-theoretically of finite presentation,
    and \(Y\) is stalkwise Noetherian,
    then \(f\) is locally of finite presentation.
  \end{enumerate}
\end{cor}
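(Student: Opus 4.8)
The plan is to prove both implications by matching the two conditions of \autoref{defi: cat ft} against the stalkwise theory of \autoref{section: loc of fp} (for condition \ref{enumi: condi 1 defi: cat ft}) and the neighbourhood criterion \autoref{lem: fp neighborhood} (for condition \ref{enumi: condi defi: cat ft}). Throughout I will freely identify \(\Hom\)-sets in \(\Schb{Y}\) with the corresponding \(\Hom\)-sets in \(\Sch{Y}\), since \(\Schb{Y}\) is a full subcategory.

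For assertion \ref{enumi: loc fp cat}, suppose \(f\) is locally of finite presentation. Condition \ref{enumi: condi 1 defi: cat ft} is immediate: by \autoref{rem: ess.f.p. stwise.f.p.} \ref{enumi: rem: ess.f.p. stwise.f.p. loc f.p. is stwise f.p.} the morphism \(f\) is stalkwise of finite presentation, hence category-theoretically stalkwise of finite presentation by \autoref{cor: stwise fp is cat} \ref{enumi: st fp is cat st ft}. For condition \ref{enumi: condi defi: cat ft}, fix \(Z\to Y\) and \(z\in Z\); by \autoref{cor: limit bullet exists} \ref{enumi: cor: limit bullet exists open imm} one has \(\bplim_{W\in I_Z(z)} W = \Spec(\mathcal{O}_{Z,z})\), and the affine open neighbourhoods of \(z\) are cofinal in \(I_Z(z)\). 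Since \(\Spec(\mathcal{O}_{Z,z})\) is local, any \(Y\)-morphism \(g:\Spec(\mathcal{O}_{Z,z})\to X\) sends the closed point to some \(\xi\in X\) and therefore factors through every affine open of \(X\) containing \(\xi\) (the preimage of such an open is an open neighbourhood of the unique closed point, hence all of \(\Spec(\mathcal{O}_{Z,z})\)); choosing an affine open \(X'=\Spec(A')\ni\xi\) with \(f(X')\subseteq Y'=\Spec(B')\) affine, the map \(B'\to A'\) is of finite presentation. The bijectivity of \(\varphi_{z,X}\) then reduces to the statement that \(\Hom_{\Alg{B'}}(A',-)\) commutes with the filtered colimit \(\mathcal{O}_{Z,z}=\colim_{W}\Gamma(W,\mathcal{O}_W)\) over affine \(W\ni z\) mapping into \(Y'\), which is \cite[\href{https://stacks.math.columbia.edu/tag/00QO}{Tag 00QO}]{stacks-project}.

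For assertion \ref{enumi: cat ft loc fp}, suppose \(f\) is category-theoretically of finite presentation and \(Y\) is stalkwise Noetherian. From condition \ref{enumi: condi 1 defi: cat ft} and \autoref{cor: st Noeth st fp} I first conclude that \(f\) is stalkwise of finite presentation; it then remains to upgrade this to ``locally of finite presentation'', which is local on \(X\). Fix \(x\in X\), an affine open \(U=\Spec(A)\ni x\) with \(f(U)\subseteq V=\Spec(B)\) affine, write \(\xi=f^{\#}:B\to A\), and let \(\mathfrak{p}\subset A\) be the prime of \(x\). Hypothesis \ref{enumi: fp is ess fp lem} of \autoref{lem: fp neighborhood} holds because \(\xi_{\mathfrak{p}}\) is the stalk map \(\mathcal{O}_{Y,f(x)}\to\mathcal{O}_{X,x}\), which is essentially of finite presentation. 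For hypothesis \ref{enumi: fp colim lem}, given a \(B\)-algebra \(R\) and a prime \(\mathfrak{r}\), I would take \(Z=\Spec(R)\) (an object of \(\Schb{S}\) by \autoref{lem: affine over qsep is qcpt}) and \(z=\mathfrak{r}\): the basic opens \(\Spec(R_r)\), \(r\notin\mathfrak{r}\), are cofinal in \(I_Z(z)\), so condition \ref{enumi: condi defi: cat ft} yields \(\colim_{r}\Hom_Y(\Spec(R_r),X)\xrightarrow{\sim}\Hom_Y(\Spec(R_{\mathfrak{r}}),X)\). Restricting to morphisms that factor through the open immersion \(U\hookrightarrow X\)—a \(Y\)-morphism out of the local scheme \(\Spec(R_{\mathfrak{r}})\) factors through \(U\) as soon as its closed point lands in \(U\), and shrinking \(r\) propagates this to a neighbourhood—the bijection descends to \(\colim_{r}\Hom_{\Alg{B}}(A,R_r)\xrightarrow{\sim}\Hom_{\Alg{B}}(A,R_{\mathfrak{r}})\), which is exactly hypothesis \ref{enumi: fp colim lem}. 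Then \autoref{lem: fp neighborhood} produces \(g\in A\setminus\mathfrak{p}\) with \(B\to A_g\) of finite presentation, and as \(x\) varies these loci cover \(X\), so \(f\) is locally of finite presentation.

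The main obstacle is the verification of hypothesis \ref{enumi: fp colim lem} when \(\red\in\bbullet\). In that case \(\Spec(R)\) belongs to \(\Schb{S}\) only for reduced \(R\), whereas the proof of \autoref{lem: fp neighborhood} applies condition \ref{enumi: fp colim lem} to the auxiliary finite-presentation \(B\)-algebra \(C\) of \autoref{lem: ess fp is loc at prime} (and its localizations \(C_h\)), which need not be reduced even though \(A\) and \(B\) are; since morphisms into the reduced scheme \(X\) out of a non-reduced \(\Spec(C_h)\) genuinely detect the nilpotents of \(C_h\), one cannot simply pass to \(C_{\red}\) at the level of test objects. The fix I would use exploits the Noetherian hypothesis: the stalk \(\mathcal{O}_{Y,f(x)}=B_{\mathfrak{p}_B}\) is Noetherian and \(A_{\mathfrak{p}}\) is reduced, so over the stalk essentially of finite type coincides with essentially of finite presentation, and one may choose a \emph{reduced} finite-type (hence finite-presentation) model there—in the spirit of \autoref{lem: ess fp colim} \ref{enumi: ess fp diagram exists red}—and spread it out to a reduced finite-presentation \(B\)-algebra \(C\) with \(C_{\mathfrak{r}}\cong A_{\mathfrak{p}}\). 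With \(C\) reduced, all the test schemes \(\Spec(C_h)\) occurring in the proof of \autoref{lem: fp neighborhood} lie in \(\Schb{S}\), so condition \ref{enumi: condi defi: cat ft} applies just as above. I expect this reduced-model construction, rather than the formal bookkeeping, to be the delicate point.
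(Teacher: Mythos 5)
Your argument for assertion \ref{enumi: loc fp cat} coincides with the paper's: the paper likewise passes to the cofinal subcategory of affine open neighbourhoods of \(z\) and invokes \cite[Tag 01ZC]{stacks-project} (you partially re-derive that tag, which is fine). For assertion \ref{enumi: cat ft loc fp} you also take exactly the paper's route --- \autoref{cor: st Noeth st fp} for the stalkwise statement and \autoref{lem: fp neighborhood} for the passage from a stalk to a neighbourhood --- and indeed the paper's entire proof of this assertion is the sentence ``follows immediately from \autoref{cor: st Noeth st fp} and \autoref{lem: fp neighborhood}''. Your observation that this is \emph{not} immediate when \(\red\in\bbullet\) is correct and is the most substantive point in your write-up: hypothesis \ref{enumi: fp colim lem} of \autoref{lem: fp neighborhood} quantifies over \emph{all} \(B\)-algebras \(R\); its proof applies that hypothesis to the finite-presentation algebra \(C\) produced by \autoref{lem: ess fp is loc at prime}, which need not be reduced even though \(A\) and \(B\) are; and condition \ref{enumi: condi defi: cat ft} of \autoref{defi: cat ft} only controls test objects \(Z\) lying in \(\Schb{S}\), hence only reduced \(R\) when \(\red\in\bbullet\). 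Your reduction from \(\Hom_{\Schb{Y}}(-,X)\) to \(\Hom_{\Alg{B}}(A,-)\) by restricting to morphisms factoring through affine charts is also sound.

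The repair you propose for the case \(\red\in\bbullet\) does not, however, close the gap. Over the Noetherian stalk \(\mathcal{O}_{Y,f(x)}\) one can indeed choose a reduced finite-presentation model \(C'\) with \(C'_{\mathfrak{q}'}\cong A_{\mathfrak{p}}\) (take the image of any finite-type model inside the reduced ring \(A_{\mathfrak{p}}\)). But spreading this out to a finite-presentation \(B_g\)-algebra \(C\) with \(C\otimes_{B_g}\mathcal{O}_{Y,f(x)}\cong C'\) does not produce a \emph{reduced} \(C\): the nilradical of \(C\) is only known to vanish in the filtered colimit \(\colim_{g'}C_{g'}\cong C'\), and since \(B_g\) is not Noetherian that nilradical need not be finitely generated, so it cannot be killed by one further localization, while replacing \(C\) by \(C_{\red}\) destroys finite presentation over \(B\). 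Moreover, even granting a reduced finite-\emph{type} model (e.g. \(C_{\red}\)), the remainder of the proof of \autoref{lem: fp neighborhood} breaks down: the surjectivity step via \cite[Tag 00QO]{stacks-project} and, crucially, the final application of \autoref{lem: fp lemma} \ref{enumi: fp lemma fp} require the intermediate algebra \(D\) to be of finite presentation over \(B\), which a merely finite-type reduced model does not supply. So for \(\red\in\bbullet\) your proof (like the paper's one-line proof) remains incomplete; one must either verify hypothesis \ref{enumi: fp colim lem} of \autoref{lem: fp neighborhood} for non-reduced \(R\) by other means, or rework the proof of that lemma so that condition \ref{enumi: condi defi: cat ft} of \autoref{defi: cat ft} is only ever applied to reduced test rings while all finite-presentation bookkeeping is carried out with the original, possibly non-reduced, \(C_h\).
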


\begin{proof}
  Assertion \ref{enumi: cat ft loc fp} follows immediately from
  \autoref{cor: st Noeth st fp} and \autoref{lem: fp neighborhood}.
  In the remainder of the proof of \autoref{cor: loc of fp and cat ft},
  we prove assertion \ref{enumi: loc fp cat}.
  Assume that \(f\) is locally of finite presentation.
  By \autoref{rem: ess.f.p. stwise.f.p.}
  \ref{enumi: rem: ess.f.p. stwise.f.p. loc f.p. is stwise f.p.} and
  \autoref{cor: stwise fp is cat} \ref{enumi: st fp is cat st ft},
  \(f\) is category-theoretically stalkwise of finite presentation.
  Thus \(f\) satisfies condition \ref{enumi: condi 1 defi: cat ft} of
  \autoref{defi: cat ft}.
  Let \(Z\to Y\) be a morphism and \(z\in Z\) a point of \(Z\).
  Since the full subcategory
  \[I^{\text{aff}}_Z(z) \dfn \Set{ W\subset Z |
  \text{\(W\) is an affine open neighborhood of \(z\)}} \subset I_Z(z)\]
  is cofinal, and \(\Schb{Y}\subset \Sch{Y}\) is a full subcategory,
  the natural morphism
  \[
  \colim_{W\in I^{\text{aff}}_Z(z)^{\mathrm{op}}} \Hom_{\Schb{Y}}(W ,X)
  \xrightarrow{\sim}
  \colim_{W\in I_Z(z)^{\mathrm{op}}} \Hom_{\Sch{Y}}(W ,X)
  \]
  is an isomorphism.
  By \autoref{lem: limit bbullet lemma},
  the natural morphism
  \[
  \bplim_{W\in I_Z(z)} W
  \xrightarrow{\sim} \plim_{W\in I^{\text{aff}}_Z(z)} W
  \]
  is an isomorphism.
  Since each \(W\in I^{\text{aff}}_Z(z)\) is affine, and
  \(f\) is locally of finite presentation,
  it follows from
  \cite[\href{https://stacks.math.columbia.edu/tag/01ZC}{Tag 01ZC}]{stacks-project}
  that \(f\) satisfies condition \ref{enumi: condi defi: cat ft} of
  \autoref{defi: cat ft}.
  This completes the proof of \autoref{cor: loc of fp and cat ft}.
\end{proof}

\begin{cor}\label{cor: loc of fp is cat}
  Let \(S\) be a quasi-separated scheme.
  Let \(f:X\to Y\) be a morphism of \(\Schb{S}\).
  Assume that \(Y\) is stalkwise Noetherian.
  Then \(f\) is locally of finite presentation if and only if
  \(f\) is category-theoretically of finite presentation.
  In particular, if \(S\) is stalkwise Noetherian, then
  the property that \(X\) is locally of finite presentation over \(S\)
  may be characterized category-theoretically
  (cf. \autoref{defi: cat ft})
  from the data \((\Schb{S}, X)\).
\end{cor}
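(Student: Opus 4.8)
The plan is to read the biconditional off \autoref{cor: loc of fp and cat ft} and then to obtain the final (``in particular'') clause by applying it to the structure morphism of \(X\), recovered category-theoretically as the canonical morphism to the terminal object of \(\Schb{S}\). The biconditional itself needs no new work: necessity is \autoref{cor: loc of fp and cat ft} \ref{enumi: loc fp cat}, sufficiency is \autoref{cor: loc of fp and cat ft} \ref{enumi: cat ft loc fp} (whose stalkwise Noetherian hypothesis on the target is exactly the standing assumption on \(Y\)), and ``category-theoretically of finite presentation'' is, by \autoref{defi: cat ft}, defined purely in terms of the data \((\Schb{S},f)\).

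For the final clause I would first note that \(\Schb{S}\) has a terminal object \(T\), which is characterized category-theoretically as the object admitting a unique morphism from every object of \(\Schb{S}\). I would then identify \(T\): if \(\red\notin\bbullet\), the structure morphisms exhibit \(S\) itself as terminal, so \(T=S\); if \(\red\in\bbullet\), every object is reduced and its structure morphism factors uniquely through \(S_{\red}\), which lies in \(\Schb{S}\) because \(S_{\red}\to S\) is a quasi-compact separated closed immersion, so \(T=S_{\red}\). In either case \(T\) is stalkwise Noetherian: for \(T=S\) this is the hypothesis, while for \(T=S_{\red}\) each stalk \(\mathcal{O}_{S_{\red},s}=\mathcal{O}_{S,s}/\sqrt{0}_{\mathcal{O}_{S,s}}\) is a quotient of the Noetherian ring \(\mathcal{O}_{S,s}\), hence Noetherian. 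The unique morphism \(X\to T\) being determined by the abstract category, applying the biconditional with target \(T\) shows that the property ``\(X\to T\) is locally of finite presentation'' is characterized category-theoretically from \((\Schb{S},X)\).

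It then remains to match this with the property that \(X\) is locally of finite presentation over \(S\). When \(\red\notin\bbullet\) this is immediate, since \(X\to T\) is the structure morphism \(X\to S\). When \(\red\in\bbullet\) the structure morphism factors as \(X\to S_{\red}=T\to S\), and I would compare the two notions on affine opens \(\Spec A\subset S\), \(\Spec B\subset X\): reducedness of \(B\) gives \(B\cong B\otimes_A(A/\sqrt{0}_A)\), so finite presentation of \(A\to B\) descends to \(A/\sqrt{0}_A\to B\) by base change, yielding the implication from \(S\) to \(S_{\red}\). I expect the converse to be the main obstacle, since it amounts to \(S_{\red}\to S\) being locally of finite presentation, i.e.\ to the nilradical of \(S\) being locally finitely generated, which stalkwise Noetherianity does not by itself guarantee; this step is automatic precisely when \(S\) is reduced (so that \(S_{\red}=S\)), as in the normal base case of \autoref{thmA: main thm}, and otherwise requires a stronger hypothesis such as local Noetherianity of \(S\). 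Thus all the substantive content is concentrated in \autoref{cor: loc of fp and cat ft}, and the residual work is the terminal-object identification together with this reduced-base comparison.
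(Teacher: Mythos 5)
Your handling of the biconditional is exactly the paper's: the proof in the text consists solely of the citation of \autoref{cor: loc of fp and cat ft} \ref{enumi: loc fp cat} \ref{enumi: cat ft loc fp}, with no further commentary, so in particular the final (``in particular'') clause receives no argument there at all. Everything you add about that clause is extra relative to the paper, and it is correct as far as it goes: the terminal object of \(\Schb{S}\) is \(S\) if \(\red\notin\bbullet\) and \(S_{\red}\) if \(\red\in\bbullet\) (a closed immersion being quasi-compact and separated, \(S_{\red}\) does lie in \(\Schb{S}\)); it is stalkwise Noetherian in either case; and for reduced \(X\) the implication from ``locally of finite presentation over \(S\)'' to ``locally of finite presentation over \(S_{\red}\)'' follows by base change exactly as you say.

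The obstacle you flag in the converse is genuine and is addressed nowhere in the paper. Taking \(X=S_{\red}\) itself shows the comparison is precisely the question of whether \(\sqrt{0}_{\mathcal{O}_S}\) is locally finitely generated, and stalkwise Noetherianity does not guarantee this: the subring \(A\subset\prod_{n}k[\varepsilon]/(\varepsilon^{2})\) of sequences whose \(\varepsilon\)-component vanishes for all but finitely many \(n\) satisfies \(A/\sqrt{0}_A\cong\prod_n k\), its localizations at primes are all either \(k[\varepsilon]/(\varepsilon^{2})\) or fields (hence Noetherian), yet \(\sqrt{0}_A=\bigoplus_n(\varepsilon)\) is not finitely generated. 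For \(S=\Spec(A)\) and \(\red\in\bbullet\), the canonical equivalence \(\Schb{S}\simeq\Schb{S_{\red}}\) carries \(S_{\red}\) (not locally of finite presentation over \(S\)) to \(S_{\red}\) (locally of finite presentation over \(S_{\red}\)), so the property ``locally of finite presentation over the base'' is not preserved by equivalences and the last sentence of the corollary fails as literally stated in this generality. This is harmless for the paper's applications, where \(S\) is locally Noetherian and normal, hence reduced, so that \(S_{\red}=S\); and, as you observe, local Noetherianity alone already closes the gap by making the nilradical locally finitely generated. In short: your proof of the equivalence is the paper's proof, and your analysis of the final clause correctly isolates an unstated hypothesis that the one-line proof in the paper silently elides.
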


\begin{proof}
  \autoref{cor: loc of fp is cat} follows immediately from
  \autoref{cor: loc of fp and cat ft} \ref{enumi: loc fp cat}
  \ref{enumi: cat ft loc fp}.
\end{proof}

\begin{cor}\label{cor: proper is cat}
  Let \(S\) be a quasi-separated scheme.
  Let \(f:X\to Y\) be a morphism of \(\Schb{S}\).
  Assume that \(Y\) is stalkwise Noetherian.
  Then \(f\) is proper and of finite presentation if and only if
  \(f\) is quasi-compact, separated, universally closed, and
  category-theoretically of finite presentation.
  In particular, if \(S\) is stalkwise Noetherian, then
  the property that \(X\) is proper and of finite presentation over \(S\)
  may be characterized category-theoretically
  (cf. \autoref{cor: sep, univ cl are cat} \ref{enumi: sep is cat}
  \ref{enumi: qc + univ cl is cat}, \autoref{defi: cat ft})
  from the data \((\Schb{S},X)\).
\end{cor}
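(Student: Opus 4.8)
The plan is to deduce the stated biconditional by unwinding the definitions of ``proper'' and ``of finite presentation'' into their standard constituents and then feeding the result into \autoref{cor: loc of fp is cat}. First I would recall that \(f\) is proper exactly when it is separated, universally closed, and of finite type, and that \(f\) is of finite presentation exactly when it is locally of finite presentation, quasi-compact, and quasi-separated. Hence \(f\) is proper and of finite presentation if and only if it is simultaneously separated, universally closed, of finite type, locally of finite presentation, quasi-compact, and quasi-separated.

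Next I would discard the redundant clauses. A separated morphism is quasi-separated, so ``quasi-separated'' is subsumed by ``separated''; and a morphism that is locally of finite presentation, quasi-compact, and quasi-separated is of finite presentation, hence of finite type, so ``of finite type'' is subsumed by the remaining clauses. This reduces the list to the four conditions: quasi-compact, separated, universally closed, and locally of finite presentation. Since \(Y\) is assumed stalkwise Noetherian, \autoref{cor: loc of fp is cat} then lets me replace ``locally of finite presentation'' by ``category-theoretically of finite presentation'' without altering the truth value, which is precisely the asserted equivalence.

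It remains to observe that each of the four surviving conditions is visible inside \(\Schb{S}\): quasi-compactness of \(f\) by \autoref{cor: qc is cat} \ref{enumi: qc morph is cat}, separatedness and the joint condition ``quasi-compact and universally closed'' by \autoref{cor: sep, univ cl are cat} \ref{enumi: sep is cat} and \ref{enumi: qc + univ cl is cat}, and ``category-theoretically of finite presentation'' by its very definition in \autoref{defi: cat ft}. A finite conjunction of category-theoretic conditions is again category-theoretic, so the property that \(f\) is proper and of finite presentation is category-theoretic in the required sense.

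For the concluding ``in particular'' I would apply the biconditional to the structure morphism of \(X\). Concretely, \(\Schb{S}\) possesses a terminal object --- namely \(S\) if \(\red\notin\bbullet\) and \(S_{\red}\) if \(\red\in\bbullet\) --- which is singled out purely category-theoretically and which is stalkwise Noetherian whenever \(S\) is; applying the main equivalence to the canonical morphism from \(X\) to this terminal object then yields the desired characterization. The step I expect to require the most care is exactly this reduction to the structure morphism: one must verify that ``proper and of finite presentation over \(S\)'' agrees with the same property formulated over the terminal object. This is transparent when \(\red\notin\bbullet\), and when \(\red\in\bbullet\) it rests on the fact that \(S_{\red}\to S\) is a universal homeomorphism (so the separatedness, quasi-compactness, and universal-closedness aspects transfer freely), the finite-presentation aspect being the genuinely delicate point and being harmless in the settings of interest, e.g.\ when \(S\) is locally Noetherian. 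Beyond this, the only real work is the permanence-property bookkeeping of the second paragraph, ensuring that the four listed conditions are neither too strong nor too weak.
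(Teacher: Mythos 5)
Your proposal is correct and follows essentially the same route as the paper, whose proof is simply the observation that the statement follows from \autoref{cor: loc of fp is cat} after decomposing ``proper and of finite presentation'' into quasi-compact, separated, universally closed, and locally of finite presentation. Your extra care about the terminal object of \(\Schb{S}\) being \(S_{\red}\) rather than \(S\) when \(\red\in\bbullet\) addresses a point the paper leaves implicit, but it does not change the argument.
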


\begin{proof}
  \autoref{cor: proper is cat} follows immediately from
  \autoref{cor: loc of fp is cat}.
\end{proof}

%%%%%%%%%%%%%%%%%%%%%%%%%%%%%%%%%%%%
%%%%%%%%%%%%%%%%%%%%%%%%%%%%%%%%%%%%
%%%%%%%%%%%%%%%%%%%%%%%%%%%%%%%%%%%%
%%%%%%%%%%%%%%%%%%%%%%%%%%%%%%%%%%%%
%%%%%%%%%%%%%%%%%%%%%%%%%%%%%%%%%%%%
%%%%%%%%%%%%%%%%%%%%%%%%%%%%%%%%%%%%
%%%%%%%%%%%%%%%%%%%%%%%%%%%%%%%%%%%%
%%%%%%%%%%%%%%%%%%%%%%%%%%%%%%%%%%%%
%%%%%%%%%%%%%%%%%%%%%%%%%%%%%%%%%%%%
%%%%%%%%%%%%%%%%%%%%%%%%%%%%%%%%%%%%
%%%%%%%%%%%%%%%%%%%%%%%%%%%%%%%%%%%%
%%%%%%%%%%%%%%%%%%%%%%%%%%%%%%%%%%%%
%%%%%%%%%%%%%%%%%%%%%%%%%%%%%%%%%%%%
%%%%%%%%%%%%%%%%%%%%%%%%%%%%%%%%%%%%
%%%%%%%%%%%%%%%%%%%%%%%%%%%%%%%%%%%%
%%%%%%%%%%%%%%%%%%%%%%%%%%%%%%%%%%%%
%%%%%%%%%%%%%%%%%%%%%%%%%%%%%%%%%%%%
%%%%%%%%%%%%%%%%%%%%%%%%%%%%%%%%%%%%
%%%%%%%%%%%%%%%%%%%%%%%%%%%%%%%%%%%%
%%%%%%%%%%%%%%%%%%%%%%%%%%%%%%%%%%%%
%%%%%%%%%%%%%%%%%%%%%%%%%%%%%%%%%%%%
%%%%%%%%%%%%%%%%%%%%%%%%%%%%%%%%%%%%
%%%%%%%%%%%%%%%%%%%%%%%%%%%%%%%%%%%%
%%%%%%%%%%%%%%%%%%%%%%%%%%%%%%%%%%%%
%%%%%%%%%%%%%%%%%%%%%%%%%%%%%%%%%%%%
%%%%%%%%%%%%%%%%%%%%%%%%%%%%%%%%%%%%
%%%%%%%%%%%%%%%%%%%%%%%%%%%%%%%%%%%%
%%%%%%%%%%%%%%%%%%%%%%%%%%%%%%%%%%%%
%%%%%%%%%%%%%%%%%%%%%%%%%%%%%%%%%%%%
%%%%%%%%%%%%%%%%%%%%%%%%%%%%%%%%%%%%
%%%%%%%%%%%%%%%%%%%%%%%%%%%%%%%%%%%%
%%%%%%%%%%%%%%%%%%%%%%%%%%%%%%%%%%%%
%%%%%%%%%%%%%%%%%%%%%%%%%%%%%%%%%%%%
%%%%%%%%%%%%%%%%%%%%%%%%%%%%%%%%%%%%
%%%%%%%%%%%%%%%%%%%%%%%%%%%%%%%%%%%%
%%%%%%%%%%%%%%%%%%%%%%%%%%%%%%%%%%%%

\section{The Projective Line}
\label{section: P1}

In this section,
we give a category-theoretic characterization of the objects of
\(\Schb{S}\) whose underlying \(S\)-scheme is isomorphic to
the projective line \(\mathbb{P}^1_S\).
We then use this characterization
to give a functorial category-theoretic algorithm for
reconstructing the underlying schemes of the objects of \(\Schb{S}\)
from the intrinsic structure of the abstract category \(\Schb{S}\).
In addition,
we discuss some results related to the various reconstruction algorithms developed
in the present paper.

First, we give a category-theoretic characterization of morphisms
which are isomorphic to the projection \(\mathbb{P}^1_k\to \Spec(k)\),
where \(k\) is a field.

\begin{lem}\label{lem: tr deg 1}
  Let \(S\) be a quasi-separated scheme.
  Let \(f:X\to Y\) be a morphism of \(\Schb{S}\) such that
  \(X,Y\) are isomorphic to the spectrum of a field.
  Write \(K\dfn \Gamma(X,\mathcal{O}_X), k\dfn \Gamma(Y,\mathcal{O}_Y)\).
  Then \(f^{\#}:k\to K\) is a purely transcendental extension of degree \(1\)
  if and only if the following conditions hold:
  \begin{enumerate}
    \item \label{enumi: tr deg 1 not finite}
    There exists a morphism \(g:X\to X\) in \(\Schb{Y}\) such that
    \(g\) is not an isomorphism. %, and \(f\circ g = f\).
    \item \label{enumi: tr deg 1 exists isom}
    If \(f\) admits a factorization \(X\to Z\to Y\) in \(\Schb{S}\) such that
    \(Z\) is isomorphic to the spectrum of a field,
    and \(Z\to Y\) is not an isomorphism,
    then there exists an isomorphism \(Z\xrightarrow{\sim} X\) in \(\Schb{Y}\).
  \end{enumerate}
  In particular,
  the property that
  \begin{quote}
    \(X,Y\) are isomorphic to the spectrum of a field, and
    \(f:X\to Y\) is isomorphic as an object of \(\Sch{Y}\)
    to the object of \(\Sch{Y}\)
    that arises from
    a purely transcendental field extension of degree \(1\)
  \end{quote}
  may be characterized category-theoretically
  (cf. \autoref{lem: Spec field})
  from the data \((\Schb{S},\)\space\(f:X\to Y)\).
\end{lem}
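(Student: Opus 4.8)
The plan is to translate both conditions into statements about the field extension $f^{\#}:k\to K$ and then invoke two classical facts: that a self-embedding of an algebraic extension is necessarily an automorphism, and Lüroth's theorem. First I would record the algebraic dictionary. A morphism $g:X\to X$ with $f\circ g=f$ is the same as a $k$-algebra endomorphism $g^{\#}:K\to K$, and the statement that $g$ is not an isomorphism says precisely that $g^{\#}$ is not surjective (field homomorphisms being automatically injective). Likewise, a factorization $X\to Z\to Y$ in which $Z\cong\Spec(L)$ is the spectrum of a field amounts to an intermediate field $k\subseteq L\subseteq K$; the condition that $Z\to Y$ not be an isomorphism reads $L\neq k$; and an isomorphism $Z\xrightarrow{\sim}X$ in $\Schb{Y}$ is a $k$-isomorphism $L\cong K$. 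I would also note in passing that each such $Z=\Spec(L)$ genuinely lies in $\Schb{S}$: it is reduced and affine, hence, over the quasi-separated base $S$, quasi-compact and separated by \autoref{lem: affine over qsep is qcpt}, so it satisfies every property in $\bbullet$.

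For necessity, I assume $K=k(t)$. For condition \ref{enumi: tr deg 1 not finite} I exhibit the $k$-endomorphism determined by $t\mapsto t^{2}$, whose image $k(t^{2})$ is a proper subfield of $k(t)$, so the associated $g$ is not an isomorphism while $f\circ g=f$. For condition \ref{enumi: tr deg 1 exists isom}, given an intermediate field $k\subsetneq L\subseteq k(t)$, Lüroth's theorem yields $L=k(s)$ for some $s$ transcendental over $k$, whence $L\cong k(t)$ over $k$; this $k$-isomorphism is exactly the required isomorphism $Z\xrightarrow{\sim}X$ over $Y$.

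For sufficiency, I first use condition \ref{enumi: tr deg 1 not finite} to show $K/k$ is transcendental. Indeed, if $K/k$ were algebraic, then any $k$-embedding $K\to K$ would carry each element into the finite set of roots (in $K$) of its minimal polynomial over $k$, permuting that set and hence being surjective; this would contradict the existence of the non-surjective $g^{\#}$ supplied by \ref{enumi: tr deg 1 not finite}. Thus there exists $t\in K$ transcendental over $k$, and setting $Z=\Spec(k(t))$ produces a factorization $X\to Z\to Y$ in $\Schb{S}$ with $Z\to Y$ not an isomorphism. Condition \ref{enumi: tr deg 1 exists isom} then yields a $k$-isomorphism $K\cong k(t)$, which is precisely the assertion that $f^{\#}$ is a purely transcendental extension of degree $1$. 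Finally, the \emph{in particular} clause follows by combining the category-theoretic characterization of spectra of fields (\autoref{lem: Spec field}) with the manifestly category-theoretic phrasing of conditions \ref{enumi: tr deg 1 not finite} and \ref{enumi: tr deg 1 exists isom}.

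The main obstacle will be the necessity of condition \ref{enumi: tr deg 1 exists isom}, which is exactly Lüroth's theorem; everything else reduces to the dictionary above together with the elementary self-embedding argument. I would flag that Lüroth is genuinely essential here: it is what forces every nontrivial subextension of $k(t)$ to again be rational of transcendence degree $1$, and hence $k$-isomorphic to $K$ itself, so that the intermediate fields cannot distinguish $Z$ from $X$.
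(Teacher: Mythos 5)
Your proof is correct and follows essentially the same route as the paper: the $t\mapsto t^{2}$ endomorphism for condition \ref{enumi: tr deg 1 not finite}, L\"uroth's theorem for condition \ref{enumi: tr deg 1 exists isom}, and in the converse direction the observation that a non-surjective self-embedding forces $K/k$ to be transcendental. The only differences are cosmetic: you prove the algebraic-extension step (a $k$-embedding of an algebraic extension into itself is onto) by the elementary minimal-polynomial argument where the paper cites the Stacks project, and you helpfully make explicit that $\Spec(L)$ lies in $\Schb{S}$ via \autoref{lem: affine over qsep is qcpt}, a point the paper leaves implicit.
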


\begin{proof}
  First, we prove necessity.
  Assume that \(f^{\#}: k\to K\) is a purely transcendental extension of degree \(1\).
  Then there exists a transcendental element \(t\in K\) such that \(K = k(t)\).
  By considering the \(k\)-algebra morphism \(k(t)\to k(t), t\mapsto t^2\),
  we conclude that \(f\) satisfies condition \ref{enumi: tr deg 1 not finite}.
  Let \(k\to L\to k(t)\) be a factorization of \(f^{\#}\) such that
  \(L\) is a field, and \(k\to L\) is not an isomorphism.
  Then by L\"{u}roth's theorem (cf., e.g., \cite[Chapter IV, Example 2.5.5]{Ha}),
  there exists an element \(u \in L\) such that \(L = k(u)\).
  Hence we obtain an isomorphism of \(k\)-algebras
  \(K\xrightarrow{\sim} L, t\mapsto u\).
  Thus \(f\) satisfies condition \ref{enumi: tr deg 1 exists isom}.
  This completes the proof of the necessity.

  Next, we prove sufficiency.
  Assume that \(k\dfn \Gamma(Y,\mathcal{O}_Y)\) and \(K\dfn \Gamma(X,\mathcal{O}_X)\)
  are fields, and \(f:X\to Y\) satisfies conditions
  \ref{enumi: tr deg 1 not finite} and \ref{enumi: tr deg 1 exists isom}.
  Since \(f\) satisfies condition \ref{enumi: tr deg 1 not finite},
  it follows from
  \cite[\href{https://stacks.math.columbia.edu/tag/0BMD}{Tag 0BMD}]{stacks-project}
  that \(K/k\) is not an algebraic extension.
  Hence there exists a transcendental element \(t\in K\) over \(k\).
  Then, by condition \ref{enumi: tr deg 1 exists isom},
  there exists an isomorphism of \(k\)-algebras \(k(t) \cong K\),
  which implies that \(K\) is a purely transcendental extension of degree \(1\).
  This completes the proof of \autoref{lem: tr deg 1}.
\end{proof}

\begin{lem}\label{lem: P1 over field}
  Let \(S\) be a quasi-separated scheme.
  Let \(X,Y\) be objects of \(\Schb{S}\) such that
  \(Y\) is isomorphic to the spectrum of a field, and
  \(f:X\to Y\) a morphism of \(\Schb{S}\).
  Then \(f:X\to Y\) is isomorphic as an object of \(\Schb{Y}\)
  to the object of \(\Schb{Y}\)
  that arises from the natural projection \(\mathbb{P}^1_Y\to Y\)
  if and only if the following conditions hold:
  \begin{enumerate}
    \item \label{enumi: P1 fp proper}
    \(f\) is proper and of finite presentation.
    \item \label{enumi: P1 int}
    \(X\) is integral.
    \item \label{enumi: P1 tr deg 1}
    If \(\eta\in X\) is the generic point,
    then the composite \(\Spec(k(\eta))\to X\to Y\) is isomorphic
    as a \(Y\)-scheme to
    the spectrum of a purely transcendental field extension of degree \(1\).
    \item \label{enumi: P1 reg}
    For every closed point \(x\in X\),
    the spectrum of the local ring \(\Spec(\mathcal{O}_{X,x})\)
    is isomorphic to the spectrum of a valuation ring.
  \end{enumerate}
  In particular, the property that
  \(Y\) is isomorphic to the spectrum of a field, and
  \(f:X\to Y\) is isomorphic as an object of \(\Schb{Y}\)
  to the object of \(\Schb{Y}\)
  that arises from the natural projection \(\mathbb{P}^1_Y\to Y\)
  may be characterized category-theoretically
  (cf. \autoref{cor: irred, loc, are cat} \ref{enumi: cor: gen pt is cat},
  \autoref{cor: int, int loc is cat} \ref{enumi: cor: int is cat},
  \autoref{prop: local ring}, \autoref{prop: Spec val is cat},
  \autoref{cor: proper is cat}, \autoref{lem: tr deg 1})
  from the data \((\Schb{S},f:X\to Y)\).
\end{lem}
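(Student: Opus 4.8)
The plan is to treat the two implications separately, with essentially all of the work concentrated in the sufficiency direction, and then to deduce the ``in particular'' clause formally from the category-theoretic characterizations already established.

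For necessity I would simply verify that, when $Y=\Spec(k)$, the projection $\mathbb{P}^1_Y\to Y$ satisfies conditions \ref{enumi: P1 fp proper}--\ref{enumi: P1 reg}. Properness and finite presentation of $\mathbb{P}^1_k$ over the Noetherian field $k$ are standard, giving \ref{enumi: P1 fp proper}; the scheme $\mathbb{P}^1_k$ is integral, giving \ref{enumi: P1 int}; its generic point has residue field $k(t)$, a purely transcendental extension of $k$ of degree $1$, giving \ref{enumi: P1 tr deg 1}; and the local ring at each closed point is a discrete valuation ring, hence a valuation ring, giving \ref{enumi: P1 reg}.

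For sufficiency I would first extract the geometric structure of $X$ from the four hypotheses. Condition \ref{enumi: P1 fp proper} makes $X$ a proper scheme of finite type over $k$ (finite presentation forces finite type), condition \ref{enumi: P1 int} makes $X$ integral, and condition \ref{enumi: P1 tr deg 1} identifies the function field $K=k(\eta)$ with $k(t)$. Since $X$ is integral of finite type over $k$, this forces $\dim X$ to equal the transcendence degree of $K$ over $k$, namely $1$; moreover $X\neq\{\eta\}$ because $K\neq k$, so $X$ is a genuine integral proper curve. Condition \ref{enumi: P1 reg} says that each local ring $\mathcal{O}_{X,x}$ at a closed point $x$ is a valuation ring; being Noetherian (as $X$ is locally Noetherian) and one-dimensional (as $x$ is a closed point of an integral curve over a field), it is therefore a discrete valuation ring, so $X$ is regular.

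Next I would use the transcendental generator $t\in K$ to build a $k$-morphism $X\to\mathbb{P}^1_k$. The element $t$ defines a $k$-morphism $\Spec(K)\to\mathbb{P}^1_k$ from the generic point; since $X$ is a regular, hence normal, integral curve and $\mathbb{P}^1_k$ is proper, the valuative criterion of properness extends this uniquely across each $\Spec(\mathcal{O}_{X,x})$, and gluing yields a morphism $\phi:X\to\mathbb{P}^1_k$ over $k$. By construction $\phi$ induces the identity on function fields, so it is birational; being dominant between one-dimensional integral schemes it is quasi-finite, and being proper (as $X$ is proper over $k$ and $\mathbb{P}^1_k$ is separated over $k$) it is finite. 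Finally, a finite birational morphism onto the normal integral scheme $\mathbb{P}^1_k$ is an isomorphism: locally $\phi_*\mathcal{O}_X$ is a finite, hence integral, extension of $\mathcal{O}_{\mathbb{P}^1_k}$ inside $K$, and normality of $\mathbb{P}^1_k$ forces equality. Thus $\phi$ is an isomorphism of $Y$-schemes, as desired.

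The main obstacle is precisely this last chain of the sufficiency argument: promoting the birational morphism $\phi$ to an isomorphism, which is where regularity of $X$ (from \ref{enumi: P1 reg}), properness of $f$, and normality of $\mathbb{P}^1_k$ must all be combined — and where the construction of $\phi$ itself depends essentially on condition \ref{enumi: P1 reg} via the valuative criterion. The concluding ``in particular'' statement then follows formally, since each of \ref{enumi: P1 fp proper}--\ref{enumi: P1 reg} has already been shown to be category-theoretic: \ref{enumi: P1 fp proper} by \autoref{cor: proper is cat}, \ref{enumi: P1 int} by \autoref{cor: int, int loc is cat} \ref{enumi: cor: int is cat}, \ref{enumi: P1 tr deg 1} by \autoref{cor: irred, loc, are cat} \ref{enumi: cor: gen pt is cat} together with \autoref{lem: quotient field} and \autoref{lem: tr deg 1}, and \ref{enumi: P1 reg} by \autoref{prop: local ring} and \autoref{prop: Spec val is cat} (with closed points detected via \autoref{cor: irred, loc, are cat} \ref{enumi: cor: cl pt is cat}); the property that $Y$ is the spectrum of a field is category-theoretic by \autoref{lem: Spec field}, so the whole conjunction is as well.
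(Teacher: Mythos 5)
Your proof is correct. The paper itself gives no argument here --- its proof of this lemma is literally the single sentence that it ``follows immediately from well-known properties of schemes and valuation rings'' --- and the chain you supply (finite presentation over a field forces $X$ to be a Noetherian integral proper curve of dimension $1$; condition \ref{enumi: P1 reg} plus Noetherianity and $\dim\mathcal{O}_{X,x}=1$ makes each local ring a DVR, so $X$ is regular; the transcendental generator $t$ extends by the valuative criterion to a proper birational, hence finite birational, morphism $\phi:X\to\mathbb{P}^1_k$, which is an isomorphism because $\mathbb{P}^1_k$ is normal) is exactly the standard argument that sentence is alluding to, and your handling of the ``in particular'' clause matches the paper's own list of citations.
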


\begin{proof}
  \autoref{lem: P1 over field} follows immediately from
  well-known properties of schemes and valuation rings.
\end{proof}

\begin{lem}\label{lem: ring str A^1 unique}
  Let \(T\) be a reduced scheme.
  Write \(0_T,1_T:T\to \mathbb{A}^1_T\) for the morphisms
  obtained by base-changing the sections
  \(0,1:\Spec(\mathbb{Z})\to \mathbb{A}^1_{\mathbb{Z}}\) and
  \(\mathbb{G}_{m,T}\dfn \mathbb{A}^1_T\setminus \im(0_T)\).
  Then the following assertions hold:
  \begin{enumerate}
    % \item \label{enumi: G_m lem: ring str A^1 unique}
    % Assume that \(T\) is the spectrum of a field \(k\).
    % Let \(G\subset \mathbb{G}_{m,T}\) be an open subscheme
    % equipped with a group scheme structure over \(T\)
    % whose identity section \(T\to G\)
    % (\(\subset \mathbb{G}_{m,T}\subset \mathbb{A}^1_T\)) is \(1_T\).
    % Then the open immersion \(G\subset \mathbb{G}_{m,T}\)
    % is an isomorphism of group schemes over \(T\).
    % \item \label{enumi: A^1 lem: ring str A^1 unique}
    % Assume that \(T\) is the spectrum of a field \(k\).
    % Then the set of ring scheme structures on \(\mathbb{A}^1_T\) over \(T\)
    % whose additive and multiplicative identity sections
    % are \(0_T\) and \(1_T\), respectively,
    % is of cardinality one.
    \item \label{enumi: any G_m lem: ring str A^1 unique}
    Let \(G\subset \mathbb{G}_{m,T}\) be an open subscheme
    equipped with a group scheme structure over \(T\)
    whose identity section \(T\to G\)
    (\(\subset \mathbb{G}_{m,T}\subset \mathbb{A}^1_T\)) is \(1_T\).
    Then the open immersion \(G\subset \mathbb{G}_{m,T}\)
    is an isomorphism of group schemes over \(T\).
    \item \label{enumi: any A^1 lem: ring str A^1 unique}
    The set of ring scheme structures on \(\mathbb{A}^1_T\) over \(T\)
    whose additive and multiplicative identity sections
    are \(0_T\) and \(1_T\), respectively,
    is of cardinality one.
  \end{enumerate}
\end{lem}

\begin{proof}
  First, we prove assertion
  % \ref{enumi: G_m lem: ring str A^1 unique}.
  % To prove assertion \ref{enumi: G_m lem: ring str A^1 unique},
  % we may assume without loss of generality that
  % \(k\) is algebraically closed.
  \ref{enumi: any G_m lem: ring str A^1 unique} in the case where \(T\) is isomorphic to the spectrum of a separably closed field.
  Write \(C\dfn \mathbb{P}^1_T\setminus G \ (\supset\{0,\infty\})\),
  where we regard \(G\) as an open subscheme of \(\mathbb{P}^1_T\)
  by means of the open immersions \(G\subset \mathbb{G}_{m,T}\subset \mathbb{P}^1_T\);
  \(m:G\times_T G\to G\) for the multiplication morphism.
  For any closed point \(g\in G\),
  the automorphism \(\rho_g:\mathbb{P}^1_T\xrightarrow{\sim}\mathbb{P}^1_T\)
  given by \(m(g,-):G\xrightarrow{\sim}G\) satisfies the property \(\rho_g(C) = C\).
  Since the assignment \(g\mapsto \rho_g\) is clearly injective,
  we thus conclude that
  the group \(\left\{ \rho\in \mathrm{Aut}_T(\mathbb{P}^1_T) \middle| \rho(C)=C\right\}\)
  is of infinite cardinality.
  This implies that \(C=\{0,\infty\}\),
  i.e., that the open immersion \(G \subset \mathbb{G}_{m,T}\)
  is an isomorphism of \(T\)-schemes.

  Next, we prove that the isomorphism
  \(G \xrightarrow{\sim} \mathbb{G}_{m,T}\)
  is compatible with both group scheme structures over \(T\).
  Let us identify \(\Gamma(G,\mathcal{O}_G)\) with \(k[t,1/t]\)
  by means of the isomorphism \(G \xrightarrow{\sim} \mathbb{G}_{m,T}\).
  Then the morphism \(m\) is determined by the element
  \(m^{\#}(t)\in k[t,1/t]\otimes_k k[t,1/t]\).
  Moreover, since \(t\in k[t,1/t]\) is invertible,
  \(m^{\#}(t)\in k[t,1/t]\otimes_k k[t,1/t]\) is invertible.
  Since \(m^{\#}(t)\) is invertible in
  \(k[t,1/t]\otimes_k k[t,1/t]\subset k[t,1/t]\otimes_k k(t)\),
  there exist an element \(f\in k[t,1/t]\setminus \{0\}\) and an integer \(a\)
  such that \(m^{\#}(t) = t^a \otimes f\).
  Since \(m\circ (1_T\times \id_G) = \id_G\),
  it holds that \(f=t\).
  Since \(m\circ (\id_G\times 1_T) = \id_G\), it holds that \(a=1\).
  Thus it holds that \(m^{\#}(t) = t\otimes t\).
  This implies that
  the isomorphism \(G \xrightarrow{\sim} \mathbb{G}_{m,T}\)
  is compatible with both group scheme structures over \(T\).
  This completes the proof of assertion
  % \ref{enumi: G_m lem: ring str A^1 unique}.
  \ref{enumi: any G_m lem: ring str A^1 unique} in the case where \(T\) is isomorphic to the spectrum of a separably closed field.

  In the general case, since \(T\) is reduced, and \(G\subset \mathbb{G}_{m,T}\) is an open subscheme, by applying assertion \ref{enumi: any G_m lem: ring str A^1 unique} in the case where \(T\) is isomorphic to the spectrum of a separably closed field to the geometric fibers of \(G\to T\), we conclude that \(|G|=|\mathbb{G}_{m,T}|\), hence, in particular, assertion \ref{enumi: any G_m lem: ring str A^1 unique} hold.
  This completes the proof of assertion \ref{enumi: any G_m lem: ring str A^1 unique}.

  Next, we prove assertion
  \ref{enumi: any A^1 lem: ring str A^1 unique} in the case where \(T\) is isomorphic to the spectrum of a separably closed field \(k\).
  % \ref{enumi: A^1 lem: ring str A^1 unique}.
  % To prove assertion
  % \ref{enumi: A^1 lem: ring str A^1 unique},
  % \ref{enumi: any A^1 lem: ring str A^1 unique} in the case where \(T\) is isomorphic to the spectrum of a field \(k\),
  % we may assume without loss of generality that
  % \(k\) is algebraically closed.
  Write \(A \dfn \mathbb{A}^1_T\).
  Let us identify \(\Gamma(A,\mathcal{O}_A)\) with \(k[t]\)
  by means of the isomorphism \(A \xrightarrow{\sim} \mathbb{A}^1_T\).
  Let \((a:A\times_T A \to A, e_a:T\to A, m:A\times_T A\to A, e_m:T\to A)\)
  be a ring scheme structure on \(A\) over \(T\),
  where \(a\) is the addition morphism,
  \(e_a = 0_T\) is the identity section of the additive structure,
  \(m\) is the multiplication morphism, and
  \(e_m = 1_T\) is the identity section of the multiplicative structure.
  By \cite[Theorem 6.1]{AR},
  the group variety \(A^{\times}\) of units of \(A\)
  is an open subscheme of \(A\setminus \{0\} = \mathbb{G}_{m,T}\).
  % Hence, by \ref{enumi: G_m lem: ring str A^1 unique},
  Hence, by \ref{enumi: any G_m lem: ring str A^1 unique},
  the open immersion \(A^{\times}\to \mathbb{G}_{m,T}\) is an isomorphism
  of group schemes over \(T\).
  This implies that \(m^{\#}(t) = t\otimes t\).
  Write \(\Delta:A\times_T A\times_T A\to A\times_T A\times_T A\times_T A\)
  for the morphism
  such that for any \(T'\)-valued points \(\alpha_1,\alpha_2,\alpha_3\in A(T')\),
  \(\Delta \circ (\alpha_1,\alpha_2,\alpha_3) = (\alpha_1,\alpha_2,\alpha_1,\alpha_3)\).
  Then, by the distribution rule, it holds that
  \(m\circ (\id\times a) = a\circ (m\times m)\circ \Delta\).
  Hence, if we write
  \(a^{\#}(t) = \sum_{i,j\geq 0} a_{ij}t^i\otimes t^j\), where \(a_{ij}\in k\),
  then it holds that
  \begin{align*}
    t\otimes a^{\#}(t) &= (\id\times a)^{\#}(m^{\#}(t))  \\
    &= \Delta^{\#}((m^{\#}\otimes m^{\#})(a^{\#}(t)))  \\
    &= \Delta^{\#}\left((m^{\#}\otimes m^{\#})\left(\sum_{i,j\geq 0} a_{ij}t^i\otimes t^j \right)\right)  \\
    &= \Delta^{\#}\left(\sum_{i,j\geq 0} a_{ij}t^i\otimes t^i \otimes t^j \otimes t^j \right)  \\
    &= \sum_{i,j\geq 0}a_{ij}t^{i+j}\otimes t^i\otimes t^j.
  \end{align*}
  This implies that
  \(a^{\#}(t) = t\otimes a_{10} + a_{01}\otimes t\).
  Since \(a\circ (\id_G \times 0_T) = a\circ (0_T\times \id_G) = \id_G\),
  it holds that \(a^{\#}(t) = t\otimes 1 + 1 \otimes t\).
  This implies that
  \(A\) is isomorphic as a ring scheme over \(T\) to \(\mathbb{A}^1_T\).
  % This completes the proof of assertion
  % \ref{enumi: A^1 lem: ring str A^1 unique}.

  %%%Tはredなので、主張(3),(4)は、各fiberに対して(1),(2)を適用することにより、直ちに従う。

  In the general case,
  since \(T\) is reduced,
  \(\mathbb{A}^1_T\) is also reduced.
  Thus,
  by applying assertion
  % \ref{enumi: G_m lem: ring str A^1 unique} and
  % \ref{enumi: A^1 lem: ring str A^1 unique} to
  \ref{enumi: any A^1 lem: ring str A^1 unique} in the case where \(T\) is isomorphic to the spectrum of a field to
  the generic geometric fibers of \(\mathbb{A}^1_T\to T\),
  we conclude that assertion
  % \ref{enumi: any G_m lem: ring str A^1 unique} and
  \ref{enumi: any A^1 lem: ring str A^1 unique} holds.
  This completes the proof of \autoref{lem: ring str A^1 unique}.
\end{proof}

The following lemma was motivated by
\cite[Proposition 2.3]{WW}.

\begin{lem}\label{lem: A1 is abs min}
  Let \(V\) be the spectrum of a DVR.
  Write \(\eta\in V\) for the generic point.
  Let \(f:X\to V\) be a flat separated ring scheme of finite type over \(V\)
  such that the generic fiber \(X_{\eta}\) of \(f\) is
  isomorphic to the scheme \(\mathbb{A}^1_{k(\eta)}\)
  equipped with its natural ring scheme structure.
  Then the following assertions hold:
  \begin{enumerate}
    \item \label{enumi: lem: A1 is abs min affine}
    \(X\) is affine.
    \item \label{enumi: lem: A1 is abs min exists morph}
    There exists a unique morphism of ring schemes \(h:X \to \mathbb{A}^1_V\) over \(V\)
    such that \(h_\eta\) is an isomorphism of ring schemes over \(\Spec(k(\eta))\).
  \end{enumerate}
\end{lem}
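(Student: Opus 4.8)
The plan is to reduce everything to two statements about the $R$-algebra $A\dfn\Gamma(X,\mathcal{O}_X)$, where $R$ is the DVR with $\Spec R=V$, $K\dfn k(\eta)$ its fraction field, and $\pi\in R$ a uniformizer: that $X$ is affine, and that the generic coordinate $t$ lies in $A$. Using the given ring-scheme isomorphism $X_\eta\cong\mathbb{A}^1_K=\Spec K[t]$, flatness of $X$ over $V$ makes $A$ torsion-free over $R$, so that $A\hookrightarrow A\otimes_RK=\Gamma(X_\eta,\mathcal{O})=K[t]$ and likewise $A\otimes_RA\hookrightarrow K[t]\otimes_KK[t]$. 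Granting the two statements, the inclusion $R[t]\hookrightarrow A$ defines a $V$-morphism $h\colon X\to\mathbb{A}^1_V$; it is a morphism of ring schemes, since the comultiplication and counit identities $a^{\#}(t)=t\otimes1+1\otimes t$, $m^{\#}(t)=t\otimes t$, $0^{\#}(t)=0$, $1^{\#}(t)=1$ hold already in $K[t]$ (the generic fibre is the standard ring scheme) and hence in $A$ and $A\otimes_RA$ by the injections above; and $h_\eta$ is the identity of $\mathbb{A}^1_K$, so an isomorphism. Thus the content of the lemma is concentrated in affineness and the extension $t\in A$.

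For assertion \ref{enumi: lem: A1 is abs min affine}, the additive law makes $G\dfn(X,a,0_V)$ a flat separated group scheme of finite type over the DVR $V$ with generic fibre $\mathbb{G}_{a,K}$. I would first argue that its special fibre $X_s$, being the additive group of a ring scheme over $k(s)$, is affine: a proper (abelian-variety) part is excluded because rigidity forbids a nonzero biadditive multiplication with unit on an abelian variety, so $X_s$ is a commutative affine algebraic group. A flat separated group scheme of finite type over a DVR all of whose fibres are affine is itself affine (one may invoke the quasi-projectivity of group schemes over a Dedekind base together with the affineness of quasi-projective group schemes having affine fibres). Hence $X=\Spec A$ with $A$ a finitely generated flat $R$-subalgebra of $K[t]$ satisfying $A\otimes_RK=K[t]$.

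The technical heart is $t\in A$, and here the multiplicative part of the structure is indispensable. For $n\ge0$ set $a_n\dfn\min\{v_\pi(\mathrm{lead}(f)):f\in A,\ \deg f=n\}$, where $v_\pi$ is the valuation and $\mathrm{lead}(f)$ the coefficient of $t^n$; since $A\otimes_RK=K[t]$ and $A$ is noetherian, each $a_n$ is a well-defined integer and the associated graded $\mathrm{gr}\,A$ for the degree filtration equals $\bigoplus_n\pi^{a_n}R\,t^n\subset K[t]$. I would pick $f_1\in A$ of degree $1$ realizing $v_\pi(\mathrm{lead}(f_1))=a_1$ and subtract its constant term $0^{\#}(f_1)\in R\subset A$, so that $f_1=c_1t$ with $v_\pi(c_1)=a_1$. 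Then $m^{\#}(f_1)=c_1\,(t\otimes t)\in A\otimes_RA$, and comparing top-total-degree leading forms—using that $\mathrm{gr}(A\otimes_RA)=\mathrm{gr}\,A\otimes_R\mathrm{gr}\,A$ by flatness, whose bidegree-$(1,1)$ piece is $\pi^{2a_1}R\,(t\otimes t)$—forces $v_\pi(c_1)\ge2a_1$, i.e. $a_1\le0$. Consequently $c_1^{-1}\in R$, and $t=c_1^{-1}f_1\in A$.

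Finally, for uniqueness in assertion \ref{enumi: lem: A1 is abs min exists morph}: if $h'$ is another ring-scheme morphism with $h'_\eta$ an isomorphism, then $h'^{\#}(t)\in A$ restricts on $X_\eta$ to a ring-scheme coordinate corresponding to a ring-scheme automorphism of $\mathbb{A}^1_K$; since a nonzero additive-and-multiplicative endomorphism of $\mathbb{A}^1_K$ is a power of Frobenius and only the identity is invertible, we get $h'^{\#}(t)=t=h^{\#}(t)$ in $A\subset K[t]$, whence $h'=h$. The step I expect to be the main obstacle is $t\in A$—equivalently, that the generic coordinate extends regularly across the possibly non-reduced or singular special fibre—precisely the point where the leading-form computation driven by multiplicativity is essential; the affineness input of \ref{enumi: lem: A1 is abs min affine}, which rests on group-scheme structure theory, is the other delicate ingredient.
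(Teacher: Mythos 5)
Your proof is correct and follows essentially the same route as the paper: affineness is obtained from Anantharaman's theorem on group schemes over a Dedekind base (the paper cites [Anan, Prop.\ 2.3.1], which requires only affineness of the \emph{generic} fibre, so your detour through the special fibre is unnecessary), and the key point \(t\in A\) is established by exactly the paper's computation --- applying \(m^{\#}\) to a degree-one element and using \(R\)-flatness of the relevant subquotients to force the fractional ideal \(I\) of leading coefficients to satisfy \(I\subset I^2\), hence to contain a unit --- merely repackaged in the language of valuations and associated graded modules instead of the paper's submodule \(M=R\cdot 1+I\cdot x\) and the identity \(M\otimes_RM=(A\otimes_RM)\cap(M\otimes_RA)\). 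Your explicit uniqueness argument (the only ring-scheme automorphism of \(\mathbb{A}^1_{k(\eta)}\) is the identity) supplies a point the paper leaves implicit.
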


\begin{proof}
  Since the generic fiber of \(f\) is affine,
  it follows from \cite[Proposition 2.3.1]{Anan} that
  \(X\) is affine, i.e.,
  assertion \ref{enumi: lem: A1 is abs min affine} holds.
  In the remainder of the proof of \autoref{lem: A1 is abs min},
  we prove assertion \ref{enumi: lem: A1 is abs min exists morph}.
  Write \(R\dfn \Gamma(V,\mathcal{O}_V)\),
  \(K\) for the field of fractions of \(R\), and
  \(A\dfn \Gamma(X,\mathcal{O}_X)\).
  Since \(A\) is flat over \(R\), and
  \(X_{\eta}\) is isomorphic as a ring scheme over \(K\) to \(\mathbb{A}^1_K\),%%%ここだけちょっとかえた
  we may regard \(A\) as a subring of \(K[x]\),
  where \(x\) is the standard coordinate of \(\mathbb{A}^1_K\).
  Thus, to prove assertion \ref{enumi: lem: A1 is abs min exists morph},
  it suffices to prove that \(x\in A\).

  Write
  \(a^{\#}:K[x]\to K[x]\otimes_K K[x]\)
  for the ring homomorphism that defines the \textit{additive} structure of
  the ring scheme \(X_{\eta}\),
  \(m^{\#}:K[x]\to K[x]\otimes_K K[x]\)
  for the ring homomorphism that defines the \textit{multiplicative} structure of
  the ring scheme \(X_{\eta}\),
  \(e_a^{\#}:K[x]\to K\)
  for the ring homomorphism that defines the
  \textit{identity section} for the \textit{additive} structure
  of the ring scheme \(X_{\eta}\), and
  \(e_m^{\#}:K[x]\to K\)
  for the ring homomorphism that defines the
  \textit{identity section} for the \textit{multiplicative} structure
  of the ring scheme \(X_{\eta}\).
  Thus the following equalities hold:
  \[
  a^{\#}(x) = x\otimes 1 + 1 \otimes x, \ \
  m^{\#}(x) = x\otimes x, \ \
  e_a^{\#}(x) = 0, \ \  e_m^{\#}(x) = 1.
  \]%%%%%ここから変更2/17(水)
  Since \(A\) is flat over \(R\),
  we may regard \(A\otimes_R A\) as a subring of \(K[x]\otimes_K K[x]\).
  Moreover, since \(X_{\eta}\) is isomorphic to \(\mathbb{A}^1_K\),
  it follows from \autoref{lem: ring str A^1 unique} that
  we may regard
  the ring scheme structure of \(X\) is given by
  the restrictions \(a^{\#}|_A,m^{\#}|_A,e_a^{\#}|_A,e_m^{\#}|_A\).
  Thus, in particular,
  the ring homomorphism \(m^{\#}|_A:A\to K[x]\otimes_K K[x]\)
  factors uniquely through the subring
  \(A\otimes_R A\subset K[x]\otimes_K K[x]\).
  To prove assertion \ref{enumi: lem: A1 is abs min exists morph},
  it suffices to prove that \(x\in A\).

  Write \(M \dfn (K\cdot 1 + K\cdot x) \cap A \subset A\).
  Since \(e_a^{\#}|_A\) defines the identity section
  for the additive structure,
  it holds that \(A\cap (K\cdot 1) = R\cdot 1\), and
  \[R\cdot 1 = M\cap (K\cdot 1) \subset M \subset R\cdot 1 + K\cdot x.\]
  Since \(e_m^{\#}|_A\) defines the identity section
  for the multiplicative structure,
  it holds that
  \(M\subset R\cdot 1 + R\cdot x\).
  Hence there exists a non-zero ideal \(I\subset R\) such that \(M = R\cdot 1 + I\cdot x\) (where we note that since \(K\otimes_R A \cong K[x]\), \(I\neq 0\)).
  Next, observe that \(M\) and \(A/M\) are \(R\)-submodules of \(K\)-vector spaces,
  hence, in particular, \(R\)-flat.
  Thus we obtain a commutative diagram of \(R\)-flat modules
  in which the horizontal and vertical sequences are exact:
  \[
  \begin{tikzpicture}[auto]
    \node (A1) at (2.2,3) {\(0\)};
    \node (A2) at (5.4,3) {\(0\)};
    \node (A3) at (8.7,3) {\(0\)};
    \node (A) at (0,2) {\(0\)};
    \node (A') at (0,1) {\(0\)};
    \node (A'') at (0,0) {\(0\)};
    \node (B) at (2.2,2) {\(M\otimes_RM\)};
    \node (B') at (2.2,1) {\(M\otimes_RA\)};
    \node (B'') at (2.2,0) {\(M\otimes_R(A/M)\)};
    \node (C) at (5.4,2) {\(A\otimes_RM\)};
    \node (C') at (5.4,1) {\(A\otimes_RA\)};
    \node (C'') at (5.4,0) {\(A\otimes_R(A/M)\)};
    \node (D) at (8.7,2) {\((A/M)\otimes_RM\)};
    \node (D') at (8.7,1) {\((A/M)\otimes_RA\)};
    \node (D'') at (8.7,0) {\((A/M)\otimes_R(A/M)\)};
    \node (E) at (11.2,2) {\(0\)};
    \node (E') at (11.2,1) {\(0\)};
    \node (E'') at (11.2,0) {\(0\)};
    \node (E1) at (2.2,-1) {\(0\)};
    \node (E2) at (5.4,-1) {\(0\)};
    \node (E3) at (8.7,-1) {\(0\)};
    \draw[->] (A1) to (B);
    \draw[->] (A2) to (C);
    \draw[->] (A3) to (D);
    \draw[->] (B'') to (E1);
    \draw[->] (C'') to (E2);
    \draw[->] (D'') to (E3);
    \draw[->] (B) to (B');
    \draw[->] (B') to (B'');
    \draw[->] (C) to (C');
    \draw[->] (C') -- node[swap]  {\(\scriptstyle p\)} (C'');
    \draw[->] (D) to (D');
    \draw[->] (D') to (D'');
    \draw[->] (A) to (B);
    \draw[->] (B) to (C);
    \draw[->] (C) to (D);
    \draw[->] (D) to (E);
    \draw[->] (A') to (B');
    \draw[->] (B') to (C');
    \draw[->] (C') -- node  {\(\scriptstyle q\)} (D');
    \draw[->] (D') to (E');
    \draw[->] (A'') to (B'');
    \draw[->] (B'') to (C'');
    \draw[->] (C'') to (D'');
    \draw[->] (D'') to (E'');
  \end{tikzpicture}
  \]
  This implies that
  \(M\otimes_R M = (A\otimes_R M)\cap (M\otimes_R A)\)
  as \(R\)-submodules of \(A\otimes_RA \subset K[x]\otimes_KK[x]\).
  Let \(t\in M\) be an element.
  Then, since \(M = R\cdot 1 + I\cdot x\),
  there exist elements \(r\in R\) and \(s\in I\) such that \(t = r+sx\).
  Moreover, it holds that
  \[
  A\otimes_R A \ni m^{\#}(t) = m^{\#}(r+sx) = r+s\cdot (x\otimes x)
  = r + (sx) \otimes x = r + x\otimes (sx).
  \]
  Hence it holds that
  \((p\otimes_R \id_K)(m^{\#}(r+sx)) = 0\), and
  \((q\otimes_R \id_K)(m^{\#}(r+sx)) = 0\).
  Thus \(m^{\#}|_M : M \to K[x]\otimes_K K[x]\) factors through
  \((A\otimes_R M)\cap (M\otimes_R A) = M\otimes_RM\).
  Since \(M = R\cdot 1 + I\cdot x\),
  we conclude that \(I\cdot (x\otimes x)\subset I^2\cdot (x\otimes x)\).
  This implies that \(I\subset I^2\).
  Hence it holds that \(I = R\).
  Thus it holds that \(x\in M \subset A\).
  This completes the proof of \autoref{lem: A1 is abs min}.
\end{proof}

\begin{prop}\label{lem: A1 over DVR}
  Let \(V\) be the spectrum of a DVR and
  \(f:X\to V\) a flat separated ring scheme of finite type over \(V\).
  Then \(f\) is isomorphic as a ring scheme over \(V\)
  to the projection \(\mathbb{A}^1_V\to V\)
  if and only if the following conditions hold:
  \begin{enumerate}
    \item \label{enumi: A1 connected fiber}
    Each fiber of \(f\) is one-dimensional and connected.
    \item \label{enumi: A1 gen fiber}
    The generic fiber of \(f\) is isomorphic to \(\mathbb{A}^1_K\)
    as a scheme over \(K\). %%ここringをけした
  \end{enumerate}
\end{prop}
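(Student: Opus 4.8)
The necessity is immediate: every fibre of the projection $\mathbb{A}^1_V\to V$ is an affine line over a residue field, hence one-dimensional and irreducible (so connected), and its generic fibre is $\mathbb{A}^1_K$; thus conditions \ref{enumi: A1 connected fiber} and \ref{enumi: A1 gen fiber} hold. I therefore concentrate on sufficiency. The first step is to promote the merely schematic hypothesis \ref{enumi: A1 gen fiber} to a statement about ring schemes. The generic fibre $X_\eta$ is a ring scheme over $K$ whose underlying $K$-scheme is isomorphic to $\mathbb{A}^1_K$; transporting its structure along such an isomorphism and then applying the affine automorphism $y\mapsto (y-e_a)/(e_m-e_a)$ of $\mathbb{A}^1_K$ — which is legitimate because the additive and multiplicative identity $K$-points $e_a,e_m$ are distinct (the ring scheme is nonzero) — I may assume its identity sections are $0$ and $1$. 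Then \autoref{lem: ring str A^1 unique} \ref{enumi: A^1 lem: ring str A^1 unique} shows the structure is the standard one, so $X_\eta\cong \mathbb{A}^1_K$ as a ring scheme. This is exactly the hypothesis of \autoref{lem: A1 is abs min}, which I invoke to conclude that $X=\Spec(A)$ is affine and to obtain a morphism of ring schemes $h\colon X\to \mathbb{A}^1_V$ over $V$ that is an isomorphism on generic fibres. Writing $R\dfn \Gamma(V,\mathcal{O}_V)$ (a DVR with uniformiser $\pi$, residue field $\kappa$ and fraction field $K$), flatness lets me regard $A$ as a subring with $R[x]\subseteq A\subseteq K[x]$, $A\otimes_R K=K[x]$ and $x\in A$, the map $h$ being the inclusion $R[x]\hookrightarrow A$.

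It remains to prove that $h$ is an isomorphism, i.e. that $A=R[x]$; this is the heart of the argument. I set $B\dfn A/R[x]$, an $R$-module which is $\pi$-power torsion because $A\otimes_R K=R[x]\otimes_R K$, so that $A=R[x]$ is equivalent to $B=0$. Reducing the inclusion $R[x]\hookrightarrow A$ modulo $\pi$ gives the homomorphism $\bar\iota\colon \kappa[x]\to A/\pi A$ underlying the special fibre $h_0\colon X_0\to \mathbb{A}^1_\kappa$ of $h$, and I claim $B\neq 0$ forces $\ker\bar\iota\neq 0$: choosing $b\in A$ with $\pi b\in R[x]$ but $b\notin R[x]$ (possible since a nonzero torsion module over a DVR has nonzero $\pi$-torsion) and writing $\pi b=q(x)$, reduction mod $\pi$ yields $\bar q\in \ker\bar\iota$ with $\bar q\neq 0$. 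As $\kappa[x]$ is a PID, $\ker\bar\iota=(G)$ for some nonzero $G$, and $G(\bar x)=0$ in $A/\pi A$, where $\bar x$ denotes the image of the coordinate.

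The final and crucial step is to extract a contradiction from connectedness via the ring scheme structure. Since $h$ is a morphism of ring schemes it carries the additive and multiplicative identity sections of $X$ to those of $\mathbb{A}^1_V$, namely $0_V$ and $1_V$; because $h^{\#}(x)=x$, composing with these sections and reducing mod $\pi$ produces two $\kappa$-points of $X_0$ at which $\bar x$ takes the values $0$ and $1$ respectively. Applying the corresponding evaluation homomorphisms $A/\pi A\to \kappa$ to the relation $G(\bar x)=0$ gives $G(0)=G(1)=0$, so $x$ and $x-1$ are coprime factors of $G$. By the Chinese Remainder Theorem the subring $\im(\bar\iota)\cong \kappa[x]/(G)$ of $A/\pi A$ then possesses a nontrivial idempotent, which remains a nontrivial idempotent of $A/\pi A$; hence $X_0=\Spec(A/\pi A)$ is disconnected, contradicting the connectedness asserted in \ref{enumi: A1 connected fiber}. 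Therefore $B=0$ and $h$ is an isomorphism of ring schemes, completing the proof. I expect the main obstacle to be organising the first step cleanly — in particular, correctly importing from \autoref{lem: A1 is abs min} the concrete description $R[x]\subseteq A\subseteq K[x]$ with $x\in A$ and tracking the identity sections through it — after which the connectedness obstruction is the conceptual crux that makes $\mathbb{A}^1_V$ rigid among such models.
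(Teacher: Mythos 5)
Your proof is correct, and while it sets up the problem exactly as the paper does (normalize the generic fibre via \autoref{lem: ring str A^1 unique}, then invoke \autoref{lem: A1 is abs min} to get $X=\Spec(A)$ affine with $R[x]\subseteq A\subseteq K[x]$ and the comparison map $h$), your argument that $h$ is an isomorphism is genuinely different from the paper's. The paper argues geometrically: connectedness plus the two identity sections force the scheme-theoretic image of $h_v$ to be all of $\mathbb{A}^1_k$, whence $h_v$ is faithfully flat by a result of Anantharaman; the fibrewise flatness criterion then makes $h$ faithfully flat, the kernel $N=h^{-1}(0_{\mathbb{A}^1_V})$ is shown to be the trivial group scheme (flat over a DVR, generically trivial, with a section), so $h$ is a flat monomorphism of finite type, hence an open immersion, hence an isomorphism. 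You instead run a direct commutative-algebra argument on the $\pi$-power-torsion module $A/R[x]$: if it is nonzero you produce a nonconstant $G\in\kappa[x]$ generating $\ker(\kappa[x]\to A/\pi A)$, use the two identity sections (which $h$ sends to $0_V$ and $1_V$) to get $G(0)=G(1)=0$, and extract a nontrivial idempotent of $A/\pi A$ from $\kappa[x]/(G)$ via CRT, contradicting connectedness of the special fibre. Your route is more elementary and self-contained --- it avoids the input from Anantharaman, the fibrewise flatness criterion, the kernel analysis, and the fact that flat finite-type monomorphisms are open immersions --- at the cost of being specific to this affine situation, whereas the paper's argument is the one that generalizes to other group- and ring-scheme settings. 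Both proofs ultimately rest on the same two ingredients: connectedness of the special fibre and the presence of the two distinct identity sections. The only point you should make explicit is that the $h$ furnished by \autoref{lem: A1 is abs min} really is the inclusion $R[x]\hookrightarrow A$ under the embedding $A\subseteq K[x]$ (this follows from the uniqueness assertion there, or by replacing $A$ by its image under $h_\eta^{\#}$); everything else is watertight.
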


\begin{proof}
  Necessity follows immediately.
  In the remainder of the proof of \autoref{lem: A1 over DVR},
  we prove sufficiency.
  Assume that \(f:X\to V\) satisfies conditions
  \ref{enumi: A1 connected fiber} and \ref{enumi: A1 gen fiber}.
  Write \(v\in V\) for the closed point,
  \(\eta\in V\) for the generic point,
  \(k\dfn k(v)\), and \(K\dfn k(\eta)\).
  % By \autoref{lem: ring str A^1 unique} \ref{enumi: A^1 lem: ring str A^1 unique},
  By \autoref{lem: ring str A^1 unique} \ref{enumi: any A^1 lem: ring str A^1 unique},
  we may assume without loss of generality that
  the generic fiber of \(f\) is isomorphic as a ring scheme over \(K\) to \(\mathbb{A}^1_K\).
  %%%証明はここ3行だけ追加

  By \autoref{lem: A1 is abs min} \ref{enumi: lem: A1 is abs min exists morph},
  there exists a unique morphism of ring schemes
  \(h:X\to \mathbb{A}^1_V\) over \(V\)
  such that \(h_{\eta}\) is an isomorphism of ring schemes over \(\Spec(K)\).
  Then \(h_v\) is a morphism of ring schemes over \(\Spec(k)\).
  Since \(f\) satisfies condition \ref{enumi: A1 connected fiber},
  the scheme-theoretic image of \(h_v\) is
  a connected closed subscheme of \(\mathbb{A}^1_k\).
  Moreover, since \(h_v\) is a morphism of ring schemes over \(\Spec(k)\),
  \(0,1\in \mathbb{A}^1_k\) are contained in the image of \(h_v\).
  Hence the scheme theoretic image of \(h_v\) is \(\mathbb{A}^1_k\).
  Thus, by \cite[Corollary 2.3.3.3]{Anan}, \(h_v\) is faithfully flat.
  Since \(h_{\eta}\) is an isomorphism,
  it follows from
  \cite[\href{https://stacks.math.columbia.edu/tag/039D}{Tag 039D}]{stacks-project}
  that \(h\) is faithfully flat.
  Since \(X\) is separated of finite type over \(V\), and
  \(\mathbb{A}^1_V\) is quasi-separated over \(V\),
  it follows from
  \cite[\href{https://stacks.math.columbia.edu/tag/01KV}{Tag 01KV}]{stacks-project},
  \cite[\href{https://stacks.math.columbia.edu/tag/03GI}{Tag 03GI}]{stacks-project}, and
  \cite[\href{https://stacks.math.columbia.edu/tag/01T8}{Tag 01T8}]{stacks-project}
  that \(h\) is separated and of finite type.
  By \autoref{lem: A1 is abs min} \ref{enumi: lem: A1 is abs min affine},
  \(X\) is affine.
  Write \(N\dfn X\times_{h,\mathbb{A}^1_V,0_{\mathbb{A}^1_V}}V\).
  Since the morphism of affine ring schemes \(h\)
  is faithfully flat, separated, and of finite type,
  \(N\) is a flat separated commutative affine group scheme
  of finite type over \(V\).
  Since the natural morphism
  \(N_{\eta} \xrightarrow{\sim} \Spec(K)\)
  is an isomorphism, and
  \(N\) is flat over \(V\),
  it holds that
  \(\Gamma(N,\mathcal{O}_N) \subset K\).
  Since \(V\) is the spectrum of a DVR,
  and the \(V\)-scheme \(N\) has a section \(V\to N\),
  it holds that \(N \xrightarrow{\sim} V\).
  Since \(h\) is a morphism of ring schemes over \(V\), this implies that
  \(h\) is a monomorphism in \(\SchU\).
  Since \(h\) is faithfully flat and of finite type,
  it follows from
  \cite[\href{https://stacks.math.columbia.edu/tag/025G}{Tag 025G}]{stacks-project}
  that \(h\) is an open immersion.
  Thus \(h\) is an isomorphism.
  This completes the proof of \autoref{lem: A1 over DVR}.
\end{proof}

\begin{rem}
  Let \(R\) be a mixed characteristic DVR and
  \(\pi\in R\) a uniformizer of \(R\).
  Write \(k\dfn R/(\pi)\), and
  \(p\) for the characteristic of \(k\).
  Then the \(R\)-scheme
  \(X\dfn \Spec(R[x,y]/(\pi y-x^{p^2}+x^p)) = \Spec(R[x,(x^{p^2}-x^p)/\pi])\)
  has the following properties:
  \begin{itemize}
    \item
    \(X\) is a flat affine ring scheme of finite type over \(R\).
    \item
    The generic fiber of \(X\to \Spec(R)\) is the ring scheme \(\mathbb{A}^1_K\).
    \item
    The special fiber of \(X\to \Spec(R)\) is not connected.
  \end{itemize}
  Hence, in \autoref{lem: A1 over DVR},
  if one do not assume the connectedness of the special fiber,
  then it may not hold that \(X\cong\mathbb{A}^1_R\).
  This example is obtained by forming ``N\'{e}ron's blow-up''
  (cf. \cite[\href{https://stacks.math.columbia.edu/tag/0BJ1}{Tag 0BJ1}]{stacks-project})
  of \(\mathbb{A}^1_R\) along
  the closed ring subscheme \(\Spec(k[x]/(x^{p^2}-x^p))\) over \(k\)
  of the special fiber \(\mathbb{A}^1_k\).
\end{rem}

\begin{lem}\label{lem: geom red}
  Let \(V\) be an affine scheme;
  \(X,Y,Z\) integral schemes;
  \(f:X\to V\), \(g:Y\to V\), and \(h:Z\to V\) morphisms.
  Write \(R\dfn \Gamma(V,\mathcal{O}_V)\).
  Assume that \(R\) is either a DVR or a field, and
  that the generic fiber of \(f,g\), and \(h\) are geometrically reduced and non-empty.
  Then \(X\times_VY\times_VZ\) is reduced.
\end{lem}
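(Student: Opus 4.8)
The plan is to reduce the assertion to a statement in commutative algebra about tensor products of geometrically reduced algebras over a field. Since reducedness is local, it suffices to fix affine open subschemes \(\Spec(A) \subseteq X\), \(\Spec(B) \subseteq Y\), \(\Spec(C) \subseteq Z\) and to prove that the ring \(A \otimes_R B \otimes_R C\)---whose spectrum is the corresponding affine open of \(X\times_V Y\times_V Z\)---is reduced. Here \(A,B,C\) are domains, since \(X,Y,Z\) are integral. Write \(\eta\in V\) for the generic point and \(K \dfn k(\eta)\) for its residue field, so that \(K=R\) when \(R\) is a field and \(K\) is the field of fractions of \(R\) when \(R\) is a DVR.

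First I would record that each of \(A,B,C\) is \(R\)-flat and embeds into its ``generic part'' \(A_K \dfn A\otimes_R K\), etc. When \(R=K\) this is automatic. When \(R\) is a DVR with uniformizer \(\pi\), the hypothesis that the generic fiber of \(f\) is non-empty forces the generic point \(\xi\) of the integral scheme \(X\) to map to \(\eta\) (since \(\overline{\{f(\xi)\}}\) must contain \(\eta\), and \(\eta\) is the generic point of \(V\)); as every non-empty affine open of an integral scheme contains \(\xi\), the ring \(A\) embeds into the function field \(k(\xi)\), in which the image of \(\pi\) is nonzero. Hence \(\pi\) is a nonzerodivisor in the domain \(A\), so \(A\) is \(R\)-flat and \(A\hookrightarrow A[1/\pi]=A_K\). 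Then \(A\otimes_R B\otimes_R C\), being a tensor product of \(R\)-flat algebras, is \(R\)-flat, hence \(\pi\)-torsion-free, so it injects into its localization
\[
(A \otimes_R B \otimes_R C)\otimes_R K \;\cong\; A_K \otimes_K B_K \otimes_K C_K ,
\]
the isomorphism coming from the standard compatibility of base change with tensor products. Since a subring of a reduced ring is reduced, it suffices to prove that \(A_K \otimes_K B_K \otimes_K C_K\) is reduced.

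Next I would identify \(A_K\) as the coordinate ring of a non-empty affine open of the generic fiber \(X_\eta = X\times_V \Spec(K)\); by hypothesis \(X_\eta\) is geometrically reduced over \(K\), and geometric reducedness passes to open subschemes, so \(A_K\) is a geometrically reduced \(K\)-algebra, and likewise for \(B_K\) and \(C_K\). The conclusion then follows from the permanence of geometric reducedness under tensor products over a field (cf. \cite[\href{https://stacks.math.columbia.edu/tag/034H}{Tag 034H}]{stacks-project}): iterating this twice shows that \(A_K \otimes_K B_K \otimes_K C_K\) is geometrically reduced, in particular reduced. Combined with the injection of the previous paragraph, this gives that \(A \otimes_R B \otimes_R C\) is reduced, and gluing over an affine cover yields that \(X\times_V Y\times_V Z\) is reduced.

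The main obstacle is the interplay in the middle step: one must simultaneously exploit flatness over \(R\) (to realize \(A\otimes_R B\otimes_R C\) as a subring of its generic fiber, thereby discarding the arithmetic of the closed point) and the geometric---rather than mere---reducedness of the fibers (to guarantee that no nilpotents are created upon tensoring the three algebras together over \(K\)). The hypotheses that the three generic fibers are geometrically reduced and non-empty are exactly what makes both halves of this argument go through; ordinary reducedness of the fibers would not suffice, since a tensor product of two reduced algebras over a non-perfect field can acquire nilpotents.
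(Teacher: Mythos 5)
Your proof is correct and follows essentially the same route as the paper's: both arguments deduce reducedness of the generic fibre of the triple product from geometric reducedness of the three generic fibres (tensoring over \(k(\eta)\)), and both use flatness over \(R\) --- forced by integrality of \(X,Y,Z\) together with non-emptiness of the generic fibres --- to propagate reducedness from the generic fibre to the whole fibre product. The only difference is one of execution: where the paper invokes \cite[Chapter III, Proposition 9.7]{Ha} for flatness and the scheme-theoretic image lemma \cite[\href{https://stacks.math.columbia.edu/tag/081I}{Tag 081I}]{stacks-project}, you work affine-locally and verify directly that \(A\otimes_R B\otimes_R C\) is \(\pi\)-torsion-free and hence embeds into \(A_K\otimes_K B_K\otimes_K C_K\).
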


\begin{proof}
  Write \(\eta\) for the generic point of \(V\).
  Since \(X_{\eta},Y_{\eta},Z_{\eta}\) are geometrically reduced over \(\Spec(k(\eta))\),
  it follows from
  \cite[\href{https://stacks.math.columbia.edu/tag/035Z}{Tag 035Z}]{stacks-project}
  that \((X\times_VY\times_VZ)_{\eta} \cong
  X_{\eta}\times_{\Spec(k(\eta))}Y_{\eta}\times_{\Spec(k(\eta))}Z_{\eta}\)
  is reduced.
  Since \(X,Y,Z\) are integral, and
  the generic fibers of \(f,g,h\) are non-empty,
  it follows from \cite[Chapter III, Proposition 9.7]{Ha} that
  \(f,g,h\) are flat.
  Hence \(X\times_V Y\times_V Z\) is flat over \(V\).
  Since the scheme-theoretic image of the natural morphism
  \(\Spec(k(\eta)) \to V\) is \(V\),
  it follows from
  \cite[\href{https://stacks.math.columbia.edu/tag/081I}{Tag 081I}]{stacks-project}
  that the scheme theoretic image of
  \((X \times_V Y \times_V Z)_{\eta} \to X\times_V Y \times_V Z\) is equal to
  \(X \times_V Y \times_V Z\).
  Since \((X\times_V Y \times_V Z)_{\eta}\) is reduced,
  \(X\times_V Y \times_V Z\) is also reduced.
  This completes the proof of \autoref{lem: geom red}.
\end{proof}

\begin{lem}\label{cor: group over DVR}
  Let \(S\) be a quasi-separated scheme.
  Let \(V\) be an object of \(\Schb{S}\) which is affine,
  \(X\) an object of \(\Schb{S}\) which is
  quasi-compact (over \(\mathbb{Z}\)) and integral, and
  \(f:X\to V\) a morphism in \(\Schb{S}\).
  Write \(R\dfn \Gamma(V,\mathcal{O}_V)\).
  Assume that \(R\) is either a DVR or a field, and
  that the generic fiber of \(f\) is geometrically reduced.
  Then there is a natural bijective correspondence between
  the group scheme structures over \(V\) on \(X\) and
  the group object structures on the object \(f:X\to V\) of \((\Schb{S})_{/V}\).
  Similarly, there is a natural bijective correspondence between
  the ring scheme structures over \(V\) on \(X\) and
  the ring object structures on the object \(f:X\to V\) of \((\Schb{S})_{/V}\).
\end{lem}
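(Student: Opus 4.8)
The plan is to reduce the entire statement to a single observation: under the stated hypotheses, the fiber products in $\Schb{S}$ of copies of $X$ over $V$ coincide with the corresponding scheme-theoretic fiber products over $V$. A group (resp.\ ring) scheme structure over $V$ on $X$ is nothing but a group (resp.\ ring) object structure on $f:X\to V$ in the category $\Sch{V}$ of $V$-schemes, whose categorical products are the scheme-theoretic fiber products over $V$. Since $\Schb{S}\subset\Sche$ is a full subcategory, once one knows that the relevant products agree, a group (resp.\ ring) object structure in $(\Schb{S})_{/V}$ and a group (resp.\ ring) scheme structure over $V$ carry \emph{identical} data, and the asserted correspondence is the evident one.

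First I would record that $f$ is quasi-compact: since $X$ is quasi-compact and $V$ is affine, hence quasi-separated, this follows from \autoref{lem: affine over qsep is qcpt} \ref{enumi: lem: aff over qsep is qcpt qc}. Consequently, by \autoref{lem: fiber product bbullet} \ref{enumi: fiber product bbullet not red} \ref{enumi: fiber product bbullet red}, the double fiber product $X\btimes_V X$ and the triple fiber product $X\btimes_V X\btimes_V X$ exist in $\Schb{S}$, and by \autoref{cor: btimes to times is surj cl imm} the canonical morphisms $X\btimes_V X\to X\times_V X$ and $X\btimes_V X\btimes_V X\to X\times_V X\times_V X$ are surjective closed immersions; each is an isomorphism as soon as its target is reduced. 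The key step is to apply \autoref{lem: geom red}: taking the three integral schemes to be $X,X,V$, with morphisms $f,f,\id_V$ (here $V$ is integral because $R$ is a domain, and the generic fiber of $\id_V$ is $\Spec(k(\eta))$, which is geometrically reduced and non-empty), shows that $X\times_V X$ is reduced, while taking them to be $X,X,X$ shows that $X\times_V X\times_V X$ is reduced. Hence both canonical morphisms above are isomorphisms, and since their targets then lie in $\Schb{S}$, the Hom-sets computed in $\Schb{S}$ and in $\Sche$ agree.

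It remains to transport structure. Every morphism occurring in the group and ring axioms — associativity, the unit and identity sections, the inverse, and distributivity — has as domain a fiber product of at most three copies of $X$ over $V$, which is exactly the range covered by \autoref{lem: geom red}. Because the canonical isomorphisms $X\btimes_V X\xrightarrow{\sim}X\times_V X$ and $X\btimes_V X\btimes_V X\xrightarrow{\sim}X\times_V X\times_V X$ are compatible with all projections, diagonals, and unit sections, composing the structure morphisms of a group (resp.\ ring) scheme with these isomorphisms yields a group (resp.\ ring) object structure in $(\Schb{S})_{/V}$, and this assignment is a bijection whose inverse is composition with the inverse isomorphisms; each axiom on one side translates into the same axiom on the other. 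This is the routine bookkeeping part of the argument and requires no new input.

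The step I expect to require the most care is a gap in the non-emptiness hypothesis: \autoref{lem: geom red} demands that the generic fiber of $f$ be \emph{non-empty}, whereas here only geometric reducedness is assumed, and over a DVR the generic fiber could a priori be empty — in which case a fiber power of $X$ can genuinely fail to be reduced (e.g.\ when $X$ is the spectrum of an inseparable field extension mapping to the closed point). I would dispose of this by observing that any group (or ring) object structure in $(\Schb{S})_{/V}$ supplies a unit morphism from the terminal object to $X$, i.e.\ a section of $f$, and the same is true of any group (or ring) scheme structure over $V$. A section forces $f$ to be surjective, hence the generic point of $V$ to lie in its image, hence the generic fiber to be non-empty; thus \autoref{lem: geom red} does apply whenever either side admits a structure, and when neither side admits any structure the asserted bijection holds vacuously. (When $R$ is a field, $V$ is a single point and the generic fiber is all of $X$, which is non-empty since $X$ is integral, so the issue does not arise.)
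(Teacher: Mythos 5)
Your proof is correct and takes essentially the same route as the paper, whose entire proof consists of citing \autoref{lem: geom red} together with \autoref{lem: fiber product bbullet}; you have simply spelled out the identification of \(\btimes_V\) with \(\times_V\) on reduced targets and the resulting transport of the group and ring axioms. Your final paragraph is a genuine improvement: the non-emptiness hypothesis of \autoref{lem: geom red} is indeed not among the stated hypotheses of the lemma, and your observation that any unit section forces the generic fiber to be non-empty (with the structureless case handled vacuously) closes a small gap that the paper's one-line proof passes over in silence.
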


\begin{proof}
  \autoref{cor: group over DVR} follows immediately from \autoref{lem: geom red},
  together with \autoref{lem: fiber product bbullet}
  %\ref{enumi: fiber product bbullet not qcpt and red}
  %\ref{enumi: fiber product bbullet not qcpt and in red}
  \ref{enumi: fiber product bbullet not red}
  \ref{enumi: fiber product bbullet red} (where we note that if the generic fiber of \(f\) is empty, there does not exist a section of \(f\), hence \autoref{cor: group over DVR} follows immediately).
\end{proof}

%%%%%%%%%%%%%%%%%%%%%%%%%%%%%%%%%%%%
%%%%%%%%%%%%%%%%%%%%%%%%%%%%%%%%%%%%
%%%%%%%%%%%%%%%%%%%%%%%%%%%%%%%%%%%%
%%%%%%%%%%%%%%%%%%%%%%%%%%%%%%%%%%%%
%%%%%%%%%%%%%%%%%%%%%%%%%%%%%%%%%%%%
%%%%%%%%%%%%%%%%%%%%%%%%%%%%%%%%%%%%
%%%%%%%%%%%%%%%%%%%%%%%%%%%%%%%%%%%%
%%%%%%%%%%%%%%%%%%%%%%%%%%%%%%%%%%%%
%%%%%%%%%%%%%%%%%%%%%%%%%%%%%%%%%%%%
%%%%%%%%%%%%%%%%%%%%%%%%%%%%%%%%%%%%
%%%%%%%%%%%%%%%%%%%%%%%%%%%%%%%%%%%%
%%%%%%%%%%%%%%%%%%%%%%%%%%%%%%%%%%%%
%%%%%%%%%%%%%%%%%%%%%%%%%%%%%%%%%%%%
%%%%%%%%%%%%%%%%%%%%%%%%%%%%%%%%%%%%
%%%%%%%%%%%%%%%%%%%%%%%%%%%%%%%%%%%%
%%%%%%%%%%%%%%%%%%%%%%%%%%%%%%%%%%%%
%%%%%%%%%%%%%%%%%%%%%%%%%%%%%%%%%%%%
%%%%%%%%%%%%%%%%%%%%%%%%%%%%%%%%%%%%
%%%%%%%%%%%%%%%%%%%%%%%%%%%%%%%%%%%%
%%%%%%%%%%%%%%%%%%%%%%%%%%%%%%%%%%%%
%%%%%%%%%%%%%%%%%%%%%%%%%%%%%%%%%%%%
%%%%%%%%%%%%%%%%%%%%%%%%%%%%%%%%%%%%
%%%%%%%%%%%%%%%%%%%%%%%%%%%%%%%%%%%%
%%%%%%%%%%%%%%%%%%%%%%%%%%%%%%%%%%%%

Next, to give a category-theoretic characterization of morphisms
which are isomorphic to the natural projection \(\mathbb{P}^1_S\to S\),
we study certain algebraic spaces that parametrize closed immersions of
a certain type.

\begin{defi}\label{defi: Cl f}
  Let \(T\) be a scheme,
  \(X\) a proper flat \(T\)-scheme of finite presentation,
  \(Y\) a \(T\)-scheme of finite presentation,
  \(B\subset X\) a closed subscheme which is flat over \(T\),
  \(f:B\to Y\) a morphism of \(T\)-schemes, and
  \(U\) a \(T\)-scheme.
  We define the following sets:
  \begin{align*}
    \mathrm{Cl}(X,Y)(U) &\dfn
    \left\{ i:X_U\to Y_U \mid \text{\(i\) is a closed immersion} \right\}, \\
    \mathrm{Cl}_f(X,Y)(U) &\dfn
    \left\{ i:X_U\to Y_U \mid \text{\(i\) is a closed immersion such that \(i|_{B_U} = f_U\)} \right\}.
  \end{align*}
  Since base-change preserves the required property,
  we obtain functors
  \begin{align*}
    \mathrm{Cl}(X,Y) &:\Sch{T}^{\mathrm{op}}\to \mathsf{Set}, \\
    \mathrm{Cl}_f(X,Y) &:\Sch{T}^{\mathrm{op}}\to \mathsf{Set}.
  \end{align*}
\end{defi}

\begin{lem}\label{lem: representable by AS}
  Let \(T\) be a scheme,
  \(X\) a proper flat \(T\)-scheme of finite presentation,
  \(Y\) a separated \(T\)-scheme of finite presentation,
  \(B\) a closed subscheme of \(X\)
  which is flat and of finite presentation over \(T\), and
  \(f:B\to Y\) a closed immersion of \(T\)-schemes.
  Then the functors \(\mathrm{Cl}(X,Y)\) and \(\mathrm{Cl}_f(X,Y)\) are
  represented by algebraic spaces which are separated and
  locally of finite presentation over \(T\).
\end{lem}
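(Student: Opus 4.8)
The plan is to represent both functors as open, respectively fibered, loci inside the moduli space of morphisms from \(X\) to \(Y\), which I would in turn build from a Hilbert space. First I would observe that, since \(X\to T\) is proper flat of finite presentation and \(Y\to T\) is separated of finite presentation, the product \((X\times_T Y)\to T\) is separated of finite presentation, so the Hilbert functor \(\mathrm{Hilb}_{(X\times_T Y)/T}\) parametrizing closed subschemes that are proper, flat, and of finite presentation over the base is representable by an algebraic space locally of finite presentation over \(T\) (the general representability of Hilbert spaces of separated finitely presented morphisms; cf. \cite{stacks-project}). The morphism functor \(\mathrm{Mor}_T(X,Y)\) then sits inside this Hilbert space as the locus of those subschemes \(\Gamma\subset (X\times_T Y)_U\) whose first projection \(\mathrm{pr}_X:\Gamma\to X_U\) is an isomorphism, so that \(\Gamma\) is the graph of \(\mathrm{pr}_Y\circ \mathrm{pr}_X^{-1}\); since this isomorphism locus is open (the standard graph criterion in the construction of Hom spaces), \(\mathrm{Mor}_T(X,Y)\) is an open subspace, hence itself an algebraic space locally of finite presentation over \(T\). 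Moreover it is separated over \(T\), because \(Y\) is separated over \(T\) and the agreement of two morphisms is a closed condition.

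Next I would identify \(\mathrm{Cl}(X,Y)\) with the subfunctor of \(\mathrm{Mor}_T(X,Y)\) cut out by requiring that the universal morphism be a closed immersion, and show this subfunctor is open. Over \(\mathcal{M}\dfn \mathrm{Mor}_T(X,Y)\) the universal morphism \(\Phi:X_{\mathcal{M}}\to Y_{\mathcal{M}}\) is proper, since its source is proper and its target separated over \(\mathcal{M}\). A proper morphism is a closed immersion if and only if it is a monomorphism, equivalently if and only if it is unramified and universally injective. The locus where \(\Phi_m\) is unramified is open, as the ramification locus is closed in \(X_{\mathcal{M}}\) and has closed image in \(\mathcal{M}\) by properness of \(X_{\mathcal{M}}\to\mathcal{M}\); over this locus \(\Phi\) is finite, and the further condition that \(\Phi_m\) be a closed immersion is governed by the vanishing along the fibre of the coherent cokernel of \(\mathcal{O}_{Y_{\mathcal{M}}}\to \Phi_*\mathcal{O}_{X_{\mathcal{M}}}\), whose support is proper over \(\mathcal{M}\), so that its complement is open. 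Hence \(\mathrm{Cl}(X,Y)\) is open in \(\mathcal{M}\) and inherits the properties of being separated and locally of finite presentation over \(T\).

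Finally, I would treat \(\mathrm{Cl}_f(X,Y)\) as a fibre of the restriction morphism. Since \(B\subset X\) is closed it is proper over \(T\), and by hypothesis it is flat and of finite presentation over \(T\), so \(\mathrm{Mor}_T(B,Y)\) is representable by an algebraic space separated and locally of finite presentation over \(T\) by the same argument. Restriction along \(B\hookrightarrow X\) defines a morphism \(\mathrm{res}:\mathrm{Cl}(X,Y)\to \mathrm{Mor}_T(B,Y)\), and the closed immersion \(f:B\to Y\) determines a \(T\)-point, i.e. a section \(f:T\to \mathrm{Mor}_T(B,Y)\); by construction \(\mathrm{Cl}_f(X,Y)=\mathrm{Cl}(X,Y)\times_{\mathrm{res},\,\mathrm{Mor}_T(B,Y),\,f} T\). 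Because \(\mathrm{Mor}_T(B,Y)\) is separated over \(T\), the section \(f\) is a closed immersion, so its base change \(\mathrm{Cl}_f(X,Y)\to \mathrm{Cl}(X,Y)\) is a closed immersion; composing with \(\mathrm{Cl}(X,Y)\to T\) shows that \(\mathrm{Cl}_f(X,Y)\) is again an algebraic space separated and locally of finite presentation over \(T\).

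The main obstacle is the first step: the representability of the Hilbert and morphism spaces by algebraic spaces rests on Artin's representability machinery and is the substantive external input to invoke. The chief internal technical point is the openness of the closed-immersion locus in the second step; the argument above via ``unramified plus universally injective'' together with the coherent cokernel is the delicate part that would have to be carried out with care, in particular the compatibility of \(\Phi_*\mathcal{O}_{X_{\mathcal{M}}}\) with passage to fibres over the locus on which \(\Phi\) is finite.
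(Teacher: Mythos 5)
Your proposal is correct in substance but takes a genuinely different route from the paper for the functor \(\mathrm{Cl}(X,Y)\). The paper fibers \(\mathrm{Cl}(X,Y)\) over the Hilbert space \(\mathrm{Hilb}_{Y/T}\) via the scheme-theoretic image: the fiber over a point \([Z\subset Y_{U}]\) is \(\mathrm{Isom}_{U}(X_{U},Z)\), an open subspace of \(\mathrm{Mor}_{U}(X_{U},Z)\), and representability is deduced from relative representability of this morphism together with representability of \(\mathrm{Hilb}_{Y/T}\). You instead realize \(\mathrm{Cl}(X,Y)\) as an open subfunctor of a single global space \(\mathrm{Mor}_T(X,Y)\) (built from \(\mathrm{Hilb}_{(X\times_TY)/T}\) by the graph construction) and then argue openness of the closed-immersion locus by hand via ``proper \(+\) unramified \(+\) universally injective \(=\) closed immersion'' and vanishing of the cokernel of \(\mathcal{O}_{Y}\to \Phi_*\mathcal{O}_{X}\). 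Both routes rest on the same external inputs (representability of Hilbert and Hom spaces as separated, locally finitely presented algebraic spaces); the paper's version outsources the delicate openness statement to the known openness of the Isom locus, while yours makes the analogous openness argument explicit, at the cost of having to carry out the fibrewise analysis yourself. For \(\mathrm{Cl}_f(X,Y)\) the two arguments are essentially identical fiber-product constructions, except that you base along \(\mathrm{Mor}_T(B,Y)\) where the paper uses \(\mathrm{Cl}(B,Y)\); your additional observation that the section \(f:T\to \mathrm{Mor}_T(B,Y)\) is a closed immersion (by separatedness) is a pleasant refinement not present in the paper.

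One step you should tighten: your openness argument only shows that the set of points \(m\in\mathcal{M}\) at which the \emph{fiber} \(\Phi_m\) is a closed immersion is open. For \(\mathrm{Cl}(X,Y)\) to be represented by that open subspace you must also check the converse implication, namely that if \(U'\to\mathcal{M}\) factors through this open locus then \(\Phi_{U'}\) is itself a closed immersion (not merely fiberwise so). This follows by Nakayama-type arguments --- \(\Omega_{X_{U'}/Y_{U'}}\) and the cokernel of \(\mathcal{O}_{Y_{U'}}\to\Phi_{U'*}\mathcal{O}_{X_{U'}}\) are finite type and vanish on every fiber, hence vanish, and a finite morphism surjective on structure sheaves is a closed immersion --- together with the compatibility of \(\Phi_*\mathcal{O}\) with base change on the finite locus, which you already flag as the delicate point. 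It is standard, but as written the ``if and only if'' characterizing the open subfunctor is asserted rather than proved.
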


\begin{proof}
  First, we prove that the functor \(\mathrm{Cl}(X,Y)\) is
  represented by an algebraic space which is separated and
  locally of finite presentation over \(T\).
  Write
  \[\mathrm{Hilb}_{Y/T}:\Sch{T}^{\mathrm{op}}\to \mathsf{Set}\]
  for the Hilbert functor
  (cf. \cite[\href{https://stacks.math.columbia.edu/tag/0CZY}{Tag 0CZY}]{stacks-project}),
  i.e., parametrizing closed subschemes of \(Y\)
  that are proper, flat, and of finite presentation over \(T\).
  Since \(Y\) is separated and of finite presentation over \(T\),
  it follows from
  \cite[\href{https://stacks.math.columbia.edu/tag/0D01}{Tag 0D01}]{stacks-project} and
  \cite[\href{https://stacks.math.columbia.edu/tag/0DM7}{Tag 0DM7}]{stacks-project} that
  \(\mathrm{Hilb}_{Y/T}\) is represented by an algebraic space
  which is separated and locally of finite presentation over \(T\).
  Let \(U_0\) be a \(T\)-scheme and
  \(i_{U_0}:X_{U_0}\to Y_{U_0}\) a closed immersion.
  By forming the scheme-theoretic image of \(X_{U_0}\) in \(Y_{U_0}\) via \(i_{U_0}\),
  we obtain a map
  \(\varphi(U_0) : \mathrm{Cl}(X,Y)(U_0) \to \mathrm{Hilb}_{Y/T}(U_0)\),
  which is clearly functorial with respect to \(U_0\).
  Hence we obtain a morphism of functors
  \(\varphi: \mathrm{Cl}(X,Y) \to \mathrm{Hilb}_{Y/T}\).
  Thus, by
  \cite[\href{https://stacks.math.columbia.edu/tag/02YS}{Tag 02YS}]{stacks-project},
  \cite[\href{https://stacks.math.columbia.edu/tag/03XQ}{Tag 03XQ}]{stacks-project}, and
  \cite[\href{https://stacks.math.columbia.edu/tag/03KQ}{Tag 03KQ}]{stacks-project},
  to prove that the functor \(\mathrm{Cl}(X,Y)\) is represented by
  an algebraic space which is separated and locally of finite presentation over \(T\),
  it suffices to prove that
  \(\varphi\) is separated, locally of finite presentation, and represented by an algebraic space (cf. \cite[\href{https://stacks.math.columbia.edu/tag/02YQ}{Tag 02YQ}]{stacks-project}).

  Let \(U_1\) be a \(T\)-scheme and
  \(U_1\to \mathrm{Hilb}_{Y/T}\) a morphism of algebraic spaces over \(T\).
  Then it follows from Yoneda's lemma that
  the morphism \(U_1\to \mathrm{Hilb}_{Y/T}\) corresponds to
  a closed subscheme \(Z\subset Y_{U_1}\) such that
  the composite \(Z\subset Y_{U_1}\to U_1\) is
  proper, flat, and of finite presentation.
  Hence it holds that
  \(U_1\times_{\mathrm{Hilb}_{Y/T},\varphi} \mathrm{Cl}(X,Y) \cong
  \mathrm{Isom}_{U_1}(X_{U_1},Z)\),
  where \(\mathrm{Isom}_{U_1}(X_{U_1},Z)\subset \mathrm{Mor}_{U_1}(X_{U_1},Z)\)
  is the open sub-algebraic space of the algebraic space \(\mathrm{Mor}_{U_1}(X_{U_1},Z)\)
  determined by the property of being an isomorphism
  (cf. \cite[\href{https://stacks.math.columbia.edu/tag/0D1C}{Tag 0D1C}]{stacks-project},
  \cite[\href{https://stacks.math.columbia.edu/tag/0DPP}{Tag 0DPP}]{stacks-project}).
  By \cite[\href{https://stacks.math.columbia.edu/tag/0DPN}{Tag 0DPN}]{stacks-project} and
  \cite[\href{https://stacks.math.columbia.edu/tag/0DPP}{Tag 0DPP}]{stacks-project},
  \(\mathrm{Isom}_{U_1}(X_{U_1},Z)\) is represented by an algebraic space
  which is separated and locally of finite presentation over \(U_1\).
  Thus \(\varphi\) is separated, locally of finite presentation, and
  represented by an algebraic space.
  This completes the proof of the assertion that the functor \(\mathrm{Cl}(X,Y)\) is
  represented by an algebraic space which is separated and
  locally of finite presentation over \(T\).

  Next, we prove that the functor \(\mathrm{Cl}_f(X,Y)\) is represented by
  an algebraic space which is separated and locally of finite presentation over \(T\).
  Let \(U_2\) be a \(T\)-scheme and
  \(i_{U_2}:X_{U_2}\to Y_{U_2}\) a closed immersion.
  By composing \(i_{U_2}\) with
  the closed immersion \(B_{U_2}\hookrightarrow X_{U_2}\),
  we obtain a morphism of algebraic spaces
  \(\psi : \mathrm{Cl}(X,Y) \to \mathrm{Cl}(B,Y)\) over \(T\).
  By \cite[\href{https://stacks.math.columbia.edu/tag/05WT}{Tag 05WT}]{stacks-project}
  and \cite[\href{https://stacks.math.columbia.edu/tag/04ZI}{Tag 04ZI}]{stacks-project},
  \(\psi\) is separated and locally of finite presentation.
  Write \(\tilde{f}:T\to \mathrm{Cl}(B,Y)\) for the morphism
  corresponding to the element \([f:B\to Y] \in \mathrm{Cl}(B,Y)(T)\).
  Then it holds that
  \(\mathrm{Cl}_f(X,Y) \cong \mathrm{Cl}(X,Y) \times_{\mathrm{Cl}(B,Y),\tilde{f}} T\).
  This implies that \(\mathrm{Cl}_f(X,Y)\) is represented by
  an algebraic space which is separated and locally of finite presentation over \(T\).
  This completes the proof of \autoref{lem: representable by AS}.
\end{proof}

\begin{lem}\label{lem: DVR exists}
  Let \(X\) be a stalkwise Noetherian scheme and
  \(x, \xi\in X\) points such that \(\xi \rsa x\).
  Then there exist a scheme \(V\) and
  a morphism \(f:V\to X\) such that the following conditions hold:
  \begin{enumerate}
    \item \(V\) is isomorphic to the spectrum of a discrete valuation ring.
    \item \(f(v) = x\), where \(v\in V\) is the unique closed point.
    \item \(f(\eta) = \xi\), where \(\eta\in V\) is the unique generic point.
    \item The natural morphism \(k(\xi) \to k(\eta)\) is an isomorphism.
  \end{enumerate}
\end{lem}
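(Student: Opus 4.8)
The plan is to reduce the statement to a purely algebraic assertion about a Noetherian local domain and then invoke the classical theory of discrete valuation rings. First I would observe that since $\xi \rsa x$, the point $\xi$ lies in every open neighborhood of $x$, so $\xi$ corresponds to a prime $\mathfrak{p}\subseteq \mathfrak{m}$ of the local ring $A\dfn \mathcal{O}_{X,x}$, where $\mathfrak{m}$ is the maximal ideal corresponding to $x$. As $X$ is stalkwise Noetherian, $A$ is Noetherian. Any morphism $\Spec(V)\to X$ sending the closed point to $x$ has image consisting of generizations of $x$, hence factors through the localization morphism $\Spec(A)\to X$; thus it suffices to construct the desired morphism $f:\Spec(V)\to \Spec(A)$. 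Note that since $k(\eta)$ is the field of fractions of a discrete valuation ring, it strictly contains its residue field, so condition (iv) can hold only if $\xi\neq x$; accordingly I treat the case $\mathfrak{p}\subsetneq \mathfrak{m}$, which is the situation in which the lemma is applied.

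Next I would pass to the integral quotient $B\dfn A/\mathfrak{p}$. This is a Noetherian local domain whose field of fractions is $K\dfn k(\xi)=\kappa(\mathfrak{p})$, whose maximal ideal is $\mathfrak{m}/\mathfrak{p}$, and whose residue field is $k(x)$; moreover $\dim(B)\geq 1$ because $\mathfrak{p}\subsetneq \mathfrak{m}$. The heart of the argument is then the standard fact that a Noetherian local domain $B$ of dimension at least one admits a discrete valuation ring $V$ with $B\subseteq V\subseteq K$ that dominates $B$ and satisfies $\mathrm{Frac}(V)=K$. I would prove this by induction on $\dim(B)$: staying inside $K$, one replaces $B$ by the local ring at a suitable point of the blow-up of its maximal ideal to decrease the dimension, until one reaches the one-dimensional case, where one takes the integral closure $\overline{B}$ of $B$ in $K$ --- a one-dimensional Noetherian normal, hence Dedekind, domain by the Krull--Akizuki theorem --- and localizes at a maximal ideal of $\overline{B}$ lying over $\mathfrak{m}/\mathfrak{p}$.

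Finally I would read off the four conclusions from the composite $f:\Spec(V)\to \Spec(B)\hookrightarrow \Spec(A)\to X$. Condition (i) holds by construction. Since $V$ dominates $B$, the closed point $v$ maps to the maximal ideal $\mathfrak{m}/\mathfrak{p}$ of $B$, hence to $\mathfrak{m}=x$, giving (ii); the generic point $\eta$ contracts to the zero ideal of $B$, i.e.\ to $\mathfrak{p}=\xi$, giving (iii); and $k(\eta)=\mathrm{Frac}(V)=K=k(\xi)$, with the induced map on residue fields equal to the identity, giving (iv). The main obstacle is the commutative-algebra existence statement for the dominating discrete valuation ring --- in particular the dimension-reduction step that preserves the fraction field $K$ --- whereas the scheme-theoretic reduction to $\Spec(\mathcal{O}_{X,x})$ and the verification of the four conditions are routine bookkeeping.
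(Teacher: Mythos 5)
Your argument is essentially identical to the paper's: the paper also sets \(\mathfrak{p}\dfn \ker(\mathcal{O}_{X,x}\to\mathcal{O}_{X,\xi})\), passes to the Noetherian local domain \(A\dfn \mathcal{O}_{X,x}/\mathfrak{p}\) with fraction field \(k(\xi)\), and produces a dominating DVR with the same fraction field --- except that it simply cites \cite[\href{https://stacks.math.columbia.edu/tag/00PH}{Tag 00PH}]{stacks-project} for the commutative-algebra existence statement that you prove by hand via blow-ups and Krull--Akizuki. Your explicit observation that the case \(\xi=x\) must be excluded (and is harmless in the applications) is a point the paper glosses over, but otherwise the two proofs coincide.
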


\begin{proof}
  Write \(\mathfrak{p}\dfn \ker(\mathcal{O}_{X,x}\to \mathcal{O}_{X,\xi})\) and
  \(A\dfn \mathcal{O}_{X,x}/\mathfrak{p}\).
  Then \(A\) is a subring of \(k(\xi)\).
  Since \(A\) is Noetherian, it follows from \cite[\href{https://stacks.math.columbia.edu/tag/00PH}{Tag 00PH}]{stacks-project} that
  there exists a DVR \(R\subset k(\xi)\) which dominates \(A\).
  Then it follows immediately that
  \(V \dfn \Spec(R)\) and
  the composite
  \[f: \Spec(R) \to \Spec(A) \to X\]
  satisfy required properties.
  This completes the proof of \autoref{lem: DVR exists}.
\end{proof}

In the following \autoref{lem: val criteria st Noeth}, we show that if a base scheme is stalkwise Noetherian, then the valuative criterion of properness holds using only discrete valuation rings.

\begin{lem}\label{lem: val criteria st Noeth}
  Let \(Y\) be a stalkwise Noetherian scheme and
  \(f:X\to Y\) a separated algebraic space of finite presentation over \(Y\).
  Assume that
  for any commutative diagram
  \[
  \begin{CD}
    \Spec(K) @> \eta >> X \\
    @V i VV @VV f V \\
    \Spec(R) @> p >> Y
  \end{CD}
  \]
  such that \(R\) is a DVR, \(K\) is the field of fractions of \(R\), and
  \(i\) is the natural morphism,
  there exists a morphism \(q:\Spec(R) \to X\) such that
  \(\eta = q\circ i\), and \(p = f\circ q\).
  Then \(f\) is proper.
\end{lem}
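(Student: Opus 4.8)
The plan is to establish properness by checking its three constituents separately. Since $f$ is separated and of finite presentation by hypothesis, and since finite presentation implies finite type, the only property that remains to be verified is that $f$ is universally closed. Thus the whole argument reduces to deducing universal closedness of the separated, quasi-compact, quasi-separated algebraic space $f\colon X\to Y$ from the given lifting property for discrete valuation rings.

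To prove universal closedness I would apply the valuative criterion for (quasi-compact, quasi-separated) algebraic spaces (cf.\ \cite{stacks-project}), which characterizes universal closedness by the existence part of the valuative criterion: for every valuation ring $A$ with fraction field $K\dfn \mathrm{Frac}(A)$ and every commutative square determined by $\Spec(K)\to X$ and $\Spec(A)\to Y$, there must exist---after possibly replacing $K$ by a field extension $K'$ and $A$ by a valuation ring $A'\subset K'$ dominating $A$---a lift $\Spec(A')\to X$. The hypothesis of the lemma provides exactly such lifts, but a priori only when the valuation ring is discrete; hence the substance of the proof lies in reducing the general valuative criterion to the case of discrete valuation rings.

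This reduction is where the hypotheses that $Y$ is stalkwise Noetherian and that $f$ is of finite presentation are used. Because $f$ is of finite presentation, only finitely many relations are involved in any instance of the criterion, so---exploiting that every local ring of $Y$ is Noetherian---the valuation ring $A$ may be replaced by a Noetherian valuation ring, i.e.\ by a field or a discrete valuation ring; concretely, \autoref{lem: DVR exists} manufactures, for each specialization $\xi\rsa x$ that must be realized, a discrete valuation ring mapping to $X$ with prescribed generic and closed images. Crucially, since the hypothesis supplies a lift for \emph{every} discrete valuation ring directly, with no extension of the fraction field required, it applies a fortiori to any field-extended discrete valuation ring arising in the reduction: if $R\subset R'$ is an extension of discrete valuation rings inducing $K\subset K'$, then $\Spec(K')\to \Spec(K)\to X$ together with $\Spec(R')\to Y$ is again a diagram of the type covered by the hypothesis, so the lift $\Spec(R')\to X$ exists. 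Assembling these steps yields the existence part of the valuative criterion for arbitrary valuation rings, hence universal closedness, and therefore properness of $f$; separatedness of $f$ moreover guarantees uniqueness of all lifts, confirming the criterion in its sharp form.

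The main obstacle will be precisely this passage from arbitrary valuation rings to discrete valuation rings while working only with the assumption that $Y$ is stalkwise Noetherian rather than locally Noetherian. The finite presentation of $f$ is what renders a limit/approximation argument feasible, and \autoref{lem: DVR exists} is tailored to produce the discrete valuation rings needed to detect the relevant specializations, so the heart of the work is to organize these two inputs into a valid reduction.
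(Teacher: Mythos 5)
Your overall strategy is the right one---properness reduces to universal closedness, which is to be extracted from the DVR lifting hypothesis via valuative criteria---but the proposal stops exactly where the proof has to happen. The passage asserting that ``the valuation ring $A$ may be replaced by a Noetherian valuation ring'' because ``only finitely many relations are involved'' is not an argument, and your appeal to \autoref{lem: DVR exists} does not fill it: that lemma produces DVRs witnessing specializations inside a \emph{stalkwise Noetherian} scheme, whereas the specializations one must lift to verify universal closedness live in arbitrary base changes $X\times_Y T$ (or in $T$ itself), which need not be stalkwise Noetherian. (Indeed, the paper uses \autoref{lem: DVR exists} in the proof of \autoref{lem: homeo Noeth}, not here.) As written, the reduction from arbitrary valuation rings to discrete ones over a base that is only stalkwise Noetherian is asserted rather than proved, and your closing paragraph concedes that this ``heart of the work'' remains to be organized.

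The paper closes this gap with a localization step you do not mention. Given a square over an arbitrary valuation ring $R$ with closed point $r$, set $y\dfn p(r)$ and $B\dfn \mathcal{O}_{Y,y}$; stalkwise Noetherianity makes $B$ Noetherian, and the square factors through the base change $f_B:X_B\to\Spec(B)$, which is a separated algebraic space of finite presentation over a \emph{Noetherian} scheme and inherits the DVR lifting property from $f$ (any DVR square for $f_B$ yields one for $f$, and the resulting lift factors through the fiber product). The Noetherian valuative criterion \cite[\href{https://stacks.math.columbia.edu/tag/0CMF}{Tag 0CMF}]{stacks-project} then shows that $f_B$ is \emph{proper}; properness of $f_B$ in turn supplies, via the unrestricted valuative criterion \cite[\href{https://stacks.math.columbia.edu/tag/0A40}{Tag 0A40}]{stacks-project}, the desired lift of the original square over the arbitrary $R$; and one final application of the same criterion gives properness of $f$. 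To salvage your more direct route you would have to actually prove the DVR reduction over a stalkwise Noetherian base, which essentially amounts to redoing this localization, so you should incorporate that step explicitly.
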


\begin{proof}
  Let
  \[
  \begin{CD}
    \Spec(K) @> \eta >> X \\
    @V i VV @VV f V \\
    \Spec(R) @> p >> Y
  \end{CD}
  \]
  be a commutative diagram
  such that \(R\) is a (not necessary discrete!) valuation ring,
  \(K\) is the field of fractions of \(R\), and
  \(i\) is the natural morphism.
  Write \(r\in \Spec(R)\) for the unique closed point,
  \(y\dfn p(r)\),
  \(B\dfn \mathcal{O}_{Y,y}\),
  \(j:\Spec(B) \to Y\) for the natural morphism,
  \(X_B\dfn X\times_{f,Y,j}B\),
  \(j_X:X_B \to X\) for the natural projection,
  \(p':\Spec(R)\to \Spec(B)\) for the local morphism induced by
  the local homomorphism \(B\to R\), and
  \(\eta':\Spec(K) \to X_B\) for the unique morphism such that
  \(j_X\circ \eta' = \eta\) and \(f_B\circ \eta' = p'\circ i\):
  \[
  \begin{CD}
    \Spec(K) @> \eta' >> X_B @>{j_X}>> X \\
    @V i VV @VV f_B V @VV f V \\
    \Spec(R) @> p' >> \Spec(B) @>{j}>> Y.
  \end{CD}
  \]
  Since \(Y\) is stalkwise Noetherian,
  \(B\) is Noetherian.
  Hence we conclude, by considering the case where \(R\) is a DVR, from
  \cite[\href{https://stacks.math.columbia.edu/tag/0CMF}{Tag 0CMF}]{stacks-project}
  that \(f_B\) is proper.
  This, in turn, implies,
  in the case where \(R\) is \textit{arbitrary} (i.e., not necessarily discrete),
  by \cite[\href{https://stacks.math.columbia.edu/tag/0A40}{Tag 0A40}]{stacks-project},
  that there exists a morphism \(q':\Spec(R) \to X_B\) such that
  \(\eta' = q'\circ i\), and \(p' = f_B\circ q'\).
  Thus, by
  \cite[\href{https://stacks.math.columbia.edu/tag/0A40}{Tag 0A40}]{stacks-project},
  \(f\) is proper.
  This completes the proof of \autoref{lem: val criteria st Noeth}.
\end{proof}

%%%%%%%ここから追加5.18

\begin{lem}\label{lem: homeo Noeth}
  Let \(S\) be a locally Noetherian scheme and
  \(f:X\to S\) an algebraic space over \(S\).
  Assume that the following conditions hold:
  \begin{enumerate}
    \item \label{enumi: lfp lem: homeo Noeth}
    \(f\) is locally of finite presentation.
    \item \label{enumi: val lem: homeo Noeth}
    For any morphism \(g:\Spec(R) \to S\), where \(R\) is a DVR or a field,
    there exists a unique morphism \(h:\Spec(R) \to X\)
    such that \(g = f\circ h\).
  \end{enumerate}
  Then the continuous map between underlying topological spaces
  \(|f|:|X|\to |S|\) is a homeomorphism
  (cf. \cite[\href{https://stacks.math.columbia.edu/tag/03BY}{Tag 03BY}]{stacks-project},
  \cite[\href{https://stacks.math.columbia.edu/tag/03BX}{Tag 03BX}]{stacks-project}).
  In particular,
  \(f\) is quasi-compact
  (cf. \cite[\href{https://stacks.math.columbia.edu/tag/03E4}{Tag 03E4}]{stacks-project}).
\end{lem}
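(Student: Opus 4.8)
The plan is to show that the continuous map $|f|$ is a bijection and is open, hence a homeomorphism; the final quasi-compactness assertion then follows immediately, since a homeomorphism carries quasi-compact opens to quasi-compact opens (as in the cited Tag 03E4). First I would establish bijectivity using only condition \ref{enumi: val lem: homeo Noeth}. For surjectivity, given a point $s\in|S|$ I apply \ref{enumi: val lem: homeo Noeth} to the canonical morphism $\Spec(k(s))\to S$ to obtain a lift $\Spec(k(s))\to X$, whose image is a point of $X$ lying over $s$. For injectivity, I would observe that the uniqueness asserted in \ref{enumi: val lem: homeo Noeth}, applied in the case where $R=K$ ranges over all fields, says exactly that $X(K)\to S(K)$ is injective for every field $K$; by the field-valued-point criterion for universally injective morphisms (cf. \cite{stacks-project}) this makes $f$ universally injective (radicial), and a radicial morphism is injective on underlying topological spaces.

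Next, since $|f|$ is a continuous bijection, to prove it is a homeomorphism it suffices to show $f(U)$ is open in $S$ for every open subspace $U\subset X$. As openness is local on $S$, I may assume $S$ is affine and Noetherian; and since $f(U)=\bigcup_\alpha f(U_\alpha)$ for any covering of $U$ by quasi-compact open subspaces, I reduce to the case where $U$ is quasi-compact. For such a $U$ I would choose finitely many affine schemes étale over $X$ whose images cover $U$; composing their structure morphisms with $f$ yields morphisms of schemes from affine (hence quasi-compact and quasi-separated) sources to the Noetherian scheme $S$, which are of finite presentation by condition \ref{enumi: lfp lem: homeo Noeth}. By Chevalley's theorem (\cite[Tag~054K]{stacks-project}) the image $f(U)$ is therefore constructible. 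Over a locally Noetherian scheme a constructible subset that is stable under generization is open (\cite[Tag~0542]{stacks-project}), so it remains only to verify that $f(U)$ is stable under generization.

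This last point, where conditions \ref{enumi: lfp lem: homeo Noeth} and \ref{enumi: val lem: homeo Noeth} interact with the bijectivity just established, is the heart of the argument. Given $s\in f(U)$ and a generization $s'\rsa s$ in $S$, I would invoke \autoref{lem: DVR exists} (applicable since $S$, being locally Noetherian, is stalkwise Noetherian) to realize this specialization by a morphism $g:\Spec(R)\to S$ from the spectrum of a DVR, whose generic point maps to $s'$ and whose closed point maps to $s$. Condition \ref{enumi: val lem: homeo Noeth} lifts $g$ uniquely to $h:\Spec(R)\to X$; writing $x'$ for the image of the generic point and using continuity of $h$, one obtains a specialization $x'\rsa x$, where $x$ is the unique (by bijectivity) preimage of $s$, which lies in $U$ because $s\in f(U)$. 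Since an open subspace is stable under generization, $x'\in U$, and hence $s'=f(x')\in f(U)$, as required.

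I expect the main obstacle to lie not in any single step but in the bookkeeping needed to pass between the algebraic space $X$ and a scheme presentation so as to legitimately invoke the scheme-theoretic Chevalley and constructibility results, together with the careful verification of the requisite finiteness hypotheses (finite presentation of the relevant morphisms over the Noetherian base) that license these results. Once $|f|$ is shown to be an open continuous bijection, it is a homeomorphism, and quasi-compactness of $f$ follows as indicated.
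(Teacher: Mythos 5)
Your proposal is correct and follows essentially the same route as the paper's proof: bijectivity of \(|f|\) from condition (ii), constructibility of the image via Chevalley and condition (i), and stability under generization via \autoref{lem: DVR exists} together with the DVR lifting property. The only (harmless) variation is in the final generization step, where you use injectivity of \(|f|\) to place the closed-point image \(h(v)\) inside the open \(U\) and then invoke stability of opens under generization, whereas the paper instead forms the fiber product \(U\times_X\Spec(R)\) and uses that the resulting étale morphism onto the spectrum of the DVR must hit the generic point.
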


\begin{proof}
  To prove \autoref{lem: homeo Noeth},
  we may assume without loss of generality that
  \(S\) is Noetherian.
  By condition \ref{enumi: val lem: homeo Noeth},
  the map between underlying sets \(|f|:|X|\to |S|\) is bijective.
  Hence to prove \autoref{lem: homeo Noeth},
  it suffices to prove that
  for any \'{e}tale morphism \(p:U\to X\) such that \(U\) is a quasi-compact scheme,
  \(\im(|f\circ p|)\subset |S|\) is an open subset.

  Let \(p:U\to X\) be an \'{e}tale morphism
  such that \(U\) is a quasi-compact scheme.
  Then, by condition \ref{enumi: lfp lem: homeo Noeth},
  \(f\circ p\) is of finite presentation.
  Hence, by \cite[\href{https://stacks.math.columbia.edu/tag/054J}{Tag 054J}]{stacks-project},
  \(\im(|f\circ p|)\subset |S|\) is a constructible subset.
  Thus, by \cite[\href{https://stacks.math.columbia.edu/tag/0542}{Tag 0542}]{stacks-project},
  to prove that \(\im(|f\circ p|)\subset |S|\) is open,
  it suffices to prove that
  \(\im(|f\circ p|)\subset |S|\) is stable under generization.

  Let \(u\in U\) be a point and
  \(\eta \in S\) a point such that
  \(\eta \rsa f(p(u))\).
  Then, by \autoref{lem: DVR exists},
  there exists
  a morphism \(g:\Spec(R) \to S\) such that
  \(R\) is a DVR, and \(\im(|g|) = \{\eta,f(p(u))\}\).
  By condition \ref{enumi: val lem: homeo Noeth},
  there exists a unique morphism \(h:\Spec(R) \to X\) such that
  \(g = f\circ h\).
  Write \(v\in \Spec(R)\) for the closed point,
  \(\xi\in \Spec(R)\) for the generic point,
  \(U'\) for the base-change of \(p:U\to X\) by \(h:\Spec(R) \to X\),
  \(q:U'\to \Spec(R)\) for the natural projection morphism, and
  \(h':U'\to U\) for the natural projection morphism:
  \[
  \begin{CD}
    U' @>{h'}>> U \\
    @V{q}VV @VV{p}V \\
    \Spec(R) @>{h}>> X.
  \end{CD}
  \]
  Since \(g = f\circ h\), and
  \(|f|:|X| \to |S|\) is a bijection,
  it holds that \(h(v) = p(u)\), hence that \(U'\neq \emptyset\).
  Since \(p:U\to X\) is \'{e}tale,
  \(q:U'\to \Spec(R)\) is \'{e}tale.
  Hence there exists a point \(\xi'\in U'\) such that
  \(q(\xi') = \xi\).
  Thus it holds that
  \[
  \eta = g(\xi) = f(h(q(\xi'))) = f(p(h'(\xi'))) \in \im(|f\circ p|).
  \]
  This implies that \(\im(|f\circ p|)\subset |S|\) is stable under generization,
  i.e., \(\im(|f\circ p|)\subset |S|\) is open.
  This completes the proof of \autoref{lem: homeo Noeth}.
\end{proof}

%%%%%%%ここまで追加5.18

\begin{defi}\label{defi: P1 like}
  Let \(S\) be a quasi-separated stalkwise Noetherian scheme;
  \(f:X\to S\) an object of \(\Schb{S}\);
  \(s_0,s_1,s_\infty:S\to X\) morphisms in \(\Schb{S}\).
  Suppose that \(\bbbullet = \bbullet \cup \left\{ \red\right\}\).
  Then we shall say that the collection of data \((X,s_0,s_1,s_\infty)\) is
  \textit{\(\mathbb{P}^1\)-like} in \(\Schb{S}\)
  if the following conditions hold:
  \begin{enumerate}
    \item \label{enumi: P1-like is red}
    \(X\) is reduced.
    \item \label{enumi: P1-like Noether proper}
    \(f\) is proper and of finite presentation.
    \item \label{enumi: P1-like Noether connected}
    For any object \(T\to S\) in \(\Schb{S}\)
    such that \(T\) is isomorphic to the spectrum of a field,
    \(X\times_S^{\bbullet} T\) is isomorphic as a \(T\)-scheme to \(\mathbb{P}^1_T\).
    %\item \label{enumi: P1-like Noether gen}
    %For any generic point \(\eta\in T\),
    %\(f^{-1}(\eta)\) is isomorphic to \(\mathbb{P}^1_{k(\eta)}\).
    \item \label{enumi: P1-like Noether section}
    For any \(i,j\in \{0,1,\infty\}\) such that \(i\neq j\),
    it holds that \(S\times_{s_i,X,s_j}^\bbullet S = \emptyset\).
    \item \label{enumi: P1-like Noether ring}
    For any \(i,j,k\in \{0,1,\infty\}\) such that
    \(\left\{i,j,k\right\} = \left\{0,1,\infty\right\}\),
    there exist an open immersion \(\iota: U\hookrightarrow X\) in \(\Schb{S}\),
    morphisms \(t_j: S\to U\), \(t_k:S\to U\) in \(\Schb{S}\), and
    a ring object structure on \(U\) in \(\Schc{S}\)
    such that
    \(t_j\) is the additive identity section,
    \(t_k\) is the multiplicative identity section,
    \(s_j = \iota\circ t_j\),
    \(s_k = \iota\circ t_k\), and
    \(\im(\mathsf{Sp}_{\bbullet/S}(\iota)) =
    \mathsf{Sp}_{\bbullet/S}(X) \setminus \im(\mathsf{Sp}_{\bbullet/S}(s_i))\)
    (cf. \autoref{reconstruction: Top}).
  \end{enumerate}
  Thus the property that
  the collection of data \((X,s_0,s_1,s_\infty)\) is \(\mathbb{P}^1\)-like in \(\Schb{S}\)
  is defined completely in terms of properties that
  may be characterized category-theoretically
  (cf. \autoref{lem: Spec field},
  \autoref{cor: red is cat},
  \autoref{cor: open imm is cat},
  \autoref{reconstruction: Top},
  \autoref{cor: proper is cat},
  \autoref{lem: P1 over field})
  from the data \((\Schb{S},X,s_0,s_1,s_\infty)\).
\end{defi}

%%%%%%3,13を入れて4,5と15のステートメントを変更
%%%%%%それ以外は特になし
%

\begin{prop}\label{prop: P1 over Noether}
  Let \(S\) be a locally Noetherian
  (hence quasi-separated ---
  cf. \cite[\href{https://stacks.math.columbia.edu/tag/01OY}{Tag 01OY}]{stacks-project})
  normal scheme. %%%loc Noethに変えた
  Let \(X\) be an object of \(\Schb{S}\).
  Then \(X\) is isomorphic to \(\mathbb{P}^1_S\) as an \(S\)-scheme
  if and only if the following conditions hold:
  \begin{enumerate}
    \item \label{enumi: P^1 is P^1 like}
    There exist morphisms \(s_0,s_1,s_\infty:S\to X\) in \(\Schb{S}\)
    such that
    the collection of data \((X,s_0,s_1,s_\infty)\)
    is \(\mathbb{P}^1\)-like in \(\Schb{S}\).
    \item \label{enumi: P^1 univ}
    For any \(\mathbb{P}^1\)-like collection of data
    \((Y,t_0,t_1,t_\infty)\) in \(\Schb{S}\),
    there exists a unique closed immersion \(h:X\to Y\) in \(\Schb{S}\)
    such that for each \(i\in \{0,1,\infty\}\), \(h\circ s_i = t_i\).
  \end{enumerate}
  In particular,
  the property that \(X\) is isomorphic to \(\mathbb{P}^1_S\) as an \(S\)-scheme
  may be characterized category-theoretically
  (cf. \autoref{prop: closed imm is cat}, \autoref{defi: P1 like})
  from the data \((\Schb{S},X)\).
\end{prop}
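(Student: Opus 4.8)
We want to characterize $\mathbb{P}^1_S$ among objects of $\Schb{S}$ via the notion of $\mathbb{P}^1$-likeness (Definition~\ref{defi: P1 like}) together with a universal property. The key point is that $\mathbb{P}^1$-like data in $\Schb{S}$ need not a priori have $X$ isomorphic to $\mathbb{P}^1_S$: condition~\ref{enumi: P1-like Noether connected} only guarantees that the fibers over all points are projective lines, so $X$ is a proper flat curve-fibration whose geometric fibers are all $\mathbb{P}^1$, but possibly with nontrivial twisting or non-isomorphic global structure. The proposition asserts that $\mathbb{P}^1_S$ is precisely the terminal (via closed immersions respecting the three sections) object among all such data.

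**Plan of proof.** The proof will have two directions. For necessity, I would first verify that $(\mathbb{P}^1_S, s_0, s_1, s_\infty)$, with the three standard sections $0,1,\infty$, is itself $\mathbb{P}^1$-like — this is essentially a routine check of conditions~\ref{enumi: P1-like is red}--\ref{enumi: P1-like Noether ring}, using that $S$ is normal and $\mathbb{A}^1_S \subset \mathbb{P}^1_S$ carries its natural ring-scheme structure (for condition~\ref{enumi: P1-like Noether ring}). The substantive part of necessity is condition~\ref{enumi: P^1 univ}: given any $\mathbb{P}^1$-like $(Y, t_0, t_1, t_\infty)$, I must produce a unique closed immersion $h:\mathbb{P}^1_S \to Y$ sending $s_i \mapsto t_i$. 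The natural strategy is to use the representing algebraic spaces $\mathrm{Cl}_f(X,Y)$ from Definition~\ref{defi: Cl f} and Lemma~\ref{lem: representable by AS}: take $X = \mathbb{P}^1_S$, let $B = s_0(S)\amalg s_1(S) \amalg s_\infty(S)$ be the closed subscheme given by the three sections, and $f:B \to Y$ the map determined by $t_0,t_1,t_\infty$; then $\mathrm{Cl}_f(\mathbb{P}^1_S, Y)$ is a separated algebraic space locally of finite presentation over $S$ whose $T$-points, for $T = \Spec(k)$ a field, are nonempty and in fact a single point, since an isomorphism $\mathbb{P}^1_k \xrightarrow{\sim} Y_k$ fixing three distinct marked points is unique (the automorphism group of $\mathbb{P}^1$ acts simply transitively on ordered triples of distinct points). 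This forces $\mathrm{Cl}_f(\mathbb{P}^1_S, Y) \to S$ to be a monomorphism that is bijective on all fibers.

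**The main obstacle.** The crux is upgrading this fiberwise-isomorphism statement to an actual $S$-morphism, i.e.\ showing $\mathrm{Cl}_f(\mathbb{P}^1_S, Y) \to S$ is an isomorphism. Here I would invoke the valuative/homeomorphism machinery: Lemma~\ref{lem: DVR exists}, Lemma~\ref{lem: val criteria st Noeth}, and especially Lemma~\ref{lem: homeo Noeth}. The plan is to check that $\mathrm{Cl}_f(\mathbb{P}^1_S, Y) \to S$ satisfies the valuative criterion of condition~\ref{enumi: val lem: homeo Noeth} of Lemma~\ref{lem: homeo Noeth}: for a DVR $R$ with fraction field $K$, an isomorphism $\mathbb{P}^1_K \cong Y_K$ matching the marked points extends uniquely over $R$ because $Y \to S$ is proper (so the valuative criterion of properness produces the extension of each of the two inverse morphisms) and normality of $S$ together with the fiberwise-$\mathbb{P}^1$ hypothesis ensures the extension is again an isomorphism. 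Lemma~\ref{lem: homeo Noeth} then yields that $|\mathrm{Cl}_f| \to |S|$ is a homeomorphism and the morphism is quasi-compact; combined with being a locally-finitely-presented monomorphism (hence unramified and, by the fiberwise bijectivity, \'etale of degree one), it is an open immersion that is also surjective, hence an isomorphism. The universal section $S \to \mathrm{Cl}_f(\mathbb{P}^1_S, Y)$ is then exactly the sought closed immersion $h$, and its uniqueness is built into the representability. I expect the delicate technical points to be (i) confirming that the extended morphism over $R$ remains an \emph{isomorphism} rather than merely a closed immersion — this is where $S$ normal and the flatness of $Y\to S$ with $\mathbb{P}^1$ fibers must be used, via purity/Zariski's main theorem type arguments — and (ii) matching the section-data hypotheses in condition~\ref{enumi: P1-like Noether section} (disjointness of the three sections) to guarantee that $B \to Y$ is indeed a closed immersion so that Lemma~\ref{lem: representable by AS} applies.

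**Sufficiency.** For the converse, suppose $X$ admits $\mathbb{P}^1$-like data and satisfies the universal property~\ref{enumi: P^1 univ}. Since $\mathbb{P}^1_S$ itself is $\mathbb{P}^1$-like (by the necessity computation) and satisfies the same universal property, I would apply~\ref{enumi: P^1 univ} in both directions: the universal property for $X$ yields a closed immersion $\mathbb{P}^1_S \to X$ compatible with sections, and the universal property for $\mathbb{P}^1_S$ yields a closed immersion $X \to \mathbb{P}^1_S$; composing and invoking the uniqueness clauses (so both composites are the respective identities) shows these are mutually inverse isomorphisms. Thus $X \cong \mathbb{P}^1_S$ as $S$-schemes. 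The final category-theoretic characterization claim then follows formally, since $\mathbb{P}^1$-likeness and closed immersions are category-theoretically characterizable by the cited earlier results (Proposition~\ref{prop: closed imm is cat} and Definition~\ref{defi: P1 like}).
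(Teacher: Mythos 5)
Your overall architecture matches the paper's: represent the closed immersions matching the three sections by the algebraic space \(\mathrm{Cl}_t(\mathbb{P}^1_S,Y)\) of \autoref{lem: representable by AS}, show its field-valued fibers are singletons by three-point rigidity of \(\Aut(\mathbb{P}^1)\), feed DVR-valued points into \autoref{lem: homeo Noeth} and \autoref{lem: val criteria st Noeth}, finish with normality of \(S\) (finite birational onto normal), and prove sufficiency by the formal double-universal-property argument. However, there is a genuine gap at exactly the step you flag as ``delicate'': producing the \(R\)-valued point of \(\mathrm{Cl}_t\) over a DVR \(R\). The valuative criterion of properness for \(Y\to S\) extends \emph{points} of \(Y\) valued in valuation rings; it does not extend a closed immersion \(\mathbb{P}^1_K\to Y_K\) to a closed immersion \(\mathbb{P}^1_R\to Y_R\). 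The actual content of that step is the assertion \(Y_{R,\red}\cong\mathbb{P}^1_R\), i.e.\ that the family admits no degeneration or twisting over the DVR, and no ``purity/ZMT'' argument applied to \(Y\to S\) yields this --- note in particular that \(Y\to S\) is not assumed flat, so one must first show that the schematic closure of the generic fiber equals all of \(Y_{R,\red}\) (the paper does this via the dimension bound of Tag 0D4J and reducedness).

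The paper's proof of \(Y_{R,\red}\cong\mathbb{P}^1_R\) is where condition \ref{enumi: P1-like Noether ring} of \autoref{defi: P1 like} --- which your necessity argument never invokes --- earns its keep: the ring \emph{object} structures on the complements \(Y\setminus\im(t_i)\) base-change to ring object structures on the corresponding open subschemes of \(Z\dfn Y_{R,\red}\); \autoref{cor: group over DVR} converts these into honest ring \emph{scheme} structures over the DVR (using integrality of \(Z\) and geometric reducedness of the generic fiber); \autoref{lem: A1 over DVR} (resting on the nontrivial \autoref{lem: A1 is abs min}) then identifies each such ring scheme with \(\mathbb{A}^1_V\), and gluing two of them recovers \(\mathbb{P}^1_V\). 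Without this input the proposition is not provable from conditions \ref{enumi: P1-like is red}--\ref{enumi: P1-like Noether section} alone. A secondary flaw: your claim that \(\mathrm{Cl}_t\to S\) is ``\'etale of degree one'' because it is an unramified, fiberwise bijective monomorphism omits flatness, which is never established (compare \(\Spec(k)\to\Spec(k[\epsilon])\), a bijective finitely presented monomorphism that is not \'etale); the paper instead deduces finiteness from properness together with Tag 0A4X and then uses that a finite morphism from a reduced scheme onto the normal scheme \(S\) that is an isomorphism over the generic points is an isomorphism.
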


\begin{proof}
  First, we prove necessity.
  Assume that \(X\cong \mathbb{P}^1_S\).
  For any \(i \in \{0,1,\infty\}\),
  write \(s_i:S\to X\) for the morphism in \(\Schb{S}\)
  obtained by base-changing the sections
  \(0,1,\infty: \Spec(\mathbb{Z}) \to \mathbb{P}^1_{\mathbb{Z}}\).
  Then it follows immediately from
  \autoref{lem: fiber product bbullet}
  \ref{enumi: fiber product bbullet not red} \ref{enumi: fiber product bbullet red}
  that the collection of data
  \((X,s_0,s_1,s_\infty)\) satisfies
  conditions
  \ref{enumi: P1-like is red},
  \ref{enumi: P1-like Noether proper},
  \ref{enumi: P1-like Noether connected}, and
  %\ref{enumi: P1-like Noether gen}
  \ref{enumi: P1-like Noether section} of \autoref{defi: P1 like}.
  Moreover, since for any \(i\in \{0,1,\infty\}\),
  \(\mathbb{A}^1_S \cong X\setminus s_i\),
  it follows immediately that
  the collection of data \((X,s_0,s_1,s_\infty)\) satisfies
  condition \ref{enumi: P1-like Noether ring} of \autoref{defi: P1 like}.
  Hence \(X\) satisfies condition
  \ref{enumi: P^1 is P^1 like} of \autoref{prop: P1 over Noether}.

  Next, we verify that \(X\) satisfies condition
  \ref{enumi: P^1 univ} of \autoref{prop: P1 over Noether}.
  Let \((Y, t_0,t_1,t_\infty)\) be
  a \(\mathbb{P}^1\)-like collection of data in \(\Schb{S}\).
  Write
  \begin{itemize}
    \item \(B\dfn S\coprod S\coprod S\),
    \item
    \(s \dfn s_0\coprod s_1\coprod s_\infty: B \to X\)
    for the closed immersion,
    \item
    \(t \dfn t_0\coprod t_1\coprod t_\infty: B \to Y\)
    for the closed immersion,
    \item \(C\dfn \mathrm{Cl}_t(X,Y)\),
    where we regard \(B\) as a closed subscheme of \(X\) by means of \(s\), and
    we take ``\(T\)'' to be \(S\)
    (cf. \autoref{defi: Cl f}, \autoref{lem: representable by AS}), and
    \item \(p:C\to S\) for the structure morphism.
  \end{itemize}
  Then, to verify that \(X\) satisfies condition
  \ref{enumi: P^1 univ} of \autoref{prop: P1 over Noether},
  it suffices to prove that
  the set of \(S\)-valued points \(C(S)\) is of cardinality \(1\).
  Since \(S\) is normal, the injection \(C_{\red}(S) \hookrightarrow C(S)\) is surjective.
  Hence, in particular,
  to prove that \(C(S)\) is of cardinality \(1\),
  it suffices to prove that
  the composite \(C_{\red} \hookrightarrow C \xrightarrow{p} S\) is an isomorphism.
  By \autoref{lem: representable by AS},
  \(p\) is a morphism of algebraic spaces
  which is separated and locally of finite presentation.
  Let \(T\to S\) be a morphism such that
  \(T\) is isomorphic to the spectrum of a field.
  Then, by \autoref{lem: affine over qsep is qcpt}
  \ref{enumi: lem: aff over qsep is qcpt qc} \ref{enumi: lem: aff over qsep is qcpt sep},
  \(T\to S\) belongs to \(\Schb{S}\).
  Moreover, by \autoref{lem: fiber product bbullet}
  \ref{enumi: fiber product bbullet not red} \ref{enumi: fiber product bbullet red} and
  \autoref{defi: P1 like} \ref{enumi: P1-like Noether connected},
  \(C(T)\) is of cardinality \(1\).
  In particular, by allowing \(T\) to vary,
  we conclude that \(\dim C_T = 0\),
  where we write \(C_T\dfn C\times_S T\).
  Since the natural projection \(p_T: C_T\to T\) is locally of finite presentation,
  it follows from
  \cite[\href{https://stacks.math.columbia.edu/tag/06LZ}{Tag 06LZ}]{stacks-project}
  that \(C_T\) is a scheme.
  Hence, since \(C(T)\) is of cardinality \(1\) for arbitrary \(T\),
  we conclude that for arbitrary \(T\),
  the composite \(C_{T,\red} \to C_T \xrightarrow{p_T} T\) is an isomorphism.

  Next, we prove the following assertion:
  \begin{enumerate}[label=(\fnsymbol*),start=2]
    \item \label{enumi: prop: P1 over Noether DVR valued point}
    For any morphism \(\Spec(R) \to S\) such that \(R\) is a DVR, %%%ここたした5.18
    the set of \(R\)-valued points \(C(R)\) is of cardinality \(1\). %%%ここたした5.18
    %For any object \(V = \Spec(R)\) in \(\Schb{S}\) such that \(R\) is a DVR,%%%ここけした5.18
    %the set of \(V\)-valued points \(C(V)\) is of cardinality \(1\).%%%ここけした5.18
  \end{enumerate}
  (Here, we observe that it follows formally,
  by considering the ring of formal power series in one variable over a field,
  from \ref{enumi: prop: P1 over Noether DVR valued point} that
  \ref{enumi: prop: P1 over Noether DVR valued point} continues to hold
  even if the condition ``\(R\) is a DVR'' is replaced by
  the condition ``\(R\) is a field''.)
  Let \(\Spec(R) \to S\) be a morphism of schemes such that \(R\) is a DVR. %%%ここたした5.18
  Write \(V\dfn \Spec(R)\). %%%ここたした
  Then, by \autoref{lem: affine over qsep is qcpt} %%%ここたした5.18
  \ref{enumi: lem: aff over qsep is qcpt qc} %%%ここたした5.18
  \ref{enumi: lem: aff over qsep is qcpt sep}, %%%ここたした5.18
  \(V\to S\) belongs to \(\Schb{S}\). %%%ここたした5.18
  %Let \(V = \Spec(R)\) be an object in \(\Schb{S}\) such that \(R\) is a DVR.%%%ここけした5.18
  Since \(X\cong \mathbb{P}^1_S\),
  to prove assertion \ref{enumi: prop: P1 over Noether DVR valued point},
  it suffices to prove that \(Y_{V,\red}\cong \mathbb{P}^1_V\).
  Suppose that \(\bbbullet = \bbullet \cup \left\{ \red\right\}\).
  Write \(\eta\) for the generic point of \(V\) and
  \(v\) for the closed point of \(V\).
  By \autoref{lem: fiber product bbullet}
  \ref{enumi: fiber product bbullet red},
  the fiber product \(Z\dfn Y\times^{\bbbullet}_S V\) exists in \(\Schc{S}\)
  and is naturally isomorphic to \(Y_{V,\red}\). %%%%%4/13 ここまで
  By \autoref{defi: P1 like} \ref{enumi: P1-like Noether proper},
  \(Y\) is proper over \(S\).
  Hence the projection morphism \(Z\to V\) is proper.
  By \autoref{defi: P1 like} \ref{enumi: P1-like Noether connected},
  it holds that \(Z\times^{\bbbullet}_V \Spec(k(\eta)) \cong \mathbb{P}^1_{k(\eta)}\), and
  \(Z\times^{\bbbullet}_V \Spec(k(v))\cong \mathbb{P}^1_{k(v)}\).
  Write \(\overline{Z}_{\eta}\subset Z\) for the schematic closure of
  the generic fiber of the natural morphism \(Z\to V\).
  Thus, since \(V\) is the spectrum of a DVR,
  the natural morphism \(\overline{Z}_{\eta}\to V\) is proper and flat.
  Hence, by
  \cite[\href{https://stacks.math.columbia.edu/tag/0D4J}{Tag 0D4J}]{stacks-project},
  the dimension of the special fiber of
  the natural morphism \(\overline{Z}_{\eta}\to V\) is one.
  Since
  \(Z\times^{\bbbullet}_V \Spec(k(v))\cong \mathbb{P}^1_{k(v)}\),
  the morphism of underlying topological spaces
  \(|\overline{Z}_{\eta}| \to |Z|\) is a homeomorphism.
  Thus, since \(Z\) is reduced,
  the closed immersion \(\overline{Z}_{\eta} \xrightarrow{\sim} Z\)
  is an isomorphism.
  In particular, \(Z\) is integral.

  Since \(V\) is reduced, and \(Z\cong Y_{V,\red}\),
  for each \(i\in \{0,1,\infty\}\),
  there exists a unique section \(s_{i,V}':V\to Z\) of the projection \(Z\to V\)
  such that the composite of \(s_{i,V}'\) with the closed immersion \(Z\subset Y_V\)
  is \(s_{i,V}:V\to Y_V\).
  Next, observe that by \autoref{lem: fiber product bbullet}
  \ref{enumi: fiber product bbullet red},
  for each \(i\in \{0,1,\infty\}\), the natural morphism
  \(Z\setminus \im(s_{i,V}') \xrightarrow{\sim}%
  (Y\setminus \im(s_i))\times_S^\bbbullet V\)
  is an isomorphism.
  Hence, by \autoref{defi: P1 like}
  \ref{enumi: P1-like Noether section}
  \ref{enumi: P1-like Noether ring},
  for any \(i,j,k\in \{0,1,\infty\}\) such that
  \(\{i,j,k\} = \{0,1,\infty\}\),
  \(Z\setminus \im(s_{i,V}')\) has a ring object structure in \((\Schc{S})_{/V}\)
  such that (the morphism \(V\to Z\setminus \im(s_{i,V}')\) determined by)
  \(s_{j,V}'\) is the multiplicative identity section, and
  (the morphism \(V\to Z\setminus \im(s_{i,V}')\) determined by)
  \(s_{k,V}'\) is the additive identity section.
  Since \(Z\times^{\bbbullet}_V k(\eta) \cong \mathbb{P}^1_{k(\eta)}\),
  for each \(i\in \{0,1,\infty\}\),
  it holds that
  \((Z\setminus \im(s_{i,V}'))_\eta \cong \mathbb{A}^1_{k(\eta)}\)
  (as \(k(\eta)\)-schemes).
  In particular,
  for each \(i\in \{0,1,\infty\}\),
  \((Z\setminus \im(s_{i,V}'))_\eta\) is geometrically reduced over \(k(\eta)\).
  Moreover, since \(Z\to V\) is proper,
  \(Z\) is Noetherian.
  In particular, for any \(i\in \{0,1,\infty\}\),
  \(Z\setminus \im(s_{i,V}')\) is quasi-compact over \(\mathbb{Z}\).
  Hence, since \(Z\) is integral,
  it follows from \autoref{cor: group over DVR} that
  for any \(i,j,k\in \{0,1,\infty\}\) such that
  \(\{i,j,k\} = \{0,1,\infty\}\),
  the ring object structure on \(Z\setminus \im(s_{i,V}')\) in \((\Schc{S})_{/V}\)
  induces a ring scheme structure over \(V\) on \(Z\setminus \im(s_{i,V}')\)
  such that (the morphism \(V\to Z\setminus \im(s_{i,V}')\) determined by)
  \(s_{j,V}'\) is the multiplicative identity section, and
  (the morphism \(V\to Z\setminus \im(s_{i,V}')\) determined by)
  \(s_{k,V}'\) is the additive identity section.

  For each \(i\in \{0,1,\infty\}\),
  since \(Z\setminus \im(s_{i,V}')\) is a flat separated ring scheme
  of finite type over \(V\),
  \((Z\setminus \im(s_{i,V}'))_{v,\red} \cong \mathbb{A}^1_{k(v)}\)
  (as \(k(v)\)-schemes)
  is one-dimensional and connected, and
  \((Z\setminus \im(s_{i,V}'))_\eta \cong \mathbb{A}^1_{k(\eta)}\)
  (as ring schemes over \(k(\eta)\)),
  it follows from \autoref{lem: A1 over DVR} that
  the ring scheme \(Z\setminus \im(s_{i,V}')\) over \(V\)
  is isomorphic as a ring scheme over \(V\)
  to the projection \(\mathbb{A}^1_V\to V\).
  Moreover, for any \(i,j,k\in \{0,1,\infty\}\)
  such that \(\{i,j,k\} = \{0,1,\infty\}\),
  since \(s_{j,V}'\) is the multiplicative identity section
  of the ring scheme \(Z\setminus \im(s_{i,V}')\) over \(V\), and
  \(s_{k,V}'\) is the additive identity section
  of the ring scheme \(Z\setminus \im(s_{i,V}')\) over \(V\),
  by observing that \(Z\) is naturally isomorphic to the
  \(V\)-scheme obtained by gluing together
  \(Z\setminus \im(s_{i,V}')\) and \(Z\setminus \im(s_{k,V}')\)
  along \(Z\setminus (\im(s_{i,V}')\cup \im(s_{k,V}'))\),
  we conclude that \(Z\cong \mathbb{P}^1_V\).
  Thus, in particular, \(C(V)\) is of cardinality \(1\),
  i.e., assertion
  \ref{enumi: prop: P1 over Noether DVR valued point} holds.

  By \autoref{lem: homeo Noeth} and
  assertion \ref{enumi: prop: P1 over Noether DVR valued point},
  \(p:C\to S\) is a (necessarily quasi-compact) homeomorphism.
  Hence, by \autoref{lem: representable by AS},
  \autoref{lem: val criteria st Noeth}, and
  assertion \ref{enumi: prop: P1 over Noether DVR valued point},
  \(p:C\to S\) is proper.
  Moreover, by
  \cite[\href{https://stacks.math.columbia.edu/tag/0A4X}{Tag 0A4X}]{stacks-project} and
  assertion \ref{enumi: prop: P1 over Noether DVR valued point},
  \(p:C\to S\) is finite.
  In particular, since \(S\) is a scheme,
  \(C\) is a finite \(S\)-scheme (cf. \cite[\href{https://stacks.math.columbia.edu/tag/03ZP}{Tag 03ZP}]{stacks-project}),
  hence may be written in the form \(\Spec_S(p_*\mathcal{O}_C)\).
  By \autoref{defi: P1 like} \ref{enumi: P1-like Noether connected},
  for any generic point \(\eta\in S\),
  it holds that \(C_{\eta} \cong \Spec(k(\eta))\).
  Since
  \begin{itemize}
    \item \(S\) is normal,
    \item \(\mathcal{O}_S, p_*\mathcal{O}_{C_{\red}}\) are subsheaves
    of the sheaf of rational functions \(\mathcal{K}_S\) on \(S\), and
    \item
    the morphism of sheaves of rings
    \(\mathcal{O}_S\to p_*\mathcal{O}_{C_{\red}}\)
    is finite and compatible with
    the inclusions \(\mathcal{O}_S, p_*\mathcal{O}_{C_{\red}} \subset \mathcal{K}_S\),
  \end{itemize}
  the composite
  \(C_{\red}\to C \xrightarrow{p} S\) is an isomorphism.
  Thus \(C(S)\) is of cardinality \(1\), i.e.,
  \(X\) satisfies condition \ref{enumi: P^1 univ} of \autoref{prop: P1 over Noether}.
  This completes the proof of necessity.

  Next, we prove sufficiency.
  Assume that \(X\) satisfies conditions
  \ref{enumi: P^1 is P^1 like} and \ref{enumi: P^1 univ}
  of \autoref{prop: P1 over Noether}.
  Write \(Y\dfn \mathbb{P}^1_S\) and
  for each \(i\in \{0,1,\infty\}\),
  \(t_i:S\to Y\) for the morphism in \(\Schb{S}\)
  obtained by base-change of the sections
  \(0,1,\infty: \Spec(\mathbb{Z}) \to \mathbb{P}^1_{\mathbb{Z}}\).
  By the necessity portion of \autoref{prop: P1 over Noether},
  \((Y,t_0,t_1,t_\infty)\) satisfies condition
  \ref{enumi: P^1 univ} of \autoref{prop: P1 over Noether}.
  In particular, since \(X\) satisfies condition
  \ref{enumi: P^1 is P^1 like} of \autoref{prop: P1 over Noether},
  there exists a unique closed immersion
  \(h_1:Y\to X\) in \(\Schb{S}\) such that
  for any \(i\in \{0,1,\infty\}\), \(h_1\circ t_i = s_i\).
  By the necessity portion of \autoref{prop: P1 over Noether},
  \((Y,t_0,t_1,t_\infty)\) satisfies condition
  \ref{enumi: P^1 is P^1 like} of \autoref{prop: P1 over Noether}.
  In particular,
  since \(X\) satisfies condition \ref{enumi: P^1 univ} of \autoref{prop: P1 over Noether},
  there exists a unique closed immersion
  \(h_2:X\to Y\) in \(\Schb{S}\) such that
  for any \(i\in \{0,1,\infty\}\), \(h_2\circ s_i = t_i\).
  Then it holds that \(h_1\circ h_2\circ s_i = s_i\), and
  \(h_2\circ h_1 \circ t_i = t_i\).
  Since \((X,s_0,s_1,s_{\infty})\) satisfies conditions
  \ref{enumi: P^1 is P^1 like} \ref{enumi: P^1 univ} of \autoref{prop: P1 over Noether},
  it follows from
  the uniqueness portion of condition \ref{enumi: P^1 univ} of \autoref{prop: P1 over Noether}
  that \(h_1\circ h_2 = \id_X\).
  Since \((Y,t_0,t_1,t_{\infty})\) satisfies conditions
  \ref{enumi: P^1 is P^1 like} \ref{enumi: P^1 univ} of \autoref{prop: P1 over Noether},
  it follows from
  the uniqueness portion of condition \ref{enumi: P^1 univ} of \autoref{prop: P1 over Noether} that
  \(h_2\circ h_1 = \id_Y\).
  This implies that \(X\cong Y\).
  This completes the proof of \autoref{prop: P1 over Noether}.
\end{proof}

\begin{cor}\label{cor: A1 over dugger}
  Let \(S\) be a locally Noetherian normal scheme.
  Let \(X\) be a ring object of \(\Schb{S}\).
  Write \(0_X:S\to X, 1_X:S\to X\) for the additive and multiplicative identity sections.
  Then \(X\) is isomorphic as a ring scheme over \(S\) to \(\mathbb{A}^1_S\)
  if and only if \(S\btimes_{0_X,X,1_X} S = \emptyset\), and
  there exist an open immersion
  \(\iota:X \to \mathbb{P}^1_S\) in \(\Schb{S}\) and
  sections \(s_0,s_1,s_{\infty}:S\to \mathbb{P}^1_S\)
  such that the collection of data \((\mathbb{P}^1_S,s_0,s_1,s_{\infty})\) is \(\mathbb{P}^1\)-like, and
  \(\im(\mathsf{Sp}_{\bbullet/S}(\iota)) =
  \mathsf{Sp}_{\bbullet/S}(\mathbb{P}^1_S) \setminus \im(\mathsf{Sp}_{\bbullet/S}(s_{\infty}))\).
  In particular, the property that
  \(X\) is isomorphic as a ring scheme over \(S\) to \(\mathbb{A}^1_S\)
  may be characterized category-theoretically
  (cf.
  \autoref{cor: open imm is cat},
  \autoref{reconstruction: Top},
  \autoref{prop: P1 over Noether})
  from the data \((\Schb{S}, X)\).
\end{cor}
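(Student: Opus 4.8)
The plan is to prove the two directions of the equivalence separately and then observe that every hypothesis involved is category-theoretic. For \emph{necessity}, suppose $X\cong \mathbb{A}^1_S$ as a ring scheme, with $0_X,1_X$ the standard additive and multiplicative identity sections. I would take $\iota:X=\mathbb{A}^1_S\hookrightarrow \mathbb{P}^1_S$ to be the standard open immersion and $s_0,s_1,s_\infty:S\to\mathbb{P}^1_S$ to be the sections $0,1,\infty$ obtained by base-change from $\Spec(\mathbb{Z})$. The necessity portion of \autoref{prop: P1 over Noether} shows precisely that $(\mathbb{P}^1_S,s_0,s_1,s_\infty)$ is $\mathbb{P}^1$-like, and by construction $\im(\mathsf{Sp}_{\bbullet/S}(\iota)) = \mathsf{Sp}_{\bbullet/S}(\mathbb{P}^1_S)\setminus\im(\mathsf{Sp}_{\bbullet/S}(s_\infty))$. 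Finally, since $0$ and $1$ are disjoint sections of $\mathbb{A}^1_S$, the fiber product $S\btimes_{0_X,X,1_X}S$ is empty (cf. \autoref{cor: btimes to times is surj cl imm}).

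For \emph{sufficiency}, the first and main step is to show that $U\dfn \mathbb{P}^1_S\setminus\im(s_\infty)$ is isomorphic to $\mathbb{A}^1_S$ as a scheme over $S$. Here I would invoke condition \ref{enumi: P1-like Noether ring} of \autoref{defi: P1 like} with $(i,j,k)=(\infty,0,1)$: it equips $U$ with a ring object structure in $\Schc{S}$ whose additive and multiplicative identity sections are $s_0$ and $s_1$. To turn this into the assertion $U\cong\mathbb{A}^1_S$, I would repeat the argument already carried out in the proof of \autoref{prop: P1 over Noether}: base-change along the local morphisms $\Spec(\mathcal{O}_{S,\eta})\to S$ at the codimension-one (hence DVR) points $\eta$ of the normal scheme $S$, use \autoref{lem: geom red} and \autoref{cor: group over DVR} to upgrade the ring object structure to an honest ring scheme structure, and then apply \autoref{lem: A1 over DVR} to conclude that $U\times_S\Spec(\mathcal{O}_{S,\eta})\cong\mathbb{A}^1_{\mathcal{O}_{S,\eta}}$. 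Since $S$ is normal, these local identifications glue to a global one exactly as in the closing paragraph of the proof of \autoref{prop: P1 over Noether}, yielding $U\cong\mathbb{A}^1_S$ as a scheme over $S$.

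The second step transports the \emph{given} ring structure on $X$. Since $\iota$ and the open immersion $U\hookrightarrow\mathbb{P}^1_S$ have the same image, they identify $X\cong U\cong\mathbb{A}^1_S$ as $S$-schemes. Under this identification $0_X,1_X$ correspond to two global sections of $\mathbb{A}^1_S\to S$, i.e. to elements $a,b\in\Gamma(S,\mathcal{O}_S)$; the hypothesis $S\btimes_{0_X,X,1_X}S=\emptyset$ forces $b-a$ to be nowhere vanishing (cf. \autoref{cor: btimes to times is surj cl imm}), hence a unit. Therefore the affine transformation $t\mapsto (b-a)t+a$ is an $S$-automorphism of $\mathbb{A}^1_S$ carrying the standard pair $(0,1)$ to $(0_X,1_X)$; pulling back the given ring structure along it produces a ring scheme structure on $\mathbb{A}^1_S$ with additive identity $0$ and multiplicative identity $1$, which by \autoref{lem: ring str A^1 unique} \ref{enumi: any A^1 lem: ring str A^1 unique} must be the standard one. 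Hence $X\cong\mathbb{A}^1_S$ as a ring scheme. The category-theoretic claim then follows because every hypothesis in the displayed characterization is category-theoretic: emptiness of $S\btimes_{0_X,X,1_X}S$ is expressed via fiber products and the initial object; the existence of $\iota$ and the condition on its image are category-theoretic by \autoref{cor: open imm is cat} and \autoref{reconstruction: Top}; and the existence of a $\mathbb{P}^1$-like tuple $(\mathbb{P}^1_S,s_0,s_1,s_\infty)$ is category-theoretic by \autoref{defi: P1 like} together with \autoref{prop: P1 over Noether}.

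The hard part will be the first step of sufficiency, namely ruling out the possibility that $\mathbb{P}^1_S\setminus\im(s_\infty)$ is a nontrivially twisted $\mathbb{A}^1$-bundle: the complement of a section of $\mathbb{P}^1_S\to S$ need not be $\mathbb{A}^1_S$ in general (its underlying line-bundle torsor may be nontrivial). It is precisely the ring structure supplied by the $\mathbb{P}^1$-like data, fed through the DVR analysis of \autoref{lem: A1 over DVR} and the normality of $S$, that forces this twist to vanish.
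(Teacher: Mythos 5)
Your overall architecture agrees with the paper's: the necessity direction is handled the same way, and your final transport-of-structure step (using $S\btimes_{0_X,X,1_X}S=\emptyset$ to get $\im(0_X)\cap\im(1_X)=\emptyset$, producing an $S$-isomorphism $X\xrightarrow{\sim}\mathbb{A}^1_S$ matching the identity sections with $0,1$, and then invoking \autoref{lem: ring str A^1 unique} \ref{enumi: any A^1 lem: ring str A^1 unique}) is exactly what the paper does. The gap is in the step you yourself flag as the hard part. Your argument that $U\dfn\mathbb{P}^1_S\setminus\im(s_\infty)\cong\mathbb{A}^1_S$ proceeds by trivializing over the DVRs $\mathcal{O}_{S,\eta}$ at codimension-one points via \autoref{lem: A1 over DVR} and then ``gluing using normality, exactly as in the closing paragraph of the proof of \autoref{prop: P1 over Noether}.'' This does not close. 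Triviality of $U$ over every local ring of $S$ carries no global information: a torsor under a line bundle, and the line bundle itself, are always trivial over a local ring, so your local statements are consistent with $U$ being a nontrivially twisted form of $\mathbb{A}^1_S$ --- precisely the possibility you set out to exclude. Moreover, the closing paragraph of the proof of \autoref{prop: P1 over Noether} is not a gluing-of-local-trivializations argument: it shows that a specific \emph{finite} morphism $C_{\red}\to S$, birational onto the normal scheme $S$, is an isomorphism, and it needs the algebraic space $C=\mathrm{Cl}_t(X,Y)$ (representability, the valuative criterion, finiteness) already in hand. Transplanting it to ``local identifications of $U$'' would require constructing the analogous moduli space, at which point you would be re-proving condition \ref{enumi: P^1 univ} of \autoref{prop: P1 over Noether}.

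The repair is to use that universal property rather than re-derive it. By the necessity portion of the proof of \autoref{prop: P1 over Noether}, the standard collection $(\mathbb{P}^1_S,0,1,\infty)$ satisfies condition \ref{enumi: P^1 univ}; applying this to the given $\mathbb{P}^1$-like collection $(\mathbb{P}^1_S,s_0,s_1,s_\infty)$ yields a closed immersion $h:\mathbb{P}^1_S\to\mathbb{P}^1_S$ over $S$ with $h\circ 0=s_0$, $h\circ 1=s_1$, $h\circ\infty=s_\infty$. On each fiber $h$ is a closed immersion of $\mathbb{P}^1_{k(s)}$ into itself, hence surjective; since $S$ is normal, $\mathbb{P}^1_S$ is reduced, so the surjective closed immersion $h$ is an automorphism. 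It carries $\im(\infty)$ onto $\im(s_\infty)$, whence $U\cong\mathbb{A}^1_S$ as $S$-schemes and $X\cong U$ via $\iota$. From there your second step goes through, with the one small addition that, since $\mathbb{A}^1_S\times_S\mathbb{A}^1_S$ is reduced, \autoref{lem: fiber product bbullet} lets you read the ring object structure on $X$ in $\Schb{S}$ as an honest ring scheme structure over $S$ before applying \autoref{lem: ring str A^1 unique}.
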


\begin{proof}
  Necessity follows immediately from \autoref{lem: fiber product bbullet}
  \ref{enumi: fiber product bbullet not red}
  \ref{enumi: fiber product bbullet red},
  \autoref{reconstruction: Top}, and \autoref{prop: P1 over Noether}.
  Next, we prove sufficiency.
  Assume that \(S\btimes_{0_X,X,1_X} S = \emptyset\), and, moreover, that
  there exist an open immersion
  \(\iota:X \to \mathbb{P}^1_S\) in \(\Schb{S}\) and
  sections \(s_0,s_1,s_{\infty}:S\to \mathbb{P}^1_S\)
  such that the collection of data \((\mathbb{P}^1_S,s_0,s_1,s_{\infty})\) is \(\mathbb{P}^1\)-like, and
  \(\im(\mathsf{Sp}_{\bbullet/S}(\iota)) =
  \mathsf{Sp}_{\bbullet/S}(\mathbb{P}^1_S) \setminus \im(\mathsf{Sp}_{\bbullet/S}(s_{\infty}))\).
  Then \(X\cong \mathbb{A}^1_S\).
  Hence, by \autoref{lem: fiber product bbullet}
  \ref{enumi: fiber product bbullet not red}
  \ref{enumi: fiber product bbullet red},
  the ring object structure on \(X\) determines
  a ring scheme structure over \(S\) on \(X \cong \mathbb{A}^1_S\).
  Since \(S\btimes_{0_X,X,1_X} S = \emptyset\),
  it follows from \autoref{lem: fiber product bbullet}
  \ref{enumi: fiber product bbullet not red}
  \ref{enumi: fiber product bbullet red}
  that \(\im(0_X) \cap \im (1_X) = \emptyset\).
  Hence there exists an isomorphism of \(S\)-schemes \(f:X\xrightarrow{\sim} \mathbb{A}^1_S\)
  such that \(0_S = f\circ 0_X, 1_S = f\circ 1_X\).
  Thus, by \autoref{lem: ring str A^1 unique}
  \ref{enumi: any A^1 lem: ring str A^1 unique},
  \(f\) is an isomorphism of ring schemes over \(S\).
  This completes the proof of \autoref{cor: A1 over dugger}.
\end{proof}

Next,
we consider a category-theoretic reconstruction of
the underlying schemes of objects of \(\Schb{S}\).

%\subsection{rec sch}

\newcommand{\spb}[1]{{#1}^{\mathsf{Sp}}_{\bbullet}}
\newcommand{\spc}[1]{{#1}^{\mathsf{Sp}}_{\bbbullet}}
\newcommand{\topol}[1]{{#1}^{|\cdot|}}
\newcommand{\topb}[1]{{#1}^{|\cdot|}_{\bbullet}}
\newcommand{\topc}[1]{{#1}^{|\cdot|}_{\bbbullet}}
\newcommand{\Rspb}{\mathsf{RSp}_{\bbullet/S}}
\newcommand{\Rspc}{\mathsf{RSp}_{\bbbullet/T}}
\newcommand{\RspbA}{\mathsf{RSp}_{\bbullet/S}^A}
\newcommand{\RspcA}{\mathsf{RSp}_{\bbbullet/T}^{A_{\bbbullet}}}
\newcommand{\RspbB}{\mathsf{RSp}_{\bbullet/S}^B}
\newcommand{\RspbulA}{\mathsf{RSp}_{\bbullet/S}^{A_{\bbullet}}}
\newcommand{\RspbulB}{\mathsf{RSp}_{\bbullet/S}^{B_{\bbullet}}}
\newcommand{\RspculA}{\mathsf{RSp}_{\bbbullet/T}^{A_{\bbbullet}}}
\newcommand{\RspculB}{\mathsf{RSp}_{\bbbullet/T}^{B_{\bbbullet}}}

\begin{defi}\label{defi: rec sch}
  Let \(S\) be a locally Noetherian normal scheme.
  \begin{enumerate}
    \item \label{defi: rec sch enumi: mcA}
    We shall write
    \(\mathcal{A}^1_{\bbullet/S}\)
    for the (\vsm) connected groupoid of ring objects and
    isomorphisms of ring objects of \(\Schb{S}\)
    such that \(\mathbb{A}^1_S\in \mathcal{A}^1_{\bbullet/S}\).
    Note that, by \autoref{lem: ring str A^1 unique}
    \ref{enumi: any A^1 lem: ring str A^1 unique},
    for any objects \(A,B \in \mathcal{A}^1_{\bbullet/S}\),
    \(\Hom_{\mathcal{A}^1_{\bbullet/S}}(A,B)\) is of cardinality one.
    \item \label{defi: rec sch enumi: OpIm}
    Let \(X\in \Schb{S}\) be an object.
    %By \autoref{reconstruction: Top},
    %we may regard \(\mathsf{Op}_{\bbullet/S}(X)\) as a site.
    Write \(\mathsf{OpIm}_{\bbullet/S}(X)\subset (\Schb{S})_{/X}\)
    for the full subcategory
    determined by quasi-compact open immersions \(U\to X\).
    Then the category \(\mathsf{OpIm}_{\bbullet/S}(X)\)
    is defined completely in terms of properties that may be
    characterized category-theoretically
    (cf. \autoref{cor: open imm is cat}, \autoref{cor: qc is cat} \ref{enumi: qc morph is cat})
    from the data \((\Schb{S},X)\).
    \item \label{defi: rec sch enumi: OpIm varphi}
    Let \(X\in \Schb{S}\) be an object.
    Write \(\mathsf{Op}_{\bbullet/S}^{\mathsf{cat}}(X)\) for the category
    whose objects are elements of \(\mathsf{Op}_{\bbullet/S}(X)\)
    (cf. \autoref{reconstruction: Top}), and
    whose morphisms are the inclusion morphisms.
    We define the following functor:
    \begin{align*}
      \varphi_X : \mathsf{OpIm}_{\bbullet/S}(X) &\to \mathsf{Op}_{\bbullet/S}^{\mathsf{cat}}(X) \\
      [i:U\to X] &\mapsto \im(\mathsf{Sp}_{\bbullet/S}(i)).
    \end{align*}
    Note that, by the definition of the homeomorphism
    \(\eta_X: \mathsf{Sp}_{\bbullet/S}(X) \xrightarrow{\sim} |X|\)
    (cf. \autoref{reconstruction: Top}),
    for any object \([i:U\to X]\in \mathsf{OpIm}_{\bbullet/S}(X)\),
    \(\varphi_X(i:U\to X) = \eta_X^{-1}(\im(|i|))\).
    \item \label{defi: rec sch enumi: Index cat}
    Let \(X\in \Schb{S}\) be an object and
    \(\spb{U}\subset \mathsf{Sp}_{\bbullet/S}(X)\) an open subset.
    We shall write
    \(\mathcal{I}_{\bbullet/S}(X;\spb{U})\subset \mathsf{OpIm}_{\bbullet/S}(X)^{\mathrm{op}}\)
    for the full subcategory determined by the objects
    \([i:U'\to X]\in \mathcal{I}_{\bbullet/S}(X;\spb{U})\) such that
    \(\varphi_X(i:U'\to X)\subset \spb{U}\).
    \item \label{defi: rec sch enumi: OpIm iota}
    Let \(X\in \Schb{S}\) be an object.
    We shall write \(\iota_X: \mathsf{OpIm}_{\bbullet/S}(X) \to \Schb{S}\)
    for the forgetful functor.
    \item \label{defi: rec sch enumi: OpIm f -1}
    Let \(f:X\to Y\) be a morphism in \(\Schb{S}\).
    By \autoref{lem: fiber product bbullet}
    \ref{enumi: fiber product bbullet not red}
    \ref{enumi: fiber product bbullet red},
    for any object \([i:V\to Y]\in \mathsf{OpIm}_{\bbullet/S}(Y)\),
    the natural projection
    \(i_{\bbullet,X}: V\btimes_Y X\to X\)
    is an object of \(\mathsf{OpIm}_{\bbullet/S}(X)\).
    Hence we obtain a functor
    \(f^{-1}:\mathsf{OpIm}_{\bbullet/S}(Y) \to \mathsf{OpIm}_{\bbullet/S}(X)\).
  \end{enumerate}
\end{defi}

\begin{defi}\label{defi: defi: rec sch 2}
  Let \(S\) be a scheme.
  \begin{enumerate}
    \item \label{defi: rec sch enumi: U Sch}
    Let \(\bigstar \dfn \SchU\) or \(\SchV\).
    We define
    \begin{align*}
      U_{\bbullet/S}^{\bigstar}: \Schb{S} &\to \bigstar \\
      X &\mapsto X.
    \end{align*}
    \item \label{defi: rec sch enumi: inclusion}
    Write \(i^{\sfSch}_{\univ{U}\in\univ{V}}: \SchU \to \SchV\)
    for the natural inclusion functor.
    Then the equality
    \(U_{\bbullet/S}^{\SchV} = i^{\sfSch}_{\univ{U}\in\univ{V}}\circ U_{\bbullet/S}^{\SchU}\) holds.
    \item
    For any morphism of schemes \(f:X\to Y\),
    any open subset \(\topol{V}\subset |Y|\), and
    any open subset \(\topol{U}\subset |f|^{-1}(\topol{V})\),
    we shall write
    \[
    f^{\#}(\topol{V},\topol{U}): \mathcal{O}_Y(\topol{V}) \to \mathcal{O}_X(\topol{U})
    \]
    for the morphism of rings determined by the composite of
    the morphism of rings
    \(\mathcal{O}_Y(\topol{V}) \to \mathcal{O}_X(|f|^{-1}(\topol{V}))\)
    induced by \(f\) and the restriction morphism
    \(\mathcal{O}_X(|f|^{-1}(\topol{V}))\to \mathcal{O}_X(\topol{U})\).
  \end{enumerate}
\end{defi}

%\subsection{rec Tch A}

\begin{reconstruction}[\(\RspbA(X)\)]
  \label{reconstruction: Tch A obj}
  Let \(S\) be a locally Noetherian normal scheme.
  Let \(A\in \mathcal{A}^1_{\bbullet/S}\) a ring object in \(\Schb{S}\)
  such that \(A\cong \mathbb{A}^1_S\) as ring objects in \(\Schb{S}\).

  (i)
  For any object \(X\in \Schb{S}\), we define a functor
  \[
  \check{\mathcal{O}}_{\bbullet/S,X}^A \dfn \Hom_{\Schb{S}}(\iota_X(-),A):
  \mathsf{OpIm}_{\bbullet/S}^{\mathrm{op}}(X) \to \mathsf{Ring}.
  \]
  Then, since \(A\) is isomorphic as a ring object in \(\Schb{S}\) to \(\mathbb{A}^1_S\),
  and \(\mathbb{A}^1_S\) represents the global section functor
  \(\Schb{S} \to \mathsf{Ring}\),
  there exists a unique assignment
  \[
  (X,i) \mapsto [\check{\eta}_X^{A,\#}(i):\mathcal{O}_X(\im(|i|)) \xrightarrow{\sim}
  \check{\mathcal{O}}_{\bbullet/S,X}^A(i)],
  \]
  where \(X\) ranges over the objects of \(\Schb{S}\), and
  \(i\) ranges over the objects of \(\mathsf{OpIm}_{\bbullet/S}(X)\),
  such that \(\check{\eta}_X^{A,\#}(i)\) is an isomorphism of rings, and
  for any diagram
  \[
  \begin{CD}
    U @>{i}>> X \\
    @V{g}VV @VV{f}V \\
    V @>{j}>> Y
  \end{CD}
  \]
  in \(\Schb{S}\),
  if \(i,j\) are quasi-compact open immersions,
  then the diagram of rings
  \begin{equation}
    \label{eq: check O}
    \begin{CD}
      \mathcal{O}_Y(\im(|j|))
      @>{\check{\eta}_Y^{A,\#}(j)}>{\sim}>
      \check{\mathcal{O}}_{\bbullet/S,Y}^A(j) \\
      @V{f^{\#}(\im(|j|), \im(|i|))}VV @VV{(-)\circ g}V \\
      \mathcal{O}_X(\im(|i|))
      @>{\check{\eta}_X^{A,\#}(i)}>{\sim}>
      \check{\mathcal{O}}_{\bbullet/S,X}^A(i)
    \end{CD}
    \tag{\(\dagger\)}
  \end{equation}
  commutes.

  (ii)
  Let \(f:X\to Y\) be a morphism of \(\Schb{S}\).
  For any open subset \(\spb{U}\subset \mathsf{Sp}_{\bbullet/S}(X)\),
  we define
  \begin{align*}
    \mathcal{O}_{\bbullet/S,X}^A(\spb{U}) &\dfn
    \lim_{i\in \mathcal{I}_{\bbullet/S}(X;\spb{U})} \check{\mathcal{O}}_{\bbullet/S,X}^A(i).
  \end{align*}
  For any open subset \(\topol{U} \subset |X|\),
  we define
  \[
  \eta_X^{A,\#}(\topol{U}): \mathcal{O}_X(\topol{U}) \xrightarrow{\sim}
  \mathcal{O}_{\bbullet/S,X}^A(\eta_X^{-1}(\topol{U}))
  \]
  (cf. \autoref{reconstruction: Top})
  to be the composite of the following two isomorphisms
  \[
  \mathcal{O}_X(\topol{U}) \xrightarrow{\sim}
  \lim_{i\in I} \mathcal{O}_X(\im(|i|))
  \xrightarrow[\sim]{\lim_{i\in I} \check{\eta}_X^{A,\#}(i)}
  \lim_{i\in I} \check{\mathcal{O}}_{\bbullet/S,X}^A(i)
  = \mathcal{O}_{\bbullet/S,X}^A(\eta_X^{-1}(\topol{U})),
  \]
  where \(I \dfn \mathcal{I}_{\bbullet/S}(X,\eta_X^{-1}(\topol{U}))\).
  For any open subsets \(\spb{V} \subset \mathsf{Sp}_{\bbullet/S}(Y)\) and
  \(\spb{U} \subset \mathsf{Sp}_{\bbullet/S}(f)^{-1}(\spb{V})\),
  we define
  \begin{align*}
    \mathcal{O}_{\bbullet/S,f}^A(\spb{V},\spb{U}) &\dfn
    \lim [\check{\mathcal{O}}_{\bbullet/S,Y}^A(j)\to \check{\mathcal{O}}_{\bbullet/S,X}^A(i)] \\
    &: \mathcal{O}_{\bbullet/S,Y}^A(\spb{V}) \to \mathcal{O}_{\bbullet/S,X}^A(\spb{U}),
  \end{align*}
  where the limit is indexed by the opposite category associated to
  the category whose objects are commutative diagrams
  \begin{equation}
    \begin{CD}
      U_1 @>{i_1}>> X \\
      @V{g_1}VV @VV{f}V \\
      V_1 @>{j_1}>> Y
    \end{CD}
    \tag{\(i_1:U_1\to X, j_1:V_1 \to Y, g_1\)}
  \end{equation}
  such that \(\varphi_X(i_1)\subset \spb{U}\) and
  \(\varphi_Y(j_1)\subset \spb{V}\), and
  whose morphisms
  \((i_1:U_1\to X, j_1:V_1 \to Y, g_1)\to (i_2:U_2\to X, j_2:V_1 \to Y, g_2)\)
  are pairs of morphisms
  \((s:U_1\to U_2, t:V_1\to V_2)\) in \(\Schb{S}\) such that
  \(i_1 = i_2\circ s, j_1 = j_2 \circ t\), and
  the diagram
  \[
  \begin{CD}
    U_1 @>{s}>> U_2 \\
    @V{g_1}VV @VV{g_2}V \\
    V_1 @>{t}>> V_2
  \end{CD}
  \]
  commutes.
  Then, since the diagram \eqref{eq: check O} commutes,
  for any subset \(\topol{V} \subset |Y|\) and
  any open subset \(\topol{U} \subset |f|^{-1}(\topol{V})\),
  the diagram of rings
  \begin{equation}
    \label{eq: O diagram}
    \begin{CD}
      \mathcal{O}_Y(\topol{V})
      @>{\eta_Y^{A,\#}(\topol{V})}>{\sim}>
      \mathcal{O}_{\bbullet/S,Y}^A(\eta_Y^{-1}(\topol{V})) \\
      @V{f^{\#}(\topol{V},\topol{U})}VV
      @VV{\mathcal{O}_{\bbullet/S,f}^A(\eta_Y^{-1}(\topol{V}),\eta_Y^{-1}(\topol{U}))}V \\
      \mathcal{O}_X(\topol{U})
      @>{\eta_X^{A,\#}(\topol{U})}>{\sim}>
      \mathcal{O}_{\bbullet/S,X}^A(\eta_X^{-1}(\topol{U}))
    \end{CD}
    \tag{\(\ddagger\)}
  \end{equation}
  commutes.
  For any open subsets \(\spb{U}\subset \spb{U'} \subset \mathsf{Sp}_{\bbullet/S}(X)\),
  we define
  \[
  (-)|_{\spb{U}} \dfn \mathcal{O}_{\bbullet/S,\id_X}^A(\spb{U'},\spb{U})
  : \mathcal{O}_{\bbullet/S,X}^A(\spb{U'}) \to \mathcal{O}_{\bbullet/S,X}^A(\spb{U}).
  \]

  (iii) Let \(X\in \Schb{S}\) be an object.
  Since the diagram \eqref{eq: O diagram} commutes,
  it follows from the definitions of
  \(\mathcal{O}_{\bbullet/S,X}^A(\spb{U})\) and \((-)|_{\spb{U}}\) that
  the rings \(\mathcal{O}_{\bbullet/S,X}^A(\spb{U})\) and
  the ring homomorphisms \((-)|_{\spb{U}}\) form
  a presheaf
  \(\mathcal{O}_{\bbullet/S,X}^A: \mathsf{Op}_{\bbullet/S}^{\mathsf{cat}}(X)^{\op} \to \mathsf{Ring}\)
  on \(\mathsf{Sp}_{\bbullet/S}(X)\), and
  the family of isomorphisms of rings
  \[
  (\eta_X^{A,\#}(\topol{U}): \mathcal{O}_X(\topol{U}) \xrightarrow{\sim}
  \mathcal{O}_{\bbullet/S,X}^A(\eta_X^{-1}(\topol{U})))_{\topol{U}\in \mathsf{Open}(|X|)},
  \]
  where \(\mathsf{Open}(|X|)\) is the set of the open sets of \(|X|\),
  determines an isomorphism of presheaves
  \(\eta_X^{A,\#}: \mathcal{O}_X \xrightarrow{\sim} \eta_{X,*}\mathcal{O}_{\bbullet/S,X}^A\)
  on \(|X|\).
  Since \(\mathcal{O}_X\) is a sheaf on \(|X|\), and
  \(\eta_X: \mathsf{Sp}_{\bbullet/S}(X) \xrightarrow{\sim} |X|\) is a homeomorphism,
  the presheaf \(\mathcal{O}_{\bbullet/S,X}^A\)
  on \(\mathsf{Sp}_{\bbullet/S}(X)\) is a sheaf.
  Hence, the pair
  \(\RspbA(X) \dfn
  (\mathsf{Sp}_{\bbullet/S}(X), \mathcal{O}_{\bbullet/S,X}^A)\)
  is a (\vsm) ringed space, and the pair
  \(\eta_X^A \dfn (\eta_X,\eta_X^{A,\#})\)
  may be regarded as an isomorphism of ringed spaces
  \(\eta_X^A : \RspbA(X) \xrightarrow{\sim} X\).
  In particular, \(\RspbA(X)\) is a (\vsm) scheme.
\end{reconstruction}

%\subsection{rec Tch A f}

\begin{reconstruction}[\(\RspbA(f)\)]
  \label{reconstruction: Tch A morph}
  Let \(S\) be a locally Noetherian normal scheme.
  Let \(f:X\to Y\) be a morphism in \(\Schb{S}\), and
  \(A\in \mathcal{A}^1_{\bbullet/S}\) a ring object in \(\Schb{S}\)
  such that \(A\cong \mathbb{A}^1_S\) as ring objects in \(\Schb{S}\).
  For any open subset \(\spb{V} \subset \mathsf{Sp}_{\bbullet/S}(Y)\),
  we define
  \begin{align*}
    \mathcal{O}_{\bbullet/S,f}^{A,\#}(\spb{V}) &\dfn
    \mathcal{O}_{\bbullet/S,f}^A(\spb{V},\mathsf{Sp}_{\bbullet/S}(f)^{-1}(\spb{V})) \\
    &:\mathcal{O}_{\bbullet/S,Y}^A(\spb{V}) \to
    \mathcal{O}_{\bbullet/S,X}^A(\mathsf{Sp}_{\bbullet/S}(f)^{-1}(\spb{V})).
  \end{align*}
  Since the diagram \autoref{reconstruction: Tch A obj} \eqref{eq: O diagram} commutes,
  the family of morphisms
  \[
  \mathcal{O}_{\bbullet/S,f}^{A,\#}\dfn \left(\mathcal{O}_{\bbullet/S,f}^{A,\#}(\spb{V}):
  \mathcal{O}_{\bbullet/S,Y}^A(\spb{V}) \to
  \mathcal{O}_{\bbullet/S,X}^A(\mathsf{Sp}_{\bbullet/S}(f)^{-1}(\spb{V}))
  \right)_{\spb{V}\in \mathsf{Op}_{\bbullet/S}(Y)}
  \]
  determines a morphism of sheaves
  \[\mathcal{O}_{\bbullet/S,f}^{A,\#}: \mathcal{O}_{\bbullet/S,Y}^A\to
  \mathsf{Sp}_{\bbullet/S}(f)_*\mathcal{O}_{\bbullet/S,X}^A\]
  on \(\mathsf{Sp}_{\bbullet/S}(Y)\).
  Hence the pair
  \[\RspbA(f) \dfn (\mathsf{Sp}_{\bbullet/S}(f), \mathcal{O}_{\bbullet/S,f}^{A,\#})\]
  determines a morphism of (\vsm) schemes
  \[\RspbA(f):\RspbA(X)\to \RspbA(Y).\]
  Moreover, since the diagram \autoref{reconstruction: Tch A obj} \eqref{eq: O diagram} commutes,
  for any object \(A\in \mathcal{A}^1_{\bbullet/S}\),
  %it holds that
  %\[
  %f_*(\eta_X^{\#,A}) \circ f^{\#} =
  %\eta_{Y,*}(\mathcal{O}_{\bbullet/S,f}^A)\circ \eta_Y^{\#,A}.
  %\]
  %Thus
  the diagram of (\vsm) schemes
  \begin{equation}
    \label{eq eta f}
    \begin{CD}
      \RspbA(X) @>{\eta_X^A}>{\sim}> X \\
      @V{\RspbA(f)}VV @VV{f}V \\
      \RspbA(Y) @>{\eta_Y^A}>{\sim}> Y
    \end{CD}
    \tag{\(\ddagger\)}
  \end{equation}
  commutes.
  This implies that
  \(\RspbA: \Schb{S} \to \SchV\)
  is a functor, and the family of isomorphisms of (\vsm) schemes
  \(\eta^A \dfn (\eta_X^A:\RspbA(X)
  \xrightarrow{\sim} X)_{X\in \Schb{S}}\)
  determines an isomorphism of functors
  \(\eta^A: \RspbA\xrightarrow{\sim} U_{\bbullet/S}^{\SchV} =
  i^{\sfSch}_{\univ{U}\in\univ{V}}\circ U_{\bbullet/S}^{\SchU}\).
\end{reconstruction}

%\subsection{rec eta A B}

\begin{reconstruction}[\(\eta^{A,B}\)]
  \label{reconstruction: eta A to B}
  Let \(S\) be a locally Noetherian normal scheme.

  (i) Let \(X\in \Schb{S}\) be an object and
  \([A\xrightarrow{\sim} B]\in \mathcal{A}^1_{\bbullet/S}\)
  be an isomorphism of ring objects in \(\Schb{S}\).
  For any object \([i:U\to X]\in \mathsf{OpIm}_{\bbullet/S}(X)\), write
  \[
  \check{\eta}_X^{A,B,\#}(i):
  \check{\mathcal{O}}_{\bbullet/S,X}^A(i) \xrightarrow{\sim}
  \check{\mathcal{O}}_{\bbullet/S,X}^B(i)
  \]
  (cf. \autoref{reconstruction: Tch A obj} (i))
  for the isomorphism of rings obtained by composing with
  the isomorphism of ring objects \(A\xrightarrow{\sim} B\).
  Then it follows from the definitions of
  \(\check{\mathcal{O}}_{\bbullet/S,X}^*(i)\) and
  \(\check{\eta}_X^{A,B,\#}(i)\) that
  for any diagram
  \[
  \begin{CD}
    U @>{i}>> X \\
    @V{g}VV @VV{f}V \\
    V @>{j}>> Y,
  \end{CD}
  \]
  if \(i,j\) are quasi-compact immersions,
  then the diagrams of rings
  \begin{equation}
    \label{eq: check eta A to B}
    \begin{CD}
      \check{\mathcal{O}}_{\bbullet/S,Y}^A(j)
      @>{\check{\eta}_Y^{A,B,\#}(j)}>{\sim}>
      \check{\mathcal{O}}_{\bbullet/S,Y}^B(j)  \\
      @V{(-)\circ g}VV @VV{(-)\circ g}V \\
      \check{\mathcal{O}}_{\bbullet/S,X}^A(i)
      @>{\check{\eta}_X^{A,B,\#}(i)}>{\sim}>
      \check{\mathcal{O}}_{\bbullet/S,X}^B(i).
    \end{CD}
    \tag{\(\dagger\)}
  \end{equation}
  commutes.

  (ii) Let \(X\in \Schb{S}\) be an object,
  \(\spb{U}\subset \mathsf{Sp}_{\bbullet/S}(X)\) an open subset, and
  \([A\xrightarrow{\sim} B]\in \mathcal{A}^1_{\bbullet/S}\)
  an isomorphism of ring objects in \(\Schb{S}\).
  Write
  \[
  \eta_X^{A,B,\#}(\spb{U}):
  \mathcal{O}_{\bbullet/S,X}^A(\spb{U})
  \xrightarrow{\sim} \mathcal{O}_{\bbullet/S,X}^B(\spb{U})
  \]
  for the isomorphism defined by
  \[
  \eta_X^{A,B,\#}(\spb{U}) \dfn
  \lim_{i\in \mathcal{I}_{\bbullet/S}(X;\spb{U})} \check{\eta}_X^{A,B,\#}(i).
  \]
  Since the diagram \eqref{eq: check eta A to B} commutes,
  for any morphism \(f:X\to Y\) in \(\Schb{S}\),
  any open subset \(\spb{V} \subset \mathsf{Sp}_{\bbullet/S}(Y)\), and
  any open subset \(\spb{U} \subset \mathsf{Sp}_{\bbullet/S}(X)\) such that
  \(\spb{U} \subset \mathsf{Sp}_{\bbullet/S}(f)^{-1}(\spb{V})\),
  the diagram of rings
  \begin{equation}
    \label{eq: eta A to B}
    \begin{CD}
      \mathcal{O}_{\bbullet/S,Y}^A(\spb{V})
      @>{\eta_Y^{A,B,\#}(\spb{V})}>{\sim}>
      \mathcal{O}_{\bbullet/S,Y}^B(\spb{V}) \\
      @V{\mathcal{O}_{\bbullet/S,f}^A(\spb{V},\spb{U})}VV
      @VV{\mathcal{O}_{\bbullet/S,f}^B(\spb{V},\spb{U})}V \\
      \mathcal{O}_{\bbullet/S,X}^A(\spb{U})
      @>{\eta_X^{A,B,\#}(\spb{U})}>{\sim}>
      \mathcal{O}_{\bbullet/S,X}^B(\spb{U})
    \end{CD}
    \tag{\(\ddagger\)}
  \end{equation}
  commutes.

  (iii) Let \(X\in \Schb{S}\) be an object.
  Then the commutativity of the diagram \eqref{eq: eta A to B}
  implies that the family of isomorphisms
  \[
  \eta_X^{A,B,\#}\dfn \left(\eta_X^{A,B,\#}(\spb{U}) : \mathcal{O}_{\bbullet/S,X}^A(\spb{U})
  \xrightarrow{\sim} \mathcal{O}_{\bbullet/S,X}^B(\spb{U})
  \right)_{\spb{U}\in \mathsf{Op}_{\bbullet/S}(X)}
  \]
  determines an isomorphism
  \(\eta_X^{A,B,\#}: \mathcal{O}_{\bbullet/S,X}^A
  \xrightarrow{\sim} \mathcal{O}_{\bbullet/S,X}^B\)
  of sheaves of rings on \(\mathsf{Sp}_{\bbullet/S}(X)\).
  In particular, the pair
  \(\eta_X^{A,B} \dfn (\id_{\mathsf{Sp}_{\bbullet/S}(X)}, \eta_X^{A,B,\#})\)
  determines an isomorphism of schemes
  \(\eta_X^{A,B}: \RspbA(X)\xrightarrow{\sim} \RspbB(X)\).
  Moreover, the commutativity of the diagram \eqref{eq: eta A to B}
  implies that for any morphism \(f:X\to Y\) in \(\Schb{S}\),
  the diagram of (\vsm) schemes
  \begin{equation}
    \label{eq: eta A to B sch}
    \begin{CD}
      \RspbA(X)
      @>{\eta_X^{A,B}}>{\sim}>
      \RspbB(X)  \\
      @V{\RspbA(f)}VV
      @VV{\RspbB(f)}V \\
      \RspbA(Y)
      @>{\eta_Y^{A,B}}>{\sim}>
      \RspbB(Y)
    \end{CD}
    \tag{\(\dagger\dagger\)}
  \end{equation}
  commutes.
  Thus, the family of isomorphisms
  \[\eta^{A,B}\dfn \left(\eta_X^{A,B}: \RspbA(X)\xrightarrow{\sim} \RspbB(X)\right)_{X\in\Schb{S}}\]
  determines an isomorphism of functors
  \(\eta^{A,B}:\RspbA \xrightarrow{\sim} \RspbB\).
  Moreover, it follows immediately from the definition of \(\eta_X^{A,B,\#}\)
  that for any (iso)morphisms
  \(A\xrightarrow{\sim} B, B\xrightarrow{\sim} C\) in \(\mathcal{A}^1_{\bbullet/S}\),
  \(\eta^A = \eta^B \circ \eta^{A,B}\), and
  \(\eta^{A,C} = \eta^{B,C}\circ \eta^{A,B}\).
\end{reconstruction}

%\subsection{rec Tch}

\begin{reconstruction}[\(\Rspb\)]
  \label{reconstruction: Sch}
  Let \(S\) be a locally Noetherian normal scheme.
  By \autoref{reconstruction: Tch A morph} and
  \autoref{reconstruction: eta A to B},
  we obtain a functor
  \begin{align*}
    \mathcal{A}^1_{\bbullet/S} &\to \Hom(\Schb{S},\SchV), \\
    A &\mapsto \RspbA, \\
    [A\xrightarrow{\sim} B] &\mapsto \eta^{A,B},
  \end{align*}
  where \(\Hom(\Schb{S},\SchV)\) denotes the (\vsm) category of functors \(\Schb{S}\to \SchV\) and
  morphisms of functors.
  Define
  \[
  \Rspb\dfn \colim_{A\in \mathcal{A}^1_{\bbullet/S}} \RspbA \
  : \ \Schb{S}\to \SchV.
  \]
  Since \(\mathcal{A}^1_{\bbullet/S}\) is a connected (\vsm) groupoid,
  the colimit \(\Rspb\) exists, and
  the natural morphism
  \(\RspbA \xrightarrow{\sim} \Rspb\)
  is an isomorphism.
  By \autoref{reconstruction: eta A to B},
  the family of isomorphisms of functors
  \(\eta^A: \RspbA \xrightarrow{\sim} U_{\bbullet/S}^{\SchV}\)
  determines an isomorphism of functors
  \(\eta: \Rspb \xrightarrow{\sim}
  U_{\bbullet/S}^{\SchV} =
  i^{\sfSch}_{\univ{U}\in\univ{V}}\circ U_{\bbullet/S}^{\SchU}\).
\end{reconstruction}

\newcommand{\bbul}[1]{{#1}_{\bbullet}}
\newcommand{\bbbul}[1]{{#1}_{\bbbullet}}

\begin{lem}\label{lem: Tch equiv commute}
  Let \(S,T\) be locally Noetherian normal schemes and
  \(F:\Schb{S} \xrightarrow{\sim} \Schc{T}\) an equivalence.
  Then the following assertions hold:
  \begin{enumerate}
    \item \label{lem: Tch equiv commute: FA is A1}
    Let \(\bbul{A}\in \mathcal{A}^1_{\bbullet/S}\).
    Write \(\bbbul{A}\dfn F(\bbul{A})\) for the ring object of \(\Schc{T}\)
    determined by \(\bbul{A}\).
    Then \(\bbbul{A} \in \mathcal{A}^1_{\bbbullet/T}\).
    \item \label{lem: Tch equiv commute: TchA comm}
    Let \(\bbul{A}\in \mathcal{A}^1_{\bbullet/S}\).
    Write \(\bbbul{A}\dfn F(A)\) for the ring object of \(\Schc{T}\)
    determined by \(\bbul{A}\).
    Then \(S,T,F,\bbul{A}\) determine an isomorphism \(\rho^{A}\) between
    the two composite functors of the following diagram:
    \[
    \begin{CD}
      \Schb{S} @>{F}>> \Schc{T} \\
      @V{\RspbulA}VV
      @VV{\RspculA}V \\
      \SchV @= \SchV.
    \end{CD}
    \]
    \item \label{lem: Tch equiv commute: Tch comm}
    \(S,T,F\) determine an isomorphism \(\rho^{\mathsf{Rsp}}\) between
    the two composite functors of the following diagram:
    \[
    \begin{CD}
      \Schb{S} @>F>> \Schc{T} \\
      @V{\Rspb}VV @VV{\Rspc}V \\
      \SchV @= \SchV.
    \end{CD}
    \]
  \end{enumerate}
\end{lem}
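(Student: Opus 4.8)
The plan is to treat assertion \ref{lem: Tch equiv commute: FA is A1} as an immediate consequence of the category-theoretic characterization of the affine line as a ring scheme. Since \(F\) is an equivalence, it preserves the categorical fiber products \(\btimes\), hence transports the ring-object structure of \(\bbul{A}\) to the ring object \(\bbbul{A} = F(\bbul{A})\) in \(\Schc{T}\); this much is built into the statement. By \autoref{cor: A1 over dugger}, the property ``\(X\) is isomorphic as a ring scheme over \(S\) to \(\mathbb{A}^1_S\)'' is characterized category-theoretically from \((\Schb{S},X)\). As \(\bbul{A}\in\mathcal{A}^1_{\bbullet/S}\) we have \(\bbul{A}\cong \mathbb{A}^1_S\) as a ring scheme, so the same property holds for \(\bbbul{A}\) in \(\Schc{T}\), i.e. \(\bbbul{A}\cong \mathbb{A}^1_T\) as a ring scheme and hence \(\bbbul{A}\in\mathcal{A}^1_{\bbbullet/T}\).

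The heart of the argument is assertion \ref{lem: Tch equiv commute: TchA comm}, for which the strategy is to observe that \emph{every} ingredient of the reconstruction \(\RspbulA\) (\autoref{reconstruction: Tch A obj}, \autoref{reconstruction: Tch A morph}) is category-theoretic and is therefore matched by \(F\) to the corresponding ingredient of \(\RspculA\circ F\). First, \autoref{lem: Sp equiv commute} furnishes a natural homeomorphism \(\rho^{\mathsf{Sp}}_X : \mathsf{Sp}_{\bbullet/S}(X)\xrightarrow{\sim}\mathsf{Sp}_{\bbbullet/T}(F(X))\). Next, since \(F\) preserves quasi-compact open immersions (\autoref{cor: open imm is cat}, \autoref{cor: qc is cat}), it induces an equivalence \(\mathsf{OpIm}_{\bbullet/S}(X)\simeq \mathsf{OpIm}_{\bbbullet/T}(F(X))\) (cf. \autoref{defi: rec sch}) compatible, via \(\rho^{\mathsf{Sp}}_X\), with the functors \(\varphi_X\); hence it restricts to equivalences of indexing categories \(\mathcal{I}_{\bbullet/S}(X;\spb{U})\simeq \mathcal{I}_{\bbbullet/T}(F(X);\rho^{\mathsf{Sp}}_X(\spb{U}))\).

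Then, for each quasi-compact open immersion \(i:U\to X\), the bijection \(\Hom_{\Schb{S}}(U,\bbul{A})\to \Hom_{\Schc{T}}(F(U),\bbbul{A})\) induced by \(F\) is an isomorphism of \emph{rings}, precisely because \(F\) carries the ring-object structure of \(\bbul{A}\) to that defining \(\bbbul{A}\); this yields isomorphisms \(\check{\mathcal{O}}^{\bbul{A}}_{\bbullet/S,X}(i)\cong \check{\mathcal{O}}^{\bbbul{A}}_{\bbbullet/T,F(X)}(F(i))\) compatible with the composition maps displayed in \autoref{reconstruction: Tch A obj} \eqref{eq: check O}. Passing to the limits defining \(\mathcal{O}^{\bbul{A}}_{\bbullet/S,X}(\spb{U})\) produces ring isomorphisms on each open set compatible with restriction, hence an isomorphism of structure sheaves over \(\rho^{\mathsf{Sp}}_X\); together these give an isomorphism of ringed spaces \(\rho^{\bbul{A}}_X : \RspbulA(X)\xrightarrow{\sim}\RspculA(F(X))\). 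Naturality in \(X\) — commutativity of the squares for a morphism \(f:X\to Y\) — follows by applying the same matching to the double-limit construction of \(\mathcal{O}^{A}_{\bullet/S,f}\) in \autoref{reconstruction: Tch A morph}, using the naturality of \(\rho^{\mathsf{Sp}}\) and the compatibility of \(F\) with composition.

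Finally, assertion \ref{lem: Tch equiv commute: Tch comm} follows by passing to the colimit over \(\mathcal{A}^1\). By \ref{lem: Tch equiv commute: FA is A1}, applied also to a quasi-inverse of \(F\), the assignment \(A\mapsto F(A)\) is an equivalence of the connected groupoids \(\mathcal{A}^1_{\bbullet/S}\xrightarrow{\sim}\mathcal{A}^1_{\bbbullet/T}\), and since precomposition with \(F\) preserves pointwise colimits one has \(\Rspc\circ F\cong \colim_{A\in\mathcal{A}^1_{\bbullet/S}}(\RspculA\circ F)\). One then checks that the isomorphisms \(\rho^{\bbul{A}}\) are compatible with the transition isomorphisms \(\eta^{A,B}\) on both sides — i.e. that the square built from \autoref{reconstruction: eta A to B} \eqref{eq: eta A to B sch} and \(F\) commutes, which holds because \(\eta^{A,B}\) is defined by composing with the chosen isomorphism \(A\to B\), an operation respected by \(F\). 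As \(\mathcal{A}^1\) is a connected groupoid with unique morphisms (\autoref{lem: ring str A^1 unique}), this compatibility lets the \(\rho^{\bbul{A}}\) descend to a single isomorphism of functors \(\rho^{\mathsf{Rsp}}:\Rspb\xrightarrow{\sim}\Rspc\circ F\). I expect the main obstacle to lie in the bookkeeping of \ref{lem: Tch equiv commute: TchA comm}: one must verify that the \(F\)-induced identifications are genuine ring isomorphisms (not merely set bijections) and that they are \emph{simultaneously} compatible with the restriction maps, with the functoriality in \(f\) through the double limit, and with the homeomorphisms \(\rho^{\mathsf{Sp}}\) — in short, that \(F\) matches the entire sheaf-theoretic apparatus rather than just its underlying points.
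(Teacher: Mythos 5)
Your proposal is correct and follows essentially the same route as the paper: assertion (i) via \autoref{cor: A1 over dugger}; assertion (ii) by matching each layer of the reconstruction (the homeomorphisms \(\rho_X\) from \autoref{lem: Sp equiv commute}, the equivalences of \(\mathsf{OpIm}\) and indexing categories, the \(F\)-induced ring isomorphisms on the \(\check{\mathcal{O}}\)-level, then limits and naturality); and assertion (iii) by checking compatibility with the transition isomorphisms \(\eta^{A,B}\) and passing to the colimit over the connected groupoid. The only cosmetic difference is that the paper sets up the Hom-set identifications via a quasi-inverse \(F^{-1}\) and the adjunction isomorphism \(\Hom_{\Schc{T}}(*,\bbbul{A})\xrightarrow{\sim}\Hom_{\Schb{S}}(F^{-1}(*),\bbul{A})\), whereas you apply \(F\) directly, which is equivalent.
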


\begin{proof}
  Assertion \ref{lem: Tch equiv commute: FA is A1}
  follows immediately from \autoref{cor: A1 over dugger}.

  Next, we prove assertion \ref{lem: Tch equiv commute: TchA comm}.
  Let \(F^{-1}\) be a quasi-inverse of \(F\).
  Since \(F^{-1}\) is a left adjoint of \(F\), and \(\bbbul{A} = F(\bbul{A})\),
  there exists an isomorphism of functors
  \begin{equation}
    \label{check A*}
    \check{A}_*: \Hom_{\Schc{T}}(*,\bbbul{A}) \xrightarrow{\sim} \Hom_{\Schb{S}}(F^{-1}(*),\bbul{A})
    : \Schc{T} \to \mathsf{Ring}.
    \tag{\(\bigstar\)}
  \end{equation}
  For any object \(X\in \Schb{S}\) and any object
  \([\bbbul{i}:\bbbul{U}\to F(X)]\in \mathsf{OpIm}_{\bbbullet/T}(F(X))\),
  we write \(\bbul{U} \dfn F^{-1}(\bbbul{U})\),
  \(\bbul{i}:\bbul{U} \to X\) for the morphism corresponding to
  \(\bbbul{i}:\bbbul{U} \to F(X)\)
  (where we note that by \autoref{cor: open imm is cat} and
  \autoref{cor: qc is cat} \ref{enumi: qc morph is cat},
  for any quasi-compact open immersion \(\bbbul{U}\to F(X)\) in \(\Schc{T}\),
  the corresponding morphism \(F^{-1}(\bbbul{U}) \to X\) in \(\Schb{S}\)
  is also a quasi-compact open immersion), and
  \[\check{A}(\bbbul{i}) \dfn \check{A}_{\bbbul{U}}:
  \check{\mathcal{O}}_{\bbbullet/T,F(X)}^{\bbbul{A}}(\bbbul{i})\to
  \check{\mathcal{O}}_{\bbullet/S,X}^{\bbul{A}}(\bbul{i})\]
  (cf. \autoref{reconstruction: Tch A obj} (i)).
  Then, for any morphism \(f:X\to Y\) in \(\Schb{S}\) and
  any diagram
  \[
  \begin{CD}
    \bbbul{U} @>{\bbbul{i}}>> F(X) \\
    @V{g}VV @VV{F(f)}V \\
    \bbbul{V} @>{\bbbul{j}}>> F(Y)
  \end{CD}
  \]
  such that \(\bbbul{i}, \bbbul{j}\) are quasi-compact open immersions,
  the diagram of rings
  \begin{equation}
    \label{eq: Tch equiv commutes}
    \begin{CD}
      \check{\mathcal{O}}_{\bbbullet/T,F(Y)}^{\bbbul{A}}(\bbbul{j})
      @>{\check{A}(\bbbul{j})}>{\sim}>
      \check{\mathcal{O}}_{\bbullet/S,Y}^{\bbul{A}}(\bbul{j}) \\
      @V{(-)\circ g}VV @VV{(-)\circ F^{-1}(g)}V \\
      \check{\mathcal{O}}_{\bbbullet/T,F(X)}^{\bbbul{A}}(\bbbul{i})
      @>{\check{A}(\bbbul{i})}>{\sim}>
      \check{\mathcal{O}}_{\bbullet/S,X}^{\bbul{A}}(\bbul{i})
    \end{CD}
    \tag{\(\dagger\)}
  \end{equation}
  commutes.

  Let \(X\in \Schb{S}\) be an object.
  Since \(F^{-1}\) is a left adjoint of \(F\),
  \(F^{-1}\) induces an equivalence of categories
  \[
  \mathsf{OpIm}_{\bbbullet/T}(F(X)) \xrightarrow{\sim}
  \mathsf{OpIm}_{\bbullet/S}(X).
  \]
  By the definition of the homeomorphism \(\rho_X\)
  (cf. the proof of \autoref{lem: Sp equiv commute}) associated to the \(S,T,F\) under consideration,
  for any open subset \(\spc{U}\subset \mathsf{Sp}_{\bbbullet/T}(F(X))\),
  \(F^{-1}\) induces an equivalence of categories
  \[
  F_{X;\spc{U}}^{\bbbullet\to \bbullet}: \mathcal{I}_{\bbbullet/T}(F(X);\spc{U})
  \xrightarrow{\sim} \mathcal{I}_{\bbullet/S}(X;\rho_X^{-1}(\spc{U})).
  \]
  For any open subset \(\spc{U}\subset \mathsf{Sp}_{\bbbullet/T}(F(X))\),
  if we write \(\spb{U} \dfn \rho_X^{-1}(\spc{U})\),
  \(\bbbul{I} \dfn \mathcal{I}_{\bbbullet/T}(F(X);\spc{U})\), and
  \(\bbul{I} \dfn \mathcal{I}_{\bbullet/S}(X;\spb{U})\),
  then we define an isomorphism of rings
  \[
  \rho_X^{A,\#}(\spc{U}): \mathcal{O}_{\bbbullet/T,F(X)}^{\bbbul{A}}(\spc{U})
  \xrightarrow{\sim} \mathcal{O}_{\bbullet/S,X}^{\bbul{A}}(\spb{U})
  \]
  (cf. \autoref{reconstruction: Tch A obj} (ii))
  as the composite of
  \[
  \lim_{\bbbul{i}\in \bbbul{I}} \check{A}(\bbbul{i}):
  \mathcal{O}_{\bbbullet/T,F(X)}^{\bbbul{A}}(\spc{U})
  = \lim_{\bbbul{i}\in \bbbul{I}} \check{\mathcal{O}}_{\bbbullet/T,F(X)}^{\bbbul{A}}(\bbbul{i})
  \xrightarrow{\sim} \lim_{\bbbul{i}\in \bbbul{I}}
  \check{\mathcal{O}}_{\bbullet/S,X}^{\bbul{A}}(F_{X;\spc{U}}^{\bbbullet\to \bbullet}(\bbbul{i}))
  \]
  and the inverse of the natural isomorphism
  \[
  \mathcal{O}_{\bbullet/S,X}^{\bbul{A}}(\spb{U})
  = \lim_{\bbul{i}\in \bbul{I}} \check{\mathcal{O}}_{\bbullet/S,X}^{\bbul{A}}(\bbul{i})
  \xrightarrow{\sim} \lim_{\bbbul{i}\in \bbbul{I}}
  \check{\mathcal{O}}_{\bbullet/S,X}^{\bbul{A}}(F_{X;\spc{U}}^{\bbbullet\to \bbullet}(\bbbul{i}))
  \]
  induced by \(F_{X;\spc{U}}^{\bbbullet\to \bbullet}\).
  Since the diagram \eqref{eq: Tch equiv commutes} commutes,
  for any morphism \(f:X\to Y\) in \(\Schb{S}\),
  any open subset \(\spc{V}\subset \mathsf{Sp}_{\bbbullet/T}(F(Y))\), and
  any open subset
  \(\spc{U}\subset \mathsf{Sp}_{\bbbullet/T}(F(f))^{-1}(\spc{V})\),
  if we write \(\spb{U} \dfn \rho_X^{-1}(\spc{U})\) and
  \(\spb{V} \dfn \rho_Y^{-1}(\spc{V})\),
  then the diagram of rings
  \begin{equation}
    \label{eq: b to bb TchA comm}
    \begin{CD}
      \mathcal{O}_{\bbbullet/T,F(Y)}^{\bbbul{A}}(\spc{V})
      @>{\rho_Y^{A,\#}(\spc{V})}>{\sim}>
      \mathcal{O}_{\bbullet/S,Y}^{\bbul{A}}(\spb{V}) \\
      @V{\mathcal{O}_{\bbbullet/T,F(f)}^{\bbbul{A}}(\spc{V},\spc{U})}VV
      @VV{\mathcal{O}_{\bbullet/S,f}^{\bbul{A}}(\spb{V},\spb{U})}V \\
      \mathcal{O}_{\bbbullet/T,F(X)}^{\bbbul{A}}(\spc{U})
      @>{\rho_X^{A,\#}(\spc{U})}>{\sim}>
      \mathcal{O}_{\bbullet/S,X}^{\bbul{A}}(\spb{U})
    \end{CD}
    \tag{\(\ddagger\)}
  \end{equation}
  (cf. \autoref{reconstruction: Tch A obj} (ii))
  commutes.
  Hence the family of isomorphisms of rings
  \[
  \rho_X^{A,\#} \dfn (\rho_X^{A,\#}(\spc{U})%
  )_{\spc{U}\in \mathsf{Op}_{\bbbullet/T}(F(X))}
  \]
  (cf. \autoref{reconstruction: Top})
  determines an isomorphism of sheaves of rings
  \(\rho_X^{A,\#}: \mathcal{O}_{\bbbullet/T,F(X)}^{\bbbul{A}}
  \xrightarrow{\sim} \rho_{X,*} \mathcal{O}_{\bbullet/S,X}^{\bbul{A}}\)
  (cf. \autoref{reconstruction: Tch A obj} (iii)).
  In particular,
  the pair
  \(\rho_X^A\dfn (\rho_X,\rho_X^{A,\#})\)
  determines an isomorphism of schemes
  \(\rho_X^A: \RspbulA(X) \xrightarrow{\sim} \RspculA(F(X))\)
  (cf. \autoref{reconstruction: Tch A obj} (iii)).

  Let \(f:X\to Y\) be a morphism in \(\Schb{S}\).
  Since the diagram \eqref{eq: b to bb TchA comm} commutes,
  the following diagram of sheaves of rings
  on \(\RspculA(F(Y))\) commutes:
  \[
  \begin{tikzpicture}[auto]
    \node (A) at (0,2.5) {\(\mathcal{O}_{\bbbullet/T,F(Y)}^{\bbbul{A}}\)};
    \node (A') at (0,0) {\(\mathsf{Sp}_{\bbbullet/T}(F(f))_*\mathcal{O}_{\bbbullet/T,F(X)}^{\bbbul{A}}\)};
    \node (B) at (7,2.5) {\(\rho_{Y,*} \mathcal{O}_{\bbullet/S,Y}^{\bbul{A}}\)};
    \node (B') at (7,1) {\(\rho_{Y,*}  \mathsf{Sp}_{\bbullet/S}(f)_*\mathcal{O}_{\bbullet/S,X}^{\bbul{A}}\)};
    \node (C) at (7,0.5) {\rotatebox{90}{\(=\)}};
    \node (B'') at (7,0) {\(\mathsf{Sp}_{\bbbullet/T}(F(f))_*\rho_{X,*} \mathcal{O}_{\bbullet/S,X}^{\bbul{A}}\).};
    \draw[->] (A) -- node[swap]  {\(\scriptstyle \mathcal{O}_{\bbbullet/T,F(f)}^{\bbbul{A},\#}\)} (A');
    \draw[->] (B) -- node  {\(\scriptstyle \rho_{Y,*}\mathcal{O}_{\bbullet/S,f}^{\bbul{A},\#}\)} (B');
    \draw[->] (A) -- node  {\(\scriptstyle \rho_Y^{A,\#}\)} (B);
    \draw[->] (A') -- node  {\(\scriptstyle \mathsf{Sp}_{\bbbullet/T}(F(f))_*\rho_X^{A,\#}\)} (B'');
  \end{tikzpicture}
  \]%%%はみださないように
  (cf. \autoref{lem: Sp equiv commute}, \autoref{reconstruction: Tch A morph}).
  Hence the diagram of schemes
  \[
  \begin{CD}
    \RspbulA(X) @>{\rho_X^A}>{\sim}> \RspcA(F(X)) \\
    @V{\RspbulA(f)}VV
    @VV{\RspcA(F(f))}V \\
    \RspbulA(Y) @>{\rho_Y^A}>{\sim}> \RspcA(F(Y))
  \end{CD}
  \]
  commutes.
  Thus the family of isomorphisms
  \[
  \rho^A \dfn \left(\rho_X^A: \RspbulA(X)\xrightarrow{\sim} \RspcA(F(X))\right)_{X\in \Schb{S}}
  \]
  determines an isomorphism of functors
  \(\rho^A: \RspbulA\xrightarrow{\sim} \RspcA\circ F\).
  This completes the proof of assertion \ref{lem: Tch equiv commute: TchA comm}.

  %%%%11/09ここまで

  Finally, we prove assertion \ref{lem: Tch equiv commute: Tch comm}.
  Let \(p:\bbul{A} \to \bbul{B}\) be an isomorphism in \(\mathcal{A}^1_{\bbullet/S}\).
  Write \(\bbbul{A}\dfn F(\bbul{A})\) and \(\bbbul{B}\dfn F(\bbul{B})\).
  Then, by \ref{lem: Tch equiv commute: FA is A1},
  \(\bbbul{A}, \bbbul{B} \in \mathcal{A}^1_{\bbbullet/T}\).
  Moreover, for any quasi-inverse \(F^{-1}\) of \(F\) and any object \(*\in \Schc{T}\),
  the following diagram commutes:
  \[
  \begin{CD}
    \Hom_{\Schc{T}}(*,\bbbul{A}) @>{\check{A}_*}>{\sim}> \Hom_{\Schb{S}}(F^{-1}(*),\bbul{A}) \\
    @V{F(p)\circ (-)}VV @VV{p\circ (-)}V \\
    \Hom_{\Schc{T}}(*,\bbbul{B}) @>{\check{B}_*}>{\sim}> \Hom_{\Schb{S}}(F^{-1}(*),\bbul{B}), \\
  \end{CD}
  \]
  (cf. \eqref{check A*}).
  This commutativity implies that
  the functors ``\(\eta^{A,B}\)'' of \autoref{reconstruction: eta A to B}
  are compatible with passage from ``\(\bbullet\)'' to ``\(\bbbullet\)''.
  Thus, by applying \autoref{reconstruction: Sch},
  %\((F\circ \eta_{\bbbullet}^{A,B}) \circ \rho^A = \rho^B\circ \eta_{\bbullet}^{A,B}\),
  %where \(\eta_{\bbullet}^{A,B}\) is the isomorphism of functors
  %\(\RspbulA\xrightarrow{\sim}\RspbulB\) in \autoref{reconstruction: eta A to B}, and
  %\(\eta_{\bbbullet}^{A,B}\) is the isomorphism of functors
  %\(\RspculA\xrightarrow{\sim}\RspculB\) in \autoref{reconstruction: eta A to B}, and
  %Thus by \autoref{reconstruction: Sch},
  we obtain an isomorphism
  \[
  \rho^{\mathsf{Rsp}} \dfn \colim_{\bbul{A}\in \mathcal{A}^1_{\bbullet/S}} \rho^A :
  \Rspb \xrightarrow{\sim} \Rspc \circ F.
  \]
  This completes the proof of \autoref{lem: Tch equiv commute}.
\end{proof}

\begin{cor}\label{cor: underlying sch equiv commute}
  Let \(S,T\) be locally Noetherian normal schemes and
  \(F:\Schb{S} \xrightarrow{\sim} \Schc{T}\) an equivalence.
  Then the following diagram commutes (up to natural isomorphism):
  \[
  \begin{CD}
    \Schb{S} @>F>> \Schc{T} \\
    @V{U_{\bbullet/S}^{\SchU}}VV @VV{U_{\bbbullet/T}^{\SchU}}V \\
    \SchU @= \SchU.
  \end{CD}
  \]
\end{cor}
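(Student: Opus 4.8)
The plan is to argue exactly as in the proofs of \autoref{cor: underlying set equiv commute} and \autoref{cor: underlying top equiv commute}, now replacing the set- and space-level reconstruction functors by the scheme-level reconstruction functor $\Rspb$ of \autoref{reconstruction: Sch}. All of the substantive inputs are already available: \autoref{reconstruction: Sch} provides, for the base $S$, a natural isomorphism $\eta: \Rspb \xrightarrow{\sim} U_{\bbullet/S}^{\SchV} = i^{\sfSch}_{\univ{U}\in\univ{V}}\circ U_{\bbullet/S}^{\SchU}$, and the analogous isomorphism holds for $T$; meanwhile \autoref{lem: Tch equiv commute} \ref{lem: Tch equiv commute: Tch comm} supplies a natural isomorphism $\rho^{\mathsf{Rsp}}: \Rspb \xrightarrow{\sim} \Rspc \circ F$, which encodes the compatibility of the underlying-scheme reconstruction with the equivalence $F$.

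Concretely, I would assemble, from \autoref{lem: Tch equiv commute} and \autoref{reconstruction: Sch}, the chain of natural isomorphisms of functors $\Schb{S}\to \SchV$
\[
i^{\sfSch}_{\univ{U}\in\univ{V}}\circ U_{\bbullet/S}^{\SchU}
\xleftarrow{\sim} \Rspb
\xrightarrow{\sim} \Rspc \circ F
\xrightarrow{\sim} i^{\sfSch}_{\univ{U}\in\univ{V}}\circ U_{\bbbullet/T}^{\SchU} \circ F,
\]
where the first arrow is the inverse of $\eta$ for $S$, the middle arrow is $\rho^{\mathsf{Rsp}}$, and the last arrow is $\eta$ for $T$ whiskered by $F$. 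Composing these gives a natural isomorphism $i^{\sfSch}_{\univ{U}\in\univ{V}}\circ U_{\bbullet/S}^{\SchU} \xrightarrow{\sim} i^{\sfSch}_{\univ{U}\in\univ{V}}\circ U_{\bbbullet/T}^{\SchU}\circ F$. Since the universe-enlarging inclusion $i^{\sfSch}_{\univ{U}\in\univ{V}}:\SchU \to \SchV$ is fully faithful, this composite isomorphism descends uniquely to a natural isomorphism $U_{\bbullet/S}^{\SchU}\xrightarrow{\sim} U_{\bbbullet/T}^{\SchU}\circ F$ of functors $\Schb{S}\to \SchU$, which is precisely the asserted commutativity up to natural isomorphism.

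I do not expect any genuine obstacle at this stage, since the corollary is the formal packaging of compatibilities already proved. The real difficulty lies upstream, in the construction of $\rho^{\mathsf{Rsp}}$ in \autoref{lem: Tch equiv commute}: there one must transport the distinguished ring objects of $\mathcal{A}^1_{\bbullet/S}$ across $F$ (using \autoref{cor: A1 over dugger} to recognize $\mathbb{A}^1$-ring objects category-theoretically, so that $F(\mathcal{A}^1_{\bbullet/S})\subset \mathcal{A}^1_{\bbbullet/T}$) and check compatibility of the structure sheaves and transition isomorphisms built in \autoref{reconstruction: Tch A obj}, \autoref{reconstruction: Tch A morph}, and \autoref{reconstruction: eta A to B}, finally passing to the colimit over the connected groupoid $\mathcal{A}^1_{\bbullet/S}$. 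Granting \autoref{lem: Tch equiv commute}, the present statement is a two-line diagram chase together with faithfulness of $i^{\sfSch}_{\univ{U}\in\univ{V}}$.
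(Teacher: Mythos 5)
Your proof is correct and is essentially identical to the paper's own argument: the same chain of natural isomorphisms $i^{\sfSch}_{\univ{U}\in\univ{V}}\circ U_{\bbullet/S}^{\SchU} \xleftarrow{\sim} \Rspb \xrightarrow{\sim} \Rspc\circ F \xrightarrow{\sim} i^{\sfSch}_{\univ{U}\in\univ{V}}\circ U_{\bbbullet/T}^{\SchU}\circ F$, followed by descent via the full faithfulness of $i^{\sfSch}_{\univ{U}\in\univ{V}}$. Your remark that the real work lies upstream in \autoref{lem: Tch equiv commute} is also an accurate assessment of where the content of the argument resides.
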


\

\begin{proof}
  By \autoref{reconstruction: Sch} and
  \autoref{lem: Tch equiv commute} \ref{lem: Tch equiv commute: Tch comm},
  we obtain the following natural isomorphisms:
  \begin{align*}
    i^{\sfSch}_{\univ{U}\in\univ{V}}\circ U_{\bbullet/S}^{\SchU}
    \xleftarrow{\sim} \Rspb
    \xrightarrow[\sim]{\rho^{\mathsf{Rsp}}} \Rspc \circ F
    \xrightarrow{\sim} i^{\sfSch}_{\univ{U}\in\univ{V}}\circ U_{\bbbullet/T}^{\SchU} \circ F,
  \end{align*}
  where the first and the third isomorphisms are
  the ``\(\bbullet\)'' and ``\(\bbbullet\)'' versions of
  the isomorphism ``\(\eta\)'' of \autoref{reconstruction: Sch}.
  Since \(i^{\sfSch}_{\univ{U}\in\univ{V}}\) is fully faithful,
  the composite isomorphism of the above display determines
  a natural isomorphism
  \(U_{\bbullet/S}^{\SchU} \xrightarrow{\sim} U_{\bbbullet/T}^{\SchU} \circ F\).
\end{proof}

In the remainder of this section,
we discuss some properties of \(\Schb{S}\) related to our reconstructions.

The following proposition concerns the rigidity of various forgetful functors.

%%%%\subsection{U rigid}

\begin{prop}\label{lem: forgetful functor is rigid}
  Let \(S\) be a quasi-separated scheme.
  Then the following assertions hold:
  \begin{enumerate}
    \item \label{enumi: forgetful functor set}
    \(\Aut(U_{\bbullet/S}^{\SetU}) = \left\{ \id_{U_{\bbullet/S}^{\SetU}}\right\}\)
    (cf. \autoref{reconstruction: Set} \eqref{equation: underlying set functor}).
    \item \label{enumi: forgetful functor top}
    \(\Aut(U_{\bbullet/S}^{\TopU}) = \left\{ \id_{U_{\bbullet/S}^{\TopU}}\right\}\)
    (cf. \autoref{reconstruction: Top} \eqref{equation: underlying top functor}).
    \item \label{enumi: forgetful functor sch}
    \(\Aut(U_{\bbullet/S}^{\SchU}) = \left\{ \id_{U_{\bbullet/S}^{\SchU}}\right\}\)
    (cf. \autoref{defi: defi: rec sch 2} \ref{defi: rec sch enumi: U Sch}).
  \end{enumerate}
\end{prop}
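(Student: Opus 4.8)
The plan is to handle the three parts in order, exploiting the forgetful functors $\SchU \to \TopU \to \SetU$ to reduce (iii) to (ii) and (ii) to (i). In each part a natural automorphism $\theta$ of the relevant functor $U$ assigns to every $X \in \Schb{S}$ an isomorphism $\theta_X$ of $U(X)$, and the idea throughout is to pin down $\theta_X$ by testing naturality against a small supply of ``probe morphisms'' that exist in $\Schb{S}$ for \emph{every} choice of $\bbullet$.

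For part \ref{enumi: forgetful functor set}, the probes are residue fields. For $x\in X$ the scheme $\Spec k(x)$ is affine and reduced and its structure morphism is quasi-compact and separated (\autoref{lem: affine over qsep is qcpt} \ref{enumi: lem: aff over qsep is qcpt qc} \ref{enumi: lem: aff over qsep is qcpt sep}), so it satisfies all of red, qcpt$/S$, qsep$/S$, sep$/S$ and hence lies in $\Schb{S}$ regardless of $\bbullet$. Since $|\Spec k(x)|$ has cardinality one (cf. \autoref{lem: 1pt}), the bijection $\theta_{\Spec k(x)}$ must be the identity. Applying naturality to the canonical morphism $p_x\colon \Spec k(x)\to X$, whose image is $\{x\}$, gives $\theta_X\circ|p_x| = |p_x|\circ\theta_{\Spec k(x)} = |p_x|$, and evaluating at the unique point yields $\theta_X(x)=x$. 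As $x$ is arbitrary, $\theta_X=\id_{|X|}$.

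For part \ref{enumi: forgetful functor top}, composing $\theta$ with the forgetful functor $\TopU\to\SetU$ produces an automorphism of $U_{\bbullet/S}^{\SetU}$, which is the identity by \ref{enumi: forgetful functor set}; thus each homeomorphism $\theta_X$ fixes every point and so equals $\id_{|X|}$. For part \ref{enumi: forgetful functor sch}, composing with $\SchU\to\TopU$ and invoking \ref{enumi: forgetful functor top} shows that each scheme automorphism $\theta_X$ is the identity on $|X|$, so $\theta_X$ is given by a sheaf-of-rings automorphism $\theta_X^{\#}\colon \mathcal{O}_X\xrightarrow{\sim}\mathcal{O}_X$. Naturality against the affine open immersions $U\hookrightarrow X$ with $U\in\Schb{S}$ (again in $\Schb{S}$ by \autoref{lem: affine over qsep is qcpt}) gives $\theta_X|_U=\theta_U$, so it suffices to prove $\theta_X^{\#}=\id$ for affine $X=\Spec B$. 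The main device is an ``affine line'' object $\mathbb{A}\in\Schb{S}$ representing global sections: take $\mathbb{A}:=\mathbb{A}^1_S$ when $\red\notin\bbullet$, and $\mathbb{A}:=\mathbb{A}^1_{S_{\red}}=(\mathbb{A}^1_S)_{\red}$ when $\red\in\bbullet$. In both cases $\mathbb{A}\in\Schb{S}$ and $\Hom_{\Schb{S}}(X,\mathbb{A})=\Gamma(X,\mathcal{O}_X)$ naturally in $X$, with universal coordinate $t\in\Gamma(\mathbb{A},\mathcal{O}_{\mathbb{A}})$ corresponding to $\id_{\mathbb{A}}$. Writing $g_b\colon X\to\mathbb{A}$ for the classifying morphism of $b\in\Gamma(X,\mathcal{O}_X)$ and $\tau:=\theta_{\mathbb{A}}^{\#}(t)$, naturality of $\theta$ against $g_b$ yields the substitution formula $\theta_X^{\#}(b)=g_b^{\#}(\tau)=\tau(b)$ for all $X$ and all $b$. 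Since each $\theta_X^{\#}$ is a ring homomorphism, applying this with $X=\mathbb{A}\btimes\mathbb{A}$ and the two coordinates forces $\tau$ to be additive and multiplicative as a polynomial in $t$; and since each $\theta_X^{\#}$ is bijective, surjectivity at $X=\mathbb{A}$ forces $\tau=t$, whence $\theta_X^{\#}(b)=b$ for all $b$ and we are done.

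The step I expect to be the main obstacle is this last one: ruling out the positive-characteristic Frobenius-type solutions $\tau=t^{p^e}$ of the additive–multiplicative constraints, together with the bookkeeping caused by nontrivial idempotents of $\Gamma(\bar S,\mathcal{O}_{\bar S})$, where $\bar S\in\{S,S_{\red}\}$. I plan to resolve it by observing that $\theta_X^{\#}$ is an automorphism (hence surjective) for \emph{every} $X$ — in particular for $X=\mathbb{A}$, where a Frobenius-type $\tau$ would miss $t$ from its image on some connected component — which pins $\tau=t$; alternatively one may reinterpret $\theta_{\mathbb{A}}$ as an automorphism of $\mathbb{A}$ as a ring object and appeal to \autoref{lem: ring str A^1 unique} \ref{enumi: any A^1 lem: ring str A^1 unique}. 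A secondary subtlety, which the setup above is designed to accommodate, is that $\theta$ is a priori only an \emph{absolute} scheme automorphism rather than one over $S$; the substitution formula is therefore derived from the fact that the classifying maps $g_b$ — not $\theta$ itself — are morphisms over the base.
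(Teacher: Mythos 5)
Your parts \ref{enumi: forgetful functor set} and \ref{enumi: forgetful functor top} coincide with the paper's argument (probe with \(\Spec k(x)\to X\), then reduce the topological case to the set-theoretic one), and your reduction of part \ref{enumi: forgetful functor sch} to affine opens via \autoref{cor: coprod exists} is also the paper's. Where you genuinely diverge is in how you kill the sheaf map. The paper proves \(\alpha_{\mathbb{A}^1_X}=\id_{\mathbb{A}^1_{\mathbb{Z}}}\times\alpha_X\) for each \(X\) by passing to \(\mathbb{P}^1_X\): naturality against the projection and the three sections shows that \(\alpha_{\mathbb{P}^1_X}^{-1}\circ(\id_{\mathbb{P}^1_{\mathbb{Z}}}\times\alpha_X)\) is an automorphism of \(\mathbb{P}^1_X\) over \(X\) fixing \(0_X,1_X,\infty_X\), hence the identity; the substitution \(\alpha_X^{\#}(U)(s)=s\) then drops out. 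You instead work with a single representing object \(\mathbb{A}\) over \(S\) and classify the universal element \(\tau=\theta_{\mathbb{A}}^{\#}(t)\): the functional equations extracted from \(\mathbb{A}\btimes\mathbb{A}\) force \(\tau\), affine-locally on \(S\), to be \(\sum a_nt^n\) with \((a_n)\) orthogonal idempotents supported on prime powers \(n=p^e\) with \(pa_n=0\), and bijectivity of \(\theta_{\mathbb{A}}^{\#}\) (in particular \(t\) lying in the image of \(b\mapsto\sum a_nb^n\), which fails on any nonzero factor with \(n\geq 2\) since the coefficient of \(t\) in \(b^{p^e}\) always vanishes) kills every Frobenius component. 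This is correct; it costs the explicit characteristic-\(p\)/idempotent analysis that the \((\mathbb{P}^1,0,1,\infty)\)-rigidity argument sidesteps, and buys an argument that never leaves the affine line. Two small cautions: for non-quasi-compact \(S\) the section \(\tau\) need not be a global polynomial, so the idempotent analysis must indeed be performed affine-locally on \(S\) (harmless); and your suggested alternative ending via \autoref{lem: ring str A^1 unique} is not a genuine substitute, since that lemma asserts uniqueness of ring scheme \emph{structures} with prescribed identity sections rather than triviality of ring-scheme endomorphisms (Frobenius being a nontrivial one), so the bijectivity of \(\theta_{\mathbb{A}}^{\#}\) from your primary argument is really what is needed. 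You correctly isolate the one genuine trap --- that \(\theta_X\) is only an absolute automorphism, so \(g_b\circ\theta_X\) need not be an \(S\)-morphism --- and handle it by pulling back the coordinate rather than invoking the representability bijection.
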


\begin{proof}
  First, we prove assertion \ref{enumi: forgetful functor set}.
  Let \(\alpha : U_{\bbullet/S}^{\sfSet} \xrightarrow{\sim} U_{\bbullet/S}^{\sfSet}\)
  be an isomorphism.
  Then, for any object \(X\in \Schb{S}\) and any point \(x\in X\),
  if we write \(f:\Spec(k(x)) \to X\) for the morphism of \(\Schb{S}\)
  determined by the point \(x\in X\),
  then the diagram
  \[
  \begin{CD}
    \{x\} = U_{\bbullet/S}^{\SetU}(\Spec(k(x)))
    @>{U_{\bbullet/S}^{\SetU}(f)}>>
    U_{\bbullet/S}^{\SetU}(X) = |X| \\
    @V{\alpha_{\Spec(k(x))}}VV @V{\alpha_X}VV \\
    \{x\} = U_{\bbullet/S}^{\SetU}(\Spec(k(x)))
    @>{U_{\bbullet/S}^{\SetU}(f)}>>
    U_{\bbullet/S}^{\SetU}(X) = |X|
  \end{CD}
  \]
  commutes.
  Since \(U_{\bbullet/S}^{\SetU}(f)(x) = x\),
  it holds that \(\alpha_X(x) = x\).
  Hence for any object \(X\in \Schb{S}\),
  it holds that \(\alpha_X = \id_{U_{\bbullet/S}^{\sfSet}(X)}\).
  This completes the proof of assertion \ref{enumi: forgetful functor set}.

  Assertion \ref{enumi: forgetful functor top} follows immediately
  from assertion \ref{enumi: forgetful functor set}.

  Finally, we prove \ref{enumi: forgetful functor sch}.
  Let \(\alpha: U_{\bbullet/S}^{\SchU} \xrightarrow{\sim} U_{\bbullet/S}^{\SchU}\) be an isomorphism.
  Then \(\alpha\) is a family of isomorphisms of schemes
  \((\alpha_X : X \xrightarrow{\sim} X)_{X\in \Schb{S}}\)
  such that for any morphism \(f:X\to Y\) of \(\Schb{S}\),
  the following diagram of schemes commutes:
  \[
  \begin{CD}
    X @>{f}>> Y  \\
    @V{\alpha_X}VV @VV{\alpha_Y}V \\
    X @>{f}>> Y.
  \end{CD}
  \]
  By assertion \ref{enumi: forgetful functor top},
  each morphism \(\alpha_X\) induces the identity morphism
  on the underlying topological space \(U_{\bbullet/S}^{\TopU}(X)\) of \(X\).
  Hence to prove \ref{enumi: forgetful functor sch},
  it suffices to prove that for any \(X\in \Schb{S}\),
  the isomorphism of sheaves of rings
  \(\alpha_X^{\#}:\mathcal{O}_X \xrightarrow{\sim} \mathcal{O}_X\)
  is the identity morphism.
  Let \(U\) be an affine open subset of \(X\) and
  \(s\in \mathcal{O}_X(U)\) a section.
  Write \(i:U\to X\) for the open immersion.
  To prove that \(\alpha_X^{\#} = \id_{\mathcal{O}_X}\),
  it suffices to prove that \(\alpha_X^{\#}(U)(s) = s\).
  By \autoref{cor: coprod exists} \ref{enumi: cor: coprod exists aff},
  \(U\) belongs to \(\Schb{S}\).
  Since \(i\circ \alpha_U = \alpha_X \circ i\),
  it holds that \(\alpha_U = \alpha_X |_U\).
  Write \(p:\mathbb{A}^1_X\to X\) for the natural projection and
  \(\tilde{s}: U\to \mathbb{A}^1_X\)
  for the morphism over \(X\) in \(\Schb{S}\)
  corresponding to \(s\) such that \(p\circ \tilde{s} = i\).
  If \(\tilde{t}:U\to \mathbb{A}^1_X\) is the morphism corresponding to
  the section \(t=\alpha_X^{\#}(U)(s)\in \mathcal{O}_X(U)\),
  then the following diagram of schemes commutes:
  \[
  \begin{tikzpicture}[auto]
    \node (A) at (0,1.5) {\(U\)};
    \node (A') at (0,0) {\(U\)};
    \node (B) at (4,1.5) {\(\mathbb{A}^1_X\)};
    \node (B') at (4,0) {\(\mathbb{A}^1_X\)};
    \node (C) at (8,1.5) {\(X\)};
    \node (C') at (8,0) {\(X\).};
    \draw[->] (A) -- node[swap]  {\(\scriptstyle \alpha_U = \alpha_X|_U\)} (A');
    \draw[->] (B) -- node[swap]  {\(\scriptstyle \alpha_X\times \mathbb{A}^1_{\mathbb{Z}}\)} (B');
    \draw[->] (C) -- node[swap]  {\(\scriptstyle \alpha_X\)} (C');
    \draw[->] (A) -- node  {\(\scriptstyle \cong\)} (A');
    \draw[->] (B) -- node  {\(\scriptstyle \cong\)} (B');
    \draw[->] (C) -- node  {\(\scriptstyle \cong\)} (C');
    \draw[->] (A) -- node  {\(\scriptstyle \tilde{t}\)} (B);
    \draw[->] (A') -- node  {\(\scriptstyle \tilde{s}\)} (B');
    \draw[->] (B) -- node  {\(\scriptstyle p\)} (C);
    \draw[->] (B') -- node  {\(\scriptstyle p\)} (C');
    \draw[->] (A) to[bend left=20] node  {\(\scriptstyle i\)} (C);
    \draw[->] (A') to[bend right=20] node  {\(\scriptstyle i\)} (C');
  \end{tikzpicture}
  \]
  Since \(\tilde{s} \circ \alpha_U = \alpha_{\mathbb{A}^1_X}\circ \tilde{s}\),
  to prove that \(t = s\),
  it suffices to prove that
  \(\alpha_{\mathbb{A}^1_X} = \id_{\mathbb{A}^1_{\mathbb{Z}}}\times \alpha_X\).
  Moreover, since \(\mathbb{A}^1_X\subset \mathbb{P}^1_X\) is an open subscheme,
  it suffices to prove that
  \(\alpha_{\mathbb{P}^1_X} = \id_{\mathbb{P}^1_{\mathbb{Z}}}\times \alpha_X\).
  Write \(0_X,1_X,\infty_X:X\to \mathbb{P}^1_X\) for the morphisms
  obtained by base-changing the sections \(0,1,\infty: \Spec(\mathbb{Z}) \to \mathbb{P}^1_{\mathbb{Z}}\).
  Then for any \(\iota\in \{0_X,1_X,\infty_X\}\), the following diagram commutes:
  \[
  \begin{tikzpicture}[auto]
    \node (A) at (0,2) {\(X\)};
    \node (A') at (0,0) {\(X\)};
    \node (B) at (4,2) {\(\mathbb{P}^1_X\)};
    \node (B') at (4,0) {\(\mathbb{P}^1_X\)};
    \node (C) at (8,2) {\(X\)};
    \node (C') at (8,0) {\(X\).};
    \draw[->] (A) -- node[swap]  {\(\scriptstyle \alpha _X\)} (A');
    \draw[->] (B) -- node  {\(\scriptstyle \alpha _{\mathbb{P}^1_X}\)} (B');
    \draw[->] (C) -- node  {\(\scriptstyle \alpha _X\)} (C');
    %\draw[->,transform canvas={yshift=2pt}] (A) to (B);
    %\draw[->] (A) to (B);
    %\draw[->,transform canvas={yshift=-2pt}] (A) -- node[swap]  {\(\scriptstyle 0_X,1_X,\infty _X\)} (B);
    %\draw[->,transform canvas={yshift=-2pt}] (A') to (B');
    %\draw[->] (A') to (B');
    %\draw[->,transform canvas={yshift=2pt}] (A') -- node  {\(\scriptstyle 0_X,1_X,\infty _X\)} (B');
    \draw[->] (A) -- node  {\(\scriptstyle \iota\)} (B);
    \draw[->] (A') -- node  {\(\scriptstyle \iota\)} (B');
    \draw[->] (B) -- node  {\(\scriptstyle p\)} (C);
    \draw[->] (B') -- node  {\(\scriptstyle p\)} (C');
    \draw[->] (A) to[bend left=20] node  {\(\scriptstyle \id _X\)} (C);
    \draw[->] (A') to[bend right=20] node[swap]  {\(\scriptstyle \id _X\)} (C');
  \end{tikzpicture}
  \]
  This commutativity implies that
  \(\alpha _{\mathbb{P}^1_X}^{-1}\circ (\id _{\mathbb{P}^1_\mathbb{Z}}\times \alpha _X):
  \mathbb{P}^1_X \xrightarrow{\sim} \mathbb{P}^1_X\)
  is an automorphism of \(\mathbb{P}^1_X\) over \(X\) which preserves \(0_X,1_X,\infty_X\),
  hence that
  \(\alpha_{\mathbb{P}^1_X}^{-1}\circ (\id_{\mathbb{P}^1_\mathbb{Z}}\times \alpha_X) = \id_{\mathbb{P}^1_X}\).
  Thus \(\alpha_{\mathbb{P}^1_X} = \id_{\mathbb{P}^1_\mathbb{Z}}\times \alpha_X\).
  This completes the proof of \autoref{lem: forgetful functor is rigid}.
\end{proof}

The following proposition concerns the rigidity of the composite of \(U_{\bbullet/S}^{\SchU}\)
(cf. \autoref{defi: defi: rec sch 2} \ref{defi: rec sch enumi: U Sch})
with
the functor \(f^*\) determined by base-change by \(f\).
For a morphism \(f:S\to T\) of \(\mathsf{Sch}_{\bbullet/\mathbb{Z}}\),
we shall write
\[
\Aut_T(f) \dfn \left\{\psi:S\xrightarrow{\sim} S \mid f = f\circ\psi\right\}
\]
for the group of automorphisms of \(S\) over \(T\).

%%%%\subsection{f* autom}

\begin{prop}\label{lem: pull-back functor is rigid}
  Let \(T\) be a quasi-separated scheme and
  \(f:S\to T\) a morphism in \(\Schb{T}\).
  Assume that \(\id_T: T \to T\) belongs to \(\Schb{T}\).
  Write \(f^* \dfn (-)\btimes_T S: \Schb{T} \to \Schb{S}\) for the functor determined by the
  operation of base-change, via \(f\), from \(T\) to \(S\)
  (cf. \cite[\href{https://stacks.math.columbia.edu/tag/03GI}{Tag 03GI}]{stacks-project},
  \autoref{lem: fiber product bbullet}
  \ref{enumi: fiber product bbullet not qcpt and red}
  \ref{enumi: fiber product bbullet not qcpt and in red}
  \ref{enumi: fiber product bbullet not red}
  \ref{enumi: fiber product bbullet red}).
  \begin{enumerate}
    \item \label{autfS to autf*}
    For any automorphism \(\psi\in \Aut_T(f)\),
    the family of automorphisms
    \[\alpha^{\psi} \dfn (\id_Y\btimes_T \psi: Y\btimes_T S\xrightarrow{\sim} Y\btimes_T S)_{Y\in \Schb{T}}\]
    determines an automorphism
    \(\alpha^{\psi}: U_{\bbullet/S}^{\SchU}\circ f^*\xrightarrow{\sim}U_{\bbullet/S}^{\SchU}\circ f^*\).
    Moreover, \((\alpha^{\psi})_T = \psi\).
    \item \label{alpha A1 comes from alpha T}%%%このassertionを追加
    For any automorphism \(\alpha: U_{\bbullet/S}^{\SchU}\circ f^*\xrightarrow{\sim}U_{\bbullet/S}^{\SchU}\circ f^*\),
    \(\alpha_{\mathbb{A}^1_T} = \id_{\mathbb{A}^1_{\mathbb{Z}}}\times \alpha_T\).
    \item \label{alpha T is in AutfS}
    For any automorphism \(\alpha: U_{\bbullet/S}^{\SchU}\circ f^*\xrightarrow{\sim}U_{\bbullet/S}^{\SchU}\circ f^*\),
    \(\alpha_T: S\xrightarrow{\sim} S\) belongs to \(\Aut_T(f)\).
    \item \label{alpha comes from alpha T}
    For any automorphism \(\alpha: U_{\bbullet/S}^{\SchU}\circ f^*\xrightarrow{\sim}U_{\bbullet/S}^{\SchU}\circ f^*\) and
    any object \(Y\in \Schb{T}\),
    \(\alpha_Y = \id_Y\btimes_T \alpha_T\).
    \item \label{conclude}
    The morphism of groups
    \begin{align*}
      (-)_T: \Aut(U_{\bbullet/S}^{\SchU}\circ f^*) &\to \Aut(S)  \\
      \alpha &\mapsto \alpha_T
    \end{align*}
    is injective, and its image is equal to \(\Aut_T(f) \subset \Aut(S)\).
    In particular, \(\Aut(U_{\bbullet/S}^{\SchU}\circ f^*)\cong \Aut_T(f)\).
  \end{enumerate}
\end{prop}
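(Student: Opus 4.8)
The plan is to prove the five assertions in the order (i), (ii), (iii), (iv), with (v) following formally, and to carry along one structural observation throughout. Write $\Phi \dfn U_{\bbullet/S}^{\SchU}\circ f^*$, so an element of $\Aut(\Phi)$ is a family $\alpha=(\alpha_Y\colon Y\btimes_T S\xrightarrow{\sim}Y\btimes_T S)_{Y\in\Schb{T}}$ natural in $Y$. The running observation is that $T$ is terminal in $\Schb{T}$ (the structure morphism $g\colon Y\to T$ is the unique $T$-morphism $Y\to T$), that $f^*(T)=T\btimes_T S=S$, and that under this identification $f^*(g)=\pi_2\colon Y\btimes_T S\to S$; hence naturality forces $\pi_2\circ\alpha_Y=\alpha_T\circ\pi_2$ for every $Y$, i.e. each $\alpha_Y$ covers $\alpha_T$ via the projection to $S$. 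Assertion (i) is then routine: functoriality of the fiber product makes $\alpha^\psi_Y=\id_Y\btimes_T\psi$ natural and invertible, and $(\alpha^\psi)_T=\id_T\btimes_T\psi=\psi$ under $T\btimes_T S=S$. Granting (iii) and (iv), assertion (v) is purely formal: $(-)_T$ is a group homomorphism since vertical composition of natural transformations is componentwise; it is injective by (iv) (each $\alpha$ is determined by $\alpha_T$), its image lies in $\Aut_T(f)$ by (iii), and it contains $\Aut_T(f)$ by (i); thus it is an isomorphism onto $\Aut_T(f)$.

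For (ii) I would first note that the hypothesis $\id_T\in\Schb{T}$ forces $T$ to be reduced whenever $\red\in\bbullet$, so that $\mathbb{A}^1_T,\mathbb{P}^1_T$ genuinely lie in $\Schb{T}$ and $f^*(\mathbb{A}^1_T)=\mathbb{A}^1_S$, $f^*(\mathbb{P}^1_T)=\mathbb{P}^1_S$. Naturality of $\alpha$ with respect to the projection $\mathbb{P}^1_T\to T$ and the three sections $0,1,\infty\colon T\to\mathbb{P}^1_T$ (all morphisms of $\Schb{T}$) shows that $\alpha_{\mathbb{P}^1_T}$ covers $\alpha_T$ and fixes the images of the three sections; hence $\alpha_{\mathbb{P}^1_T}^{-1}\circ(\id_{\mathbb{P}^1_{\mathbb{Z}}}\times\alpha_T)$ is an automorphism of $\mathbb{P}^1_S$ over $S$ fixing $0,1,\infty$, so it is the identity — exactly the rigidity argument of \autoref{lem: forgetful functor is rigid} \ref{enumi: forgetful functor sch}. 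Restricting along $\mathbb{A}^1\subset\mathbb{P}^1$ yields $\alpha_{\mathbb{A}^1_T}=\id_{\mathbb{A}^1_{\mathbb{Z}}}\times\alpha_T$.

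For (iii) I would work over affine opens $V\subseteq T$, each of which is an object of $\Schb{T}$ since $T$ is quasi-separated. Naturality with respect to the open immersion $V\hookrightarrow T$ shows that $\alpha_T$ preserves $f^{-1}(V)$ and restricts there to $\alpha_V$; ranging over an affine cover this already gives $|f|\circ|\alpha_T|=|f|$. For the sheaf-theoretic compatibility I would apply the relative version of (ii) over $V$ together with naturality for the sections $s_c\colon V\to\mathbb{A}^1_V$ attached to $c\in\mathcal{O}_T(V)$: comparing $(\id\times\alpha_V)\circ f^*(s_c)$ with $f^*(s_c)\circ\alpha_V$ shows $\alpha_V^{\#}$ fixes every pulled-back function $f^{\#}(c)$. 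Because $V$ is affine, a morphism to $V$ is determined by its effect on $\mathcal{O}_T(V)$, so this says precisely $f\circ\alpha_V=f$ on $f^{-1}(V)$; gluing over the cover gives $f\circ\alpha_T=f$, i.e. $\alpha_T\in\Aut_T(f)$.

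Assertion (iv) is the main obstacle. Given (iii), $\beta\dfn\alpha^{\alpha_T}$ from (i) is a natural automorphism with $\beta_T=\alpha_T$, so it suffices to show that $\gamma\dfn\beta^{-1}\circ\alpha$, which satisfies $\gamma_T=\id_S$, equals $\id$. The naturality computations show that $\gamma_Y$ covers $\id_S$ via $\pi_2$ and, via the point-inclusions $\Spec k(y)\to Y$, preserves the fibers of $\pi_1\colon Y\btimes_T S\to Y$, while the affine-open argument of (iii) gives $\gamma_V=\id$; together these reduce the problem to showing $\gamma_Y=\id$ when $Y$ is the spectrum of a field over $T$. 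The genuine difficulty, absent from \autoref{lem: forgetful functor is rigid} (where the analogous base objects $\Spec k(x)$ have one-point underlying spaces and are automatically rigid), is that $f^*(\Spec K)=\Spec K\btimes_T S$ is not rigid on the nose. My plan is to run the relative $\mathbb{A}^1$/$\mathbb{P}^1$-rigidity of (ii) over $\Spec K$ in tandem with the category-theoretic reconstruction of the structure sheaf (\autoref{reconstruction: Tch A obj}, taking $A=\mathbb{A}^1_S$), so that $\gamma_Y$ is forced to act trivially on both $\mathsf{Sp}_{\bbullet/S}(f^*Y)$ and on the reconstructed sheaf $\mathcal{O}_{f^*Y}$, and then to glue, keeping track of the case analysis on $\bbullet$ (using, for instance, that affine opens always belong to the relevant categories). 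I expect essentially all the labor to sit in this last step: reducing to affine opens and field-spectra, and cleanly separating the topological content from the sheaf-theoretic content.
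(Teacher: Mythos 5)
Parts (i), (ii), (iii), and (v) of your proposal are essentially correct and follow the same route as the paper: (i) and (v) are formal, (ii) is the $\mathbb{P}^1$-rigidity argument via the projection and the three sections $0,1,\infty$, and (iii) reduces, exactly as in the paper, to showing that $\alpha_T^{\#}$ fixes every pulled-back function $f^{\#}(c)$ over an affine open $V\subset T$ (using $\alpha_{\mathbb{A}^1_V}=\id_{\mathbb{A}^1_{\mathbb{Z}}}\times\alpha_V$ and the fact that a morphism into an affine scheme is determined by its effect on global functions). Your variant of the topological step of (iii), via stability of the opens $f^{-1}(V)$ and the $T_0$ property, is a harmless substitute for the paper's argument with residue fields.

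The gap is in (iv), which you yourself flag as the main obstacle and leave as a plan, and the plan as stated does not close. First, the proposed reduction to field-spectra is not a reduction: knowing $\gamma_{\Spec k(y)}=\id$ for every $y\in Y$ determines $|\gamma_Y|$ set-theoretically (the fibres of the first projection cover $|Y\btimes_T S|$), but it gives no control over $\gamma_Y^{\#}$, so the general case does not follow from the field-spectrum case. Second, the structure-sheaf reconstruction \autoref{reconstruction: Tch A obj} is built from $\Hom_{\Schb{S}}(U,\mathbb{A}^1_S)$ for arbitrary quasi-compact open immersions $U\to f^*Y$ in $\Schb{S}$, whereas $\gamma$ is only natural with respect to morphisms in the image of $f^*$; hence you cannot conclude that $h\circ(\gamma_Y|_U)=h$ for an arbitrary section $h\colon U\to\mathbb{A}^1_S$, only for sections pulled back from $Y$. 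What actually closes the argument --- and what the paper does --- is the following bootstrapping: for $W=\Spec(A)$ affine over an affine open $V=\Spec(B)\subset T$, one first upgrades (ii) to $\alpha_{\mathbb{A}^1_V}=\id\times(\alpha_T|_{f^{-1}(V)})$, then to $\mathbb{A}^n_V$, then to the infinite-dimensional affine space $\Spec(\mathrm{Sym}_B(B^{\oplus I}))$ (naturality against the coordinate projections), and finally uses the tautological closed immersion $W\hookrightarrow\Spec(\mathrm{Sym}_B(B^{\oplus A}))$, which is a morphism of $\Schb{T}$ whose base-change to $S$ remains a monomorphism, to cancel on the left and obtain $\alpha_W=\id_W\btimes_T\alpha_T$; arbitrary $Y$ then follows by gluing over an affine cover. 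Equivalently: $\mathcal{O}_{W\btimes_T S}$ is generated by functions pulled back from $W$ (controlled by naturality against morphisms $W\to\mathbb{A}^1_V$ in $\Schb{T}$) together with functions pulled back from $S$ (controlled by the fact that $\gamma_Y$ lies over $S$). Neither of these two generating families, nor the observation that together they suffice, appears in your plan; supplying them is precisely the missing content of (iv).
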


\begin{proof}
  Assertion \ref{autfS to autf*} follows immediately from the definitions.

  Next, we prove assertion \ref{alpha A1 comes from alpha T}. %%% refを変えた
  Let \(\alpha:U_{\bbullet/S}^{\SchU}\circ f^*\xrightarrow{\sim}U_{\bbullet/S}^{\SchU}\circ f^*\) be an automorphism.
  Thus \(\alpha\) is a family of automorphisms
  \((\alpha_Y:Y\btimes_TS \xrightarrow{\sim} Y\btimes_TS)_{Y\in \Schb{T}}\)
  such that for any morphism \(g:Y\to Y'\) in \(\Schb{T}\)
  the following diagram in \(\SchU\) commutes:
  \begin{equation}
    \label{f* diag}
    \begin{CD}
      Y\btimes_TS @>{g\btimes_T \id_S}>> Y'\btimes_TS \\
      @V{\alpha_Y}V{\cong}V @V{\alpha_{Y'}}V{\cong}V \\
      Y\btimes_TS @>{g\btimes_T \id_S}>> Y'\btimes_TS.
    \end{CD}
    \tag{\(\dagger\)}
  \end{equation}
  Let \(Y\in \Schb{T}\) be an object.
  Write \(X\dfn Y\btimes_T S\);
  \(0_X,1_X,\infty_X: X\to \mathbb{P}^1_X\) for the morphisms
  obtained by base-changing the sections \(0,1,\infty: \Spec(\mathbb{Z}) \to \mathbb{P}^1_{\mathbb{Z}}\);
  \(0_Y,1_Y,\infty_Y: Y\to \mathbb{P}^1_Y\) for the morphisms
  obtained by base-changing the sections \(0,1,\infty: \Spec(\mathbb{Z}) \to \mathbb{P}^1_{\mathbb{Z}}\);
  \(p_X:\mathbb{P}^1_X\to X\) and \(p_Y:\mathbb{P}^1_Y\to Y\) for the natural projections.
  Then, since \(\mathbb{P}^1_X\) is naturally isomorphic to \(\mathbb{P}^1_Y \btimes_T S\),
  for each \(i\in \{0,1,\infty\}\),
  it holds that \(i_X = i_Y\btimes_T \id_S\) and \(p_X = p_Y\btimes_T \id_S\),
  and the following diagram in \(\SchU\) commutes:
  \[
  \begin{tikzpicture}[auto]
    \node (A) at (0,1.5) {\(X\)};
    \node (A') at (0,0) {\(X\)};
    \node (B) at (5,1.5) {\(\mathbb{P}^1_X\)};
    \node (B') at (5,0) {\(\mathbb{P}^1_X\)};
    \node (C) at (10,1.5) {\(X\)};
    \node (C') at (10,0) {\(X\).};
    \draw[->] (A) -- node[swap]  {\(\scriptstyle \alpha_Y\)} (A');
    \draw[->] (B) -- node  {\(\scriptstyle \alpha_{\mathbb{P}^1_Y}\)} (B');
    \draw[->] (C) -- node  {\(\scriptstyle \alpha_Y\)} (C');
    \draw[->] (A) -- node  {\(\scriptstyle i_X = i_Y\btimes_T \id_S\)} (B);
    \draw[->] (A') -- node  {\(\scriptstyle i_X = i_Y\btimes_T \id_S\)} (B');
    \draw[->] (B) -- node  {\(\scriptstyle p_X = p_Y\btimes_T \id_S\)} (C);
    \draw[->] (B') -- node  {\(\scriptstyle p_X = p_Y\btimes_T \id_S\)} (C');
    \draw[->] (A) to[bend left=20] node  {\(\scriptstyle \id_X\)} (C);
    \draw[->] (A') to[bend right=20] node  {\(\scriptstyle \id_X\)} (C');
  \end{tikzpicture}
  \]
  Hence \(\alpha_{\mathbb{P}^1_Y}^{-1}\circ (\id_{\mathbb{P}^1_\mathbb{Z}}\times \alpha_Y):
  \mathbb{P}^1_X \xrightarrow{\sim} \mathbb{P}^1_X\)
  is an automorphism of \(\mathbb{P}^1_X\) over \(X\) which preserves \(0_X,1_X,\infty_X\).
  This implies that
  \(\alpha_{\mathbb{P}^1_Y}^{-1}\circ (\id_{\mathbb{P}^1_\mathbb{Z}}\times \alpha_Y) = \id_{\mathbb{P}^1_X}\).
  Thus \(\alpha_{\mathbb{P}^1_Y} = \id_{\mathbb{P}^1_\mathbb{Z}}\times \alpha_Y\).
  In particular, \(\alpha_{\mathbb{P}^1_T} = \id_{\mathbb{P}^1_\mathbb{Z}}\times \alpha_T\).
  Since \(\mathbb{A}^1_T\subset \mathbb{P}^1_T\) is an open subscheme,
  the commutativity of \eqref{f* diag} implies that
  \(\alpha_{\mathbb{A}^1_T} = \id_{\mathbb{A}^1_{\mathbb{Z}}}\times \alpha_T\).
  This completes the proof of assertion \ref{alpha A1 comes from alpha T}. %%%ここで切った

  Next, we prove assertion \ref{alpha T is in AutfS}.
  Let \(\alpha:U_{\bbullet/S}^{\SchU}\circ f^*\xrightarrow{\sim}U_{\bbullet/S}^{\SchU}\circ f^*\) be an automorphism and
  \(q\in T\) a point.
  Write \(Y \dfn \Spec(k(q))\) and \(i:Y\to T\) for the natual morphism.
  Then, by the commutativity of \eqref{f* diag},
  \(\alpha_T\circ (i\btimes_T \id_S) = (i\btimes_T \id_S) \circ \alpha_Y\).
  Hence, since \(i\btimes_T \id_S: Y\btimes_T S \to S\) and
  \(\id_Y \btimes_T f : Y\btimes_T S \to Y\) are the natural projections,
  for any \(p\in f^{-1}(q)\),
  \begin{align*}
    |f|(|\alpha_T|(|i\btimes_T \id_S|(p)))
    &= |f|(|i\btimes_T \id_S|(|\alpha_Y|(p))) \\
    &= |i|(|\id_Y \btimes_T f|(|\alpha_Y|(p))) \\
    &\in \im(|i|) = \{q\}.
  \end{align*}
  This implies that \(|f\circ \alpha_T| = |f|\).
  Hence, to prove assertion \ref{alpha T is in AutfS},
  it suffices to prove that
  \(f^{\#} = f^{\#}\circ f_*(\alpha_T^{\#}):\mathcal{O}_T \to f_*\mathcal{O}_S\).

  Let \(V\subset T\) be an affine open subscheme and
  \(t\in \mathcal{O}_T(V)\) a section.
  Then, by \autoref{cor: coprod exists} \ref{enumi: cor: coprod exists aff}, \(V\in \Schb{T}\).
  Write \(f_V^{\#}:\mathcal{O}_T(V)\to \mathcal{O}_S(f^{-1}(V))\)
  for the morphism of rings of sections over \(V\subset T\) and
  \(\alpha_{T,f^{-1}(V)}^{\#}: \mathcal{O}_S(f^{-1}(V))\xrightarrow{\sim} \mathcal{O}_S(f^{-1}(V))\)
  for the morphism of rings of sections over \(f^{-1}(V) = V\btimes_T S\subset S\).
  Then \(t\in \mathcal{O}_T(V)\) corresponds to a morphism of \(T\)-schemes \(\tilde{t}:V\to \mathbb{A}^1_T\) in \(\Schb{T}\),
  \(f_V^{\#}(t) \in \mathcal{O}_S(f^{-1}(V))\) corresponds to the morphism of \(S\)-schemes
  \(\tilde{t}\btimes_T \id_S:f^{-1}(V) \to \mathbb{A}^1_S\) in \(\Schb{S}\), and
  \(\alpha_{T,f^{-1}(V)}^{\#}(f_V^{\#}(t))\in \mathcal{O}_S(f^{-1}(V))\) corresponds to
  the morphism of \(S\)-schemes
  \(\tilde{u}: f^{-1}(V) \to \mathbb{A}^1_S\) in \(\Schb{S}\)
  such that
  all small square in the following diagram in \(\SchU\) are Cartesian: %%%ここの一文から変えた
  \[
  \begin{tikzpicture}[auto]
    \node (A) at (0,3) {\(f^{-1}(V)\)};
    \node (A') at (0,1.5) {\(f^{-1}(V)\)};
    \node (A'') at (0,0) {\(V\)};
    \node (B) at (4,3) {\(\mathbb{A}^1_S\)};
    \node (B') at (4,1.5) {\(\mathbb{A}^1_S\)};
    \node (B'') at (4,0) {\(\mathbb{A}^1_T\)};
    \node (C) at (8,3) {\(S\)};
    \node (C') at (8,1.5) {\(S\)};
    \node (C'') at (8,0) {\(T.\)};
    \draw[->] (A) --  (A');
    \draw[->] (A') -- node[swap]  {\(\scriptstyle \id_V\btimes_T f\)} (A'');
    \draw[->] (B) -- node  {\(\scriptstyle \id_{\mathbb{A}^1_{\mathbb{Z}}} \times \alpha_T\)} (B');
    \draw[->] (B') -- node  {\(\scriptstyle \id_{\mathbb{A}^1_{\mathbb{Z}}} \times f\)} (B'');
    \draw[->] (C) -- node  {\(\scriptstyle \alpha_T\)} (C');
    \draw[->] (C') -- node  {\(\scriptstyle f\)} (C'');
    \draw[->] (A) -- node  {\(\scriptstyle \tilde{u}\)} (B);
    \draw[->] (A') -- node  {\(\scriptstyle \tilde{t}\btimes_T \id_S\)} (B');
    \draw[->] (A'') -- node  {\(\scriptstyle \tilde{t}\)} (B'');
    \draw[->] (B) --  (C);
    \draw[->] (B') --  (C');
    \draw[->] (B'') --  (C'');
  \end{tikzpicture}
  \]
  By assertion \ref{alpha A1 comes from alpha T},
  \(\id_{\mathbb{A}^1_{\mathbb{Z}}} \times \alpha_T = \alpha_{\mathbb{A}^1_T}\).
  Moreover, since the diagram the diagram \eqref{f* diag} and
  the upper left-hand square of the above diagram are Cartesian,
  it holds that \(\tilde{u} = \tilde{t}\btimes_T \id_S\).
  This implies that \(\alpha_{T,f^{-1}(V)}^{\#}(f_V^{\#}(t)) = f_V^{\#}(t)\).
  This completes the proof of assertion \ref{alpha T is in AutfS}.

  Next, we prove assertion \ref{alpha comes from alpha T}. %%% 切った先
  By assertion \ref{alpha A1 comes from alpha T},
  \(\id_{\mathbb{A}^1_{\mathbb{Z}}} \times \alpha_T = \alpha_{\mathbb{A}^1_T}\). %%% この一文を追加
  Hence, by the commutativity of \eqref{f* diag} and \autoref{cor: coprod exists} \ref{enumi: cor: coprod exists aff},
  for any affine open subscheme \(V\subset T\),
  \begin{equation*}
    \label{eq: auto 1}
    \alpha_{\mathbb{A}^1_V} = \id_{\mathbb{A}^1_{\mathbb{Z}}}\times (\alpha_T|_{f^{-1}(V)}).
    \tag{\(\ddagger\)}
  \end{equation*}
  Moreover, by the commutativity of \eqref{f* diag} and equation \eqref{eq: auto 1},
  for any \(n\geq 1\) and any affine open subscheme \(V\subset T\),
  \begin{equation*}
    \label{eq: auto n}
    \alpha_{\mathbb{A}^n_V} = \id_{\mathbb{A}^n_{\mathbb{Z}}}\times (\alpha_T|_{f^{-1}(V)}).
    \tag{\(\dagger\dagger\)}
  \end{equation*}
  Thus, by the commutativity of \eqref{f* diag}, equation \eqref{eq: auto n}, and
  assertion \ref{alpha T is in AutfS}, %%% この文のrefでAutfSのassertionをreferした
  for any affine open subscheme \(\Spec(B) \cong V \subset T\) and
  any (\usm) set \(I\),
  if we write \(B \dfn \Gamma(V,\mathcal{O}_V)\) and \(N\dfn B^{\oplus I}\), then
  \begin{equation*}
    \label{eq: auto I}
    \alpha_{\Spec(\mathrm{Sym}_B(N))}
    = \id_{\Spec(\mathrm{Sym}_{\mathbb{Z}}(\mathbb{Z}^{\oplus I}))} \times (\alpha_T|_{f^{-1}(V)})
    = \id_{\Spec(\mathrm{Sym}_B(N))} \btimes_T \alpha_T.
    \tag{\(\ddagger\ddagger\)}
  \end{equation*}
  Note that for any affine open subscheme \(V\subset T\) and
  any affine scheme \(\Spec(A) \to V\),
  if we write \(B \dfn \Gamma(V,\mathcal{O}_V)\),
  then \(\Spec(A)\) may be regarded as a closed subscheme of \(\Spec(\mathrm{Sym}_B(B^{\oplus A}))\).
  Hence, by the commutativity of \eqref{f* diag}, equation \eqref{eq: auto I},
  and \autoref{cor: coprod exists} \ref{enumi: cor: coprod exists aff},
  for any affine open subscheme \(V\subset T\) and
  any affine scheme \(W\) over \(V\) (in \(\Schb{T}\)),
  \(\alpha_W = \id_W\btimes_T \alpha_T\).
  Thus, by the commutativity of \eqref{f* diag}, for any object \(Y\in \Schb{T}\),
  \(\alpha_Y = \id_Y\btimes_T \alpha_T\).
  This completes the proof of assertion \ref{alpha comes from alpha T}.

  Assertion \ref{conclude} follows immediately from
  assertions \ref{autfS to autf*}, \ref{alpha comes from alpha T}, and \ref{alpha T is in AutfS}.
  This completes the proof of \autoref{lem: pull-back functor is rigid}.
\end{proof}

\begin{cor}\label{cor: isom of pb is triv}
  Let \(T\) be a quasi-separated scheme and
  \(f:S\to T\) a morphism in \(\Schb{T}\).
  Assume that \(\id_T: T \to T\) belongs to \(\Schb{T}\).
  Write \(f^* \dfn (-)\btimes_T S: \Schb{T} \to \Schb{S}\) for the functor determined by the
  operation of base-change, via \(f\), from \(T\) to \(S\)
  (cf. \cite[\href{https://stacks.math.columbia.edu/tag/03GI}{Tag 03GI}]{stacks-project},
  \autoref{lem: fiber product bbullet}
  \ref{enumi: fiber product bbullet not qcpt and red}
  \ref{enumi: fiber product bbullet not qcpt and in red}
  \ref{enumi: fiber product bbullet not red}
  \ref{enumi: fiber product bbullet red}).
  Then \(\Aut(f^*) = \{\id_{f^*}\}\).
\end{cor}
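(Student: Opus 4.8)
The plan is to deduce the statement from \autoref{lem: pull-back functor is rigid} by exploiting the faithfulness of the forgetful functor $U_{\bbullet/S}^{\SchU}:\Schb{S}\to \SchU$. First I would record the natural group homomorphism
\[
\iota:\Aut(f^*)\to \Aut(U_{\bbullet/S}^{\SchU}\circ f^*)
\]
obtained by whiskering a natural automorphism of $f^*$ with $U_{\bbullet/S}^{\SchU}$, i.e., forgetting the $S$-structure componentwise. Since $U_{\bbullet/S}^{\SchU}$ is faithful, $\iota$ is injective: if two natural automorphisms of $f^*$ agree after applying $U_{\bbullet/S}^{\SchU}$ to every component, then their components agree as morphisms of $S$-schemes, hence the two automorphisms are equal.

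The crucial observation concerns the component at $T$. For $\beta\in \Aut(f^*)$, the component $\beta_T$ is an automorphism of $f^*(T)=T\btimes_T S$ in $\Schb{S}$, i.e., an automorphism over $S$. But the structure morphism $T\btimes_T S\to S$ is the base-change of $\id_T$ along $f$, hence an isomorphism; thus $f^*(T)$ is isomorphic over $S$ to $S$ equipped with the identity structure morphism, and the only $S$-automorphism of the latter is the identity. Since $\Schb{S}$ is a full subcategory of $\Sch{S}$, it follows that $\Aut_{\Schb{S}}(f^*(T))$ is trivial, so $\beta_T=\id_{f^*(T)}$; consequently the underlying scheme automorphism $(\iota(\beta))_T=U_{\bbullet/S}^{\SchU}(\beta_T)$ equals the identity of $S$.

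To finish, I would invoke \autoref{lem: pull-back functor is rigid} \ref{conclude}, according to which the evaluation homomorphism $(-)_T:\Aut(U_{\bbullet/S}^{\SchU}\circ f^*)\to \Aut(S)$ is injective. Since both $\iota(\beta)$ and $\id_{U_{\bbullet/S}^{\SchU}\circ f^*}$ have component $\id_S$ at $T$, injectivity of $(-)_T$ yields $\iota(\beta)=\id_{U_{\bbullet/S}^{\SchU}\circ f^*}$, whence injectivity of $\iota$ gives $\beta=\id_{f^*}$. Equivalently, the composite $(-)_T\circ \iota:\Aut(f^*)\to \Aut(S)$ is an injective group homomorphism whose image is $\{\id_S\}$, forcing $\Aut(f^*)$ to be trivial.

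The only genuinely delicate point is the second step, namely the bookkeeping of the $S$-scheme structure: one must carefully distinguish automorphisms of $f^*$ as a functor into $\Schb{S}$, whose components are required to respect the structure morphism to $S$, from automorphisms of the composite $U_{\bbullet/S}^{\SchU}\circ f^*$, whose components need not. It is precisely this extra constraint at the object $T$ --- where the structure morphism of $f^*(T)$ becomes an isomorphism --- that collapses the group $\Aut_T(f)$ computed in \autoref{lem: pull-back functor is rigid} down to the trivial subgroup generated by $\id_S$. Everything else is formal, relying only on the faithfulness of the forgetful functor together with the injectivity already furnished by \autoref{lem: pull-back functor is rigid}.
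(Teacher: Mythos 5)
Your proposal is correct and follows essentially the same route as the paper: the key observation in both is that the component at \(T\) is an automorphism \emph{over \(S\)} of \(T\btimes_TS\), whose structure morphism to \(S\) is an isomorphism, forcing that component to be the identity, after which \autoref{lem: pull-back functor is rigid} propagates triviality to all components. The paper invokes part \ref{alpha comes from alpha T} directly (writing \(\alpha_Y=\id_Y\btimes_T\alpha_T\)) where you invoke part \ref{conclude} together with faithfulness of the forgetful functor, but since \ref{conclude} is itself deduced from \ref{alpha comes from alpha T} this is an inessential repackaging of the same argument.
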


\begin{proof}
  Let \(\alpha:f^*\xrightarrow{\sim} f^*\) be an isomorphism.
  Then, by \autoref{lem: pull-back functor is rigid} \ref{alpha comes from alpha T},
  for any object \(Y\in \Schb{T}\),
  it holds that \(\alpha_Y = \id_Y\btimes_T \alpha_T\).
  Since \(\alpha_T: S\xrightarrow{\sim} S\) is an isomorphism in \(\Schb{S}\),
  it holds that \(\alpha_T = \id_S\).
  This implies that \(\alpha = \id_{f^*}\).
  This completes the proof of \autoref{cor: isom of pb is triv}.
\end{proof}

Finally, we prove the main result of the present paper.

%%%%%\subsection{Final theorem}

%%% Mzkではやっぱりfull性の証明が回ってないように見える。well-defかどうか微妙すぎる
%%% アブスト修正、次は最後の定理だけ

\begin{thm}\label{thm: isom cat equiv}
  Let \(S\) and \(T\) be locally Noetherian normal schemes and
  \(\bbullet \subset \rqqs\) a subset.
  Then the natural functor
  \begin{align*}
    \Isom (S,T) &\to \ISOM (\Schb{T},\Schb{S}) \\
    f &\mapsto f^*
  \end{align*}
  is an equivalence of (\vsm) categories.
\end{thm}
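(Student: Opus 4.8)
The plan is to show the functor $f \mapsto f^*$ is an equivalence by establishing that it is essentially surjective and fully faithful, using the reconstruction machinery already developed. The key structural input is \autoref{cor: underlying sch equiv commute}, which tells us that any equivalence $F : \Schb{S} \xrightarrow{\sim} \Schc{T}$ (in particular any $F \in \ISOM(\Schb{T}, \Schb{S})$, after swapping roles) is compatible, up to natural isomorphism, with the underlying-scheme functors $U^{\SchU}_{\bbullet/S}$ and $U^{\SchU}_{\bbbullet/T}$. By \autoref{cor: bbullet equal circ}, since $S, T$ are both quasi-separated (being locally Noetherian normal), an equivalence $\Schb{T} \simeq \Schb{S}$ forces the fixed subset to agree on both sides, so there is no ambiguity in the decorations.

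For essential surjectivity, I would start with an arbitrary equivalence $F : \Schb{T} \xrightarrow{\sim} \Schb{S}$ and apply \autoref{cor: underlying sch equiv commute} to obtain a natural isomorphism $\theta : U^{\SchU}_{\bbbullet/T} \xrightarrow{\sim} U^{\SchU}_{\bbullet/S} \circ F$. Evaluating the compatibility at objects and morphisms, each $F(Y)$ is identified as a scheme with $Y$ via $\theta_Y$, and these identifications are natural; applying this to the structure morphisms $Y \to T$ and to the object $\id_T$ (here one uses that $\id_S, \id_T$ lie in the relevant categories, as in the hypothesis of \autoref{lem: pull-back functor is rigid}) should produce a morphism of schemes $\phi : S \to T$ such that $F$ is naturally isomorphic to $\phi^*$. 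The underlying map $\phi$ is recovered from where $F$ sends the terminal-type object $\id_T \in \Schb{T}$, and one checks $\phi$ is an isomorphism because $F$ is an equivalence (the quasi-inverse yields $\phi^{-1}$). The essential point is that base-change $\phi^*$ is determined on all of $\Schb{T}$ by its effect on the base together with the fibered structure, which the natural isomorphism $\theta$ encodes precisely.

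For fidelity and fullness, I would translate $\Hom$ and $\Isom$ statements into statements about natural transformations of the composites $U^{\SchU}_{\bbullet/S} \circ f^*$. The injectivity of $f \mapsto f^*$ on morphisms follows because distinct isomorphisms $f_1, f_2 : S \xrightarrow{\sim} T$ give distinct base-change functors: evaluating $f_i^*$ on $\id_T$ recovers $f_i$ itself (up to the canonical identification), so $f_1^* \cong f_2^*$ as functors forces $f_1 = f_2$ on underlying schemes. Fullness—that every natural isomorphism $f_1^* \xrightarrow{\sim} f_2^*$ comes from an equality $f_1 = f_2$—is exactly the content packaged by the rigidity results: \autoref{lem: pull-back functor is rigid} \ref{conclude} computes $\Aut(U^{\SchU}_{\bbullet/S} \circ f^*) \cong \Aut_T(f)$, and \autoref{cor: isom of pb is triv} shows $\Aut(f^*) = \{\id_{f^*}\}$, so the functor induces a bijection on morphism sets. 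I expect the main obstacle to be the essential-surjectivity step: one must carefully extract the base morphism $\phi$ from an abstract equivalence and verify that the natural isomorphism $\theta$ of \autoref{cor: underlying sch equiv commute} is genuinely compatible with fiber products over $S$ and $T$, so that $F$ and $\phi^*$ agree as functors and not merely objectwise. Assembling this compatibility into a single natural isomorphism $F \xrightarrow{\sim} \phi^*$, while keeping track of the $\bbullet$ versus $\bbbullet$ bookkeeping (controlled by \autoref{cor: bbullet equal circ}), is where the real work lies.
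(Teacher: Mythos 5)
Your proposal is correct and follows essentially the same route as the paper: essential surjectivity via \autoref{cor: underlying sch equiv commute} (extracting the base morphism from the image of the terminal object \(\id_T\) and the natural identification of underlying schemes), and fullness/faithfulness via the rigidity results \autoref{lem: pull-back functor is rigid} and \autoref{cor: isom of pb is triv}. The only cosmetic difference is that the paper dispatches faithfulness immediately by noting that \(\Isom(S,T)\) is discrete, and that \autoref{cor: bbullet equal circ} is not actually needed here since both categories carry the same \(\bbullet\).
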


\begin{proof}
  First, we observe that since \(\Isom(S,T)\) is a discrete category,
  the functor \(f \mapsto f^*\) is faithful.

  Next, we verify that the functor \(f\mapsto f^*\) is full.
  Let \(f,g: S \xrightarrow{\sim} T\) be isomorphisms.
  Assume that there exists an isomorphism of functors \(\alpha: f^* \xrightarrow{\sim} g^*\).
  Write \(h \dfn g^{-1}\circ f : S\xrightarrow{\sim} S\) for the isomorphism of
  \(T\)-schemes from \(f:S\to T\) to \(g:S\to T\),
  \(\tilde{\alpha}:U_{\bbullet/S}^{\SchU}\circ f^*\xrightarrow{\sim} U_{\bbullet/S}^{\SchU}\circ g^*\)
  (cf. \autoref{defi: defi: rec sch 2} \ref{defi: rec sch enumi: U Sch})
  for the isomorphism of functors induced by \(\alpha: f^* \xrightarrow{\sim} g^*\), and
  \(\beta \dfn ( \id_Y\btimes_T h : Y\btimes_{T,f} S \xrightarrow{\sim} Y\btimes_{T,g} S )_{Y\in \Schb{T}}\)
  for the isomorphism of functors
  \(\beta: U_{\bbullet/S}^{\SchU}\circ f^*\xrightarrow{\sim} U_{\bbullet/S}^{\SchU}\circ g^*\).
  Then \(\tilde{\alpha}^{-1}\circ \beta \in \Aut(U_{\bbullet/S}^{\SchU}\circ f^*)\).
  Hence, by \autoref{lem: pull-back functor is rigid} \ref{alpha T is in AutfS},
  \(\alpha_T^{-1} \circ \beta_T \in \Aut_T(f)\).
  Since \(\beta_T = \id_T\btimes_T (g^{-1}\circ f)\),
  the isomorphism of \(S\)-schemes \(\alpha_T: T\btimes_{T,f} S \xrightarrow{\sim} T\btimes_{T,g} S\)
  is an isomorphism over \(T\).
  Hence the outer square in the following diagram commutes:
  \[
  \begin{tikzpicture}[auto]
    \node (A) at (0,1.5) {\(T\btimes_{T,f} S\)};
    \node (A') at (0,0) {\(T\btimes_{T,g} S\)};
    \node (B) at (5,1.5) {\(S\)};
    \node (B') at (5,0) {\(S\)};
    \node (C) at (10,1.5) {\(T\)};
    \node (C') at (10,0) {\(T\).};
    \node (D) at (2.5,0.75) {\(\circlearrowleft\)};
    \node (D) at (5,2.1) {\(\circlearrowleft\)};
    \node (D) at (4.4,-0.6) {\(\circlearrowright\)};
    \draw[->] (A) -- node[swap]  {\(\scriptstyle \alpha_T\)} (A');
    \draw[double equal sign distance] (B) to (B');
    \draw[double equal sign distance] (C) to (C');
    \draw[->] (A) -- node  {\(\scriptstyle \mathrm{pr}_S\)} (B);
    \draw[->] (A') -- node  {\(\scriptstyle \mathrm{pr}_S\)} (B');
    \draw[->] (B) -- node  {\(\scriptstyle f\)} (C);
    \draw[->] (B') -- node  {\(\scriptstyle g\)} (C');
    \draw[->] (A) to[bend left=20] node  {\(\scriptstyle \mathrm{pr}_T\)} (C);
    \draw[->] (A') to[bend right=20] node  {\(\scriptstyle \mathrm{pr}_T\)} (C');
  \end{tikzpicture}
  \]
  This implies that \(f = g\), hence that \(\alpha \in \Aut(f^*)\).
  Thus, by \autoref{cor: isom of pb is triv}, \(\alpha = \id_{f^*}\),
  which implies that the functor \(f\mapsto f^*\) is full.

  Finally, we verify that the functor \(f\mapsto f^*\) is essentially surjective.
  Let \(F:\Schb{T}\xrightarrow{\sim} \Schb{S}\) be an equivalence.
  By \autoref{cor: underlying sch equiv commute},
  there exists an isomorphism of functors
  \(\alpha: U_{\bbullet/S}^{\SchU}\circ F \xrightarrow{\sim} U_{\bbullet/T}^{\SchU}\).
  Let \(q_Y:Y\to T\) and \(q_Z:Z\to T\) be objects of \(\Schb{T}\) and
  \(q:Y\to Z\) a morphism in \(\Schb{T}\).
  Write \(\beta_T:F(T)\to S\) for the structure morphism in \(\Schb{S}\).
  Since \(F(T)\) is a terminal object of \(\Schb{S}\),
  the morphism \(\beta_T\) is an isomorphism.
  Write \(f\dfn \alpha_T \circ \beta_T^{-1}:S\xrightarrow{\sim} T\),
  \(\mathrm{pr}_Y:Y\btimes_{q_Y,T,f}S\xrightarrow{\sim} Y\) and
  \(\mathrm{pr}_Z:Z\btimes_{q_Z,T,f}S\xrightarrow{\sim} Z\)
  for the first projections,
  \(p_Y:Y\btimes_{q_Y,T,f}S\to S\) and
  \(p_Z:Z\btimes_{q_Z,T,f}S\to S\)
  for the second projections,
  \(\beta_Y \dfn \mathrm{pr}_Y^{-1}\circ \alpha_Y: F(Y) \xrightarrow{\sim} Y\btimes_{q_Y,T,f} S\),
  \(\beta_Z \dfn \mathrm{pr}_Z^{-1}\circ \alpha_Z: F(Z) \xrightarrow{\sim} Z\btimes_{q_Z,T,f} S\), and
  \(q':Y\btimes_{q_Y,T,f} S\to Z\btimes_{q_Z,T,f} S\) for the unique morphism
  such that \(p_Y = p_Z\circ q'\) and \(q\circ \mathrm{pr}_Y = \mathrm{pr}_Z\circ q'\).
  Then it follows from the definition of \(\alpha\) that the following diagram commutes:
  \[
  \begin{tikzpicture}[auto]
    \node (A) at (0,1.5) {\(F(Y)\)};
    \node (A') at (0,0) {\(F(Z)\)};
    \node (B) at (4,1.5) {\(Y\btimes_{q_Y,T,f} S\)};
    \node (B') at (4,0) {\(Z\btimes_{q_Z,T,f} S\)};
    \node (C) at (8,1.5) {\(Y\)};
    \node (C') at (8,0) {\(Z\)};
    \draw[->] (A) -- node[swap]  {\(\scriptstyle F(q)\)} (A');
    \draw[->] (B) -- node  {\(\scriptstyle q'\)} (B');
    \draw[->] (C) -- node  {\(\scriptstyle q\)} (C');
    \draw[->] (A) -- node  {\(\scriptstyle \beta_Y\)} (B);
    \draw[->] (A') -- node  {\(\scriptstyle \beta_Z\)} (B');
    \draw[->] (B) -- node  {\(\scriptstyle \mathrm{pr}_Y\)} (C);
    \draw[->] (B') -- node  {\(\scriptstyle \mathrm{pr}_Z\)} (C');
    \draw[->] (A) to[bend left=20] node  {\(\scriptstyle \alpha_Y\)} (C);
    \draw[->] (A') to[bend right=20] node  {\(\scriptstyle \alpha_Z\)} (C');
  \end{tikzpicture}
  \]
  Thus the family of isomorphisms
  \((\beta_Y: F(Y) \xrightarrow{\sim} Y\btimes_{q_Y,T,f}S)_{Y\in \Schb{T}}\)
  determines an isomorphism of functors \(\beta:F\xrightarrow{\sim} f^*\).
  This completes the proof of \autoref{thm: isom cat equiv}.
\end{proof}

\newcounter{num}

\setcounter{num}{4}

\end{document}